\declaretheoremstyle[bodyfont=\normalfont]{defnstyle}
\declaretheorem[numberwithin=section,refname={Theorem,Theorems},name=Theorem]{theorem}
\declaretheorem[name=Theorem,numbered=yes]{ltheorem}
\declaretheorem[numberlike=theorem,refname={Corollary,Corollaries},name=Corollary]{corollary}
\declaretheorem[numberlike=theorem,refname={Lemma,Lemmmas},name=Lemma]{lemma}
\declaretheorem[numberlike=theorem,refname={Proposition,Propositions},name=Proposition]{proposition}
\declaretheorem[numberlike=theorem,refname={Definition,Definitions},style=defnstyle,name=Definition]{definition}
\declaretheorem[numberlike=theorem,style=remark]{remark}
\declaretheorem[numberlike=theorem,style=remark,refname={Notation,Notations}]{notation}
\declaretheorem[numbered=no,style=definition]{question}
\declaretheorem[numbered=no,style=remark,qed=$\blacksquare$]{proof}
\newlist{rome}{enumerate}{7}
\setlist[rome]{label=(\roman*)}
\newlist{trome}{enumerate}{1}
\setlist[trome]{label=(\roman*), ref=\thetheorem~(\roman*)}
\newlist{drome}{enumerate}{1}
\setlist[drome]{label=(\roman*),ref=\thedefinition~(\roman*)}
\newlist{rrome}{enumerate}{1}
\setlist[rrome]{label=(\roman*),ref=\theremark~(\roman*)}
\newlength{\diagrampunctdist}\setlength{\diagrampunctdist}{2ex}
\newsavebox{\diagrampunct}
\xdef\basenode{\@ifmtarg{#2}{current bounding box.south}{#2.base}}\savebox{\diagrampunct}{#1}\begin{center}\begin{tikzpicture}}{\@ifnotmtarg{#1}{\node at ($(\basenode-|current bounding box.east)+(\diagrampunctdist,0)$) {\usebox{\diagrampunct}};\node at ($(\basenode-|current bounding box.west)-(\diagrampunctdist,0)$) {\hphantom{\usebox{\diagrampunct}}};}\end{tikzpicture}\end{center}}
\newcounter{diagram}
\newenvironment{diagram}{\setcounter{diagram}{\value{theorem}}\refstepcounter{theorem}\refstepcounter{diagram}\begin{center}\hfill\begin{tikzpicture}}
    {\end{tikzpicture}\hfill\llap{\normalfont(\thediagram)}\end{center}}
\crefname{diagram}{}{}
\Crefname{diagram}{Diagram}{Diagram}
\numberwithin{diagram}{section} 
\newlength{\ProArLineHeight}\setlength{\ProArLineHeight}{1.25ex}
\newlength{\ProArThickness}\setlength{\ProArThickness}{\pgflinewidth}
\tikzset{node distance=1.5cm, auto,
  baseline=(current bounding box.center),
  shrink/.style={outer sep=2pt,inner sep=0, minimum size=0},
  squeeze/.style={auto=false,outer sep=3pt,inner sep=0, minimum size=0},
  dot/.style={inner sep=1pt, minimum
    size=0pt,outer sep=5pt,circle,fill},
  a/.style={->}, e/.style={->>},
  d/.style={double, double equal sign distance,-},
  u/.style={dashed,->},
  n/.style={double equal sign distance, -implies},
  ne/.style={double equal sign distance, -},
  t/.style={double distance=2.5pt, -implies, postaction={draw,-}},
  te/.style={double distance=2.5pt, -, postaction={draw,-}},
  cover/.style={preaction={draw=white, -,line width=6pt}},
  over/.style={auto=false,fill=white,inner sep=1.5pt, minimum size=0, outer sep=0},
  s/.style={shorten >=3pt, shorten <=3pt},
  pro/.style={decoration={markings,
      mark=at position 1/2 with {
        \draw[-,line width=\ProArThickness]
        (0,{-\ProArLineHeight/2}) -- (0,{\ProArLineHeight/2});
      }},
    inner sep=\ProArLineHeight,
    postaction=decorate,
  },
  proarrow/.style={->,pro},
  prodotted/.style={->,dotted,pro},
  proequal/.style={double distance=1.5pt,pro},
  la/.style={scale=0.8}}
\def\cellslide{0.5}
\def\celllength{.2cm}
\NewDocumentCommand{\cell}{ O{} O{n} O{\cellslide} O{\celllength} m m m }{
  \coordinate (mid) at ($({#5})!{#3}!({#6})$);
  \coordinate (start) at ($(mid)!{#4}!({#5})$);
  \coordinate (end) at ($(mid)!{#4}!({#6})$);
  \draw[#2] (start) to node
  [inner sep=4pt,outer sep=0,minimum size=0,#1]{{#7}} (end);
}
\NewDocumentCommand{\celli}{ O{} O{n} O{\cellslide} O{\celllength} m m m }{
  \coordinate (mid) at ($({#5})!{#3}!({#6})$);
  \coordinate (start) at ($(mid)!{#4}!({#5})$);
  \coordinate (end) at ($(mid)!{#4}!({#6})$);
  \draw[#2] (start) to node(label)[inner sep=4pt,outer sep=0,minimum size=0,#1]{{#7}} (end);
  \coordinate (far) at ($(end)+(mid)-(label)$);
  \node[] at ($(end)!7pt!(far)$) {$\scriptscriptstyle\iso$} ;
}
\newcommand{\inv}{{\hspace{1pt}\scriptscriptstyle\text{-}1}}
\newdimen{\pullbackmarkerlength}
\def\pullbackangle{30}
\pgfmathsetlengthmacro{\pullbackradius}{\pullbackmarkerlength*sin(45)/sin(45-\pullbackangle)}
\newcommand{\pullback}[1]{
  \draw ({#1}.center) ++(360-\pullbackangle:\pullbackradius) -- +(0,-\pullbackmarkerlength);
  \draw ({#1}.center) ++(270+\pullbackangle:\pullbackradius) -- +(\pullbackmarkerlength,0);
}
\newcommand{\sq}[5]{{#1}\colon({#4} \; ^{{#2}}_{\substack{{#3}}} \; {#5})}
\DeclareMathOperator{\id}{id}
\DeclareMathOperator{\vid}{e}
\DeclareMathOperator{\op}{op}
\DeclareMathOperator{\ar}{Ar}
\newcommand{\Ar}{\ensuremath{\mathrm{Ar}_{*}}}
\newcommand{\evY}{\ensuremath{\mathcal{E}}}
\newcommand{\YC}{\ensuremath{\mathbf C^{\mathcal Y}}}
\newcommand{\Yoneda}{\ensuremath{\mathcal Y}}
\newcommand{\sY}{\ensuremath{\overline{\mathcal{Y}}}}
\newcommand{\slice}[2]{\ensuremath{{#1}\mathbin{\downarrow}{#2}}}
\newcommand{\pslice}[2]{\ensuremath{{#1}\mathbin{\downarrow}{#2}}}
\newcommand{\dblslice}[2]{\ensuremath{{#1}\mathbin{\downarrow\downarrow}{#2}}}
\newlength{\arrowlength}
\newcommand{\vcong}{\rotatebox{270}{$\cong$}}
\newcommand{\threecellarr}{\mathrel{\begin{tikzpicture}[baseline=(A.base)]
      \node(A)[inner sep=0,outer sep=0,minimum size=0] at (0,0) {\vphantom{a}};
      \draw[t](A) -- ++(\arrowlength,0);
    \end{tikzpicture}}}
\newcommand{\varrow}{\mathrel{\begin{tikzpicture}
    [baseline=(current bounding box.south)]
    \draw[proarrow] (0,0) -- (\arrowlength,0);
  \end{tikzpicture}}}
\newcommand{\vequal}{\mathrel{\begin{tikzpicture}
    [baseline=(current bounding box.south)]
    \draw[proequal] (0,0) -- (\arrowlength,0);
  \end{tikzpicture}}}
\newcommand{\iso}{\mathrel{\cong}}
\newcommand{\eqv}{\mathrel{\simeq}}
\newcommand{\xarr}[1]{
  \xrightarrow{\lower2pt\hbox{\smash{\ensuremath{\scriptscriptstyle #1}}\hspace{1pt}}}
}
\newcommand{\xnat}[1]{
  \xRightarrow{\hbox{\smash{\ensuremath{\scriptscriptstyle #1}}\hspace{2pt}}}
}
\newcommand{\ainl}[3]{\ensuremath{ {#2}\colon{#1}\rightarrow{#3} }}
\newcommand{\aeinl}[3]{\ensuremath{ {#2}\colon{#1}\xarr{\eqv}{#3} }}
\newcommand{\aiinl}[3]{\ensuremath{ {#2}\colon{#1}\xarr{\iso}{#3} }}
\newcommand{\ninl}[3]{\ensuremath{ {#2}\colon{#1}\Rightarrow{#3} }}
\newcommand{\niinl}[3]{\ensuremath{ {#2}\colon{#1}\xnat{\iso}{#3} }}
\newcommand{\neinl}[3]{\ensuremath{ {#2}\colon{#1}\xnat{\eqv}{#3} }}
\newcommand{\minl}[3]{\ensuremath{ {#2}\colon{#1}\threecellarr{#3} }}
\newcommand{\vainl}[3]{\ensuremath{{#2}\colon{#1}\varrow{#3} }}
\newcommand{\vidinl}[1]{{#1}\vequal{#1} }
\newcommand{\el}{\ensuremath{\mathbf{el}}}
\newcommand{\eell}{\ensuremath{\mathbbm{el}}}
\newcommand{\mor}{\ensuremath{\mathbf{mor}}}
\DeclareMathOperator{\Psd}{\mathbf {Ps}}
\DeclareMathOperator{\PPsd}{\mathbb {P}s}
\newcommand{\SV}{\ensuremath{\mathcal V}\xspace}
\newcommand{\Cat}{\textnormal{\textsf{\textbf{Cat}}}\xspace}
\newcommand{\TwoCat}{\textnormal{\textsf{2Cat\textsubscript{nps}}}\xspace}
\newcommand{\DblCat}{\textnormal{\textsf{DblCat\textsubscript{h,nps}}}\xspace}
\newcommand{\onecat}[1]{\ensuremath{\mathcal{#1}}}
\newcommand{\@bbify}[1]{
  \ifcsname b#1\endcsname
  \message{WARNING: Overwriting b#1 with blackboard letter!}
  \fi
  \expandafter\edef\csname b#1\endcsname
  {\noexpand\ensuremath{\noexpand\mathbb #1}\noexpand\xspace}}
\newcommand{\@calify}[1]{
  \ifcsname c#1\endcsname
  \message{WARNING: Overwriting c#1 with calligraphic letter!}
  \fi
  \expandafter\edef\csname c#1\endcsname
  {\noexpand\ensuremath{\noexpand\mathbf #1}\noexpand\xspace}}
\newcounter{@letter}\stepcounter{@letter}
\loop\@bbify{\Alph{@letter}}\@calify{\Alph{@letter}}
\title{Bi-initial objects and bi-representations are not so different}
\author[\lowercase{t.\ clingman}]{\lowercase{tslil clingman}}
\address{Johns Hopkins University, 3400 N. Charles St., Baltimore, MD, USA}
\email{tslil@jhu.edu}
\author[L.\ Moser]{Lyne Moser}
\address{Max Planck Institute for Mathematics, Vivatsgasse 7, 53111 Bonn, Germany}
\email{moser@mpim-bonn.mpg.de}
\subjclass[2020]{18N10, 18A05, 18A30, 18A40, 18A25}
\keywords{Bi-representations, (double) bi-initial objects, bi-adjunctions, weighted bi-limits, pseudo-commas}
\begin{document}

\begin{abstract}
  We introduce a functor $\mathcal V\colon\textsf{DblCat\textsubscript{h,nps}}\to \textsf{2Cat\textsubscript{h,nps}}$ extracting from a double category a $2$-category whose objects and morphisms are the vertical morphisms and squares. We give a characterisation of bi-representations of a normal pseudo-functor $F\colon \mathbf C^{\operatorname{op}}\to \textsf{\textbf{Cat}}$ in terms of double bi-initial objects in the double category $\mathbbm{el}(F)$ of elements of $F$, or equivalently as bi-initial objects of a special form in the $2$-category~$\mathcal V\mathbbm{el}(F)$ of \emph{morphisms} of $F$. Although not true in general, in the special case where the $2$-category~$\mathbf C$ has tensors by the category $\mathbbm 2=\{0\to 1\}$ and $F$ preserves those tensors, we show that a bi-representation of $F$ is then precisely a bi-initial object in the $2$-category~$\mathbf{el}(F)$ of \emph{elements} of $F$. We give applications of this theory to bi-adjunctions and weighted bi-limits.
\end{abstract}

\maketitle

\section{Introduction}
\label{sec:introduction}

In ordinary category theory, properties of a categorical object are often formulated as questions of \emph{representability} of a \emph{presheaf}. By a presheaf, we mean a functor $\ainl {\onecat C^{\op}}F{\textsf{Set}}$, where $\onecat C$ is a category and $\textsf{Set}$ is the category of sets and functions. A representation of a presheaf comprises the data of an object $I\in \onecat C$ together with a natural isomorphism $\onecat C(-,I)\xnat{\iso} F$. In particular, this gives isomorphisms of sets $\onecat C(C,I)\iso FC$, for each object $C\in \onecat C$. A classical theorem, which we shall refer to as the ``Representation Theorem'', establishes that a presheaf $F$ has a representation precisely when its \emph{category of elements} has an initial object; see for example \cite[Proposition III.2.2]{MacLane} or \cite[Proposition 2.4.8]{Riehl}\footnote{Riehl defines the category of elements by hand along with a projection functor to $\onecat C$, rather than $\onecat C^{\op}$; therefore, representations correspond to terminal objects in this setting.}. This category of elements of $F$ is defined as the slice category $\slice{\{*\}}F$, where $\{*\}$ denotes the singleton set.

Two main examples of properties that can be rephrased in terms of representability are the existence of limits and adjoints for a functor. Indeed, asking that a functor $\ainl {\onecat I}F{\onecat C}$ admits a limit amounts to asking whether the presheaf $\ainl{\onecat C^{\op}}{[\onecat I,\onecat C](\Delta (-),F)}{\textsf{Set}}$ has a representation. Therefore, by the Representation Theorem, this is equivalent to requiring the presence of a terminal object in the slice category $\slice \Delta F$ of cones over~$F$. Similarly, the existence of a right adjoint to a functor $\ainl {\onecat C}L{\onecat D}$ may equivalently be reformulated as the existence of a representation of the presheaf $\ainl {\onecat C^{\op}}{\onecat C(L-,D)}{\textsf{Set}}$, for each object~$D\in \onecat D$, or equivalently of the existence of a terminal object in the slice category $\slice L D$, for each object $D\in \onecat D$.

In passing from ordinary categories to $2$-categories, we may seek to elevate discussions of representations of ordinary presheaves to their $2$-dimensional counter-parts. By a $2$-dimensional presheaf, we mean a normal pseudo-functor $\ainl {\cC^{\op}}F{\Cat}$, where $\cC$ is a $2$-category and $\Cat$ is the $2$-category of categories, functors, and natural transformations. The data of a $2$-dimensional representation is once more an object $I\in\cC$, but when it comes to comparing the \emph{categories} $\cC(C,I)$ and $FC$ we may retain the idea that this comparison is mediated by an isomorphism of categories, or we may require only the presence of an equivalence of categories. The former choice leads to the notion of a \emph{$2$-representation}, while the latter leads to the more general notion of a \emph{bi-representation}.

Recall that an object $I$ in a category $\onecat C$ is initial if we have an isomorphism of sets $\onecat C(I,C)\iso\{*\}$ for all objects $C\in\onecat C$. If we wish to formulate the $2$-dimensional definition analogously for an object $I$ in a $2$-category $\cC$, as before we now have the option of retaining the idea that the universal property should be governed by an isomorphism of categories $\cC(I,C)\iso\mathbbm 1$, for all objects $C\in \cC$, where $\mathbbm 1$ is the terminal category, or instead asking that the universal property is governed by an equivalence of categories. The former requirement leads to the notion of a \emph{$2$-initial object}, while the latter leads to the more general notion of a \emph{bi-initial object}.

Now that the players are ready the game is afoot. The question underpinning the most general $2$-dimensional version of the Representation Theorem is this:
\begin{question}\hypertarget{question}
Can bi-representations of a normal pseudo-functor $F$ be characterised as \emph{certain} bi-initial objects in \emph{some} $2$-category?
\end{question}\newcommand{\quest}{\hyperlink{question}{question}}
As a first guess, based on the Representation Theorem, we might expect that the \emph{$2$-category of elements $\el(F)$ of $F$} would be the correct setting for an affirmative answer. The $2$-category $\el(F)$ is defined as the \emph{pseudo-slice} $2$-category $\slice {\mathbbm 1} F$, where by pseudo-slice we mean a relaxation of the slice $2$-category where the triangles of morphisms commute up to a general $2$-isomorphism, rather than an identity.

Although we hate to disappoint the reader, to see that this is not the case we will turn our interest to a specific kind of bi-representation. Generalising ordinary limits, bi-representations of the $2$-presheaf $[\cI,\cC](\Delta(-),F)$, known as \emph{bi-limits}, were first introduced by Street in \cite{Street1980,Street1980Corr} and further studied by Kelly in \cite{Kelly1989}. The comparatively stronger special case -- $2$-representations of the above $2$-presheaf, known as \emph{$2$-limits} -- had previously been introduced, independently, by Auderset \cite{Auderset} and Borceux-Kelly \cite{BorKel}, and was further developed by Street~\cite{Street1976}, Kelly \cite{Kelly1989,KellyBook} and Lack in \cite{Lack2010}. As $\el([\cI,\cC](\Delta(-),F))$ is the opposite of the pseudo-slice $2$-category $\slice \Delta F$ of cones over $F$, the \quest{} now becomes whether bi-limits may be characterised as bi-terminal objects in the pseudo-slice $2$-category of cones.

Unfortunately, as the authors show in \cite{us}, such a characterisation is not possible in general. The failure stems from the fact that the data of a bi-limit is not wholly captured by a bi-terminal object in the pseudo-slice $2$-category of cones (see {\cite[\S 5]{us}}). In fact, such a failure is actually illustrated by an example of a $2$-terminal object in a slice $2$-category of cones which is not a $2$-limit; see \cite[Counter-example 2.12]{us}.

We have thus eliminated our first guess from the possible affirmative answers to our \quest{} above. To cast further doubt on any positive resolution, correct characterisations of $2$-limits as some form of $2$-dimensional terminal objects that are present in the literature are all phrased in the language of \emph{double categories}; such results are explored by Grandis \cite{Grandis}, Grandis-Par\'e \cite{GraPar1999,GraPar2019}, and Verity \cite{Verity}. These results may all be seen to share the following approach: the slice $2$-category of cones does not capture enough data to successfully characterise $2$-limits, and so instead more data must be necessarily added in the form of a slice double category of cones. Indeed, in \cite{GraPar2019} Grandis and Par\'e write:
\begin{quote}
        ``On the other hand, there seems to be no natural way of expressing the $2$-dimensional universal property of weighted (strict or pseudo) limits by terminality in a $2$-category.''
\end{quote}

The state of the art thus seems to suggest that our \quest{} admits no positive answer in general. However, our main contribution in response to Grandis and Par\'e above, is a successful and purely $2$-categorical characterisation of $2$-limits as certain $2$-terminal objects in a ``shifted'' slice $2$-category of cones. In fact, we obtain this result as an application of a purely $2$-categorical formulation of a generalisation of the Representation Theorem in the case of bi-representations. To do this, we extend the results of Grandis, Par\'e, and Verity to general bi-representations of normal pseudo-functors, and obtain in this fashion a double-categorical characterisation of bi-representations in terms of ``bi-type'' double-initial objects. From this work and some new methods we are able to extract our results.

Let us explore these results in greater detail. Recall that a double category has two sorts of morphisms between objects -- the \emph{horizontal} and \emph{vertical} morphisms -- and $2$-dimensional morphisms called \emph{squares}. So as to distinguish double categories from $2$-categories, we will always be careful to name the former by double-struck letters \bA, \bB, \bC, \ldots whereas the latter will always appear as named by bold letters \cA, \cB, \cC, \ldots

A $2$-category $\cA$ can always be seen as a horizontal double category $\bH\cA$ with only trivial vertical morphisms. This construction extends functorially to an assignment on normal pseudo-functors $F\mapsto \bH F$. Associated to each such normal pseudo-functor is a \emph{double category of elements} $\eell(F)$ given by the pseudo-slice double category $\dblslice {\mathbbm 1}{\bH F}$, where these pseudo-slices are double-categorical analogues of pseudo-slice $2$-categories. Furthermore, we introduce a new notion of \emph{double bi-initial} objects $I$ in a double category $\bA$; objects $I\in \bA$ for which the projection $\dblslice I\bA\to \bA$ is given by an appropriate equivalence of double categories. Note that, in the case of \emph{double-initial} objects as defined by Grandis and Par\'e in \cite[\S 1.8]{GraPar1999}, the projection is required to be only an isomorphism. With these notions in hand, we are able to formulate the following double categorical characterisation of bi-representations. This appears as the first part of our main theorem, \cref{thm:birep}.

\begin{ltheorem}\label{theoremA}
Let $\cC$ be a $2$-category, and $\ainl {\cC^{\op}} F\Cat$ be a normal pseudo-functor. The following statements are equivalent.
\begin{rome}
  \item The normal pseudo-functor $F$ has a bi-representation $(I,\rho)$.
  \item There is an object $I\in \cC$ together with an object $i\in FI$ such that $(I,i)$ is double bi-initial in~$\eell(F)$.
\end{rome}
\end{ltheorem}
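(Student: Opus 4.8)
The plan is to prove the two implications by unravelling $\eell(F)=\dblslice{\mathbbm 1}{\bH F}$ into explicit data and using the bicategorical Yoneda lemma to mediate between an element $i\in FI$ and pseudo-natural transformations into $F$. Unpacking the pseudo-slice double category, an object of $\eell(F)$ is a pair $(C,x)$ with $C\in\cC$ and $x\in FC$; a horizontal morphism $(I,i)\to(C,x)$ is a pair $(f,\phi)$ with $\ainl{C}{f}{I}$ in $\cC$ and $\phi\colon x\xarr{\iso}Ff(i)$ an isomorphism in $FC$; a vertical morphism $(C,x)\to(C,x')$ is a morphism $x\to x'$ in the fibre $FC$ (the $\cC$-component being forced to be trivial, as $\bH\cC^{\op}$ has only trivial verticals); and squares package $2$-cells of $\cC$ compatibly with the fibre data. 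Writing $\rho_C(f):=Ff(i)$, the content of this bookkeeping is that the objects, vertical morphisms and squares of $\eell(F)$ sitting over a fixed $C$ assemble exactly the hom-category $\cC(C,I)$, the functor $\rho_C\colon\cC(C,I)\to FC$, and its pseudo-naturality constraints; so, granting the notion of equivalence of double categories in \DDblCat recalled earlier, double bi-initiality of $(I,i)$ in $\eell(F)$ — that the projection $\dblslice{(I,i)}{\eell(F)}\to\eell(F)$ is such an equivalence — should come out equivalent to the assertion that each $\rho_C$ is an equivalence of categories, in a way pseudo-natural in $C$.

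Granting this dictionary, $(i)\Rightarrow(ii)$ runs as follows: given a bi-representation $(I,\rho)$ with $\neinl{\cC(-,I)}{\rho}{F}$, set $i:=\rho_I(\id_I)\in FI$; by the bicategorical Yoneda lemma $\rho$ is isomorphic, through an invertible modification, to the transformation $f\mapsto Ff(i)$, and since replacing $\rho$ by an isomorphic transformation changes neither the property of being componentwise an equivalence nor double bi-initiality of $(I,i)$, we may assume $\rho_C=Ff(i)$ strictly. One then checks that $\dblslice{(I,i)}{\eell(F)}\to\eell(F)$ is an equivalence of double categories: essential surjectivity on objects, and on horizontal and vertical morphisms, follows from essential surjectivity of each $\rho_C$, producing the horizontal morphisms $(f,\phi)\colon(I,i)\to(C,x)$ from preimages of $x$; and the required fully-faithfulness on vertical morphisms and squares translates, through the dictionary, into fullness and faithfulness of each $\rho_C$, modulo a verification that the coherence isomorphisms on the two sides agree. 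For $(ii)\Rightarrow(i)$ one reads the dictionary in reverse: from $(I,i)$ double bi-initial, take $\rho_C(f):=Ff(i)$ with constraints assembled from the pseudo-functoriality and normality of $F$ — the transformation named by $i$ under $2$-Yoneda — and read off from essential surjectivity of the projection that each $\rho_C$ is essentially surjective, and from fully-faithfulness of the projection on vertical morphisms and squares that each $\rho_C$ is full and faithful; hence each $\rho_C$ is an equivalence and $(I,\rho)$ a bi-representation. (For the direction $(i)\Rightarrow(ii)$ an alternative, more abstract route would be to use that $i$ induces a map $\bH(\cC(-,I))\to\bH F$ which is an equivalence precisely when $F$ is bi-representable, that $\dblslice{\mathbbm 1}{-}$ sends it to an equivalence of double slices, and that $\eell(\cC(-,I))$ has an evident double bi-initial object at $(I,\id_I)$ which then transports across.)

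I expect the real work to lie in the bookkeeping rather than in either implication: making precise the identification of the cells of $\eell(F)$ with the data above — in particular tracking which comparison $2$-cells the pseudo-slice demands to be invertible — and matching the coherence data of an equivalence of double categories, namely the inverse double functor and its two unit/counit modifications, against the pseudo-naturality constraints of $\rho$ and the associativity and unit constraints of $F$. I would also flag that this is exactly where the double category is indispensable: bi-initiality of $(I,i)$ in the pseudo-slice $2$-category $\el(F)$ would force each $\rho_C$ only to be essentially surjective and an equivalence "on isomorphisms" — which does not imply that $\rho_C$ is an equivalence — and it is the \emph{vertical} morphisms of $\eell(F)$, encoding the non-invertible morphisms of the fibres $FC$, that supply the missing fullness; keeping track of how the auxiliary $2$-category $\mathcal V\eell(F)$ of morphisms of $F$ reorganises this vertical data should streamline the verification.
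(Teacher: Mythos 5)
Your proposal is correct in substance and follows essentially the same route as the paper's proof of \cref{thm:birep} (i)$\Leftrightarrow$(ii): the same choice $i=\rho_I(\id_I)$, the same candidate $\rho_C=(F-)i$ with compositor constraints $(\phi_{c,-})_i$ in the converse direction, and the same dictionary matching the existence of horizontal morphisms out of $(I,i)$ with essential surjectivity of each $\rho_C$ and the unique-square-filling property with its full faithfulness. The one genuine variation is in (i)$\Rightarrow$(ii): you first rectify $\rho$ to $f\mapsto (Ff)i$ via the bicategorical Yoneda lemma and then verify the conditions for the strictified transformation, whereas the paper keeps $\rho$ as given and proves a ``unique filler'' lemma (\cref{lem:sqboundary}) whose proof runs exactly the computation hiding inside your Yoneda step, namely the pseudo-naturality square of $\rho$ at $\gamma$ evaluated at $\id_I$; the rectification you start from is recorded by the paper only afterwards, as \cref{cor:bi-rep-rectification}. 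The two organisations are interchangeable; yours front-loads Yoneda, the paper's keeps the argument self-contained.

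Two points to repair in a full write-up. First, double bi-initiality is defined (\cref{def:dblbi-initial,def:doubleq}) by the projection being a double \emph{trivial fibration} -- surjectivity on objects, horizontal and vertical morphisms, and unique lifting of squares -- so there is no inverse double functor and no unit/counit modifications to construct; the coherence-matching you flag as the hard part is not required, and \cref{prop:simpler-dblbinitial} reduces everything to precisely the two conditions you actually use. Second, your attribution ``essential surjectivity on objects, and on horizontal and vertical morphisms, follows from essential surjectivity of each $\rho_C$'' is incorrect for the horizontal (and partly the vertical) liftings: lifting a horizontal morphism of the slice means producing a vertically invertible square filling a triangle, i.e.\ a $2$-cell $\gamma$ of $\cC$ with $(F\gamma)_i$ prescribed, which is fullness (plus faithfulness, for the invertibility) of $\rho_C$, not essential surjectivity; likewise lifting a vertical morphism needs a square filler and hence fullness. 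Since in that direction every $\rho_C$ is an equivalence, the needed fullness is available and the slip is harmless, but the bookkeeping should be redistributed accordingly -- this is exactly what the paper's \cref{lem:sqboundary} packages.
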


By applying this result to the $2$-presheaf $[\cI,\cC](\Delta(-),F)$, where $F\colon \cI\to \cC$ is a normal pseudo-functor, we derive a generalisation in \cref{thm:conicalbilim} of results by Grandis, Par\'e, and Verity, characterising bi-limits as double bi-terminal objects in the pseudo-slice double category $\dblslice \Delta F$. This follows from the fact that the double category $\eell([\cI,\cC](\Delta(-),F))$ is isomorphic to the horizontal opposite of the pseudo-slice double category $\dblslice \Delta F$.

We now aim to extract a fully $2$-categorical statement from \cref{theoremA} above. For this, it is enough to characterise double bi-initial objects in a double category $\bA$ as \emph{certain} bi-initial objects in \emph{some} $2$-category. A first guess for the $2$-category in question would be given by the underlying horizontal $2$-category $\cH\bA$ of objects, horizontal morphisms, and squares with trivial vertical boundaries of $\bA$. However, the general vertical structure of the double category \bA is not captured by this operation, and therefore the $2$-category $\cH\bA$ alone does not suffice for our purposes. To remedy this issue, we introduce a functor \SV which extracts from a double category $\bA$ a $2$-category $\SV\bA$ whose objects and morphisms are the vertical morphisms and squares of $\bA$, respectively. This captures precisely the additional data that was lacking in $\cH\bA$ in our application, and allows us to prove the below result. In fact, the functor $\cH$ is as a retract of $\SV$ and so we may leverage $\SV$ alone to characterise double bi-initial objects. The below appears as \cref{thm:initial} in the paper.

\begin{ltheorem} \label{theoremB}
Let $\bA$ be a double category, and $I\in \bA$ be an object. The following statements are equivalent.
\begin{rome}
  \item The object $I$ is double bi-initial in $\bA$.
  \item The object $I$ is bi-initial in $\cH\bA$ and the vertical identity $\vid_I$ is bi-initial in $\SV\bA$.
  \item The vertical identity $\vid_I$ is bi-initial in $\SV\bA$.
\end{rome}
\end{ltheorem}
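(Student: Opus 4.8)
The plan is to unwind double bi-initiality of $I$ into the statement that the projection double functor $P\colon\dblslice I\bA\to\bA$ is an equivalence of double categories, and then to translate that statement across the functors $\SV$ and $\cH$ into bi-initiality statements in $\SV\bA$ and $\cH\bA$. Two facts feed this. First, a double functor is an equivalence of double categories if and only if its image under $\SV$ is a biequivalence of $2$-categories --- the analogous requirement on $\cH$ being then automatic, since $\cH$ is a retract of $\SV$ and retracts of biequivalences are biequivalences (alternatively, evaluating $\SV$ on vertical identities recovers the $\cH$-statement by hand). Second, the standard fact that an object $X$ of a $2$-category $\cD$ is bi-initial if and only if the pseudo-slice projection $\slice X\cD\to\cD$ is a biequivalence, equivalently $\cD(X,Y)\eqv\mathbbm 1$ for every $Y\in\cD$. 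Granting these, $I$ is double bi-initial exactly when $\SV P\colon\SV(\dblslice I\bA)\to\SV\bA$ is a biequivalence; as (ii)$\Rightarrow$(iii) is a tautology, the remaining content is the equivalence (i)$\iff$(iii), from which (iii)$\Rightarrow$(ii) will also follow.

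The key step is to identify $\SV(\dblslice I\bA)$, compatibly with the projections to $\SV\bA$, with the pseudo-slice $2$-category $\slice{\vid_I}{\SV\bA}$. An object of $\SV(\dblslice I\bA)$ is a vertical morphism of the pseudo-slice double category $\dblslice I\bA$, which unwinds to precisely a square of $\bA$ with left boundary $\vid_I$, that is, an object of $\slice{\vid_I}{\SV\bA}$; a morphism of $\SV(\dblslice I\bA)$ is a square of $\dblslice I\bA$, whose underlying square of $\bA$ together with its coherence data is exactly a morphism of $\slice{\vid_I}{\SV\bA}$; and the $2$-cells of $\SV(\dblslice I\bA)$ reorganise likewise into the $2$-cells of $\slice{\vid_I}{\SV\bA}$. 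I would then check that these bijections respect the horizontal and vertical compositions and all identities, so that they assemble into an isomorphism of $2$-categories over $\SV\bA$. The same bookkeeping, run on horizontal identities, yields an isomorphism $\cH(\dblslice I\bA)\iso\slice I{\cH\bA}$ over $\cH\bA$.

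Putting this together: $I$ is double bi-initial iff $\SV P$ is a biequivalence iff (by the identification) the pseudo-slice projection $\slice{\vid_I}{\SV\bA}\to\SV\bA$ is a biequivalence iff $\vid_I$ is bi-initial in $\SV\bA$ --- this is (i)$\iff$(iii). For (iii)$\Rightarrow$(ii), note that (iii) makes $\SV P$ a biequivalence, hence $\cH P$ a biequivalence, hence, by the $\cH$-version of the identification, the projection $\slice I{\cH\bA}\to\cH\bA$ is a biequivalence, so $I$ is bi-initial in $\cH\bA$; together with (iii) this gives (ii). And (ii)$\Rightarrow$(iii) is immediate. The main obstacle is the identification of $\SV(\dblslice I\bA)$ with $\slice{\vid_I}{\SV\bA}$ over $\SV\bA$: it forces one to line up the square- and $2$-cell-level structure of a pseudo-slice double category with the morphism- and $2$-cell-level structure of a pseudo-slice $2$-category, including all coherence isomorphisms; by comparison the two facts above and the final deductions are formal.
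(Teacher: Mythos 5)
Your skeleton is the paper's: the two load-bearing steps you identify -- the identification of $\SV(\dblslice I\bA)$ with $\slice{\vid_I}{\SV\bA}$ (and of $\cH(\dblslice I\bA)$ with $\slice I{\cH\bA}$) compatibly with the projections, and the fact that the relevant property of the projection is detected by $\SV$ alone, with the $\cH$-statement recovered because $\cH$ is a retract of $\SV$ -- are exactly \cref{lem:iso_between_slices} (there deduced from the general comma-preservation result \cref{lem:h-sv-general-commas}) and \cref{prop:dblvs2trivfib}, and the final chaining is the proof of \cref{thm:initial}. So this is not a genuinely different route.

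There is, however, a gap at the interface with the definitions, which is precisely where you and the paper diverge. In the paper, ``$I$ is double bi-initial'' means that $\Pi\colon\dblslice I\bA\to\bA$ is a double \emph{trivial fibration} (\cref{def:dblbi-initial}), and ``bi-initial'' likewise means the pseudo-slice projection is a trivial fibration (\cref{def:bi-initial}); the entire argument is then run at the level of trivial fibrations, where the retract trick and the slice identifications give the theorem with no further input. You instead unwind both notions as equivalence-type statements (the projection is ``an equivalence of double categories'', respectively a biequivalence, respectively $\cD(X,Y)\eqv\mathbbm 1$) and never reconcile these with the trivial-fibration formulations. The two packagings do agree for these projection functors, but that is an extra step your argument needs and does not supply: one must either show that for such projections a biequivalence is automatically a trivial fibration (equivalently, that the projections are fibrations in Lack's model structure and its double-categorical analogue), or pass through the explicit characterisations of (double) bi-initiality (\cref{prop:simpler-dblbinitial,prop:simpler-bi-initial}). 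Relatedly, your first ``fact'' -- that a double functor is an equivalence of double categories iff its image under $\SV$ is a biequivalence -- is only available once ``equivalence of double categories'' is pinned down as the weak equivalences of the double-categorical model structure, i.e.\ ``$\cH F$ and $\SV F$ are biequivalences'', in which case your retract argument works; for an intrinsic notion of double equivalence the claim would itself require proof. None of your steps is false, but as written the argument proves the theorem for biequivalence-style reformulations of the three conditions rather than for the definitions actually in force, and bridging that mismatch is comparable in effort to the trivial-fibration bookkeeping it was meant to avoid.
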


As a direct application of this result to the double category of elements $\eell(F)$ of a normal pseudo-functor $\ainl {\cC^{\op}} F\Cat$, we obtain our fully $2$-categorical characterisation of bi-representations. Note that the underlying horizontal $2$-category of $\eell(F)$ is precisely the $2$-category of elements $\el(F)$ of $F$, but new here is $\SV\eell(F)$ which we refer to as the \emph{$2$-category of morphisms of $F$}, denoted by $\mor(F)$. Indeed, while the objects in $\el(F)$ are pairs $(C,x)$ of an object $C\in \cC$ and an object $x\in FC$, the objects of $\mor(F)$ are pairs $(C,\alpha)$ of an object $C\in \cC$ and a morphism $\ainl x\alpha y$ in $FC$, which justifies the terminology. The following result extends \cref{theoremA} and appears as the second part of our main theorem, \cref{thm:birep}.

\begin{ltheorem} \label{theoremC}
Let $\cC$ be a $2$-category, and $\ainl {\cC^{\op}} F\Cat$ be a normal pseudo-functor. The following statements are equivalent.
\begin{rome}
  \item The normal pseudo-functor $F$ has a bi-representation $(I,\rho)$.
  \item There is an object $I\in \cC$ together with an object $i\in FI$ such that $(I,i)$ is bi-initial in~$\el(F)$ and $(I,\id_i)$ is bi-initial in $\mor(F)$.
  \item There is an object $I\in \cC$ together with an object $i\in FI$ such that $(I,\id_i)$ is bi-initial in $\mor(F)$.
\end{rome}
\end{ltheorem}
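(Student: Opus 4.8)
The plan is to derive \cref{theoremC} formally by composing \cref{theoremA} with \cref{theoremB}, applied to the double category of elements $\eell(F)$. First, \cref{theoremA} already identifies statement (i) with the assertion that there exist an object $I\in\cC$ and an object $i\in FI$ such that $(I,i)$ is double bi-initial in $\eell(F)$. So it suffices to show that, for a fixed such pair $(I,i)$, double bi-initiality of $(I,i)$ in $\eell(F)$ is equivalent to each of statements (ii) and (iii) of \cref{theoremC}.

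Next I would invoke \cref{theoremB} with $\bA = \eell(F)$ and the object $(I,i)$. This gives at once that $(I,i)$ is double bi-initial in $\eell(F)$ if and only if $(I,i)$ is bi-initial in $\cH\eell(F)$ and $\vid_{(I,i)}$ is bi-initial in $\SV\eell(F)$, and moreover that the latter condition alone already forces the former. It then remains only to rewrite these two conditions under the identifications $\cH\eell(F) = \el(F)$ and $\SV\eell(F) = \mor(F)$: the first is immediate, while for the second I would unwind the definitions of the pseudo-slice double category $\dblslice{\mathbbm 1}{\bH F}$ and of the functor $\SV$ to check that the object $\vid_{(I,i)}$ of $\SV\eell(F)$ — a vertical morphism of $\eell(F)$ — corresponds to the pair $(I,\id_i)$ under the hands-on description of $\mor(F)$ recalled above. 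Substituting yields precisely statements (ii) and (iii), so that the chain ``(i) $\Leftrightarrow$ double bi-initial in $\eell(F)$ $\Leftrightarrow$ (ii) $\Leftrightarrow$ (iii)'' closes, with the existential quantifiers over $I$ and $i$ threading through unchanged.

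The only part carrying content beyond citing \cref{theoremA,theoremB} is this last translation, and I expect it to be the main — though quite mild — obstacle: it amounts to verifying that feeding the objects, horizontal and vertical morphisms, and squares of $\dblslice{\mathbbm 1}{\bH F}$ through $\SV$ reproduces the description of $\mor(F)$ as the $2$-category with objects $(C,\alpha\colon x\to y)$, and in particular that $\vid_{(C,x)}$ is sent to $(C,\id_x)$. This is essentially bookkeeping, but it is where the specific combinatorics of $\eell(F)$ and $\SV$ must be matched up; everything else is a purely formal consequence of the two preceding theorems.
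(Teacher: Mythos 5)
Your proposal is correct and matches the paper's own proof: the paper derives the equivalence of (ii) and (iii) with (i) exactly by combining Theorem~A (the equivalence of a bi-representation with a double bi-initial object in $\eell(F)$) with Theorem~B applied to $\bA=\eell(F)$, using that $\el(F)$ and $\mor(F)$ are by definition $\cH\eell(F)$ and $\SV\eell(F)$, with $\vid_{(I,i)}$ identified with $(I,\id_i)$. The final translation you flag is indeed just the bookkeeping the paper treats as immediate from \cref{def:eell,def:el-mor}.
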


The equivalence of (i) and (iii) in the above theorem gives a satisfying answer to our original \quest{}. In particular, to respond Grandis and Par\'e, we specialise the above theorem to the case of bi-limits to see that bi-limits are equivalently certain types of bi-terminal objects in a $2$-category whose objects are given by the \emph{morphisms} of cones -- known as modifications -- as we will see in \cref{thm:conicalbilim}. Thus the counter-examples of \cite{us} for bi-limits show the presence of $\mor(F)$ in (ii) is necessary in general.

Although the correct characterisation of bi-limits in a $2$-category \cC above depends on taking morphisms of cones as objects, in the presence of \emph{tensors} in \cC these can be simply seen as cones whose summit is a tensor by the category $\mathbbm 2=\{0\to 1\}$. In the case of $2$-limits, Kelly observed in \cite[\S 3]{Kelly1989} that the presence of tensors by $\mathbbm 2$ causes the $1$-dimensional aspect of the universal property of a $2$-limit to imply the $2$-dimensional aspect. As a consequence, we showed in \cite[Proposition 2.13]{us} that a $2$-limit is precisely a $2$-terminal object in the slice $2$-category of cones under such a hypothesis.

This result is part of a far more general framework and we shall approach this in parts. A double categorical analogue of tensors by $\mathbbm 2$ is given by the notion of \emph{tabulators}\footnote{In fact these notions are somehow dual, but our applications all involve the horizontal double category associated to the opposite of a $2$-category and so tabulators there coincide with tensors by $\mathbbm 2$.} of vertical morphisms; these are defined by Grandis and Par\'e in \cite[\S 5.3]{GraPar1999} as double limits of vertical morphisms seen as double functors. By \cref{theoremA}, bi-representations correspond to double bi-initial objects in a certain double category, and it is at this level that we seek a simplification of \cref{theoremB} in the presence of tabulators. This is the content of \cref{prop:tabulators-biinitial}.

\begin{ltheorem} \label{theoremD}
Let $\bA$ be a double category with tabulators, and $I\in \bA$ be an object. Then the following statements are equivalent.
\begin{rome}
  \item The object $I$ is double bi-initial in $\bA$.
  \item The object $I$ is bi-initial in $\cH\bA$.
\end{rome}
\end{ltheorem}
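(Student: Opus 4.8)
The plan is to deduce the statement from \cref{theoremB}, which already reduces double bi-initiality of $I$ to the conjunction of two conditions: that $I$ be bi-initial in $\cH\bA$, and that the vertical identity $\vid_I$ be bi-initial in $\SV\bA$. The implication (i)~$\Rightarrow$~(ii) then requires no hypothesis on $\bA$, since (ii) is precisely the first of those two conditions. So the content lies in the converse: assuming that $\bA$ has tabulators and that $I$ is bi-initial in $\cH\bA$, one must show that $\vid_I$ is bi-initial in $\SV\bA$; invoking \cref{theoremB} again then yields that $I$ is double bi-initial in $\bA$.

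To prove this, first I would fix an arbitrary object of $\SV\bA$ — that is, a vertical morphism $v$ of $\bA$ — and let $T$ denote its tabulator, which exists by hypothesis. The crucial step is to turn the universal property of $T$, as the double limit of $v$ viewed as a double functor, into an isomorphism of categories
\[ \SV\bA(\vid_I, v) \;\iso\; \cH\bA(I, T). \]
On objects this records the familiar fact that a square of $\bA$ with left boundary $\vid_I$ and right boundary $v$ is the same datum as the horizontal morphism $I \to T$ it classifies; on morphisms it identifies a $2$-cell of $\SV\bA$ between two such squares with the square of trivial vertical boundaries relating the corresponding classifying morphisms, via the $2$-dimensional part of the tabulator's universal property. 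Granting this, the argument finishes at once: since $I$ is bi-initial in $\cH\bA$, the hom-category $\cH\bA(I, T)$ is equivalent to the terminal category $\mathbbm 1$, hence so is $\SV\bA(\vid_I, v)$; as this holds for every object $v$ of $\SV\bA$, the vertical identity $\vid_I$ is bi-initial there.

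I expect the displayed isomorphism of categories to be the main obstacle. Its effect on objects is the classical description of tabulators as classifying squares with one identity vertical boundary, but on morphisms one has to unwind carefully the definition of the $2$-cells of $\SV\bA$ and confront it with the $2$-dimensional aspect of the double-limit universal property of $T$, and then verify functoriality and that the assignment is indeed invertible. Everything else merely bridges \cref{theoremB} with this local computation and introduces no new idea.
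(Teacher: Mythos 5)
Your proposal is correct, and the core use of tabulators is the same as in the paper, but the decomposition is genuinely different. The paper proves (ii)~$\Rightarrow$~(i) by directly verifying the explicit conditions for double bi-initiality (the expanded form of \cref{def:dblbi-initial}, cf.\ \cref{prop:simpler-dblbinitial}): conditions on objects and on squares with trivial vertical boundaries are inherited verbatim from bi-initiality of $I$ in $\cH\bA$, and the remaining square-filling conditions are produced by feeding \cref{cond:firstup-tab} and \cref{cond:secondup-tab} the bi-initiality data for $\top u$ and $\top\alpha$; the statement you take as your pivot --- that with tabulators, $I$ bi-initial in $\cH\bA$ forces $\vid_I$ bi-initial in $\SV\bA$ --- appears in the paper only afterwards, as \cref{cor:bi-in-in-HA-VA-tab}, deduced from the theorem together with \cref{thm:initial}. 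You instead prove that statement first, by packaging the two universal properties of the tabulator (specialised to left vertical boundary $\vid_I$) into an isomorphism of hom-categories $\SV\bA(\vid_I,v)\iso\cH\bA(I,\top v)$, and then conclude via \cref{thm:initial}. This works: the first universal property gives the bijection on objects, and the second, taken with the left-hand vertical morphism equal to $\vid_I$, has exactly the $2$-cell condition of \cref{2morinSV} as its compatibility constraint, so its existence and uniqueness clauses give fullness and faithfulness of the comparison functor $\theta\mapsto(\vid_p\circ\theta,\vid_q\circ\theta)$; well-definedness and functoriality follow from interchange, vertical unitality, and the uniqueness clause. What your route buys is a cleaner conceptual statement (a local hom-category isomorphism transporting ``equivalent to $\mathbbm 1$'') and the corollary for free; what the paper's route buys is that it never has to set up and verify a functorial comparison of hom-categories, only to exhibit fillers and check one uniqueness, staying entirely within the elementary characterisation of double bi-initiality.
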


We now aim to simplify the characterisation of bi-representations given in \cref{theoremC} when the $2$-category $\cC$ has tensors by $\mathbbm 2$, which can be seen as tabulators in the double category $\bH\cC^{\op}$. For this simplification, we further need the normal pseudo-functor $\ainl{\cC^{\op}}F{\Cat}$ to preserve these tensors so that overall the double category of elements $\eell(F)$ admits tabulators. Although we were not able to use the $2$-category of elements $\el(F)$ to give an answer to our \quest{} in general, in this special case we may apply \cref{theoremD} to recover the following verbatim translation of the Representation Theorem to the $2$-categorical setting, which appears as \cref{thm:eeellll-tensors}.

\begin{ltheorem}\label{theoremE}
  Let $\cC$ be a $2$-category with tensors by $\mathbbm 2$, and $\ainl{\cC^{\op}}F{\Cat}$ be a normal pseudo-functor which preserves these tensors. The following statements are equivalent.
  \begin{rome}
    \item The normal pseudo-functor $F$ has a  bi-representation $(I,\rho)$.
    \item There is an object $I\in\cC$ together with an object $i\in FI$ such that $(I,i)$ is bi-initial in $\el(F)$.
  \end{rome}
\end{ltheorem}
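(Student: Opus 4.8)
The plan is to deduce \cref{theoremE} by combining \cref{theoremC} with \cref{theoremD}, using the hypothesis that $\cC$ has tensors by $\mathbbm 2$ and $F$ preserves them to supply tabulators in the double category of elements $\eell(F)$. First I would record the translation of the hypothesis into double-categorical language: tensors by $\mathbbm 2$ in $\cC$ correspond, via the passage $\cC \mapsto \bH\cC^{\op}$, to tabulators of vertical morphisms in $\bH\cC^{\op}$ (as remarked in the footnote after \cref{theoremD}). The substantive preliminary step — which I expect to be the main obstacle — is to show that when $\cC$ has tensors by $\mathbbm 2$ and $\ainl{\cC^{\op}}F{\Cat}$ preserves them, the pseudo-slice double category $\eell(F) = \dblslice{\mathbbm 1}{\bH F}$ inherits tabulators. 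Concretely, given a vertical morphism in $\eell(F)$ over an object $(C,x)$ with $C\in\cC$ and $x\in FC$, one builds its tabulator using the tensor $\mathbbm 2\otimes C$ in $\cC$ together with the corresponding object of $F(\mathbbm 2\otimes C)$ obtained from the $F$-image of the two insertion morphisms $C \to \mathbbm 2\otimes C$ and the preservation isomorphism $F(\mathbbm 2\otimes C)\simeq (FC)^{\mathbbm 2}$; one must then verify the universal property of a double limit, which amounts to unwinding the pseudo-slice structure and matching cones in $\eell(F)$ with cones in $\bH\cC^{\op}$ together with compatible data in $F$. This verification is routine in spirit but bookkeeping-heavy, and is the technical heart of \cref{thm:eeellll-tensors}.

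With tabulators in $\eell(F)$ established, the argument is short. By \cref{theoremC}, statement (i) — that $F$ has a bi-representation — is equivalent to the existence of $I\in\cC$ and $i\in FI$ such that $(I,\id_i)$ is bi-initial in $\mor(F) = \SV\eell(F)$. On the other hand, \cref{theoremB}(i)$\Leftrightarrow$(iii) says that, for any double category $\bA$, an object $J$ is double bi-initial in $\bA$ exactly when $\vid_J$ is bi-initial in $\SV\bA$; applying this to $\bA = \eell(F)$ and $J = (I,i)$, bi-initiality of $(I,\id_i) = \vid_{(I,i)}$ in $\mor(F)$ is equivalent to $(I,i)$ being double bi-initial in $\eell(F)$. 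Now, because $\eell(F)$ has tabulators, \cref{theoremD} applies: $(I,i)$ is double bi-initial in $\eell(F)$ if and only if it is bi-initial in $\cH\eell(F)$. Finally, identifying the underlying horizontal $2$-category $\cH\eell(F)$ with the $2$-category of elements $\el(F)$ — an identification already noted in the discussion preceding \cref{theoremC} — we conclude that (i) holds if and only if there exist $I\in\cC$ and $i\in FI$ with $(I,i)$ bi-initial in $\el(F)$, which is exactly statement (ii). This chain of equivalences proves the theorem.

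A couple of points deserve care along the way. One should confirm that the tabulators produced in $\eell(F)$ are genuinely the double limits meant in \cref{theoremD}, i.e.\ that no mismatch arises between the "vertical morphisms seen as double functors" formulation of Grandis–Par\'e and the construction via tensors; the footnote's duality remark suggests checking that, for the specific double categories $\bH\cC^{\op}$ at hand, tabulators of vertical morphisms do coincide with tensors by $\mathbbm 2$ rather than their formal dual. One should also be explicit that the object $i\in FI$ appearing in the final statement is the same one throughout — this is automatic since all three characterisations are phrased in terms of the same pair $(I,i)$ — so that no existential quantifier is silently strengthened or weakened. Modulo these checks, the proof is entirely a matter of assembling \cref{theoremB,theoremC,theoremD} once tabulators in $\eell(F)$ are in place.
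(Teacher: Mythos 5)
Your proposal is correct and follows essentially the same route as the paper: the key step is exactly the paper's \cref{lem:eellF-tabulators} (tabulators in $\eell(F)$ built from the tensor $C\otimes\mathbbm 2$ and the preservation isomorphism $F(C\otimes\mathbbm 2)\cong\Cat(\mathbbm 2,FC)$), after which the paper likewise concludes via \cref{thm:birep} and \cref{prop:tabulators-biinitial}. Your detour through $\mor(F)$ and \cref{theoremB} is only a cosmetic reshuffling of the equivalences already packaged in \cref{thm:birep}, so the two arguments coincide in substance.
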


This applies to the case of bi-limits, and we formulate in \cref{cor:bilimtensor} a more general version of \cite[Proposition 2.13]{us}: a bi-limit is precisely a bi-terminal object in the pseudo-slice $2$-category of cones when the ambient $2$-category admits tensors by $\mathbbm 2$. This application provides the promised proof of \cite[Proposition 5.5]{us}.

While we have only mentioned the case of bi-limits so far, in this paper the different theorems characterising bi-representations are first specialised to the case of \emph{weighted bi-limits}, which were introduced by Street \cite{Street1976} and Kelly \cite{Kelly1989}. The cone of a weighted limit is of a special shape, determined by the \emph{weight} -- a normal pseudo-functor $W$ taking values in $\Cat$ -- and a bi-limit can be seen as a weighted limit with \emph{conical} weight $W=\Delta \mathbbm 1$, i.e., a constant weight at the terminal category. More still, when the weight is conical the pseudo-slice of cones is isomorphic to the opposite of the pseudo-slice of \emph{weighted cones}. Since weighted bi-limits can also be seen as bi-representations of a normal pseudo-functor of a special kind, we also obtain characterisations in \cref{thm:weighted,thm:tensors-weighted} of weighted bi-limits in terms of double bi-initial and bi-initial objects. From these we extract the characterisations of bi-limits in terms of double bi-terminal and bi-terminal objects mentioned above.

Another application of the Representation Theorem is to the existence of a right adjoint to a given functor. Going one dimension up, we can define an analogous notion of \emph{bi-adjunction} between $2$-categories \cC and \cD. A bi-adjunction comprises the data of a pair of normal pseudo-functors $\ainl \cC L \cD$ and $\ainl \cD R \cC$ together with a pseudo-natural equivalence $\cD(L-,-)\xnat{\eqv} \cC(-,R-)$. In order to apply our main results to the existence of a right bi-adjoint to a given normal pseudo-functor, there is first the delicate matter of reformulating such a question in terms of bi-representations.

\cref{lem:yonedastyle} states that a normal pseudo-functor $\ainl \cC L\cD$ has a right bi-adjoint if and only if there is a bi-representation of the normal pseudo-functor $\cD(L-,D)$ for each object $D\in \cD$. This shows that the pseudo-naturality of $\cD(L-,-)\xnat{\eqv} \cC(-,R-)$ in one of the variables is superfluous data, and may always be recovered from merely object-wise information -- in analogy with the corresponding result for ordinary adjunctions and representations. Although this result about bi-adjunctions is known and expected, we were unable to find even a statement of this theorem in the literature. Capitalising on this gap we provide a proof in \cref{sec:bi-adjunctions} using some cool $2$-dimensional Yoneda tricks rather than a direct construction.

This formulation of the existence of a right bi-adjoint is then amenable to our theorems about bi-representations above and we prove in \cref{thm:biadjoint} that $L$ has a right bi-adjoint if and only if there is a double bi-initial object in the pseudo-slice double category $\dblslice{L}{D}$ for each object $D\in\cD$. As before we derive a purely $2$-categorical statement by applying the functors $\cH$ and $\SV$ to $\dblslice L D$. The resulting $2$-categories are isomorphic to the pseudo-slice $2$-category $\slice L D$ and a ``shifted'' pseudo-slice $2$-category $\slice {\Ar L} D$, whose objects are $2$-morphisms between $LC$ and $D$. Finally bi-adjunctions also benefit from the presence of tensors and we prove in \cref{thm:biadjtensor} that, if the $2$-category $\cC$ has tensors by $\mathbbm 2$ which are preserved by $L$\footnote{Note that tensors by $\mathbbm 2$ are weighted colimits and that left bi-adjoints preserve those (LBAPWBC).}, then $L$ has a right bi-adjoint if and only if there is a  bi-initial object in the pseudo-slice $2$-category $\slice L D$ for each object $D\in\cD$. This special case gives a straightforward $2$-categorical version of the characterisation of the existence of a right adjoint to an ordinary functor.

As we saw throughout the introduction, there are also $2$-type versions of the bi-type notions considered here. All of the theorems given in this paper may also be proven in this stronger setting; the proofs are predictably less involved as there are less coherence conditions to check here. For example, \cref{theoremA} in this stronger setting can be formulated as follows: there is a  $2$-representation of a normal pseudo-functor $\ainl {\cC^{\op}}F\Cat$ if and only if there is a double-initial object in the double category of elements of $F$, defined here as the \emph{strict} slice double category $\dblslice{\mathbbm 1}\bH F$. Similarly, \cref{theoremC} for the $2$-type case would concern $2$-initial objects and stricter versions of $\el(F)$ and $\mor(F)$.

\subsection{Outline}

The paper is organised as follows. \cref{sec:background,sec:equivalences} present the setting of $2$-categories and double categories in which we will be working and the functors relating these two settings. We recall notions of trivial fibrations of $2$-categories and double categories, which we later use in the definition of bi-initiality.

Then \cref{sec:pseudo-comma,sec:initial} introduce pseudo-comma $2$-categories and double categories, as well as (double) bi-initial objects. We then compare all these $2$-categorical notions to their double categorical analogues. After establishing these comparisons, the remaining work to prove the main result is to characterise bi-representations in terms of double bi-initial objects, which we do in \cref{sec:bi-rep-pseudo}. Finally \cref{sec:applications} studies applications of our main theorem to bi-adjunctions and (weighted) bi-limits.

\subsection{Acknowledgements}

Both authors are grateful to anonymous referees for pointing out issues in previous versions. This work began when both authors were at the Mathematical Sciences Research Institute in Berkeley, California, during the Spring 2020 semester. The first-named author benefited from support by the National Science Foundation under Grant No. DMS-1440140, while at residence in MSRI. The second-named author was supported by the Swiss National Science Foundation under the project P1ELP2\_188039 and the Max Planck Institute of Mathematics. The first-named author was additionally supported by the National Science Foundation grant DMS-1652600, as well as the JHU Catalyst Grant.

\section{Background on \texorpdfstring{$2$}{2}-categories and double categories}
\label{sec:background}

To state and prove our main result, \Cref{thm:birep} below, we will make use of the languages of $2$-categories and of double categories. In particular we will employ the notions of normal pseudo-functor, pseudo-natural transformation, modification, as well as horizontal double categorical counterparts to these notions -- double functors and horizontal natural transformations which exhibit pseudo-type behaviour in the horizontal direction. To cement terminology and familiarise ourselves with these notions we will briefly recall the $2$-categorical and double categorical concepts at issue in \Cref{sec:2-categories,sec:double-categories} below. Readers comfortable with these definitions should skip ahead to \Cref{sec:equivalences}.

\subsection{\texorpdfstring{$2$}{2}-categories}
\label{sec:2-categories}

Recall that a $2$-category is a category enriched in categories. It comprises the data of objects and hom-categories between each pair of objects, together with a \emph{horizontal} composition operation. The objects of the hom-categories are called \emph{morphisms}, the morphisms therein are called \emph{$2$-morphisms}, and the composition operation therein is called \emph{vertical} composition of $2$-morphisms.

Morphisms between $2$-categories which preserve all the $2$-categorical structure strictly are called $2$-functors. However, in this paper, we consider the more general notion of morphisms of $2$-categories, namely \emph{normal pseudo-functors}.

\begin{definition} \label{defn:pseudo}
Given $2$-categories $\cA$ and $\cB$, a \textbf{pseudo-functor} $\ainl \cA{(F,\phi)}\cB$ comprises the data of
\begin{rome}
\item an assignment on objects $A \in\cA \mapsto FA\in\cB$,
\item functors $\ainl{\cA(A,A')}{F}{\cB(FA,FA')}$ for each pair of objects $A,A'\in\cA$,
\item $2$-isomorphisms $\niinl{\id_{FA}}{\phi_{A}}{F\id_{A}}$ in $\cB$ for each object $A\in\cA$, called \emph{unitors},
\item $2$-isomorphisms $\ninl{(Fa')(Fa)}{\phi_{a,a'}}{F(a'a)}$ in \cB for each pair of composable morphisms $\ainl Aa{A'}$ and $\ainl{A'}{a'}{A''}$ in \cA, called \emph{compositors},
\end{rome}
such that these data satisfy naturality, associativity, and unitality conditions. For details see, for example, \cite[Definition 4.1.2]{JohYau}.

If, for all $A\in \cA$, the unitor $\phi_A$ is given by the identity $2$-morphism $\id_{\id_{FA}}$, we say that the pseudo-functor $(F,\phi)$ is \textbf{normal}.
\end{definition}

As every pseudo-functor is appropriately isomorphic to a normal one by \cite[Proposition 5.2]{LackPauli}, we choose to simplify our arguments by forgoing the extra data and coherence associated to the former class by working only with normal pseudo-functors.

\begin{notation}
    We denote by $\TwoCat$ the category of $2$-categories and normal pseudo-functors.
\end{notation}

We now define a $2$-category whose objects are the normal pseudo-functors. For this, we first define its morphisms and $2$-morphisms.

\begin{definition}\label{def:pseudo-natural-transformation}
Given pseudo-functors $\ainl{\cA}{F,G}{\cB}$, a \textbf{pseudo-natural transformation} $\ninl{F}{\alpha}{G}$ comprises the data of
\begin{rome}
  \item morphisms $\ainl{FA}{\alpha_A}{GA}$ in \cB for each object $A\in\cA$,
  \item $2$-isomorphisms $\niinl{(Ga)\alpha_A}{\alpha_{a}}{\alpha_{A'}(Fa)}$ in \cB for each morphism $\ainl Aa{A'}$ in \cA as depicted below.
  \begin{diagram*}
    \node(1)[]{$FA$};
    \node(2)[right of= 1,xshift=.2cm]{$GA$};
    \node(3)[below of= 1]{$FA'$};
    \node(4)[below of= 2]{$GA'$};
    \draw[a](1) to node[la]{$\alpha_A$}(2);
    \draw[a](3) to node[la,swap]{$\alpha_{A'}$}(4);
    \draw[a](1) to node[la,swap]{$Fa$}(3);
    \draw[a](2) to node[la]{$Ga$}(4);
    \celli[la]{2}{3}{$\alpha_a$};
  \end{diagram*}
\end{rome}
such that the $2$-morphisms $\alpha_a$ above are natural with respect to $2$-morphisms in \cA, and compatible with the compositors and unitors of $F$ and $G$. For details see, for example, \cite[Definition 4.2.1]{JohYau}.

If, for all morphisms $\ainl Aa{A'}$ in $\cA$, the $2$-isomorphism component $\alpha_a$ is an identity, i.e., $(Ga)\alpha_A=\alpha_{A'}(Fa)$, then we say that $\alpha$ is \textbf{$2$-natural}.
\end{definition}

\begin{remark}
  As a consequence of the compatibility with the unitors above, a pseudo-natural transformation $\ninl F\alpha G$ for which $F$ and $G$ are both normal automatically satisfies $\alpha_{\id_A}=\id_{\alpha_A}$ for all $A\in \cA$.
\end{remark}

\begin{definition} \label{def:modification}
Given pseudo-natural transformations $\ninl F{\alpha,\beta}G$, a \textbf{modification} $\minl\alpha\Gamma\beta$ comprises the data of $2$-morphisms $\ninl{\alpha_A}{\Gamma_A}{\beta_A}$ in \cB for each object $A\in \cA$ which are compatible with the $2$-isomorphism components of $\alpha$ and $\beta$. For details see, for example, \cite[Definition 4.4.1]{JohYau}.
\end{definition}

\begin{definition}\label{def:inthom2Cat}
  Let $\cA$ and $\cB$ be $2$-categories. We define the $2$-category $\Psd(\cA,\cB)$ whose objects are normal pseudo-functors from $\cA$ to $\cB$, morphisms are pseudo-natural transformations, and $2$-morphisms are modifications.
\end{definition}

\subsection{Double categories}
\label{sec:double-categories}

In addition to the $2$-dimensional concepts above, we will make much use of the possibly less familiar notions of double categories and their morphisms. To prepare for this, we invite the reader to join us in recalling some of the early definitions.

\begin{definition}
A \textbf{double category} $\bA$ comprises the data of
\begin{rome}
\item objects $A,A',B,B',\ldots$,
\item horizontal morphisms $\ainl Aa{A'}$,
\item vertical morphisms $\vainl AuB$,
\item squares $\alpha$ with both horizontal and vertical sources and targets, written inline as $\sq{\alpha}{a}{b}{u}{u'}$ or drawn as
  \begin{diagram*}[,]
    \node[](1) {$A$};
    \node[below of=1](2) {$B$};
    \node[right of=1](3) {$A'$};
    \node[below of=3](4) {$B'$};
    \draw[proarrow] (1) to node[swap,la] {$u$} (2);
    \draw[proarrow] (3) to node[la] {$u'$} (4);
    \draw[a] (1) to node[la] {$a$} (3);
    \draw[a] (2) to node[swap,la] {$b$} (4);
    \node[la] at ($(3)!0.5!(2)$) {$\alpha$};
 \end{diagram*}
\item horizontal and vertical identity morphisms for each object $A$, written $\id_A\colon A=A$ and $\vid_A\colon\vidinl A$ respectively,
\item a horizontal identity square for each vertical morphism $u$ and a vertical identity square for each horizontal morphism $f$, written respectively as
  \begin{diagram*}
    \node[](1) {$A$};
    \node[below of=1](2) {$B$};
    \node[right of=1](3) {$A$};
    \node[below of=3](4) {$B$};
    \draw[proarrow] (1) to node[swap,la] {$u$} (2);
    \draw[proarrow] (3) to node[la] {$u$} (4);
    \draw[d] (1) to node[la] {$\id_A$} (3);
    \draw[d] (2) to node[swap,la] {$\id_B$} (4);
    \node[la] at ($(3)!0.5!(2)$) {$\id_u$};

    \node[right of=3,xshift=2cm](1) {$A$};
    \node[below of=1](2) {$A$};
    \node[right of=1](3) {$A'$};
    \node[below of=3](4) {$A'$};
    \draw[proequal] (1) to node[swap,la] {$\vid_A$} (2);
    \draw[proequal] (3) to node[la] {$\vid_{A'}$} (4);
    \draw[a] (1) to node[la] {$a$} (3);
    \draw[a] (2) to node[swap,la] {$a$} (4);
    \node[la] at ($(3)!0.5!(2)$) {$\vid_a$};
 \end{diagram*}
\item a horizontal composition operation on horizontal morphisms and squares along a shared vertical boundary,
\item a vertical composition operation on vertical morphisms and squares along a shared horizontal boundary,
\end{rome}
such that the composition operations are all appropriately associative and unital, and such that horizontal and vertical composition of squares obeys the interchange law. We direct the reader to \cite[Definition 3.1.1]{Grandis} for details.
\end{definition}

\begin{remark}
  Note that a $2$-category \cA can be seen as a \emph{horizontal} double category $\bH\cA$, with only trivial vertical morphisms; see \cref{defn:bH}. Dually, we can also see a $2$-category~\cA as a \emph{vertical} double category $\bV\cA$.

  The horizontal embedding is preferred in this document, as it corresponds to the inclusion of $2$-categories seen as internal categories to \textsf{Cat} whose category of objects is discrete, into general internal categories to \textsf{Cat}, which are precisely the double categories. This inclusion itself agrees with the inclusion $\textsf{Cat}=\textsf{IntCat}(\textsf{Set})\rightarrowtail \textsf{IntCat}(\textsf{Cat})=\textsf{DblCat}$ arising from $\textsf{Set}\rightarrowtail \textsf{Cat}$, when categories are seen as $2$-categories with only trivial $2$-morphisms.
\end{remark}

Much as in the case of $2$-categories above, we will be interested not in (strict) double functors, which preserve the double categorical structure strictly, but in certain pseudo-type ones. As we choose here to see $2$-categories as horizontal double categories, in order to extend this assignment on objects to a functor from $\TwoCat$, we need to require that our pseudo-double functors are pseudo in the horizontal direction.

\begin{definition} \label{defn:pseudodouble}
Given double categories \bA and \bB, a \textbf{(horizontally) pseudo-double functor} $\ainl\bA {(F,\phi)}\bB$ comprises the data of
\begin{drome}
  \item assignments sending respectively objects $A$, horizontal morphisms $\ainl Aa{A'}$, vertical morphisms $\vainl AuB$, and squares $\sq{\alpha}{a}{b}{u}{u'}$ in \bA to objects $FA$, horizontal morphisms $\ainl{FA}{Fa}{FA'}$, vertical morphisms $\vainl{FA}{Fu}{FB}$, and squares $\sq{F\alpha}{Fa}{Fb}{Fu}{Fu'}$ in \bB,
  \item\label{defn:pseudo-double-unit-squares} for each $A\in\bA$, a vertically invertible unitor square of \bB of the form
  \begin{diagram*}[,][2]
    \node[](1) {$FA$};
    \node[below of=1](2) {$FA$};
    \node[right of=1](3) {$FA$};
    \node[below of=3](4) {$FA$};
    \draw[proequal] (1) to (2);
    \draw[proequal] (3) to (4);
    \draw[d] (1) to node[la]{$\id_{FA}$}(3);
    \draw[a] (2) to node [la,swap]{$F\id_A$} (4);
    \node[la,xshift=-6pt] at ($(1)!0.5!(4)$) {$\phi_A$};
    \node[la,xshift=6pt] at ($(1)!0.5!(4)$) {$\vcong$};
\end{diagram*}
\item for each pair of composable horizontal morphisms $\ainl Aa{A'}$ and $\ainl {A'}{a'}{A''}$ in~\bA, a vertically invertible compositor square of \bB of the form
  \begin{diagram*}[,][2]
    \node[](1) {$FA$};
    \node[right of=1](5) {$FA'$};
    \node[below of=1](2) {$FA$};
    \node[right of=5](3) {$FA''$};
    \node[below of=3](4) {$FA''$};
    \draw[proequal] (1) to (2);
    \draw[proequal] (3) to (4);
    \draw[a] (1) to node[la]{$Fa$}(5);
    \draw[a] (5) to node[la]{$Fa'$}(3);
    \draw[a] (2) to node [la,swap]{$F(a'a)$} (4);
    \node[la,xshift=-8pt] at ($(1)!0.5!(4)$) {$\phi_{a,a'}$};
    \node[la,xshift=8pt] at ($(1)!0.5!(4)$) {$\vcong$};
\end{diagram*}
\end{drome}
such that
\begin{enumerate}
\item vertical compositions of vertical morphisms and squares, as well as vertical identities, are preserved strictly,
\item the unitor squares are natural with respect to vertical morphisms of \bA,
\item the compositor squares are natural with respect to vertical composition by squares of \bA,
\item the compositor squares are associative and unital with respect to the unitor squares
\end{enumerate}

If, for all $A\in \bA$, the unitor square $\phi_A$ is given by the vertical identity square $\vid_{\id_{FA}}$, we say that the pseudo-double functor $(F,\phi)$ is \textbf{normal}.
\end{definition}

\begin{notation}
We denote by $\DblCat$ the category of double categories and normal pseudo-double functors.
\end{notation}

We direct the reader to \cite[Definition 3.5.1]{Grandis} for a full elaboration of these conditions for lax-type double functors -- though we have interchanged the vertical and horizontal directions by comparison.

We also construct a double category whose objects are the normal pseudo-double functors. We now define its horizontal morphisms which are \emph{horizontal pseudo-natural transformations}. Its vertical morphisms and squares are the \emph{vertical natural transformations} and \emph{modifications}, but we will not have use of the details of these notions, and refer the curious reader to \cite[Definitions 3.8.1 and 3.8.3]{Grandis} for these. As ever, we caution the reader that our horizontal and vertical directions are interchanged.

\begin{definition}\label{def:horizontal-pseudo-natural-transformation}
Given pseudo-double functors $\ainl\bA{F,G}\bB$, a \textbf{horizontal pseudo-natural transformation} $\ninl F\alpha {G}$ comprises the data of
\begin{rome}
\item horizontal morphisms $\ainl {FA}{\alpha_{A}}{GA}$ for each $A\in\bA$,
\item squares $\sq{\alpha_{u}}{\alpha_{A}}{\alpha_{B}}{Fu}{Gu}$ for each vertical morphism $\vainl AuB$ of \bA,
\item vertically invertible squares
  \begin{diagram*}
    \node(1)[]{$FA$};
    \node(2)[right of= 1]{$GA$};
    \node(3)[right of= 2]{$GA'$};
    \node(4)[below of= 1]{$FA$};
    \node(5)[below of= 2]{$FA'$};
    \node(6)[below of= 3]{$GA'$};
    \draw[proequal](1)to(4);
    \draw[proequal](3)to(6);
    \draw[a](1)to node[la]{$\alpha_{A}$}(2);
    \draw[a](2)to node[la]{$Ga$}(3);
    \draw[a](4)to node[la,swap]{$Fa$}(5);
    \draw[a](5)to node[la,swap]{$\alpha_{A'}$}(6);
    \node[la,xshift=-6pt] at ($(3)!0.5!(4)$) {$\alpha_{a}$};
    \node[la,xshift=6pt] at ($(3)!0.5!(4)$) {$\vcong$};
  \end{diagram*}
  for each horizontal morphism $\ainl Aa{A'}$,
\end{rome}
such that the squares $\alpha_{u}$ are coherent with respect to vertical composition and identities, and together with the squares $\alpha_{a}$ and the compositors and unitors of $F$ and $G$ satisfy horizontal conditions of naturality and unitality. For a full expansion of these conditions, see \cite[Definition 3.8.2]{Grandis} -- though note again that our horizontal and vertical directions have been interchanged.

If, for all horizontal morphisms $a$, the square component $\alpha_a$ is an identity, we call $\alpha$ a \textbf{horizontal natural transformation}.
\end{definition}

\begin{remark}
  As a consequence of the axioms, a horizontal pseudo-natural transformation $\ninl F\alpha G$ for which $F$ and $G$ are normal is such that the vertically invertible square $\alpha_{\id_A}$ is given by the vertical identity square $\vid_{\alpha_A}$ for all $A\in \bA$.
\end{remark}

\begin{definition} \label{def:inthomDbl}
  Let $\bA$ and $\bB$ be double categories. We define the double category $\PPsd(\bA,\bB)$ in $\DblCat$ whose objects are normal pseudo-double functors from $\bA$ to $\bB$, horizontal morphisms are horizontal pseudo-natural transformations, vertical morphisms are vertical (strict-)natural transformations, and squares are modifications.
\end{definition}

\section{The functor \texorpdfstring{$\SV$}{V} and trivial fibrations}
\label{sec:equivalences}

A $2$-category $\cA$ can be seen as a horizontal double category $\bH\cA$ with only trivial vertical morphisms. This construction has a right adjoint, which extracts from a double category~$\bA$ its underlying horizontal $2$-category $\cH \bA$ of objects, horizontal morphisms, and squares with trivial vertical boundaries. Another $2$-category $\SV\bA$ that can be extracted from a double category $\bA$ has as objects the vertical morphisms of $\bA$, as morphisms the squares of $\bA$, and $2$-morphisms as described in \Cref{defn:SVA}. These $2$-categories $\cH\bA$ and $\SV\bA$ allow one to retrieve most of the structure of the double category $\bA$, except for composition of vertical morphisms.

We explore these constructions as a means of comparing a weak notion of initial objects in a double category $\bA$ with bi-initial objects in the $2$-categories $\cH\bA$ and $\SV\bA$. In \cref{sec:dblvs2biinitial} these notions of initiality for objects are defined, in analogy with the $1$-dimensional case, by requiring the projection from the slice over the considered object to be an appropriate equivalence -- in the case of $2$-categories, a bi-equivalence. Both in the $2$-categorical and double categorical case, these projection morphisms are always strict functors and appear as fibrations in certain model structures. As a consequence, initiality can be equivalently defined by requiring the projection to be a \emph{trivial fibration} -- a special case of a bi-equivalence or its double categorical analogue.

With this motivation, in \cref{subsec:trivfib} we recall the definition of a trivial fibration in Lack's model structure on $2$-categories and $2$-functors \cite{Lack}. An analogous notion of \emph{double trivial fibrations} is introduced in \cite{usbutnotyou} by the second-named author, Sarazola, and Verdugo as the trivial fibrations in a model structure on double categories and double functors. While the double trivial fibrations are defined as those double functors whose images under \cH and \SV are trivial fibrations of $2$-categories, we show in \cref{prop:dblvs2trivfib} that double trivial fibrations are precisely those double functors whose image under \SV alone is a trivial fibration. This theorem is the essential content of our main result, and is responsible for allowing us to formulate the universal property of bi-representations in terms of bi-initial objects in the corresponding ``$2$-category of morphisms''.

\subsection{The functors \texorpdfstring{$\bH$}{IH}, \texorpdfstring{$\cH$}{H}, and \texorpdfstring{$\SV$}{V}} \label{subsec:HHV}

Let us first introduce the horizontal full embedding functor from $\TwoCat$ to $\DblCat$.

\begin{definition} \label{defn:bH}
    We define a functor $\ainl{\TwoCat}{\bH}{\DblCat}$ which sends a $2$-category~$\cA$ to the horizontal double category $\bH\cA$ with the same objects as $\cA$, horizontal morphisms given by the morphisms of $\cA$, only trivial vertical morphisms, and squares $\sq{\alpha}{a}{b}{\vid_A}{\vid_{A'}}$ given by the $2$-morphisms $\ninl{a}{\alpha}{b}$ of $\cA$.

    Given a normal pseudo-functor $\ainl{\cA}{F}{\cB}$, the induced normal pseudo-double functor $\ainl{\bH\cA}{\bH F}{\bH\cB}$ acts as $F$ does on the corresponding data, and respects vertical identities. The compositor vertically invertible squares of $\bH F$ are the ones corresponding to the compositor $2$-isomorphisms of $F$.
\end{definition}

The functor $\bH$ has a right adjoint, given by the following functor.

\begin{definition} \label{defn:cH}
  The functor $\ainl{\DblCat}{\cH}{\TwoCat}$ sends a double category $\bA$ to its \textbf{underlying horizontal $2$-category} $\cH\bA$ with the same objects as $\bA$, morphisms given by the horizontal morphisms of $\bA$, and $2$-morphisms $\ninl{a}{\alpha}{b}$ given by the squares in $\bA$ of the form
  \begin{diagram*}[.][2]
    \node[](1) {$A$};
    \node[below of=1](2) {$A$};
    \node[right of=1](3) {$A'$};
    \node[below of=3](4) {$A'$};
    \draw[proequal] (1) to (2);
    \draw[proequal] (3) to (4);
    \draw[a] (1) to node[la] {$a$} (3);
    \draw[a] (2) to node[swap,la] {$b$} (4);
    \node[la] at ($(1)!0.5!(4)$) {$\alpha$};
  \end{diagram*}

  Given a normal pseudo-double functor $\ainl{\bA}{F}{\bB}$, the induced normal pseudo-functor ${\ainl{\cH\bA}{\cH F}{\cH\bB}}$ acts as $F$ does on the corresponding data, and the data of its compositor $2$-isomorphisms are given by the compositor squares of $F$.
\end{definition}

\begin{proposition} \label{lem:HHadjunction}
  The functors $\ainl{\TwoCat}{\bH}{\DblCat}$ and $\ainl{\DblCat}{\cH}{\TwoCat}$ form an adjunction $\bH\dashv \cH$ such that the unit $\ninl{\id_{\TwoCat}}{\eta}{\cH\bH}$ is an identity.
\end{proposition}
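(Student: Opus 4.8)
The plan is to exhibit the adjunction by constructing the unit and counit and verifying the triangle identities, rather than by a hom-set bijection, since the latter would force us to track the pseudo-coherence data through an isomorphism of categories whereas the former reduces everything to checking that certain assignments on objects, (horizontal) morphisms, vertical morphisms, and squares agree. First I would observe that for a $2$-category $\cA$, the double category $\bH\cA$ has only trivial vertical morphisms and its squares are exactly the $2$-morphisms of $\cA$; hence applying $\cH$ returns a $2$-category with the same objects, the same morphisms, and $2$-morphisms given by squares of the prescribed form, which in $\bH\cA$ are precisely the $2$-morphisms of $\cA$. Thus $\cH\bH\cA = \cA$ on the nose, and the same bookkeeping on a normal pseudo-functor $F$ shows $\cH\bH F = F$ (the compositor $2$-isomorphisms of $\cH\bH F$ are the compositor squares of $\bH F$, which are by definition the compositor $2$-isomorphisms of $F$). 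So I would define the unit $\eta$ to be the identity, which is visibly natural.

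Next I would define the counit $\ninl{\bH\cH\bA}{\epsilon_{\bA}}{\bA}$ for a double category $\bA$. On objects, horizontal morphisms, and squares with trivial vertical boundary it is the identity-on-underlying-data inclusion: $\bH\cH\bA$ is the horizontal double category on the underlying horizontal $2$-category, so it has the same objects, the same horizontal morphisms, only trivial vertical morphisms, and squares equal to the squares of $\bA$ with trivial vertical boundaries. The double functor $\epsilon_{\bA}$ sends each trivial vertical morphism $\vid_A$ to $\vid_A \in \bA$ and is otherwise the evident inclusion; its compositor squares are the vertical identity squares $\vid_{\id}$, since horizontal composition in a horizontal double category is strict. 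I would then check that $\epsilon_{\bA}$ is a strict (hence in particular normal pseudo-) double functor and that $\epsilon$ is natural in $\bA$: given a normal pseudo-double functor $\ainl{\bA}{G}{\bB}$, both $\epsilon_{\bB}\circ\bH\cH G$ and $G\circ\epsilon_{\bA}$ act as $G$ on objects, horizontal morphisms, and squares with trivial boundary, and send the unique vertical morphisms to the images of vertical identities under $G$, which are vertical identities as $G$ preserves them strictly.

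Finally I would verify the two triangle identities. Since $\eta$ is an identity, the identity $\id_{\cH} = \cH\epsilon \circ \eta\cH$ reduces to checking $\cH\epsilon_{\bA} = \id_{\cH\bA}$, which is immediate because $\epsilon_{\bA}$ is the identity on objects, horizontal morphisms, and trivial-boundary squares, which is exactly the data $\cH$ records. Likewise $\id_{\bH} = \epsilon\bH \circ \bH\eta$ reduces to $\epsilon_{\bH\cA} = \id_{\bH\cA}$: here $\cH\bH\cA = \cA$ so $\bH\cH\bH\cA = \bH\cA$, and $\epsilon_{\bH\cA}$ is by construction the identity inclusion, all vertical morphisms in sight being trivial. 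The only mildly delicate point—and the one I would write out with care—is bookkeeping the compositor data: one must confirm that the compositors of $\bH F$, $\cH F$, $\bH\cH G$, and $\epsilon_{\bA}$ match up under the equalities above, but in each case the relevant double category is either horizontal (so horizontal composition is strict and compositors are vertical identities) or the compositor is transported verbatim, so no genuine coherence computation arises. I expect this compositor-tracking to be the main obstacle only in the sense of verbosity, not of mathematical content.
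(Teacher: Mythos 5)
Your proposal is correct, but it takes a different route from the paper. The paper's proof is a two-line hom-set argument: specialising \cref{defn:pseudodouble} to a source double category with only trivial vertical morphisms shows that normal pseudo-double functors $\bH\cC\to\bA$ \emph{are} exactly normal pseudo-functors $\cC\to\cH\bA$, giving a bijection $\DblCat(\bH\cC,\bA)\iso\TwoCat(\cC,\cH\bA)$ natural in both variables, and the equality $\cH\bH\cC=\cC$ then yields the identity unit. You instead build the counit $\epsilon_{\bA}\colon\bH\cH\bA\to\bA$ explicitly as the (strict) inclusion, check its naturality against normal pseudo-double functors, and verify the triangle identities; this works because $\TwoCat$ and $\DblCat$ are honest $1$-categories (composition of normal pseudo-functors is strictly associative and unital), and your checks of the compositor data and of $G$ preserving vertical identities strictly are exactly the points that need saying. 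What each approach buys: the paper's bijection is shorter and makes the conceptual point that $\bH$ is a full embedding with $\cH$ computing the ``restriction'' — the correspondence is literally a re-reading of the definition — whereas your unit--counit argument makes the identity-unit claim and the strictness of $\eta$ immediate and isolates all the coherence bookkeeping in a single concrete counit, at the cost of a naturality check (for $\epsilon$, including its compositor comparison, which you only gesture at in the naturality step) that the paper's phrasing sidesteps. One cosmetic slip: the compositor squares of $\epsilon_{\bA}$ are the vertical identity squares $\vid_{a'a}$ on composites, and the reason they are identities is that horizontal composition in $\bH\cH\bA$ coincides with that of $\bA$, not merely that composition in a horizontal double category is strict; this does not affect the argument.
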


\begin{proof}
  Let $\cC$ be a $2$-category, and $\bA$ be a double category. By specialising \Cref{defn:pseudodouble} to the case where the source is a double category with only trivial vertical morphisms, we see that normal pseudo-double functors $\bH\cC\to\bA$ correspond precisely to normal pseudo-functors $\cC\to\cH\bA$, i.e., we have an isomorphism of sets
  \[ \DblCat(\bH\cC,\bA)\iso \TwoCat(\cC,\cH\bA)\ , \]
  natural in $\cC$ and $\bA$. Moreover, a straightforward computation shows that $\cH\bH\cC=\cC$.
\end{proof}

We want to extract another $2$-category from a double category which contains the data of all vertical morphisms and squares, and this is done through the following functor $\DblCat\to \TwoCat$. In \cite[Definition 2.10]{usbutnotyou}, the second-named author, Sarazola, and Verdugo give a similar definition but in a setting where the morphisms of $2$-categories and double categories are \emph{strict}. Under the inclusion of the appropriate subcategories into our weaker setting, our functor may be seen to restrict to theirs.

Although this functor appeared chronologically prior in the work of \cite{usbutnotyou}, the original motivation to isolate and treat its definition was our \cref{thm:initial} below. Indeed, as we shall see in \cref{REMARK}, from this context the below definition naturally emerges.

\begin{definition} \label{defn:SVA}
    Let $\bV\mathbbm 2$ denote the free double category on a vertical morphism. We define the functor $\ainl{\DblCat}{\SV}{\TwoCat}$ to be the composite
    \[\DblCat\xrightarrow{\PPsd(\bV\mathbbm 2,-)}\DblCat\stackrel{\cH}{\longrightarrow}\TwoCat\ . \]
    In particular, it sends a double category $\bA$ to the $2$-category $\SV\bA$ whose
\begin{drome}
  \item objects are the vertical morphisms of $\bA$,
  \item morphisms $\ainl{u}{\alpha}{u'}$ are squares in $\bA$ of the form
\begin{diagram*}[,][2]
    \node[](1) {$A$};
    \node[below of=1](2) {$B$};
    \node[right of=1](3) {$A'$};
    \node[below of=3](4) {$B'$};
    \draw[proarrow] (1) to node[swap,la] {$u$} (2);
    \draw[proarrow] (3) to node[la] {$u'$} (4);
    \draw[a] (1) to node[la] {$a$} (3);
    \draw[a] (2) to node[swap,la] {$b$} (4);
    \node[la] at ($(1)!0.5!(4)$) {$\alpha$};
\end{diagram*}
    \item \label{2morinSV}$2$-morphisms $\ninl{\alpha}{(\sigma_0,\sigma_1)}{\alpha'}$ are pairs of squares $\sq{\sigma_0}{a}{a'}{\vid_A}{\vid_{A'}}$ and $\sq{\sigma_1}{b}{b'}{\vid_B}{\vid_{B'}}$ satisfying the following pasting equality.
\begin{diagram*}
    \node(1)[]{$A$};
    \node(2)[below of= 1]{$A$};
    \draw[proequal](1)to (2);
    \node(3)[right of= 1]{$A'$};
    \node(4)[below of= 3]{$A'$};
    \draw[proequal](3)to (4);
    \node (5)[below of= 2]{$B$};
    \node (6)[right of= 5]{$B'$};
    \draw[a](1)to node[la]{$a$}(3);
    \draw[a](2)to node[la,over]{$a'$}(4);
    \node[la] at ($(1)!0.5!(4)$) {$\sigma_0$};
    \draw[proarrow](2) to node[la,swap]{$u$}(5);
    \draw[proarrow](4) to node[la]{$u'$}(6);
    \draw[a](5)to node[swap,la]{$b'$}(6);
    \node[la] at ($(2)!0.5!(6)$) {$\alpha'$};

    \node(1)[right of= 4]{$B$};
    \node at ($(4)!0.5!(1)$) {$=$};
    \node(2)[below of= 1]{$B$};
    \draw[proequal](1)to (2);
    \node(3)[right of= 1]{$B'$};
    \node(4)[below of= 3]{$B'$};
    \draw[proequal](3)to (4);
    \draw[a](1)to node[over,la]{$b$}(3);
    \draw[a](2)to node[swap,la]{$b'$}(4);
    \node[la] at ($(1)!0.5!(4)$) {$\sigma_1$};
    \node(5)[above of=1]{$A$};
    \node(6)[right of=5]{$A'$};
    \draw[proarrow](5) to node[swap,la]{$u$}(1);
    \draw[proarrow](6) to node[la]{$u'$}(3);
    \draw[a](5) to node[la]{$a$}(6);
    \node[la] at ($(5)!0.5!(3)$) {$\alpha$};
\end{diagram*}
\end{drome}
\end{definition}

\begin{remark}
  The functor $\ainl{\DblCat}{\SV}{\TwoCat}$ is also a right adjoint since it is the composite of two right adjoints $\cH$ and $\PPsd(\bV\mathbbm 2,-)$, and its left adjoint is given by $\bH(-)\times \bV\mathbbm 2$.
\end{remark}

\subsection{Trivial fibrations} \label{subsec:trivfib}

We now recall the definition of the \emph{trivial fibrations} in Lack's model structure on the category of $2$-categories and $2$-functors; see \cite{Lack}.

\begin{definition}
    Let $\cA$ and $\cB$ be $2$-categories. A $2$-functor $F\colon \cA\to \cB$ is a \textbf{trivial fibration} if
    \begin{rome}
      \item for every object $B\in \cB$, there is an object $A\in \cA$ such that $FA=B$,
      \item for every morphism $\ainl{FA}{b}{FA'}$ in $\cB$, there is a morphism $\ainl{A}{a}{A'}$ in~$\cA$ such that $Fa= b$,
      \item for every $2$-morphism $\ninl{Fa}{\beta}{Fb}$ in $\cB$, there is a unique $2$-morphism $\ninl{a}{\alpha}{b}$ in $\cA$ such that $F\alpha=\beta$.
    \end{rome}
\end{definition}

Similarly, we recall double trivial fibrations which were defined as the trivial fibrations in the model structure on double categories and double functors constructed by the second-named author, Sarazola, and Verdugo in \cite{usbutnotyou}.

\begin{definition} \label{def:doubleq}
  Let $\bA$ and $\bB$ be double categories. A double functor $\ainl{\bA}{F}{\bB}$ is a \textbf{double trivial fibration} if
\begin{drome}
  \item\label{cond:ess_surj_obj} for every object $B\in \bB$, there is an object $A\in \bA$ such that $FA=B$,
  \item\label{cond:ff_hor_mor} for every horizontal morphism $\ainl{FA}{b}{FA'}$ in $\bB$, there is a horizontal morphism $\ainl{A}{a}{A'}$ such that $Fa=b$,
  \item\label{cond:ess_surj_ver_mor} for every vertical morphism $\vainl{B}{v}{D}$ in $\bB$, there is a vertical morphism $\vainl{A}{u}{C}$ in $\bA$ such that $Fu=v$,
  \item\label{cond:ff_squares} for every square $\beta$ in $\bB$ of the form
    \begin{diagram*}[,][3]
      \node[](1) {$FA$};
      \node[right of=1](2) {$FA'$};
      \node[below of=1](3) {$FC$};
      \node[right of=3](4) {$FC'$};
      \draw[proarrow] (1) to node[la,swap] {$Fu$} (3);
      \draw[proarrow] (2) to node[la] {$Fu'$} (4);
      \draw[a] (1) to node[la] {$Fa$} (2);
      \draw[a] (3) to node[la,swap] {$Fc$} (4);
      \node[la] at ($(1)!0.5!(4)$) {$\beta$};
    \end{diagram*}
    there is a unique square $\sq{\alpha}{a}{c}{u}{u'}$ in $\bA$ such that $F\alpha=\beta$.
  \end{drome}
\end{definition}

Since fibrations and weak equivalences in the model structure on double categories are defined to be the double functors whose images under $\cH$ and $\SV$ are fibrations and weak equivalences in Lack's model structure (see \cite[Theorem 3.19]{usbutnotyou}), it is possible to characterise double trivial fibrations in terms of trivial fibrations of $2$-categories. However, even more is true: $\SV$ alone captures enough data to detect the entire model structure on double categories, and in particular the double trivial fibrations.

\begin{theorem} \label{prop:dblvs2trivfib}
 Let $\bA$ and $\bB$ be double categories and $\ainl{\bA}{F}{\bB}$ be a double functor. The following statements are equivalent.
 \begin{rome}
 \item The double functor $F$ is a double trivial fibration.
 \item The $2$-functors $\ainl{\cH\bA}{\cH F}{\cH\bB}$ and $\ainl{\SV\bA}{\SV F}{\SV\bB}$ are trivial fibrations.
 \item The $2$-functor $\ainl{\SV\bA}{\SV F}{\SV\bB}$ is a trivial fibration.
 \end{rome}
\end{theorem}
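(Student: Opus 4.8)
The plan is to prove the cycle (i) $\Rightarrow$ (ii) $\Rightarrow$ (iii) $\Rightarrow$ (i). The implication (ii) $\Rightarrow$ (iii) is trivial. For (i) $\Rightarrow$ (ii) I would simply match up clauses: assuming $F$ is a double trivial fibration, surjectivity of $\cH F$ on objects and fullness of $\cH F$ on morphisms are conditions \ref{cond:ess_surj_obj} and \ref{cond:ff_hor_mor}; surjectivity of $\SV F$ on objects is condition \ref{cond:ess_surj_ver_mor}; unique lifting of $2$-morphisms for $\cH F$ is the instance of condition \ref{cond:ff_squares} in which the vertical boundaries have the form $\vid_{FA}=F\vid_A$; fullness of $\SV F$ on morphisms follows by first lifting the two free horizontal boundaries via \ref{cond:ff_hor_mor} and then lifting the resulting square via \ref{cond:ff_squares}; and unique lifting of $2$-morphisms for $\SV F$ follows by lifting the two globular components of the $2$-morphism via \ref{cond:ff_squares}, the defining pasting equation in $\SV\bA$ then holding automatically by the uniqueness half of \ref{cond:ff_squares}. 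So the real content is (iii) $\Rightarrow$ (i), which I would establish by producing the four clauses of \cref{def:doubleq} from the hypothesis that $\SV F$ is a trivial fibration.

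Three of those clauses are quick. Clause \ref{cond:ess_surj_ver_mor} is exactly surjectivity of $\SV F$ on objects. For clause \ref{cond:ess_surj_obj}, given $B\in\bB$ I would lift the vertical identity $\vid_B$, viewed as an object of $\SV\bB$, to a vertical morphism $u$ of $\bA$ with $Fu=\vid_B$; as $F$ preserves sources on the nose, the source of $u$ is an object over $B$. For clause \ref{cond:ff_hor_mor}, given $b\colon FA\to FA'$ I would view the vertical identity square $\vid_b$ as a morphism $\vid_{FA}\to\vid_{FA'}$ in $\SV\bB$, note $\vid_{FA}=F\vid_A$, and apply fullness of $\SV F$ on morphisms to get a square over $\vid_b$ with vertical boundaries $\vid_A,\vid_{A'}$; its top edge is then a horizontal morphism of $\bA$ over $b$.

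The hard part is clause \ref{cond:ff_squares}: unique lifting of a square $\beta$ of shape $\sq{\beta}{Fa}{Fc}{Fu}{Fu'}$. Fullness of $\SV F$ on morphisms applied to $u,u'$ only produces a square $\hat\alpha$ with the right vertical boundaries but with horizontal boundaries $\hat a,\hat c$ satisfying merely $F\hat a=Fa$ and $F\hat c=Fc$, so these boundaries must be corrected. The key sub-lemma I would prove first is: \emph{whenever $Fa=Fa'$ for a parallel pair of horizontal morphisms of $\bA$, there is a unique globular square $a\Rightarrow a'$ in $\bA$ lying over $\id_{Fa}$}. For this I would view $\vid_a,\vid_{a'}$ as parallel morphisms in $\SV\bA$ (both over $\vid_{Fa}$), observe from \cref{defn:SVA} that a $2$-morphism $\vid_a\Rightarrow\vid_{a'}$ in $\SV\bA$ is a pair of globular squares whose defining pasting equation, all vertical legs being identities, forces the two components to agree, so that such $2$-morphisms are exactly globular squares $a\Rightarrow a'$, and then apply unique lifting of $2$-morphisms for $\SV F$ to the identity $2$-morphism of $\vid_{Fa}$. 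Granting the sub-lemma: for existence in \ref{cond:ff_squares} I would vertically paste the globular comparison squares $a\Rightarrow\hat a$ and $\hat c\Rightarrow c$ (over identities) onto $\hat\alpha$; since $F$ sends these comparisons to vertical identity squares, which are strict units for vertical composition, the pasted square still lies over $\beta$. For uniqueness, given two lifts $\alpha,\alpha'$ of $\beta$ with the same boundary data, I would apply unique lifting of $2$-morphisms for $\SV F$ to the identity $2$-morphism $(\id_{Fa},\id_{Fc})$ of $\beta$; the resulting $2$-morphism $\alpha\Rightarrow\alpha'$ in $\SV\bA$ has components lying over $\id_{Fa}$ and $\id_{Fc}$, hence equal to $\id_a$ and $\id_c$ by the uniqueness in the sub-lemma, and the pasting equation making $(\id_a,\id_c)$ a $2$-morphism $\alpha\Rightarrow\alpha'$ then reads exactly $\alpha=\alpha'$.

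I expect the only genuinely delicate step to be the sub-lemma, together with carefully unwinding the pasting equation of \cref{defn:SVA} in the degenerate cases that arise above; everything else is routine bookkeeping that repeatedly uses that the strict double functor $F$ preserves sources, targets, vertical identities, vertical identity squares, and vertical composition on the nose.
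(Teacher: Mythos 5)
Your proof is correct, but it takes a genuinely different route from the paper. The paper's own proof is two lines: the equivalence of (i) and (ii) is simply cited from \cite[Corollary 3.14]{usbutnotyou}, and (ii)$\Leftrightarrow$(iii) follows from the observation that $\cH F$ is a retract of $\SV F$ (via the double functors induced by $\bV\mathbbm 2\to\mathbbm 1$ and an endpoint inclusion $\mathbbm 1\to\bV\mathbbm 2$) together with closure of trivial fibrations under retracts. You instead give a self-contained, element-wise verification, closing the cycle (i)$\Rightarrow$(ii)$\Rightarrow$(iii)$\Rightarrow$(i), so that you in effect reprove the cited result as well; the real content is your (iii)$\Rightarrow$(i), whose key sub-lemma (unique globular square over an identity $2$-cell, obtained by lifting the identity $2$-morphism between $\vid_a$ and $\vid_{a'}$ in $\SV\bA$ and noting that the pasting condition of \cref{defn:SVA} degenerates to force the two components to agree) is sound, and your boundary-correction and uniqueness arguments for clause (iv) of \cref{def:doubleq} check out, using only strict unitality of vertical identities and the fact that $F$ preserves them on the nose. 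What each approach buys: the paper's argument is short and conceptual but leans on an external reference and on the model-categorical closure property; yours is longer but entirely self-contained, and your sub-lemma is essentially a hands-on incarnation of the retract fact --- it exhibits concretely how the globular ($\cH$-level) data is recovered inside $\SV\bA$ from identity components, which is exactly why $\SV$ alone detects double trivial fibrations.
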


\begin{proof}
 The equivalence of (i) and (ii) follows from \cite[Corollary 3.14]{usbutnotyou}. The equivalence of (ii) and (iii) is a consequence of the facts that $\cH F$ is a retract of $\SV F$ and that trivial fibrations are closed under retracts.
\end{proof}

\section{Pseudo-comma \texorpdfstring{$2$}{2}-dimensional categories} \label{sec:pseudo-comma}

We now introduce pseudo-comma double categories and pseudo-comma $2$-categories, and show that they are related through the functors $\ainl{\DblCat}{\cH,\SV}{\TwoCat}$. We treat these objects in general so that we may later variously specialise the theory to pseudo-slices both over and under objects. We will use these results for the purposes of comparing double bi-initial objects in a double category with bi-initial objects in the induced $2$-categories obtained by applying \cH and \SV in \cref{sec:initial}, as well as for computing the double categories of elements in the case of bi-adjunctions and weighted bi-limits in \cref{sec:applications}.

Let us first define the pseudo-comma double category of a cospan of normal pseudo-double functors. With an eye to our applications of this theory in \cref{sec:initial,sec:applications}, we then give a more explicit description of the data in a \emph{pseudo-slice} double category: a pseudo-comma where one of the double categories involved is terminal.

\begin{definition}\label{def:double-comma}
Let $\ainl{\bC}{G}{\bA}$ and $\ainl{\bB}{F}{\bA}$ be normal pseudo-double functors. The \textbf{pseudo-comma double category} $\dblslice G F$ of $G$ and $F$ is defined as the following pullback in \DblCat,
\begin{diagram*}
  \node(1)[]{$\dblslice G F$};
  \node(2)[below of= 1]{$\bC\times \bB$};
  \node(3)[right of= 2, xshift=1cm]{$\bA\times \bA$};
  \node(4)[above of= 3]{$\PPsd(\bH\mathbbm{2},\bA)$};
  \draw[a](1)to node[swap,la] {$\Pi$} (2);
  \draw[a](2)to node[swap,la]{$(G,F)$}(3);
  \draw[a](1)to (4);
  \draw[a](4) to node[la]{$(s,t)$}(3);
  \pullback{1};
\end{diagram*}
where $\bH\mathbbm 2$ is the free double category on a horizontal morphism and $\PPsd(-,-)$ is the double category described in \cref{def:inthomDbl}.
\end{definition}

  Note that $\ainl{\dblslice GF}{\Pi}{\bC\times \bB}$ is a \emph{strict} double functor.

\begin{remark} \label{def:pseudoslicedouble}
 We give an explicit description of the pseudo-comma double category in the case where $\bC=\mathbbm 1$ is the terminal category and $\ainl{\mathbbm 1}{G=I}{\bA}$ is an object in $\bA$. This is the double category $\dblslice I F$, called \textbf{pseudo-slice double category}, whose
\begin{rome}
  \item objects are pairs $(B,f)$ of an object $B\in \bB$ and a horizontal morphism $\ainl{I}{f}{FB}$ in $\bA$,
  \item horizontal morphisms $\ainl{(B,f)}{(b,\psi)}{(B',f')}$ comprise the data of a horizontal morphism $\ainl{B}{b}{B'}$ in $\bB$ and a vertically invertible square $\psi$ in $\bA$ of the form
\begin{diagram*}[,][4]
    \node(1)[]{$I$};
    \node(2)[below of=1]{$I$};
    \node(3)[right of=2]{$FB$};
    \node(4)[right of=3]{$FB'$};
    \node(5)[above of=4]{$FB'$};
    \draw[proequal] (1) to (2);
    \draw[proequal] (5) to (4);
    \draw[a] (1) to node[la] {$f'$} (5);
    \draw[a] (2) to node[swap,la] {$f$} (3);
    \draw[a] (3) to node[swap,la] {$Fb$} (4);
    \node[la,xshift=-5pt] at ($(1)!0.5!(4)$) {$\psi$};
    \node[la,xshift=5pt] at ($(1)!0.5!(4)$) {$\vcong$};
\end{diagram*}
    \item vertical morphisms $\vainl{(B,f)}{(u,\gamma)}{(C,g)}$ comprise the data of a vertical morphism $\vainl{B}{u}{C}$ in $\bB$ and a square $\gamma$ in $\bA$ of the form
\begin{diagram*}[,][3]
    \node(1)[]{$I$};
    \node(2)[below of=1]{$I$};
    \node(3)[right of=2]{$FC$};
    \node(4)[above of=3]{$FB$};
    \draw[proequal] (1) to (2);
    \draw[proarrow] (4) to node[la] {$Fu$} (3);
    \draw[a] (1) to node[la] {$f$} (4);
    \draw[a] (2) to node[swap,la] {$g$} (3);
    \node[la] at ($(1)!0.5!(3)$) {$\gamma$};
\end{diagram*}
    \item squares
\begin{diagram*}
    \node(1)[]{$(B,f)$};
    \node(2)[below of=1]{$(C,g)$};
    \node(3)[right of=2,xshift=.5cm]{$(C',g')$};
    \node(4)[above of=3]{$(B',f')$};
    \draw[proarrow] (1) to node[swap,la] {$(u,\gamma)$} (2);
    \draw[proarrow] (4) to node[la] {$(u',\gamma')$} (3);
    \draw[a] (1) to node[la] {$(b,\psi)$} (4);
    \draw[a] (2) to node[swap,la] {$(c,\varphi)$} (3);
    \node[la] at ($(1)!0.5!(3)$) {$\beta$};
\end{diagram*}
    comprise the data of a square $\sq{\beta}{b}{c}{u}{u'}$ in $\bB$ such that the following pasting equality holds in $\bA$.
\begin{diagram*}
    \node(1)[]{$I$};
    \node(2)[below of=1]{$I$};
    \node(3)[right of=2]{$FB$};
    \node(4)[right of=3]{$FB'$};
    \node(5)[above of=4]{$FB'$};
    \draw[proequal] (1) to (2);
    \draw[proequal] (5) to (4);
    \draw[a] (1) to node[la] {$f'$} (5);
    \draw[a] (2) to node[over,la] {$f$} (3);
    \draw[a] (3) to node[la] {$Fb$} (4);
    \node[la,xshift=-5pt] at ($(1)!0.5!(4)$) {$\psi$};
    \node[la,xshift=5pt] at ($(1)!0.5!(4)$) {$\vcong$};

    \node(6)[below of=2] {$I$};
    \node(7)[right of=6] {$FC$};
    \node (8)[right of=7] {$FC'$};
    \draw[proequal] (2) to (6);
    \draw[proarrow] (3) to node[la,swap] {$Fu$} (7);
    \draw[proarrow] (4) to node[la] {$Fu'$} (8);
    \draw[a] (6) to node[swap,la] {$g$} (7);
    \draw[a] (7) to node[swap,la] {$Fc$} (8);
    \node[la,xshift=-5pt] at ($(2)!0.5!(7)$) {$\gamma$};
    \node[la] at ($(3)!0.5!(8)$) {$F\beta$};

    \node(6)[right of=5] {$I$};
    \node at ($(8)!0.5!(6)$) {$=$};
    \node(1)[below of=6]{$I$};
    \node(2)[below of=1]{$I$};
    \node(3)[right of=2]{$FC$};
    \node(4)[right of=3]{$FC'$};
    \node(5)[above of=4]{$FC'$};
    \node(7)[above of=5]{$FB'$};
    \draw[proequal] (6) to (1);
    \draw[proarrow] (7) to node[la] {$Fu'$} (5);
    \draw[proequal] (1) to (2);
    \draw[proequal] (5) to (4);
    \draw[a] (6) to node[la] {$f'$} (7);
    \draw[a] (1) to node[la,over] {$g'$} (5);
    \draw[a] (2) to node[la,swap] {$g$} (3);
    \draw[a] (3) to node[la,swap] {$Fc$} (4);
    \node[la,xshift=-5pt] at ($(1)!0.5!(4)$) {$\varphi$};
    \node[la,xshift=5pt] at ($(1)!0.5!(4)$) {$\vcong$};
    \node[la] at ($(6)!0.5!(5)$) {$\gamma'$};
\end{diagram*}
\end{rome}

The double functor $\ainl{\dblslice I F}{\Pi}{\mathbbm 1\times \bB\cong \bB}$ is the projection onto the $\bB$-component.

If $\bB=\bA$ and $F=\id_\bA$, we write $\dblslice I\bA\coloneqq \dblslice{I}{\id_\bA}$.
\end{remark}

We now define the pseudo-comma $2$-category of a cospan of normal pseudo-functors, and also give an explicit description of the special case of a pseudo-slice $2$-category.

\begin{definition}
Let $G\colon \cC\to \cA$ and $F\colon \cB\to \cA$ be normal pseudo-functors. The \textbf{pseudo-comma $2$-category} $\slice G F$ is defined as the following pullback in \TwoCat.
\begin{diagram*}
  \node(1)[]{$\slice G F$};
  \node(2)[below of= 1]{$\cC\times \cB$};
  \node(3)[right of= 2, xshift=1cm]{$\cA\times \cA$};
  \node(4)[above of= 3]{$\Psd(\mathbbm{2},\cA)$};
  \draw[a](1)to node[swap,la] {$\pi$} (2);
  \draw[a](2)to node[swap,la]{$(G,F)$}(3);
  \draw[a](1)to (4);
  \draw[a](4) to node[la]{$(s,t)$}(3);
  \pullback{1};
\end{diagram*}
where $\mathbbm 2$ is the free $2$-category on a morphism and $\Psd(-,-)$ is the $2$-category described in \cref{def:inthom2Cat}.
\end{definition}

  Note that $\ainl{\slice GF}{\pi}{\cC\times \cB}$ is a \emph{strict} $2$-functor.

\begin{remark} \label{def:pseudoslice2}
  We give an explicit description of the pseudo-comma $2$-category in the case where $\cC=\mathbbm 1$ is the terminal category and $\ainl{\mathbbm 1}{G=I}{\cA}$ is an object in $\cA$. This is the $2$-category $\slice{I}{F}$, called a \textbf{pseudo-slice $2$-category}, whose
  \begin{rome}
  \item objects are pairs $(B,f)$ of an object $B\in \cB$ and a morphism $\ainl I f {FB}$ in $\cA$,
  \item morphisms $\ainl{(B,f)}{(b,\psi)}{(B',f')}$ comprise the data of a morphism $\ainl B b{B'}$ in $\cB$ and a $2$-isomorphism $\psi$ in $\cA$ of the form
\begin{diagram*}[,][3][node distance=1.8cm]
  \node(1)[]{$I$};
  \node(2)[right of =1]{$FB$};
  \node(3)[below of =2]{$FB'$};
  \draw[a](1) to node[la]{$f$} (2);
  \draw[a](1) to node(x)[swap,la]{$f'$} (3);
  \draw[a](2) to node[la]{$Fb$} (3);
  \celli[la]{x}{2}{$\psi$};
\end{diagram*}
  \item $2$-morphisms $\ninl {(b,\psi)}{\beta}{(c,\varphi)}$ comprise the data of a $2$-morphism $\ninl{b}{\beta}{c}$ in $\cB$ such that the following pasting equality holds in $\cA$.
\begin{diagram*}[][][node distance=1.8cm]
    \node(1)[]{$I$};
    \node(2)[right of= 1]{$FB$};
    \node(3)[below of= 2]{$FB'$};
    \draw[a](2)to node(a)[la,over]{$Fb$}(3);
    \draw[a, bend left=60](2)to node(b)[la,right]{$Fc$}(3);
    \draw[a](1)to node(f)[la,swap]{$f$}(3);
    \draw[a](1)to node(x')[la]{$f'$}(2);
    \cell[la,xshift=-.2cm][n][.45]{a}{b}{$F\beta$};
    \celli[la,shrink]{f}{2}{$\psi$};

    \node(4)[right of= a,xshift=.2cm]{$=$};

    \node(1)[right of= 2, xshift=.75cm]{$I$};
    \node(2)[right of= 1]{$FB$};
    \node(3)[below of= 2]{$FB'$};
    \draw[a](2)to node(a)[la]{$Fc$}(3);
    \draw[a](1)to node[la]{$f'$}(2);
    \draw[a](1)to node(y)[swap,la]{$f$}(3);
    \celli[la,shrink]{y}{2}{$\varphi$};
  \end{diagram*}
\end{rome}

The $2$-functor $\ainl{\slice I F}{\pi}{\mathbbm 1\times\cB\cong \cB}$ is the projection onto the $\cB$-component.

If $\cB=\cA$ and $F=\id_\cA$, we write $\slice I\cA\coloneqq \slice{I}{\id_\cA}$ with $I\in \cA$.
\end{remark}

\begin{remark}
Given the explications of \Cref{def:pseudoslicedouble,def:pseudoslice2}, we wish to draw the reader's attention to an important disparity between the double categories $\dblslice I{\bH F}$ and $\bH(\slice I F)$, for a $2$-functor $\ainl \cB F\cA$ and an object $I\in \cA$. While the latter double category has only trivial vertical morphisms, the former has all $2$-morphisms of \cA of the form $\ninl f\gamma g$ for $\ainl I {f,g}{FB}$ as vertical morphisms -- a far richer stock of information. This is symptomatic of a broader truth: the double category $\PPsd(\bH\mathbbm2,\bH\cA)$ has all $2$-morphisms of $\cA$ as vertical morphisms, while $\bH\Psd(\mathbbm 2,\cA)$ is the double category associated to its underlying horizontal $2$-category $\Psd(\mathbbm 2,\cA)=\cH\PPsd(\bH\mathbbm2,\bH\cA)$ and therefore has only trivial vertical morphisms.
\end{remark}

While \bH thus does not preserve pseudo-comma objects, the main result of this section says that the functors $\cH$ and $\SV$ do preserve pseudo-comma objects, in the sense that the $2$-category obtained by applying $\cH$ or $\SV$ to the pseudo-comma double category associated to a cospan is isomorphic to the pseudo-comma $2$-category of the image under $\cH$ or $\SV$ of the original cospan. This is the content of the following proposition and the rest of the section will be devoted to its proof.

\begin{proposition}\label{lem:h-sv-general-commas}
  Let $\ainl{\bC}{G}{\bA}$ and $\ainl{\bB}{F}{\bA}$ be normal pseudo-double functors. Then there are canonical isomorphisms of $2$-categories as in the following commutative squares.
  \begin{center}
    \begin{tikzpicture}
      \node(1)[]{$\cH(\dblslice{G}{F})$};
      \node(2)[right of= 1,xshift=1cm]{$\slice{\cH G}{\cH F}$};
      \node(3)[below of= 1]{$\cH(\bC\times\bB)$};
      \node(4)[below of= 2]{$\cH\bC\times\cH\bB$};
      \draw[a](1)to node[la]{$\iso$}(2);
      \draw[a](1)to node[la,swap]{$\cH\Pi$}(3);
      \draw[a](2)to node[la]{$\pi$}(4);
      \draw[a](3)to node[la,swap]{$\iso$}(4);
    \end{tikzpicture}\hspace{2cm}
    \begin{tikzpicture}
      \node(1)[]{$\SV(\dblslice{G}{F})$};
      \node(2)[right of= 1,xshift=1cm]{$\slice{\SV G}{\SV F}$};
      \node(3)[below of= 1]{$\SV(\bC\times\bB)$};
      \node(4)[below of= 2]{$\SV\bC\times\SV\bB$};
      \draw[a](1)to node[la]{$\iso$}(2);
      \draw[a](1)to node[la,swap]{$\SV\Pi$}(3);
      \draw[a](2)to node[la]{$\pi$}(4);
      \draw[a](3)to node[la,swap]{$\iso$}(4);
    \end{tikzpicture}
  \end{center}
\end{proposition}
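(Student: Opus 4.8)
The plan is to exploit the fact that both $\cH$ and $\SV$ are right adjoints — hence preserve all limits, in particular the pullback of \DblCat defining $\dblslice GF$ — and reduce the statement to a handful of identifications of $2$-categories. First I would record that $\cH$ preserves products on the nose, so $\cH(\bC\times\bB)\iso\cH\bC\times\cH\bB$, and similarly for $\SV$; this gives the bottom isomorphisms of both squares and is essentially immediate from \Cref{defn:cH,defn:SVA}. Next, applying $\cH$ (resp.\ $\SV$) to the defining pullback square of \Cref{def:double-comma} and using limit-preservation, $\cH(\dblslice GF)$ (resp.\ $\SV(\dblslice GF)$) is the pullback in \TwoCat of the cospan obtained by applying $\cH$ (resp.\ $\SV$) everywhere. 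Comparing with the pullback defining $\slice{\cH G}{\cH F}$ (resp.\ $\slice{\SV G}{\SV F}$), the statement then reduces to producing a canonical isomorphism
\[
  \cH\,\PPsd(\bH\mathbbm 2,\bA)\iso \Psd(\mathbbm 2,\cH\bA)
  \qquad\text{and}\qquad
  \SV\,\PPsd(\bH\mathbbm 2,\bA)\iso \Psd(\mathbbm 2,\SV\bA)
\]
compatible with the source/target $2$-functors $(s,t)$ on each side. The first of these is already asserted in the excerpt (the remark following \Cref{def:pseudoslice2} states $\Psd(\mathbbm 2,\cA)=\cH\PPsd(\bH\mathbbm 2,\bH\cA)$, and the same computation works with a general double category $\bA$ in place of $\bH\cA$), so the genuinely new content is the $\SV$-version.

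For the $\SV$ identification I would unwind both sides explicitly. On one side, $\SV\PPsd(\bH\mathbbm 2,\bA)=\cH\,\PPsd(\bV\mathbbm 2,\PPsd(\bH\mathbbm 2,\bA))$ by \Cref{defn:SVA}; using that $\PPsd(-,-)$ is an internal hom for $\DblCat$ (so $\PPsd(\bV\mathbbm 2,\PPsd(\bH\mathbbm 2,\bA))\iso\PPsd(\bV\mathbbm 2\times\bH\mathbbm 2,\bA)\iso\PPsd(\bH\mathbbm 2\times\bV\mathbbm 2,\bA)\iso\PPsd(\bH\mathbbm 2,\PPsd(\bV\mathbbm 2,\bA))$), and then applying $\cH$, this becomes $\cH\PPsd(\bH\mathbbm 2,\PPsd(\bV\mathbbm 2,\bA))$. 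On the other side, $\Psd(\mathbbm 2,\SV\bA)=\cH\PPsd(\bH\mathbbm 2,\bH\SV\bA)=\cH\PPsd(\bH\mathbbm 2,\bH\cH\PPsd(\bV\mathbbm 2,\bA))$, and one checks that replacing $\bH\cH(-)$ by the identity is harmless here because $\cH$ only sees horizontal morphisms and squares with trivial vertical boundary, which are unaffected by the counit $\bH\cH\PPsd(\bV\mathbbm 2,\bA)\to\PPsd(\bV\mathbbm 2,\bA)$. Matching these against the explicit description of $\SV\bA$ in \Cref{defn:SVA}: an object of $\SV(\dblslice GF)$ is a vertical morphism of $\dblslice GF$, i.e.\ by \Cref{def:pseudoslicedouble} a triple consisting of vertical morphisms of $\bC$, $\bB$ and a filler square in $\bA$ — which is exactly an object of $\slice{\SV G}{\SV F}$ once one uses that vertical morphisms of $\PPsd(\bH\mathbbm 2,\bA)$ are the "naturality squares" that encode morphisms in $\Psd(\mathbbm 2,-)$; and similarly for morphisms and $2$-morphisms, the $2$-morphisms of $\SV$ being pairs of squares with trivial vertical boundary satisfying a pasting law, which correspond on the nose to the $2$-morphisms of $\slice{\SV G}{\SV F}$ spelled out via \Cref{def:pseudoslice2}.

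The routine but slightly laborious part — and the main obstacle — is checking the commutativity of the two squares, i.e.\ that the isomorphism constructed abstractly from limit-preservation agrees with the one obtained by the explicit object/morphism/$2$-morphism matching, and in particular that it is compatible with the projections $\cH\Pi$, $\pi$ (resp.\ $\SV\Pi$, $\pi$) and the bottom product isomorphisms. This is ultimately bookkeeping: both isomorphisms are determined by where they send objects, horizontal/vertical morphisms, and squares, and one verifies they send generators to the same targets, then invokes that a morphism out of a pullback (or into an internal hom) is determined by its components. I would present the product case and the $\cH$-internal-hom case tersely (citing the remark after \Cref{def:pseudoslice2}) and spend the bulk of the written proof on the explicit $\SV\PPsd(\bH\mathbbm 2,-)\iso\Psd(\mathbbm 2,\SV-)$ identification, since that is where all the genuine content lies; the passage from there to the proposition is then formal via the preservation of pullbacks and products by the right adjoints $\cH$ and $\SV$.
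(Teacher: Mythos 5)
Your overall architecture is the same as the paper's: you use that $\cH$ and $\SV$ are right adjoints, hence preserve the defining pullback of \cref{def:double-comma} and products, and thereby reduce the proposition to natural identifications $\cH\PPsd(\bH\mathbbm 2,\bA)\iso\Psd(\mathbbm 2,\cH\bA)$ and $\SV\PPsd(\bH\mathbbm 2,\bA)\iso\Psd(\mathbbm 2,\SV\bA)$ compatible with $(s,t)$; these are precisely the paper's \cref{lem:annoying,cor:alsoannoying} (applied to $\mathbbm 2$ and to $\mathbbm 1\sqcup\mathbbm 1$, with naturality in the $2$-category variable).

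The gap is in how you justify the crucial $\SV$-identification. You obtain the interchange $\PPsd(\bV\mathbbm 2,\PPsd(\bH\mathbbm 2,\bA))\iso\PPsd(\bH\mathbbm 2,\PPsd(\bV\mathbbm 2,\bA))$ by asserting that $\PPsd(-,-)$ is an internal hom for $\DblCat$ and currying through $\bV\mathbbm 2\times\bH\mathbbm 2$. The paper neither states nor uses such a closed structure, and it is not available in this pseudo setting: for morphisms that are pseudo in the horizontal direction, a normal pseudo-double functor out of a cartesian product is not the same thing as one into a $\PPsd$-hom (this is the usual cubical/Gray-tensor obstruction), so the chain of currying isomorphisms you invoke is unjustified, and your argument would prove an isomorphism of \emph{double} categories where the paper only claims, and only can claim, an isomorphism after applying $\cH$. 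The paper is explicit about this danger: \cref{lem:morethanannoying} establishes exactly the $\cH$-image of your interchange, proves it by unwinding the data by hand, and warns that the statement fails in general if $\bV\mathbbm 2$ or $\bH\cC$ is replaced by a double category with non-trivial morphisms in the other direction -- i.e.\ it is a special feature of these shapes, not an instance of cartesian closedness. Your fallback of directly matching objects, morphisms and $2$-morphisms of $\SV(\dblslice{G}{F})$ against $\slice{\SV G}{\SV F}$ would repair this, but that explicit verification (for the general pseudo-comma, not just the slice of \cref{def:pseudoslicedouble}, including the $2$-morphism level and compatibility with the projections) is exactly where the real content lies; as written it is only gestured at, while the load-bearing step rests on the internal-hom claim.
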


To prove this we first show that the functors $\cH$ and $\SV$ behave well with respect to $\Psd$ and $\PPsd$.

\begin{lemma}\label{lem:annoying}
  For every $2$-category $\cC$ and every double category $\bA$, there is an isomorphism of $2$-categories
 \[ \Psd(\cC, \cH \bA)\iso\cH\PPsd(\bH\cC,\bA) \]
 natural in $\cC$ and $\bA$.
\end{lemma}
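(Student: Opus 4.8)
The plan is to build the isomorphism by unwinding the definitions of $\Psd$ and $\PPsd$ on objects, $1$-cells, and $2$-cells, exploiting throughout that $\bH\cC$ has only trivial vertical morphisms, so that most of the vertical data in $\PPsd(\bH\cC,\bA)$ is either absent or forced. On objects there is nothing new to do: an object of $\Psd(\cC,\cH\bA)$ is a normal pseudo-functor $\cC\to\cH\bA$, an object of $\cH\PPsd(\bH\cC,\bA)$ is a normal pseudo-double functor $\bH\cC\to\bA$, and these are identified by the bijection $\DblCat(\bH\cC,\bA)\iso\TwoCat(\cC,\cH\bA)$ from the proof of \cref{lem:HHadjunction} --- which is precisely the statement that \cref{defn:pseudodouble}, specialised along a double category with trivial vertical morphisms, reduces to \cref{defn:pseudo}. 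Fix normal pseudo-functors $F,G\colon\cC\to\cH\bA$ with corresponding normal pseudo-double functors $\bar F,\bar G\colon\bH\cC\to\bA$.

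For $1$-cells, a morphism $\bar F\to\bar G$ in $\cH\PPsd(\bH\cC,\bA)$ is a horizontal pseudo-natural transformation $\alpha\colon\bar F\Rightarrow\bar G$ in the sense of \cref{def:horizontal-pseudo-natural-transformation}. Since the vertical morphisms of $\bH\cC$ are exactly the identities $\vid_A$, the only square components $\alpha_u$ are the $\alpha_{\vid_A}$, and coherence with vertical identities forces $\alpha_{\vid_A}=\vid_{\alpha_A}$, so these contribute no data. What survives is: a horizontal morphism $\alpha_A\colon\bar FA\to\bar GA$ of $\bA$, i.e.\ a morphism $\alpha_A\colon FA\to GA$ of $\cH\bA$, for each $A$; and, for each morphism $a$ of $\cC$ (a horizontal morphism of $\bH\cC$), a vertically invertible square $\alpha_a$ with trivial vertical boundaries. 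Such a square is exactly a $2$-isomorphism $(Ga)\alpha_A\xnat{\iso}\alpha_{A'}(Fa)$ in $\cH\bA$: a square with trivial vertical sides is a $2$-morphism of $\cH\bA$ by \cref{defn:cH}, and it is vertically invertible if and only if it is invertible there, since a vertical inverse is forced by the unit law to again have trivial vertical boundaries. Finally, the horizontal naturality and unitality conditions on $\alpha$ from \cref{def:horizontal-pseudo-natural-transformation}, instantiated at the squares of $\bH\cC$ (the $2$-morphisms of $\cC$) and at the compositors and unitors of $\bar F,\bar G$ (those of $F$ and $G$), match term by term the axioms of a pseudo-natural transformation $F\Rightarrow G$ in \cref{def:pseudo-natural-transformation}. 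Hence $1$-cells correspond bijectively.

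The $2$-cells are handled the same way. A $2$-morphism $\alpha\Rightarrow\beta$ in $\cH\PPsd(\bH\cC,\bA)$ is, by \cref{defn:cH}, a square of $\PPsd(\bH\cC,\bA)$ with trivial vertical boundaries, that is, a modification $\alpha\to\beta$ of horizontal pseudo-natural transformations whose vertical sides are identity vertical natural transformations; its data reduces, by the same triviality, to a square $\Gamma_A$ of $\bA$ with trivial vertical boundaries for each $A$ --- a $2$-morphism $\alpha_A\Rightarrow\beta_A$ of $\cH\bA$ --- subject to the compatibility with the $2$-isomorphism components $\alpha_a,\beta_a$, which is exactly the modification axiom of \cref{def:modification}. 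One then checks directly that horizontal composition of horizontal pseudo-natural transformations and vertical composition of modifications in $\PPsd(\bH\cC,\bA)$ transport, under these bijections, to composition and vertical composition of pseudo-natural transformations and modifications in $\Psd(\cC,\cH\bA)$ (identities are visibly matched). This produces a $2$-functor that is bijective on objects, $1$-cells, and $2$-cells and strictly preserves all the structure, hence an isomorphism of $2$-categories.

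Naturality in $\cC$ and $\bA$ is then formal: every step of the correspondence is literally a relabelling of the same underlying diagram data in $\bA$, and the operations $\bH$, $\cH$, $\Psd(-,-)$, $\PPsd(-,-)$ are (co)functorial, so pre- and post-composition with a normal pseudo-functor $\cC'\to\cC$ or a normal pseudo-double functor $\bA\to\bA'$ commutes with the isomorphism on the nose. The only genuine labour I anticipate is the bookkeeping in the $1$-cell step: carefully matching the axioms of a Grandis-style (direction-swapped) horizontal pseudo-natural transformation with those of an ordinary pseudo-natural transformation, in particular confirming that coherence with vertical identities is exactly what eliminates the $\alpha_{\vid_A}$ and that the composition orders and the directions of the coherence $2$-cells align. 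This is routine, but it is where a sign-type error would hide.
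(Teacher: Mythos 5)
Your proposal is correct and follows essentially the same route as the paper's proof: identify objects via the adjunction $\bH\dashv\cH$ of \cref{lem:HHadjunction}, then unpack horizontal pseudo-natural transformations and modifications, observing that the triviality of vertical morphisms in $\bH\cC$ forces the vertically-indexed square components to be identities so that the remaining data matches \cref{def:pseudo-natural-transformation,def:modification}. Your write-up simply carries out in more detail the definitional bookkeeping the paper leaves to the reader.
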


\begin{proof}
Recall from \cref{lem:HHadjunction} that $\bH\dashv \cH$ form an adjunction between $\TwoCat$ and $\DblCat$, so that we have an isomorphism at the level of objects which is natural in $\cC$ and $\bA$. By unpacking the definition of a horizontal pseudo-natural transformation (see \cref{def:horizontal-pseudo-natural-transformation}) between normal pseudo-double functors $\bH\cC\to \bA$, we can see that such data corresponds to that of a pseudo-natural transformation (see \cref{def:pseudo-natural-transformation}) between normal pseudo-functors $\cC\to \cH\bA$ as the squares in \cref{defn:pseudo-double-unit-squares} are all trivial given that all vertical morphisms of $\bH\cC$ are identities. Similarly, one can check that, using \cref{def:modification} and \cite[Definition 3.8.3]{Grandis}, the $2$-morphisms of these $2$-categories coincide.
\end{proof}

In order to give the next result, an analogous statement for $\SV$, we will make use of the following technical lemma.

\begin{lemma}\label{lem:morethanannoying}
For every $2$-category $\cC$ and every double category $\bA$, there are isomorphisms of $2$-categories
  \[\cH\PPsd(\bH\cC,\PPsd(\bV\mathbbm 2,\bA))\cong \cH \PPsd(\bV\mathbbm 2,\PPsd(\bH\cC,\bA))\]
   natural in $\cC$ and $\bA$.
\end{lemma}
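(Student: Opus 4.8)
The plan is to identify both sides with the $2$-category $\Psd(\cC,\SV\bA)$, so that the two identifications compose to the desired isomorphism. For the left-hand side, recall from \cref{defn:SVA} that $\SV=\cH\circ\PPsd(\bV\mathbbm 2,-)$, and in particular $\cH\PPsd(\bV\mathbbm 2,\bA)=\SV\bA$. Applying \cref{lem:annoying} with the double category there taken to be $\PPsd(\bV\mathbbm 2,\bA)$ --- and using that $\PPsd(\bV\mathbbm 2,-)$ is functorial, so that naturality in $\bA$ survives --- yields a natural isomorphism of $2$-categories
\[
  \cH\PPsd\bigl(\bH\cC,\PPsd(\bV\mathbbm 2,\bA)\bigr)\;\cong\;\Psd\bigl(\cC,\cH\PPsd(\bV\mathbbm 2,\bA)\bigr)\;=\;\Psd(\cC,\SV\bA).
\]

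For the right-hand side, note that by the very definition of $\SV$ we have $\cH\PPsd\bigl(\bV\mathbbm 2,\PPsd(\bH\cC,\bA)\bigr)=\SV\PPsd(\bH\cC,\bA)$, so it remains to produce a natural isomorphism $\SV\PPsd(\bH\cC,\bA)\cong\Psd(\cC,\SV\bA)$. I would establish this by a direct unpacking, dual in spirit to the proof of \cref{lem:annoying}. An object of $\SV\PPsd(\bH\cC,\bA)$ is a vertical strict natural transformation $v$ between a pair of normal pseudo-double functors $\bH\cC\to\bA$; spelling out its data, and using that every vertical morphism of $\bH\cC$ is an identity so that the vertical-strictness clauses become vacuous, one reads off precisely a normal pseudo-functor $\cC\to\SV\bA$: the object-components $v_C$ give the values on objects of $\cC$, the square-components $v_c$ indexed by the horizontal morphisms $c$ of $\bH\cC$ give the values on morphisms of $\cC$, and the compatibility of $v$ with the horizontal compositors of the two pseudo-double functors supplies the compositor $2$-morphisms in $\SV\bA$, which are exactly pairs of squares satisfying the pasting equality of \cref{2morinSV}. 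The same bookkeeping one dimension up matches the morphisms of $\SV\PPsd(\bH\cC,\bA)$ --- the squares, i.e.\ modifications, of $\PPsd(\bH\cC,\bA)$ --- with pseudo-natural transformations $\cC\to\SV\bA$, and its $2$-morphisms with modifications; each of these correspondences is manifestly natural in $\cC$ and $\bA$. Composing with the isomorphism from the previous paragraph completes the proof.

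The main obstacle is the verification in this second step: one must check, clause by clause, that the coherence axioms of a vertical strict natural transformation (respectively a modification of the relevant shape) between normal pseudo-double functors $\bH\cC\to\bA$ correspond to those of a normal pseudo-functor (respectively a pseudo-natural transformation, respectively a modification) valued in $\SV\bA$, keeping careful track of where the horizontal compositors of the pseudo-double functors enter and of the pasting equality of \cref{2morinSV} defining $2$-morphisms of $\SV\bA$. No genuinely new idea is needed beyond the proof of \cref{lem:annoying}, but --- as the name of the lemma warns --- the verification here is more laborious, since now the vertical and square data of $\bA$ is genuinely in play rather than being trivialised. One could sidestep the unpacking by first establishing a ``Fubini'' isomorphism of double categories $\PPsd\bigl(\bH\cC,\PPsd(\bV\mathbbm 2,\bA)\bigr)\cong\PPsd\bigl(\bV\mathbbm 2,\PPsd(\bH\cC,\bA)\bigr)$ and then applying $\cH$; but since only the $2$-categorical statement is used downstream, the route above is the more economical one.
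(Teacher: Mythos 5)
Your proposal is correct, but it organises the argument differently from the paper. The paper proves this lemma by a direct Fubini-style unpacking: it expands the data of a normal pseudo-double functor $\bH\cC\to\PPsd(\bV\mathbbm 2,\bA)$ and observes that it is precisely a pair of normal pseudo-double functors $F,G\colon\bH\cC\to\bA$ together with a vertical natural transformation $\nu\colon F\Rightarrow G$, i.e.\ an object of $\cH\PPsd(\bV\mathbbm 2,\PPsd(\bH\cC,\bA))$, and then leaves the morphism and $2$-morphism levels to the reader; it then deduces the identification $\Psd(\cC,\SV\bA)\iso\SV\PPsd(\bH\cC,\bA)$ (\cref{cor:alsoannoying}) as a consequence of \cref{lem:annoying} and the present lemma. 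You invert this dependency: you prove the content of \cref{cor:alsoannoying} directly, by matching a vertical transformation between normal pseudo-double functors $\bH\cC\to\bA$ against a normal pseudo-functor $\cC\to\SV\bA$ (with the compositors landing as pairs of squares satisfying the pasting equality of \cref{2morinSV}), and then obtain the stated lemma by composing with \cref{lem:annoying} applied to $\PPsd(\bV\mathbbm 2,\bA)$. There is no circularity, since your proof of the $\SV\PPsd(\bH\cC,\bA)\iso\Psd(\cC,\SV\bA)$ step is by hand and does not invoke the lemma being proved; and the clause-by-clause verification you defer is of essentially the same size and nature as the one the paper defers (indeed the same coherence data --- $F\gamma$, $G\gamma$, the compositor squares, and the naturality/compatibility axioms of $\nu$ --- is what gets matched in both treatments). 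What your route buys is that both sides of the hard comparison are fairly concrete $2$-categories, and that the Yoneda-free statement you actually verify is the one used downstream; what it costs is that the naturality in $\bA$ now passes through the functoriality of $\PPsd(\bV\mathbbm 2,-)$, which you correctly flag. Your closing remark about a double-categorical Fubini isomorphism is only an aside and you rightly do not rely on it, since the asymmetry between horizontally pseudo and vertically strict transformations makes that stronger statement delicate.
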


\begin{proof}
  That this result holds is due to the appearance of $\bH\cC$ and $\bV\mathbbm2$ in its statement. Should we replace $\bV\mathbbm 2$ by a more general double category with non-trivial horizontal morphisms, or replace $\bH\cC$ by a more general double category with non-trivial vertical morphisms, the result would fail to hold in general.

  We show that we have an isomorphism on objects. A normal pseudo-double functor $\ainl{\bH\cC}\nu{\PPsd(\bV\mathbbm2,\bA)}$ assigns to each object of $C\in\cC$ a vertical morphism $\vainl{FC}{\nu}{GC}$ in $\bA$, to each morphism $\ainl{C}{c}{C'}$ in $\cC$ a square $\sq{\nu_c}{Fc}{Gc}{\nu_C}{\nu_{C'}}$ in $\bA$, to each $2$-morphism $\ninl{c}{\gamma}{c'}$ in $\cC$ two squares $F\gamma$ and $G\gamma$ in $\bA$ which satisfy the following pasting equality,
  \begin{diagram*}
    \node(1)[]{$FC$};
    \node(2)[below of= 1]{$FC$};
    \draw[proequal](1)to (2);
    \node(3)[right of= 1]{$FC'$};
    \node(4)[below of= 3]{$FC'$};
    \draw[proequal](3)to (4);
    \node (5)[below of= 2]{$GC$};
    \node (6)[right of= 5]{$GC'$};
    \draw[a](1)to node[la]{$Fc$}(3);
    \draw[a](2)to node[la]{$Fc'$}(4);
    \node[la] at ($(1)!0.5!(4)$) {$F\gamma$};
    \draw[proarrow](2) to node[la,swap]{$\nu_C$}(5);
    \draw[proarrow](4) to node[la]{$\nu_{C'}$}(6);
    \draw[a](5)to node[swap,la]{$Gc'$}(6);
    \node[la] at ($(2)!0.5!(6)$) {$\nu_{c'}$};

    \node(1)[right of= 4]{$GC$};
    \node at ($(4)!0.5!(1)$) {$=$};
    \node(2)[below of= 1]{$GC$};
    \draw[proequal](1)to (2);
    \node(3)[right of= 1]{$GC'$};
    \node(4)[below of= 3]{$GC'$};
    \draw[proequal](3)to (4);
    \draw[a](1)to node[la]{$Gc$}(3);
    \draw[a](2)to node[swap,la]{$Gc'$}(4);
    \node[la] at ($(1)!0.5!(4)$) {$G\gamma$};
    \node(5)[above of=1]{$FC$};
    \node(6)[right of=5]{$FC'$};
    \draw[proarrow](5) to node[swap,la]{$\nu_C$}(1);
    \draw[proarrow](6) to node[la]{$\nu_{C'}$}(3);
    \draw[a](5) to node[la]{$Fc$}(6);
    \node[la] at ($(5)!0.5!(3)$) {$\nu_c$};
\end{diagram*}
  and to each pair of composable morphisms $\ainl{C}{c}{C'}$ and $\ainl{C}{c'}{C''}$ in $\cC$, two vertically invertible squares $\phi_{c,c'}$ and $\psi_{c,c'}$ in $\bA$ which satisfy the following pasting equality.
\begin{diagram*}
    \node[](1) {$GC$};
    \node[right of=1](5) {$GC'$};
    \node[below of=1](2) {$GC$};
    \node[right of=5](3) {$GC''$};
    \node[below of=3](4) {$GC''$};
    \node[above of=1](1') {$FC$};
    \node[above of=5](5') {$FC'$};
    \node[above of=3](3') {$FC''$};
    \draw[proequal] (1) to (2);
    \draw[proequal] (3) to (4);
    \draw[proarrow] (1') to node[swap,la]{$\nu_C$} (1);
    \draw[proarrow] (5') to node[swap,la]{$\nu_{C'}$} (5);
    \draw[proarrow] (3') to node[la]{$\nu_{C''}$} (3);
    \draw[a] (1') to node[la]{$Fc$}(5');
    \draw[a] (5') to node[la]{$Fc'$}(3');
    \draw[a] (1) to node[la]{$Gc$}(5);
    \draw[a] (5) to node[la]{$Gc'$}(3);
    \draw[a] (2) to node [la,swap]{$G(c'c)$} (4);
    \node[la,xshift=-8pt] at ($(1)!0.5!(4)$) {$\psi_{c,c'}$};
    \node[la,xshift=8pt] at ($(1)!0.5!(4)$) {$\vcong$};

    \path (1') -- node[la,auto=false,xshift=-5pt]{$\nu_c$}(5);
    \path(5') --node[la,auto=false]{$\nu_{c'}$} (3);
    \node[right of=3'](1) {$FC$};
    \node[below of=1](2) {$FC$};
    \path(3) --node[auto=false]{$=$} (2);
    \node[right of=1](5) {$FC'$};
    \node[right of=5](3) {$FC''$};
    \node[below of=3](4) {$FC''$};
    \node[below of=2](2') {$GC$};
    \node[below of=4](4') {$GC''$};
    \draw[proequal] (1) to (2);
    \draw[proequal] (3) to (4);
    \draw[proarrow] (2) to node[swap,la]{$\nu_C$} (2');
    \draw[proarrow] (4) to node[la]{$\nu_{C''}$} (4');
    \draw[a] (1) to node[la]{$Fc$}(5);
    \draw[a] (5) to node[la]{$Fc'$}(3);
    \draw[a] (2) to node [la,over]{$F(c'c)$} (4);
    \draw[a] (2') to node [la,swap]{$G(c'c)$} (4');
    \node[la,xshift=-8pt] at ($(1)!0.5!(4)$) {$\phi_{c,c'}$};
    \node[la,xshift=8pt] at ($(1)!0.5!(4)$) {$\vcong$};

    \path (2) -- node[la,auto=false]{$\nu_{c'c}$}(4');
\end{diagram*}

One can check that not only is this data precisely the underlying data of two normal pseudo-double functors $\ainl{\bH\cC}{F,G}{\bA}$ together with a vertical natural transformation $\ninl F\nu G$ between them, but also that the various laws governing the compositors of $F,G$ hold. The two diagrams above already demonstrate that $\nu$ is natural.

We leave it to the reader to check that the morphisms and $2$-morphisms of the $2$-categories considered also coincide under this identification.
\end{proof}

\begin{lemma} \label{cor:alsoannoying}
  For every $2$-category $\cC$ and every double category $\bA$, there is an isomorphism of $2$-categories
 \[ \Psd(\cC, \SV \bA)\iso\SV\PPsd(\bH\cC,\bA) \]
 natural in $\cC$ and $\bA$.
\end{lemma}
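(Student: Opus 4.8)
The plan is to obtain the claimed isomorphism purely formally, by chaining together the definition of $\SV$ with \cref{lem:annoying} and \cref{lem:morethanannoying}. First recall from \cref{defn:SVA} that $\SV\bB=\cH\PPsd(\bV\mathbbm 2,\bB)$ for every double category $\bB$; applying this to $\bB=\bA$ gives
\[\Psd(\cC,\SV\bA)=\Psd(\cC,\cH\PPsd(\bV\mathbbm 2,\bA)).\]

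Next I would apply \cref{lem:annoying} with its double-category argument taken to be $\PPsd(\bV\mathbbm 2,\bA)$, yielding a natural isomorphism
\[\Psd(\cC,\cH\PPsd(\bV\mathbbm 2,\bA))\iso \cH\PPsd(\bH\cC,\PPsd(\bV\mathbbm 2,\bA)).\]
Then \cref{lem:morethanannoying} supplies a natural isomorphism
\[\cH\PPsd(\bH\cC,\PPsd(\bV\mathbbm 2,\bA))\iso \cH\PPsd(\bV\mathbbm 2,\PPsd(\bH\cC,\bA)),\]
and, by \cref{defn:SVA} once more, the right-hand side is precisely $\SV\PPsd(\bH\cC,\bA)$. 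Composing these three identifications produces the isomorphism $\Psd(\cC,\SV\bA)\iso\SV\PPsd(\bH\cC,\bA)$.

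For naturality, I would check that each of the three steps is natural in $\cC$ and in $\bA$. The two equalities coming from \cref{defn:SVA} are natural because $\SV$, $\cH$, and $\PPsd(\bV\mathbbm 2,-)$ are functors. The isomorphism of \cref{lem:annoying} is natural in $\cC$ and in its double-category argument; substituting that argument by $\PPsd(\bV\mathbbm 2,\bA)$ and using that $\PPsd(\bV\mathbbm 2,-)\colon\DblCat\to\DblCat$ is a functor, it remains natural in $\bA$. The isomorphism of \cref{lem:morethanannoying} is already stated to be natural in $\cC$ and $\bA$. A composite of natural isomorphisms is a natural isomorphism, so the result follows.

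There is no genuine obstacle here: the statement is a bookkeeping consequence of the two preceding lemmas and the definition of $\SV$. The only point requiring any care is tracking the naturality variables through the substitution of $\PPsd(\bV\mathbbm 2,\bA)$ into \cref{lem:annoying}, which is dispatched by the functoriality of $\PPsd(\bV\mathbbm 2,-)$ noted above.
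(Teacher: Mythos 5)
Your proof is correct and follows exactly the paper's argument: unfold $\SV$ via \cref{defn:SVA}, apply \cref{lem:annoying} with double-category argument $\PPsd(\bV\mathbbm 2,\bA)$, then \cref{lem:morethanannoying}, and refold $\SV$. Your explicit tracking of naturality through the substitution is a slight elaboration of what the paper leaves implicit, but the route is the same.
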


\begin{proof}
We have the following isomorphisms
\begin{align*}
    \Psd(\cC, \SV\bA) & = \Psd(\cC, \cH\PPsd(\bV\mathbbm 2,\bA)) & \text{(definition of } \SV) \\
    & \cong \cH \PPsd(\bH\cC,\PPsd(\bV\mathbbm 2,\bA)) & \text{(\cref{lem:annoying})}  \\
    & \cong \cH \PPsd(\bV\mathbbm 2,\PPsd(\bH\cC,\bA)) & \text{(\cref{lem:morethanannoying})} \\
    & = \SV \PPsd(\bH\cC,\bA). & \text{(definition of } \SV)
\end{align*}
natural in $\cC$ and $\bA$.
\end{proof}

The proof of \cref{lem:h-sv-general-commas} now follows from these results and the fact that $\cH$ and $\SV$ are right adjoints, and therefore preserve limits.

\begin{proof}[\cref{lem:h-sv-general-commas}]
Let us consider the following diagram.
\begin{diagram*}
  \node(1)[]{$\slice {\cH G}{\cH F}$};
  \node(2)[below of= 1]{$\cH\bC\times \cH\bB$};
  \node(8)[left of=2,xshift=-1.5cm] {$\cH(\bC\times \bB)$};
  \node(7)[above of=8] {$\cH(\dblslice GF)$};
  \node(3)[right of= 2, xshift=1.5cm]{$\cH\bA\times \cH\bA$};
  \node(4)[above of= 3]{$\Psd(\mathbbm 2, \cH \bA)$};
  \node(5) [right of=4,xshift=1.5cm] {$\cH\PPsd(\bH\mathbbm{2},\bA)$};
  \node(6) [below of=5] {$\cH(\bA\times \bA)$};
  \draw[a] (4) to node[la] {$\iso$} (5);
  \draw[a] (3) to node[swap,la] {$\iso$} (6);
  \draw[a] (8) to node[swap,la] {$\iso$} (2);
  \draw[a] (5) to node[la] {$\cH(s,t)$} (6);
  \draw[a](1)to node[over,la] {$\pi$} (2);
  \draw[a](7)to node[swap,la] {$\cH\Pi$} (8);
  \draw[a](2)to node[swap,la]{$\cH(G\times F)$}(3);
  \draw[a](1)to (4);
  \draw[a](4) to node[la,over]{$(s,t)$}(3);
  \draw[a](7) to [bend left=20] (5);
  \draw[u](7) to node[la] {$\iso$} (1);
  \draw[a] (8) to [bend right=20] node[la,swap] {$\cH G\times \cH F$} (6);
  \node[la] at ($(1)!0.5!(4) +(0,.7cm)$) {(1)};
  \node[la] at ($(7)!0.5!(2)$) {(2)};
  \node[la] at ($(1)!0.5!(3)$) {(3)};
  \node[la] at ($(4)!0.5!(6)$) {(4)};
  \node[la] at ($(2)!0.5!(3) +(0,-.7cm)$) {(5)};
\end{diagram*}
First note that $\slice{\cH G}{\cH F}$ is a pullback of the commutative square (3), and, since $\cH$ preserves pullbacks, $\cH(\dblslice GF)$ is a pullback of the outer commutative square. The commutative square (4) is obtained in two steps. First, apply \cref{lem:annoying} to the $2$-categories $\mathbbm 1\sqcup \mathbbm 1$ and $\mathbbm 2$, respectively, and to the double category $\bA$, and use the naturality of these isomorphisms with respect to the $2$-functor $\mathbbm 1\sqcup \mathbbm 1\to \mathbbm 2$ given by the inclusion at the two endpoints. Second, apply the isomorphisms
\[ \Psd(\mathbbm 1\sqcup \mathbbm 1, \cH\bA)\cong \cH\bA\times \cH\bA \ \ \text{and} \ \ \cH\PPsd(\mathbbm 1\sqcup \mathbbm 1, \cH\bA)\cong \cH(\bA\times \bA). \]
Note that the bottom isomorphism $\cH\bA\times \cH\bA\cong \cH(\bA\times \bA)$ of the square (4) is the canonical one coming from the fact that $\cH$ preserves products. Similarly, we have a canonical isomorphism $\cH\bB\times \cH\bC\cong \cH(\bB\times \bC)$ and the diagram (5) commutes. By the universal property of pullbacks, we get an isomorphism $\cH(\dblslice FG)\iso \slice{\cH F}{\cH G}$ such that the diagrams (1) and (2) commute.

The argument is similar for the case of $\SV$ since this functor also preserves pullbacks and products,  and \cref{cor:alsoannoying} holds.
\end{proof}

\section{\texorpdfstring{$2$}{2}-dimensional initiality}
\label{sec:initial}

In \cref{sec:doublebiinitial} we introduce the new notion of a double bi-initial object in a double category, which we aim to compare with that of a bi-initial object in a $2$-category in \cref{sec:dblvs2biinitial}.

Double bi-initial objects are defined by requiring that the projection double functor from the pseudo-slice double category under this object is a double trivial fibration. While this notion might seem to involve a lot of data \emph{a priori}, in fact we show that there is a straightforward characterisation of double bi-initial objects: an object $I$ is double bi-initial if and only if there is a horizontal morphism to every object, and all square boundaries whose left vertical morphism is $\vid_I$ have a unique filler. We then show that a similar result holds for bi-initial objects in a $2$-category: an object $I$ is bi-initial if and only if there is a morphism to every object, and all parallel such morphisms $I\rightrightarrows C$ have a unique $2$-morphism filler which must therefore be invertible.

The main result then says that an object of a double category $\bA$ is double bi-initial if and only if its images in the $2$-category $\cH\bA$ and $\SV\bA$ are bi-initial. In fact, we improve upon this by showing that an object $I$ is bi-initial in $\cH\bA$ if its vertical identity $\vid_I$ is bi-initial in $\SV\bA$. That is, we show that double bi-initial objects in \bA may be successfully detected purely $2$-categorically as bi-initial objects of a suitable form in $\SV\bA$.

Finally we show that in the presence of double limits of vertical morphisms in $\bA$, called \emph{tabulators}, the reverse implication also holds: the object $\vid_I$ is bi-initial in $\SV\bA$ when $I$ is bi-initial in $\cH\bA$. Taken together these results show that, in the presence of tabulators, the characterisation of double bi-initial objects is now as good as one could hope for: a double bi-initial object in a double category $\bA$ is precisely a bi-initial object in the underlying horizontal $2$-category $\cH\bA$.

Both $2$-categories and double categories have several duals, but of interest is the opposite~$\cC^{\op}$ of a $2$-category $\cC$ and the horizontal opposite $\bA^{\op}$ of a double category $\bA$. These operations agree with one another under applications of the functors $\bH$, $\cH$, and $\SV$. In particular, later we will have interest in (double) \emph{bi-terminal} objects, which are simply (double) bi-initial objects in the (horizontal) opposite. Correspondingly, all the results of this section dualise to the setting of (double) bi-terminal objects.

\subsection{Double bi-initial objects} \label{sec:doublebiinitial}

Let us first give the definition of a double bi-initial object. Recall \cref{def:pseudoslicedouble}, where we described explicitly pseudo-slice double categories.

\begin{definition}\label{def:dblbi-initial}
Let $\bA$ be a double category. An object $I$ in $\bA$ is \textbf{double bi-initial} if the projection double functor $\ainl{\dblslice I \bA}{\Pi}{\bA}$ is a double trivial fibration.
\end{definition}

Although there appears to be a lot of data in this definition, in fact we will show that there is a simpler characterisation of double bi-initial objects.

\begin{proposition}\label{prop:simpler-dblbinitial}
Let $\bA$ be a double category. An object $I$ in $\bA$ is double bi-initial if and only if the following conditions hold:
\begin{rome}
\item for every object $A\in \bA$, there is a horizontal morphism $\ainl IfA$ in $\bA$,
\item[(ii')] for every vertical morphism $\vainl AuB$ and every pair of horizontal morphisms $\ainl IfA$ and $\ainl IgB$ in $\bA$, there is a unique square $\gamma$ in $\bA$ of the form
  \begin{diagram*}[.][4]
    \node(1)[]{$I$};
    \node(2)[below of= 1]{$I$};
    \draw[proequal](1)to(2);
    \node(3)[right of= 1]{$A$};
    \node(4)[below of= 3]{$B$};
    \draw[proarrow](3)to node[la]{$u$}(4);
    \draw[a](1)to node[la]{$f$}(3);
    \draw[a](2)to node[swap,la]{$g$}(4);
    \node[la] at ($(1)!0.5!(4)$) {$\gamma$};
\end{diagram*}
\end{rome}
\end{proposition}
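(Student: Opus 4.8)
The plan is to unpack \cref{def:dblbi-initial} directly: $I$ is double bi-initial precisely when the projection $\ainl{\dblslice I\bA}{\Pi}{\bA}$ satisfies the four conditions defining a double trivial fibration (\cref{def:doubleq}). I would translate each of those four conditions using the explicit description of $\dblslice I\bA=\dblslice I{\id_\bA}$ from \cref{def:pseudoslicedouble}, specialised to $\bB=\bA$ and $F=\id_\bA$ (so that each $FB$ reads as $B$), and check that together they amount to (i) and (ii').

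\emph{From (i) and (ii') to $\Pi$ being a double trivial fibration.} Surjectivity of $\Pi$ on objects says exactly that every $A\in\bA$ carries a horizontal morphism $\ainl IfA$, which is (i). Surjectivity on vertical morphisms asks, given objects $(A,f),(B,g)$ of $\dblslice I\bA$ and a vertical morphism $\vainl AuB$, for a square $\gamma$ of precisely the shape in (ii'), which the existence half of (ii') provides. For surjectivity on horizontal morphisms, given $(B,f),(B',f')$ and a horizontal morphism $\ainl Bb{B'}$, apply (ii') with right vertical morphism $\vid_{B'}$, top $f'$ and bottom $bf$ to obtain a square $\psi$, and again with top $bf$ and bottom $f'$ to obtain a square $\psi'$; the uniqueness half of (ii') identifies the two vertical composites of $\psi$ and $\psi'$ with $\vid_{f'}$ and $\vid_{bf}$ (the unique squares on $f'$, resp.\ $bf$, with trivial vertical boundaries), so $\psi$ is vertically invertible and $(b,\psi)$ is a horizontal morphism of $\dblslice I\bA$ over $b$. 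Finally, for the unique lifting of squares: a square of $\dblslice I\bA$ over a given square $\beta$ of $\bA$, with prescribed boundary lifts — vertically invertible squares $\psi,\varphi$ and squares $\gamma,\gamma'$ as in (ii') — is exactly $\beta$ subject to the single pasting equality of \cref{def:pseudoslicedouble}, so a lift exists and is automatically unique precisely when that equality holds. But both sides of that equality are squares with the same boundary: left vertical morphism $\vid_I$, right vertical morphism $u'$ (the right boundary of $\beta$), top $f'$, and bottom the composite of $g$ with the bottom edge of $\beta$; by the uniqueness half of (ii') they coincide.

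\emph{From $\Pi$ being a double trivial fibration to (i) and (ii').} Surjectivity on objects gives (i), and surjectivity on vertical morphisms gives the existence half of (ii') as above. For uniqueness, let $\gamma_1,\gamma_2$ be two squares with the common boundary appearing in (ii'), so that $(u,\gamma_1)$ and $(u,\gamma_2)$ are vertical morphisms $(A,f)\to(B,g)$ of $\dblslice I\bA$. Apply the unique-lifting-of-squares condition to the horizontal identity square $\id_u$ of $\bA$, whose boundary lifts to the horizontal identities $(\id_A,\vid_f),(\id_B,\vid_g)$ and the vertical morphisms $(u,\gamma_1),(u,\gamma_2)$: the guaranteed lift has underlying square $\id_u$ and satisfies the corresponding pasting equality, and since the horizontal and vertical identity squares involved contribute nothing this equality unwinds to $\gamma_1=\gamma_2$. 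Hence $\gamma_1=\gamma_2$, completing (ii').

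\emph{Main obstacle.} The work is essentially bookkeeping: transcribing the pseudo-slice data of \cref{def:pseudoslicedouble} correctly under $F=\id_\bA$, and noticing that — because the left vertical morphism $\vid_I$ of every square in sight is trivial — each pasting equality that arises is an instance of the uniqueness clause (ii') (or of the companion fact that $\vid_{f'}$ is the unique square on $f'$ with trivial vertical boundaries). A small point for the horizontal-morphism case is that a vertically invertible square with trivial vertical boundaries is the same as an invertible $2$-morphism in $\cH\bA$, which is why the composites constructed there are genuinely inverse. Alternatively the proposition can be extracted from \cref{lem:h-sv-general-commas} and \cref{prop:dblvs2trivfib} once bi-initiality in a $2$-category is characterised, but the direct argument above is self-contained.
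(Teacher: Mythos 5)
Your converse direction contains a genuine gap, caused by a misreading of \cref{def:doubleq}. For the projection $\Pi\colon\dblslice I\bA\to\bA$, condition \cref{cond:ess_surj_ver_mor} only asserts that every vertical morphism $\vainl AuB$ of $\bA$ admits \emph{some} lift, i.e.\ a vertical morphism $(u,\overline\gamma)\colon(A,\overline f)\varrow(B,\overline g)$ of the pseudo-slice in which the horizontal morphisms $\ainl I{\overline f}A$ and $\ainl I{\overline g}B$ are \emph{not} prescribed in advance (contrast with \cref{cond:ff_hor_mor}, where the endpoints are prescribed). You instead transcribe it as ``given objects $(A,f)$, $(B,g)$ and $u$, there is a square $\gamma$ of the shape in (ii')'', and in the direction ``double bi-initial $\Rightarrow$ (ii')'' you cite exactly this to get the existence half of (ii') for arbitrary prescribed $f$ and $g$. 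That step is unjustified: double bi-initiality only hands you a square $\overline\gamma$ with some top $\overline f$ and bottom $\overline g$. The repair is to also invoke the horizontal-lifting condition \cref{cond:ff_hor_mor}, which for $\Pi$ produces vertically invertible squares with trivial vertical boundaries; applying it to the tuples $(\overline f,f,\id_A)$ and $(g,\overline g,\id_B)$ and vertically composing the resulting squares $\psi$ and $\varphi$ with $\overline\gamma$ yields the desired square with top $f$ and bottom $g$. This composition argument is precisely how the paper proves that implication, and it is where the substantive content of the equivalence sits; your misreading of \cref{cond:ess_surj_ver_mor} assumes it away. (In the other direction the stronger reading is harmless, since proving the prescribed-endpoint statement certainly gives the unprescribed one.)

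The remainder of your argument is fine and matches the paper's. Your verification that the square obtained from (ii') over $u=\vid_{B'}$ is vertically invertible (by composing with the square in the other direction and using uniqueness against $\vid_{f'}$ and $\vid_{bf}$) is the same observation the paper makes; your check that both sides of the pasting equality of \cref{def:pseudoslicedouble} have common boundary $\vid_I$, $u'$, $f'$, $cg$ and hence agree by the uniqueness half of (ii') is correct; and your uniqueness argument in the converse, lifting the horizontal identity square $\id_u$ against the boundary $(\id_A,\vid_f)$, $(\id_B,\vid_g)$, $(u,\gamma_1)$, $(u,\gamma_2)$, is equivalent to the paper's application of its square condition to $(\gamma,\gamma',\id_u)$ and $(\vid_f,\vid_g)$. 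So only the existence half of (ii') in the converse needs to be fixed as above.
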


In order to prove this, we first elaborate the content of \cref{def:dblbi-initial}.

\begin{remark}
By expanding the definition, the projection double functor $\ainl {\dblslice I \bA}\Pi\bA$ being a double trivial fibration is equivalent to the following conditions:
\begin{rrome}
\item\label{cond:exists-obj} for every object $A\in \bA$, there is a horizontal morphism $\ainl IfA$ in $\bA$,
  \item\label{cond:exists-horiz} for every tuple of horizontal morphisms $\ainl IfA$, $\ainl I{f'}{A'}$, and $\ainl Aa{A'}$ in~$\bA$, there is a vertically invertible square $\psi$ in $\bA$
    \begin{diagram*}[,][4]
      \node(1)[]{$I$};
      \node(2)[below of= 1]{$I$};
      \draw[proequal](1)to(2);
      \node (4)[right of=2]{$A$};
      \node(5)[right of= 4]{$A'$};
      \node(3)[above of= 5]{$A'$};
      \draw[proequal](3)to(5);
      \draw[a](1)to node[la]{$f'$}(3);
      \draw[a](2)to node[swap,la]{$f$}(4);
      \draw[a](4)to node[swap,la]{$a$}(5);
    \node[la,xshift=-5pt] at ($(1)!0.5!(5)$) {$\psi$};
    \node[la,xshift=5pt] at ($(1)!0.5!(5)$) {$\vcong$};
    \end{diagram*}
\item\label{cond:exists-vert} for every vertical morphism $\vainl AuB$ in $\bA$, there is a square $\gamma$ in $\bA$
  \begin{diagram*}[,][4]
    \node(1)[]{$I$};
    \node(2)[below of= 1]{$I$};
    \draw[proequal](1)to(2);
    \node(3)[right of= 1]{$A$};
    \node(4)[below of= 3]{$B$};
    \draw[proarrow](3)to node[la]{$u$}(4);
    \draw[a](1)to node[la]{$f$}(3);
    \draw[a](2)to node[swap,la]{$g$}(4);
    \node[la] at ($(1)!0.5!(4)$) {$\gamma$};
\end{diagram*}
    \item\label{cond:square} for every tuple of squares $\gamma$, $\gamma'$, and $\alpha$ in $\bA$
\begin{diagram*}
    \node(1)[]{$I$};
    \node(2)[below of= 1]{$I$};
    \draw[proequal](1)to(2);
    \node(3)[right of= 1]{$A$};
    \node(4)[below of= 3]{$B$};
    \draw[proarrow](3)to node[la]{$u$}(4);
    \draw[a](1)to node[la]{$f$}(3);
    \draw[a](2)to node[swap,la]{$g$}(4);
    \node[la] at ($(1)!0.5!(4)$) {$\gamma$};

    \node(1)[right of= 3]{$I$};
    \node(2)[below of= 1]{$I$};
    \draw[proequal](1)to(2);
    \node(3)[right of= 1]{$A'$};
    \node(4)[below of= 3]{$B'$};
    \draw[proarrow](3)to node[la]{$u'$}(4);
    \draw[a](1)to node[la]{$f'$}(3);
    \draw[a](2)to node[swap,la]{$g'$}(4);
    \node[la] at ($(1)!0.5!(4)$) {$\gamma'$};

    \node(1)[right of= 3]{$A$};
    \node(2)[below of= 1]{$B$};
    \draw[proarrow](1)to node[swap,la]{$u$}(2);
    \node(3)[right of= 1]{$A'$};
    \node(4)[below of= 3]{$B'$};
    \draw[proarrow](3)to node[la]{$u'$}(4);
    \draw[a](1)to node[la]{$a$}(3);
    \draw[a](2)to node[swap,la]{$b$}(4);
    \node[la] at ($(1)!0.5!(4)$) {$\alpha$};
\end{diagram*}
    and for every pair of vertically invertible squares in $\bA$
    \begin{diagram*}[,][4]
      \node(1)[]{$I$};
      \node(2)[below of= 1]{$I$};
      \draw[proequal](1)to(2);
      \node (4)[right of=2]{$A$};
      \node(5)[right of= 4]{$A'$};
      \node(3)[above of= 5]{$A'$};
      \draw[proequal](3)to(5);
      \draw[a](1)to node[la]{$f'$}(3);
      \draw[a](2)to node[swap,la]{$f$}(4);
      \draw[a](4)to node[swap,la]{$a$}(5);
    \node[la,xshift=-5pt] at ($(1)!0.5!(5)$) {$\psi$};
    \node[la,xshift=5pt] at ($(1)!0.5!(5)$) {$\vcong$};

    \node(1)[right of=3]{$I$};
      \node(2)[below of= 1]{$I$};
      \draw[proequal](1)to(2);
      \node (4)[right of=2]{$B$};
      \node(5)[right of= 4]{$B'$};
      \node(3)[above of= 5]{$B'$};
      \draw[proequal](3)to(5);
      \draw[a](1)to node[la]{$g'$}(3);
      \draw[a](2)to node[swap,la]{$g$}(4);
      \draw[a](4)to node[swap,la]{$b$}(5);
    \node[la,xshift=-5pt] at ($(1)!0.5!(5)$) {$\varphi$};
    \node[la,xshift=5pt] at ($(1)!0.5!(5)$) {$\vcong$};
    \end{diagram*}
  the following pasting equality holds in $\bA$.
\begin{diagram*}
    \node(1)[]{$I$};
    \node(2)[below of= 1]{$I$};
    \draw[proequal](1)to(2);
    \node (4)[right of=2]{$A$};
    \node(5)[right of= 4]{$A'$};
    \node(3)[above of= 5]{$A'$};
    \draw[proequal](3)to(5);
    \draw[a](1)to node[la]{$f'$}(3);
    \draw[a](2)to node[over,la]{$f$}(4);
    \draw[a](4)to node[over,la]{$a$}(5);
    \node[la,xshift=-5pt] at ($(1)!0.5!(5)$) {$\psi$};
    \node[la,xshift=5pt] at ($(1)!0.5!(5)$) {$\vcong$};
    \node(1')[below of= 2]{$I$};
    \node(2')[right of= 1']{$B$};
    \node (3')[right of= 2']{$B'$};
    \draw[proequal](2)to(1');
    \draw[proarrow](4) to node[swap,la]{$u$}(2');
    \draw[proarrow](5) to node[la]{$u'$}(3');
    \draw[a](1')to node[swap,la]{$g$}(2');
    \draw[a](2')to node[swap,la]{$b$}(3');
    \node[la] at ($(2)!0.5!(2')$) {$\gamma$};
    \node[la] at ($(4)!0.5!(3')$) {$\alpha$};

    \node(1)[right of=3]{$I$};
    \node at ($(3')!0.5!(1)$) {$=$};
    \node(2)[below of= 1]{$I$};
    \draw[proequal](1)to(2);
    \node(4)[right of=2, xshift=2cm]{$B'$};
    \node(3)[above of= 4]{$A'$};
    \draw[proarrow](3)to node[la]{$u'$}(4);
    \draw[a](1)to node[la]{$f'$}(3);
    \draw[a](2)to node[over,la]{$g'$}(4);
    \node[la] at ($(1)!0.5!(4)$) {$\gamma'$};
    \node(1')[below of= 2]{$I$};
    \node(2')[right of= 1']{$B$};
    \node (3')[below of= 4]{$B'$};
    \draw[proequal](2)to(1');
    \draw[proequal](4) to (3');
    \draw[a](1')to node[swap,la]{$g$}(2');
    \draw[a](2')to node[swap,la]{$b$}(3');
    \node[la,xshift=-5pt] at ($(2)!0.5!(3')$) {$\varphi$};
    \node[la,xshift=5pt] at ($(2)!0.5!(3')$) {$\vcong$};
\end{diagram*}
  \end{rrome}
\end{remark}

Now we are ready to prove the simpler characterisation of double bi-initial objects.

\begin{proof}[\cref{prop:simpler-dblbinitial}]
We first prove that if $I$ is a double bi-initial object in $\bA$, then conditions (i) and (ii') hold. It is clear that (i) holds by \cref{cond:exists-obj}. We prove (ii').

Let $\ainl I f A$ and $\ainl I g B$ be two horizontal morphisms in $\bA$, and let $\vainl A u B$ be a vertical morphism in $\bA$. By \cref{cond:exists-vert}, there is a square $\overline{\gamma}$ in $\bA$
\begin{diagram*}[.][2]
    \node(1)[]{$I$};
    \node(2)[below of= 1]{$I$};
    \draw[proequal](1)to(2);
    \node(3)[right of= 1]{$A$};
    \node(4)[below of= 3]{$B$};
    \draw[proarrow](3)to node[la]{$u$}(4);
    \draw[a](1)to node[la]{$\overline f$}(3);
    \draw[a](2)to node[swap,la]{$\overline g$}(4);
    \node[la] at ($(1)!0.5!(4)$) {$\overline \gamma$};
\end{diagram*}
By \cref{cond:exists-horiz} applied to $(\overline f,f, \id_A)$ and $(g,\overline g, \id_B)$, there are vertically invertible squares $\psi$ and $\varphi$ in $\bA$ as depicted below.
\begin{diagram*}
    \node(1)[]{$I$};
    \node(2)[below of= 1]{$I$};
    \draw[proequal](1)to(2);
    \node(3)[right of= 1]{$A$};
    \node(4)[below of= 3]{$A$};
    \draw[proequal](3)to (4);
    \draw[a](1)to node[la]{$f$}(3);
    \draw[a](2)to node[swap,la]{$\overline f$}(4);
    \node[la,xshift=-5pt] at ($(1)!0.5!(4)$) {$\psi$};
    \node[la,xshift=5pt] at ($(1)!0.5!(4)$) {$\vcong$};

    \node(1)[right of=3]{$I$};
    \node(2)[below of= 1]{$I$};
    \draw[proequal](1)to(2);
    \node(3)[right of= 1]{$B$};
    \node(4)[below of= 3]{$B$};
    \draw[proequal](3)to (4);
    \draw[a](1)to node[la]{$\overline g$}(3);
    \draw[a](2)to node[swap,la]{$g$}(4);
    \node[la,xshift=-5pt] at ($(1)!0.5!(4)$) {$\varphi$};
    \node[la,xshift=5pt] at ($(1)!0.5!(4)$) {$\vcong$};
\end{diagram*}
Then we set $\gamma$ to be the following composite
\begin{diagram*}
    \node(1)[]{$I$};
    \node(2)[below of= 1]{$I$};
    \draw[proequal](1)to(2);
    \node(3)[right of= 1]{$A$};
    \node(4)[below of= 3]{$B$};
    \draw[proarrow](3)to node[la]{$u$}(4);
    \draw[a](1)to node[la]{$f$}(3);
    \draw[a](2)to node[swap,la]{$g$}(4);
    \node[la] at ($(1)!0.5!(4)$) {$\gamma$};

    \node(2)[right of=3]{$I$};
    \node at ($(4)!0.5!(2)$) {$=$};
    \node(1)[above of=2]{$I$};
    \draw[proequal](1)to(2);
    \node(3)[right of= 1]{$A$};
    \node(4)[below of= 3]{$A$};
    \draw[proequal](3)to (4);
    \draw[a](1)to node[la]{$f$}(3);
    \draw[a](2)to node[over,la]{$\overline f$}(4);
    \node[la,xshift=-5pt] at ($(1)!0.5!(4)$) {$\psi$};
    \node[la,xshift=5pt] at ($(1)!0.5!(4)$) {$\vcong$};

    \node(5)[below of=2]{$I$};
    \node(6)[right of=5]{$B$};
    \draw[proequal](2) to (5);
    \draw[proarrow](4)to node[la]{$u$}(6);
    \draw[a](5)to node[la,over]{$\overline g$}(6);
    \node[la] at ($(2)!0.5!(6)$) {$\overline \gamma$};
    \node(7)[below of=5]{$I$};
    \node(8)[right of=7]{$B$};
    \draw[proequal](5)to (7);
    \draw[proequal](6)to (8);
    \draw[a](7)to node[swap,la]{$g$}(8);
    \node[la,xshift=-5pt] at ($(5)!0.5!(8)$) {$\varphi$};
    \node[la,xshift=5pt] at ($(5)!0.5!(8)$) {$\vcong$};
\end{diagram*}
which proves the existence.

Now suppose $\gamma'$ is another such square in $\bA$
\begin{diagram*}[.][2]
    \node(1)[]{$I$};
    \node(2)[below of= 1]{$I$};
    \draw[proequal](1)to(2);
    \node(3)[right of= 1]{$A$};
    \node(4)[below of= 3]{$B$};
    \draw[proarrow](3)to node[la]{$u$}(4);
    \draw[a](1)to node[la]{$f$}(3);
    \draw[a](2)to node[swap,la]{$g$}(4);
    \node[la] at ($(1)!0.5!(4)$) {$\gamma'$};
\end{diagram*}
By applying \cref{cond:square} to the squares $(\gamma,\gamma',\id_u)$ and to the vertically invertible squares $(\vid_f,\vid_g)$, we directly get that $\gamma=\gamma'$ from the pasting equality.

Now, conversely, it is straightforward to see that (ii') implies \cref{cond:exists-horiz,cond:exists-vert,cond:square}, since there is a unique square with each given boundary whose left vertical morphism is $\vid_I$. Note that the square in \cref{cond:exists-horiz} is indeed vertically invertible since every square from $\vid_I$ to another vertical identity is vertically invertible by (ii').
\end{proof}

\subsection{Double bi-initial objects vs bi-initial objects} \label{sec:dblvs2biinitial}

We now want to compare the notion of double bi-initial objects in a double category with the notion of bi-initial objects in related $2$-categories. We first recall the definition of a bi-initial object, which uses the notion of a pseudo-slice $2$-category as elaborated in \cref{def:pseudoslice2}.

\begin{definition} \label{def:bi-initial}
    Let $\cA$ be a $2$-category. An object $I\in \cA$ is \textbf{bi-initial} if the projection $2$-functor $\ainl{\slice I \cA}{\pi}{\cA}$ is a trivial fibration.
\end{definition}

\begin{proposition}\label{prop:simpler-bi-initial}
 Let $\cA$ be a $2$-category, and $I\in \cA$ be an object. Then the object $I$ is bi-initial in \cA if and only if, for every object $A\in \cA$, the unique functor $\cA(I,A)\xarr{\eqv}\mathbbm 1$ is part of an equivalence of categories. More precisely, this means that
 \begin{rome}
 \item for every object $A\in \cA$, there is a morphism $\ainl{I}{f}{A}$ in $\cA$,
 \item[(ii')] for every pair of morphisms $\ainl IfA$ and $\ainl I{f'}A$, there is a unique $2$-morphism $\ninl {f'}\alpha{f}$.
 \end{rome}
\end{proposition}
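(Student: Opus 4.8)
The plan is to mirror the proof of \cref{prop:simpler-dblbinitial}, using the explicit description of the pseudo-slice $2$-category $\slice I\cA$ recorded in \cref{def:pseudoslice2}. Unwinding \cref{def:bi-initial} and the definition of a trivial fibration, the object $I$ is bi-initial precisely when: (a)~every object $A\in\cA$ carries a morphism $f\colon I\to A$, so that $(A,f)$ lies over $A$; (b)~for any chosen objects $(A,f),(A',f')$ of $\slice I\cA$ and any $b\colon A\to A'$ in $\cA$ there is a $2$-isomorphism $\psi$ making $(b,\psi)$ a morphism $(A,f)\to(A',f')$ over $b$; and (c)~for any chosen parallel morphisms $(b,\psi),(c,\varphi)\colon(A,f)\to(A',f')$ and any $2$-morphism $\beta\colon b\Rightarrow c$ in $\cA$ there is a unique $2$-morphism $(b,\psi)\Rightarrow(c,\varphi)$ over $\beta$. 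Since by \cref{def:pseudoslice2} a $2$-morphism in $\slice I\cA$ is exactly the data of its image under $\pi$ subject to one pasting equation, the uniqueness in (c) is automatic, and (c) amounts to the assertion that, for all such $(b,\psi),(c,\varphi)$, every $\beta\colon b\Rightarrow c$ satisfies the relevant pasting equation.

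For the forward implication, (i) is immediate from (a). For (ii'), fix morphisms $f,f'\colon I\to A$. Applying (b) to the objects $(I,\id_I)$ and $(A,f)$ and to the morphism $f'\colon I\to A$ in $\cA$, we obtain a morphism $(f',\psi')\colon(I,\id_I)\to(A,f)$ in which $\psi'$ is a $2$-isomorphism between $f$ and $f'$; we also have the tautological morphism $(f,\id_f)\colon(I,\id_I)\to(A,f)$. By (c) the functor $\pi$ restricts to a bijection from the set of $2$-morphisms $(f',\psi')\Rightarrow(f,\id_f)$ in $\slice I\cA$ to the set of $2$-morphisms $f'\Rightarrow f$ in $\cA$. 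But since the source object here is $(I,\id_I)$, the pasting equation of \cref{def:pseudoslice2} has its $F\beta$-component whiskered only by the identity $\id_I$, so it collapses to an equation that determines $\beta$ uniquely in terms of $\psi'$; hence the left-hand set is a singleton, and therefore so is the set of $2$-morphisms $f'\Rightarrow f$. This gives (ii'); invertibility of the resulting $\alpha$ is then automatic, since the composite of the unique $2$-morphisms $f'\Rightarrow f$ and $f\Rightarrow f'$ must be the unique $2$-morphism $f'\Rightarrow f'$, namely $\id_{f'}$, and likewise the other way round.

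For the converse, assume (i) and (ii'); note first that, by the composition argument just given, (ii') forces every $2$-morphism between parallel morphisms $I\to C$ to be invertible. We check (a)--(c) for $\pi\colon\slice I\cA\to\cA$. Condition (a) is (i). For (b), given $b\colon A\to A'$ and objects $(A,f),(A',f')$, condition (ii') applied to the parallel morphisms $f',\,bf\colon I\to A'$ supplies a necessarily invertible $2$-morphism $\psi$ between $f'$ and $bf$, so $(b,\psi)$ is a morphism $(A,f)\to(A',f')$ over $b$. For (c), given parallel morphisms $(b,\psi),(c,\varphi)\colon(A,f)\to(A',f')$ and $\beta\colon b\Rightarrow c$ in $\cA$, the pasting equation of \cref{def:pseudoslice2} is an equality of two $2$-morphisms between the parallel morphisms $f',\,cf\colon I\to A'$, and so holds automatically by uniqueness in (ii'); thus $\beta$ lifts, uniquely, and $\pi$ is a trivial fibration.

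The heart of the matter is the forward direction: one must test $\slice I\cA$ not at objects lying over $A$ but at the ``basepoint'' object $(I,\id_I)$, for there $2$-morphisms between morphisms correspond precisely to $2$-morphisms between morphisms out of $I$ in $\cA$; the only computation needed is the observation that the pasting equation loses its whiskering at $(I,\id_I)$ and so pins down the lift. In the converse direction the single point to notice is that (ii') renders every coherence pasting equation vacuous.
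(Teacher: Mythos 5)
Your proof is correct, and it follows the same overall strategy as the paper: unwind the trivial-fibration condition on $\pi\colon\slice{I}{\cA}\to\cA$ into the three lifting conditions (your (a)--(c) are exactly the paper's elaboration of \cref{def:bi-initial}), then verify the two implications; your converse coincides with the paper's. The one place you diverge is the forward direction: the paper obtains the comparison $2$-isomorphism by applying the $1$-morphism lifting condition to the tuple $(f,f',\id_A)$ and is terse about uniqueness of a general $2$-morphism $f'\Rightarrow f$, whereas you test the $2$-morphism lifting condition at the basepoint object $(I,\id_I)$, between $(f',\psi')$ and $(f,\id_f)$, where the pasting equation collapses under whiskering by $\id_I$ and forces $\beta={\psi'}^{-1}$; this gives existence and uniqueness in one stroke and in fact supplies the uniqueness detail that the paper's own proof leaves implicit. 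So your argument is a sound, slightly more careful variant of the paper's proof rather than a different route.
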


In order to prove this, we first elaborate the content of \cref{def:bi-initial}.

\begin{remark}
By expanding the definition, the projection $2$-functor $\ainl {\slice I \cA}{Pi}\cA$ being a trivial fibration is equivalent to the following conditions:
\begin{rrome}
  \item\label{cond:bi-in_obj} for every object $A\in \cA$, there is a morphism $\ainl{I}{f}{A}$ in $\cA$,
  \item\label{cond:bi-in_mor} for every tuple of morphisms $\ainl{I}{f}{A}$, $\ainl{I}{f'}{A'}$, and $\ainl{A}{a}{A'}$ in $\cA$, there is a $2$-isomorphism $\psi$ in $\cA$
 \begin{diagram*}[,][3][node distance=1.8cm]
  \node(1)[]{$I$};
  \node(2)[right of =1]{$A$};
  \node(3)[below of =2]{$A'$};
  \draw[a](1) to node[la]{$f$} (2);
  \draw[a](1) to node(x)[swap,la]{$f'$} (3);
  \draw[a](2) to node[la]{$a$} (3);
  \celli[la,shrink]{x}{2}{$\psi$};
\end{diagram*}
    \item \label{cond:bi-in_2mor} for every $2$-morphism $\ninl{a}{\alpha}{b}$ in $\cA$, and every pair of $2$-isomorphisms $\niinl f \psi{af'}$ and $\niinl f {\varphi}{bf'}$ the following pasting equality holds in $\cA$.
\begin{diagram*}[][][node distance=1.8cm]
\node(1)[]{$I$};
    \node(2)[right of= 1]{$A$};
    \node(3)[below of= 2]{$A'$};
    \draw[a](2)to node(a)[la,over]{$a$}(3);
    \draw[a, bend left=60](2)to node(b)[la,right]{$b$}(3);
    \draw[a](1)to node(f)[la,swap]{$f'$}(3);
    \draw[a](1)to node(x')[la]{$f$}(2);
    \cell[la,xshift=-.1cm][n][.42]{a}{b}{$\alpha$};
    \celli[la,shrink]{f}{2}{$\psi$};

    \node(4)[right of= a,xshift=.2cm]{$=$};

    \node(1)[right of= 2, xshift=.75cm]{$I$};
    \node(2)[right of= 1]{$A$};
    \node(3)[below of= 2]{$A'$};
    \draw[a](2)to node(a)[la]{$b$}(3);
    \draw[a](1)to node[la]{$f$}(2);
    \draw[a](1)to node(y)[swap,la]{$f'$}(3);
    \celli[la,shrink]{y}{2}{$\varphi$};
  \end{diagram*}
\end{rrome}
\end{remark}

\begin{proof}[\cref{prop:simpler-bi-initial}]
We first prove that if $I$ is a bi-initial object in \cA then conditions (i) and (ii') hold. It is clear that (i) holds by \cref{cond:bi-in_obj}. We prove (ii'). Given two morphisms $\ainl IfA$ and $\ainl I{f'}A$ in $\cA$, by applying \cref{cond:bi-in_mor} to the tuple $(f,f',\id_A)$, there is a unique $2$-isomorphism $\niinl {f'}\psi f$.

Now, conversely, it is straightforward to see that (ii’) implies \cref{cond:bi-in_mor,cond:bi-in_2mor}, since there is a unique $2$-morphism between every pair of morphisms from $I$ to any object. Note that every such $2$-morphism is in fact invertible by (ii’).
\end{proof}

Since pseudo-slices are special cases of pseudo-commas, \cref{lem:h-sv-general-commas} may be specialised in this context to give the following result.

\begin{corollary} \label{lem:iso_between_slices}
Let $\bA$ be a double category, and $I\in \bA$ be an object. Then there are canonical isomorphisms of $2$-categories as in the following commutative triangles.
\begin{diagram*}
\node[](1) {$\cH(\dblslice{I}{\bA})$};
\node[right of=1,xshift=1cm](3) {$\slice{I}{\cH\bA}$};
\node(2) at ($(1)!0.5!(3)-(0,1.5cm)$) {$\cH\bA$};
\draw[a] (1) to node[la,swap] {$\cH \Pi$} (2);
\draw[a] (1) to node[la] {$\iso$} (3);
\draw[a] (3) to node[la] {$\pi$} (2);

\node[right of=3,xshift=2cm](1) {$\SV(\dblslice{I}{\bA})$};
\node[right of=1,xshift=1cm](3) {$\slice{\vid_I}{\SV\bA}$};
\node(2) at ($(1)!0.5!(3)-(0,1.5cm)$) {$\SV\bA$};
\draw[a] (1) to node[la,swap] {$\SV \Pi$} (2);
\draw[a] (1) to node[la] {$\iso$} (3);
\draw[a] (3) to node[la] {$\pi$} (2);
\end{diagram*}
\end{corollary}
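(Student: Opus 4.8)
The plan is to obtain this corollary as a direct specialisation of \cref{lem:h-sv-general-commas}. Recall from \cref{def:pseudoslicedouble} that the pseudo-slice double category $\dblslice I\bA$ is by definition $\dblslice I{\id_\bA}$, namely the pseudo-comma of the cospan $\ainl{\mathbbm 1}{I}{\bA}$ and $\ainl{\bA}{\id_\bA}{\bA}$, where the object $I\in\bA$ is regarded as a normal pseudo-double functor $\ainl{\mathbbm 1}{I}{\bA}$. Applying \cref{lem:h-sv-general-commas} with $\bC=\mathbbm 1$, $G=I$, $\bB=\bA$, and $F=\id_\bA$ then produces canonical isomorphisms of $2$-categories $\cH(\dblslice I\bA)\iso\slice{\cH I}{\cH\id_\bA}$ and $\SV(\dblslice I\bA)\iso\slice{\SV I}{\SV\id_\bA}$, each fitting into a commutative square over $\cH\mathbbm 1\times\cH\bA$, respectively $\SV\mathbbm 1\times\SV\bA$, whose left-hand leg is $\cH\Pi$, respectively $\SV\Pi$, and whose right-hand leg is the pseudo-slice projection $\pi$.

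It then remains only to identify the right-hand pseudo-slices with those in the statement. Since $\cH$ and $\SV$ are functors, they carry $\id_\bA$ to $\id_{\cH\bA}$ and to $\id_{\SV\bA}$. A glance at \cref{defn:cH,defn:SVA} shows that $\cH\mathbbm 1$ and $\SV\mathbbm 1$ are both terminal: the terminal double category has a single object, a single horizontal and vertical morphism, and a single square, so both its underlying horizontal $2$-category and its $2$-category of vertical morphisms and squares are terminal. Under these identifications the pseudo-double functor $\ainl{\mathbbm 1}{I}{\bA}$ induces the $2$-functor $\ainl{\mathbbm 1}{\cH I}{\cH\bA}$ picking out the object $I$, while $\ainl{\mathbbm 1}{\SV I}{\SV\bA}$ picks out the image of the unique vertical morphism of $\mathbbm 1$, which is $\vid_I$ because double functors preserve vertical identities. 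Hence $\slice{\cH I}{\cH\id_\bA}=\slice{I}{\cH\bA}$ and $\slice{\SV I}{\SV\id_\bA}=\slice{\vid_I}{\SV\bA}$; moreover $\cH\mathbbm 1\times\cH\bA\iso\cH\bA$ and $\SV\mathbbm 1\times\SV\bA\iso\SV\bA$ canonically, and through these identifications the commutative squares of \cref{lem:h-sv-general-commas} collapse to the two asserted commutative triangles.

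There is essentially no obstacle beyond this bookkeeping, since the real content has already been packaged into \cref{lem:h-sv-general-commas}. The one point deserving a moment of attention is the behaviour on vertical morphisms: the double functor $\ainl{\mathbbm 1}{I}{\bA}$ sends the unique vertical morphism of $\mathbbm 1$ to $\vid_I$ rather than to some other vertical endomorphism of $I$, which is precisely why $\vid_I$ — and not merely $I$ — appears in the second triangle, so that the $\SV$-side is not the naive literal analogue of the $\cH$-side. For a reader who prefers to avoid the general machinery, the isomorphisms can equally well be written out by hand: comparing the explicit description of $\dblslice I\bA$ in \cref{def:pseudoslicedouble} with \cref{defn:cH} exhibits the objects, morphisms, and $2$-morphisms of $\cH(\dblslice I\bA)$ as exactly those of $\slice I{\cH\bA}$ recorded in \cref{def:pseudoslice2}, and comparing it with \cref{defn:SVA} identifies $\SV(\dblslice I\bA)$ with $\slice{\vid_I}{\SV\bA}$, the vertical morphisms $(u,\gamma)$ of $\dblslice I\bA$ becoming objects of $\SV\bA$ over $\vid_I$ and the squares $\beta$ becoming morphisms of the pseudo-slice. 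In either approach the projections are patently respected, giving commutativity of the triangles.
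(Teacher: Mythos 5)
Your proposal is correct and follows essentially the same route as the paper: the paper's own proof is exactly the specialisation of \cref{lem:h-sv-general-commas} to the cospan defining the pseudo-slice, together with the observation that $\SV(\mathbbm 1)=\mathbbm 1$ and $\SV I=\vid_I$. You merely spell out the bookkeeping (terminality of $\cH\mathbbm 1$ and $\SV\mathbbm 1$, functoriality on identities, collapsing the squares to triangles) more explicitly than the paper does, and your choice $\bC=\mathbbm 1$, $G=I$, $\bB=\bA$, $F=\id_\bA$ is the one consistent with \cref{def:pseudoslicedouble}.
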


\begin{proof}
This directly follows from \cref{lem:h-sv-general-commas}, by taking $\bB=\mathbbm 1$, $\bC=\bA$, $\ainl{\mathbbm 1}{F=I}{\bA}$ and $\ainl{\bA}{G=\id_{\bA}}{\bA}$. Note that $\ainl{\SV(\mathbbm 1)=\mathbbm 1}{\SV I=\vid_I}{\SV\bA}$.
\end{proof}

With this result and the fact that double trivial fibrations are exactly the double functors whose images under $\cH$ and $\SV$ are trivial fibrations, we may give a $2$-categorical characterisation of double bi-initial objects by leveraging this fact as follows.

\begin{theorem}\label{thm:initial}
    Let $\bA$ be a double category, and $I\in \bA$ be an object. The following statements are equivalent.
    \begin{rome}
      \item The object $I\in \bA$ is double bi-initial.
      \item The corresponding objects $I\in \cH\bA$ and $\vid_I\in \SV\bA$ are bi-initial.
      \item The corresponding object $\vid_I\in \SV\bA$ is bi-initial.
    \end{rome}
\end{theorem}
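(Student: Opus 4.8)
The plan is to read off the equivalences directly from \cref{prop:dblvs2trivfib} and \cref{lem:iso_between_slices}, so that no double-categorical diagram-chasing is needed.

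First I would recall that, by \cref{def:dblbi-initial}, the object $I$ is double bi-initial precisely when the projection $\ainl{\dblslice I\bA}{\Pi}{\bA}$ is a double trivial fibration. By \cref{prop:dblvs2trivfib}, this is equivalent to asking that both $2$-functors $\cH\Pi$ and $\SV\Pi$ be trivial fibrations, and it is also equivalent to asking that $\SV\Pi$ alone be a trivial fibration. So everything is reduced to understanding $\cH\Pi$ and $\SV\Pi$.

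Next I would transport this along the isomorphisms of \cref{lem:iso_between_slices}. That corollary supplies isomorphisms of $2$-categories $\cH(\dblslice I\bA)\iso\slice I{\cH\bA}$ and $\SV(\dblslice I\bA)\iso\slice{\vid_I}{\SV\bA}$ sitting inside commutative triangles whose other two edges are $\cH\Pi$ (resp.\ $\SV\Pi$) and the canonical projections $\ainl{\slice I{\cH\bA}}{\pi}{\cH\bA}$ (resp.\ $\ainl{\slice{\vid_I}{\SV\bA}}{\pi}{\SV\bA}$). Since the conditions defining a trivial fibration of $2$-categories only constrain the images of objects, morphisms, and $2$-morphisms, they are visibly preserved under pre-composition with an isomorphism of $2$-categories; hence $\cH\Pi$ is a trivial fibration if and only if $\ainl{\slice I{\cH\bA}}{\pi}{\cH\bA}$ is, which by \cref{def:bi-initial} is exactly the statement that $I$ is bi-initial in $\cH\bA$, and likewise $\SV\Pi$ is a trivial fibration if and only if $\vid_I$ is bi-initial in $\SV\bA$.

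Combining these observations yields the theorem: statement (i) holds if and only if $\cH\Pi$ and $\SV\Pi$ are both trivial fibrations, i.e.\ if and only if $I$ is bi-initial in $\cH\bA$ and $\vid_I$ is bi-initial in $\SV\bA$, which is (ii); and statement (i) holds if and only if $\SV\Pi$ is a trivial fibration, i.e.\ if and only if $\vid_I$ is bi-initial in $\SV\bA$, which is (iii). I do not expect a genuine obstacle here: the only points needing (minor) care are that the isomorphisms of \cref{lem:iso_between_slices} are compatible with the relevant projections, which is part of the statement of that corollary, and the remark that being a trivial fibration is invariant under pre-composition with an isomorphism, which is immediate from the definition.
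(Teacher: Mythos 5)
Your proposal is correct and follows essentially the same route as the paper's own proof: unravel \cref{def:dblbi-initial}, apply \cref{prop:dblvs2trivfib}, transport along the isomorphisms of \cref{lem:iso_between_slices}, and conclude via \cref{def:bi-initial}. The only addition is your explicit remark that trivial fibrations are invariant under pre-composition with an isomorphism, which the paper leaves implicit.
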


\begin{proof}
  By definition, an object $I\in \bA$ is double bi-initial if and only if the projection double functor $\ainl{\dblslice{I}{\bA}}{\Pi}{\bA}$ is a double trivial fibration. By \cref{prop:dblvs2trivfib}, this is equivalent to saying that the induced $2$-functors $\cH \Pi$ and $\SV\Pi$ are trivial fibrations, or equivalently that $\SV\Pi$ alone is a trivial fibration. By \cref{lem:iso_between_slices}, this holds if and only if the projection $2$-functors $\ainl{\slice{I}{\cH\bA}}{\pi}{\cH\bA}$ and $\ainl{\slice{\vid_I}{\SV\bA}}{\pi}{\SV\bA}$ are trivial fibrations, or equivalently that $\ainl{\slice{\vid_I}{\SV\bA}}{\pi}{\SV\bA}$ alone is a trivial fibration. By definition of a bi-initial object, this holds if and only if the objects $I\in \cH\bA$ and $\vid_I\in \SV\bA$ are bi-initial, or equivalently that $\vid_I\in \SV\bA$ is bi-initial.
\end{proof}

\begin{remark}\label{REMARK}
 This theorem served as the initial motivation for the definition of the functor $\SV$ whose role is so central in this paper.

 Observe that double bi-initial objects in a double category $\bA$ have two aspects to their \emph{weak} universal properties: one concerning objects and one concerning vertical morphisms. The former is entirely horizontal in nature and so is completely captured by the underlying horizontal $2$-category $\cH\bA$. The latter, despite concerning vertical morphisms, does not in fact need the full strength of vertical composition in $\bA$ to be expressed. Indeed, except for vertical composition by squares with trivial boundary of the form of \cref{2morinSV}, this aspect of the weak universal property is also somehow horizontal. That is to say, the underlying \emph{horizontal} $2$-category $\cH\PPsd(\bV\mathbbm 2,\bA)$ is precisely the setting in which to capture this last data as it has vertical morphisms as objects and understands horizontal compositions of general squares. But this $2$-category $\cH\PPsd(\bV\mathbbm 2,\bA)$ is exactly our $\SV\bA$!
\end{remark}

\subsection{Double bi-initial objects and tabulators}

We have seen that double bi-initial objects may be detected through purely $2$-categorical means. In this section we show that a substantial simplification of the $2$-categorical criteria is possible when the double category in question has \emph{tabulators}. These correspond to double limits of vertical morphisms and were introduced by Grandis and Par\'e in \cite[\S 5.3]{GraPar1999}. In the presence of tabulators our result is as strong as possible: double bi-initial objects are precisely bi-initial objects in the underlying horizontal $2$-category.

\begin{definition} \label{def:tabulator}
  Let $\bA$ be a double category, and $\vainl{A}{u}{B}$ be a vertical morphism in~$\bA$. A \textbf{tabulator} of $u$ is a double limit of the double functor $u\colon \bV\mathbbm 2\to \bA$, where $\bV\mathbbm 2$ is the double category free on a vertical morphism. In other words, it is a pair $(\top u,\tau_u)$ of an object $\top u\in \bA$ together with a square $\sq{\tau_u}{p}{q}{\vid_{\top u}}{u}$ in $\bA$ satisfying the following universal properties.
  \begin{drome}
      \item \label{cond:firstup-tab} For every square $\sq{\gamma}{f}{g}{\vid_I}{u}$ in $\bA$, there is a unique horizontal morphism $t\colon I\to \top u$ in $\bA$ such that the following pasting equality holds.
\begin{diagram*}
\node[](1) {$I$};
\node[below of =1](2) {$I$};
\node[right of=1](3) {$A$};
\node[below of =3](4) {$B$};
\draw[a] (1) to node[above,la] {$f$} (3);
\draw[a] (2) to node[below,la] {$g$} (4);
\draw[proequal] (1) to (2);
\draw[proarrow] (3) to node[right,la] {$u$} (4);
\node[la] at ($(1)!0.5!(4)$) {$\gamma$};

\node[right of=3,xshift=.75cm](6) {$I$};
\node at ($(4)!0.5!(6)$) {$=$};
\node[below of=6](7) {$I$};
\node[right of =6](1) {$\top u$};
\node[below of =1](2) {$\top u$};
\node[right of=1](3) {$A$};
\node[below of =3](4) {$B$};
\draw[a] (6) to node[above,la] {$t$} (1);
\draw[a] (7) to node[below,la] {$t$} (2);
\draw[a] (1) to node[above,la] {$p$} (3);
\draw[a] (2) to node[below,la] {$q$} (4);
\draw[proequal] (6) to (7);
\draw[proequal] (1) to (2);
\draw[proarrow] (3) to node[right,la] {$u$} (4);
\node[la] at ($(6)!0.5!(2)$) {$\vid_t$};
\node[la] at ($(1)!0.5!(4)$) {$\tau_u$};
\end{diagram*}
    \item \label{cond:secondup-tab} For every tuple of squares $\sq{\gamma}{f}{g}{\vid_I}{u}$, $\sq{\gamma'}{f'}{g'}{\vid_{I'}}{u}$, $\sq{\theta_0}{f'}{f}{v}{\vid_A}$ and $\sq{\theta_1}{g'}{g}{v}{\vid_B}$ in $\bA$ satisfying the following pasting equality,
\begin{diagram*}
\node[](6) {$I'$};
\node[right of=6](7) {$A$};
\node[below of=6](1) {$I$};
\node[below of =1](2) {$I$};
\node[right of=1](3) {$A$};
\node[below of =3](4) {$B$};
\draw[a] (6) to node[above,la] {$f'$} (7);
\draw[a] (1) to node[over,la] {$f$} (3);
\draw[a] (2) to node[below,la] {$g$} (4);
\draw[proequal] (7) to (3);
\draw[proequal] (1) to (2);
\draw[proarrow] (3) to node[right,la] {$u$} (4);
\draw[proarrow] (6) to node[left,la] {$v$} (1);
\node[la] at ($(6)!0.5!(3)$) {$\theta_0$};
\node[la] at ($(1)!0.5!(4)$) {$\gamma$};

\node[right of=7,xshift=.75cm](6) {$I'$};
\node at ($(4)!0.5!(6)$) {$=$};
\node[right of=6](7) {$A$};
\node[below of=6](1) {$I'$};
\node[below of =1](2) {$I$};
\node[right of=1](3) {$B$};
\node[below of =3](4) {$B$};
\draw[a] (6) to node[above,la] {$f'$} (7);
\draw[a] (1) to node[over,la] {$g'$} (3);
\draw[a] (2) to node[below,la] {$g$} (4);
\draw[proequal] (3) to (4);
\draw[proequal] (6) to (1);
\draw[proarrow] (7) to node[right,la] {$u$} (3);
\draw[proarrow] (1) to node[left,la] {$v$} (2);
\node[la] at ($(6)!0.5!(3)$) {$\gamma'$};
\node[la] at ($(1)!0.5!(4)$) {$\theta_1$};
\end{diagram*}
   there is a unique square $\sq{\theta}{t'}{t}{v}{\vid_{\top u}}$, where $t\colon I\to \top u$ and $t'\colon I'\to \top u$ are the unique horizontal morphisms given by (i) applied to $\gamma$ and $\gamma'$ respectively, such that $\theta$ satisfies the following pasting equalities.
\end{drome}
\begin{diagram*}
\node[](1) {$I'$};
\node[below of =1](2) {$I$};
\node[right of=1](3) {$A$};
\node[below of =3](4) {$A$};
\draw[a] (1) to node[above,la] {$f'$} (3);
\draw[a] (2) to node[below,la] {$f$} (4);
\draw[proarrow] (1) to node[left,la] {$v$} (2);
\draw[proequal] (3) to (4);
\node[la] at ($(1)!0.5!(4)$) {$\theta_0$};

\node[right of=3](6) {$I'$};
\node at ($(4)!0.5!(6)$) {$=$};
\node[below of=6](7) {$I$};
\node[right of =6](1) {$\top u$};
\node[below of =1](2) {$\top u$};
\node[right of=1](3) {$A$};
\node[below of =3](4) {$A$};
\draw[a] (6) to node[above,la] {$t'$} (1);
\draw[a] (7) to node[below,la] {$t$} (2);
\draw[a] (1) to node[above,la] {$p$} (3);
\draw[a] (2) to node[below,la] {$p$} (4);
\draw[proequal] (3) to (4);
\draw[proequal] (1) to (2);
\draw[proarrow] (6) to node[left,la] {$v$} (7);
\node[la] at ($(6)!0.5!(2)$) {$\theta$};
\node[la] at ($(1)!0.5!(4)$) {$\vid_p$};

\node[right of=3,xshift=.5cm](1) {$I'$};
\node[below of =1](2) {$I$};
\node[right of=1](3) {$B$};
\node[below of =3](4) {$B$};
\draw[a] (1) to node[above,la] {$g'$} (3);
\draw[a] (2) to node[below,la] {$g$} (4);
\draw[proarrow] (1) to node[left,la] {$v$} (2);
\draw[proequal] (3) to (4);
\node[la] at ($(1)!0.5!(4)$) {$\theta_1$};

\node[right of=3](6) {$I'$};
\node at ($(4)!0.5!(6)$) {$=$};
\node[below of=6](7) {$I$};
\node[right of =6](1) {$\top u$};
\node[below of =1](2) {$\top u$};
\node[right of=1](3) {$B$};
\node[below of =3](4) {$B$};
\draw[a] (6) to node[above,la] {$t'$} (1);
\draw[a] (7) to node[below,la] {$t$} (2);
\draw[a] (1) to node[above,la] {$q$} (3);
\draw[a] (2) to node[below,la] {$q$} (4);
\draw[proequal] (3) to (4);
\draw[proequal] (1) to (2);
\draw[proarrow] (6) to node[left,la] {$v$} (7);
\node[la] at ($(6)!0.5!(2)$) {$\theta$};
\node[la] at ($(1)!0.5!(4)$) {$\vid_q$};
\end{diagram*}
 We say that $\bA$ \textbf{has tabulators} if there is a tabulator for each vertical morphism $u$ in $\bA$.
\end{definition}

\begin{theorem} \label{prop:tabulators-biinitial}
  Let \bA be a double category with tabulators, and $I\in \bA$ be an object. Then the following statements are equivalent.
  \begin{rome}
    \item The object $I$ is double bi-initial in $\bA$.
    \item The object $I$ is bi-initial in $\cH\bA$.
  \end{rome}
\end{theorem}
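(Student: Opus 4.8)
The plan is to reduce everything to \cref{thm:initial}. The implication (i)$\Rightarrow$(ii) is immediate and needs no tabulators: if $I$ is double bi-initial in $\bA$, then by \cref{thm:initial} the object $I$ is bi-initial in $\cH\bA$. For the converse, assume $I$ is bi-initial in $\cH\bA$ and (as in the hypothesis) that $\bA$ has tabulators; by \cref{thm:initial} it suffices to show that the vertical identity $\vid_I$ is bi-initial in $\SV\bA$, which I would check via \cref{prop:simpler-bi-initial} applied to $\vid_I\in\SV\bA$: every object of $\SV\bA$ must receive a morphism from $\vid_I$, and any two parallel morphisms out of $\vid_I$ must admit a unique $2$-morphism between them.

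For the first requirement, given a vertical morphism $\vainl AuB$ of $\bA$ I would take its tabulator $(\top u,\tau_u)$ with $\sq{\tau_u}{p}{q}{\vid_{\top u}}{u}$ (which exists since $\bA$ has tabulators, \cref{def:tabulator}), choose a horizontal morphism $\ainl Is{\top u}$ using bi-initiality of $I$ in $\cH\bA$, and horizontally compose the vertical identity square $\vid_s$ with $\tau_u$; the result is a square with left boundary $\vid_I$ and right boundary $u$, i.e.\ a morphism $\vid_I\to u$ in $\SV\bA$.

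For the second requirement, let $\gamma$ and $\gamma'$ be morphisms $\vid_I\to u$ in $\SV\bA$, i.e.\ squares $\sq{\gamma}{f}{g}{\vid_I}{u}$ and $\sq{\gamma'}{f'}{g'}{\vid_I}{u}$. By \cref{2morinSV} a $2$-morphism $\gamma'\Rightarrow\gamma$ is a pair of $2$-morphisms $\ninl{f'}{\sigma_0}{f}$ and $\ninl{g'}{\sigma_1}{g}$ of $\cH\bA$ subject to a pasting equality; since $I$ is bi-initial in $\cH\bA$, \cref{prop:simpler-bi-initial} forces $\sigma_0$ and $\sigma_1$ to be \emph{the} unique such $2$-morphisms, so the $2$-morphism $\gamma'\Rightarrow\gamma$ is unique if it exists, and the real content is to verify that this forced pair $(\sigma_0,\sigma_1)$ satisfies the pasting equality. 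For this I would apply the one-dimensional universal property \cref{cond:firstup-tab} of the tabulator to $\gamma$ and to $\gamma'$, obtaining horizontal morphisms $\ainl It{\top u}$, $\ainl I{t'}{\top u}$ with $\gamma$ and $\gamma'$ the pastings of $\vid_t$, $\vid_{t'}$ with $\tau_u$ (so $pt=f$, $qt=g$, $pt'=f'$, $qt'=g'$); then bi-initiality of $I$ in $\cH\bA$ gives a $2$-morphism $\ninl{t'}{\omega}{t}$ in $\cH\bA$, and whiskering $\omega$ by the tabulator projections $p$ and $q$ produces $2$-morphisms $f'\Rightarrow f$ and $g'\Rightarrow g$ which, by the same uniqueness, are $\sigma_0$ and $\sigma_1$. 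A short interchange-law computation, using unitality of vertical identity squares, then shows that both sides of the pasting equality — $\sigma_0$ composed vertically above $\gamma$, and $\gamma'$ composed vertically above $\sigma_1$ — collapse to the single horizontal composite of $\omega$ with $\tau_u$, hence agree. This exhibits $(\sigma_0,\sigma_1)$ as a $2$-morphism $\gamma'\Rightarrow\gamma$, so $\vid_I$ is bi-initial in $\SV\bA$ and $I$ is double bi-initial in $\bA$ by \cref{thm:initial}.

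The obstacle I anticipate is conceptual rather than computational: \cref{prop:simpler-dblbinitial} describes double bi-initiality by \emph{strict} uniqueness of a filler square for every boundary whose left leg is $\vid_I$, which looks stronger than anything the weak universal property of $I$ in $\cH\bA$ offers. The resolution is that \cref{cond:firstup-tab} re-expresses squares with right leg $u$ as honest horizontal morphisms into $\top u$, transporting the strict uniqueness of $2$-cells out of $I$ in $\cH\bA$ to exactly the data in play; it is worth noting in the write-up that the \emph{two}-dimensional universal property \cref{cond:secondup-tab} of the tabulator is never used, since in $\SV\bA$ the underlying pair of any $2$-cell out of $\vid_I$ is already pinned down by bi-initiality in $\cH\bA$. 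One could instead argue directly from \cref{prop:simpler-dblbinitial}, but routing through $\SV\bA$ makes the uniqueness half automatic and isolates the one genuine interchange-law calculation.
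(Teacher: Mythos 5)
Your proof is correct, and it takes a genuinely different route from the paper's in the direction (ii)$\Rightarrow$(i). The paper verifies directly that the projection $\dblslice{I}{\bA}\to\bA$ is a double trivial fibration, i.e.\ it checks \cref{cond:exists-vert,cond:square} by hand: for \cref{cond:square} it converts $\gamma$, $\gamma'$ \emph{and} the connecting square $\alpha\colon u\to u'$ into horizontal morphisms $t$, $t'$, $\top\alpha$ via \cref{cond:firstup-tab}, obtains a $2$-cell $\theta$ from bi-initiality of $I$ in $\cH\bA$, and then invokes the two-dimensional universal property \cref{cond:secondup-tab} to turn $\theta$ back into the pair $(\psi,\varphi)$ satisfying the required pasting. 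You instead target condition (iii) of \cref{thm:initial}, checking via \cref{prop:simpler-bi-initial} that $\vid_I$ is bi-initial in $\SV\bA$; this only ever requires comparing two parallel squares $\gamma,\gamma'\colon\vid_I\to u$ into the \emph{same} vertical morphism, and there the compatibility condition of \cref{2morinSV} follows from the interchange/unitality computation collapsing both sides to $\omega$ pasted with $\tau_u$. The payoff of your route is economy and a mild strengthening: only the one-dimensional universal property \cref{cond:firstup-tab} of tabulators is used (so ``$1$-dimensional tabulators'' suffice), and the strict-uniqueness aspect of double bi-initiality is delegated entirely to the already-proved \cref{thm:initial} rather than re-derived; the paper's route stays closer to the definition of double trivial fibration, at the cost of needing \cref{cond:secondup-tab}. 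Your closing remark attributes the resolution of the ``strict vs.\ weak uniqueness'' tension to the tabulator, but it is really \cref{thm:initial} (proved without tabulators) that absorbs it; this is a presentational point only and does not affect correctness.
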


\begin{proof}
  If $I$ is double bi-initial in $\bA$, then, by \cref{thm:initial}, $I$ is bi-initial in $\cH\bA$.

 Now suppose that $I$ is bi-initial in $\cH\bA$. We prove that $I$ satisfies \cref{def:dblbi-initial} (i-iv). First note that \cref{cond:bi-in_obj} and (ii) applied to $I\in \cH\bA$ correspond to \cref{cond:exists-obj} and (ii) applied to $I\in\bA$. Therefore, it remains to show \cref{cond:exists-vert} and (iv). Let $\vainl AuB$ be a vertical morphism and let $(\top u,\tau_u)$ be the tabulator of $u$. By \cref{cond:bi-in_obj} applied to the object $\top u$, there is a horizontal morphism $t\colon I\to \top u$. By the first universal property of tabulators, we get a square in $\bA$
 \begin{diagram*}
\node[](1) {$I$};
\node[below of =1](2) {$I$};
\node[right of=1](3) {$A$};
\node[below of =3](4) {$B$};
\draw[a] (1) to node[above,la] {$f$} (3);
\draw[a] (2) to node[below,la] {$g$} (4);
\draw[proequal] (1) to (2);
\draw[proarrow] (3) to node[right,la] {$u$} (4);
\node[la] at ($(1)!0.5!(4)$) {$\gamma$};
\end{diagram*}
as desired. This proves \cref{cond:exists-vert}.

Now suppose that we have squares in $\bA$
\begin{diagram*}
    \node(1)[]{$I$};
    \node(2)[below of= 1]{$I$};
    \draw[proequal](1)to(2);
    \node(3)[right of= 1]{$A$};
    \node(4)[below of= 3]{$B$};
    \draw[proarrow](3)to node[la]{$u$}(4);
    \draw[a](1)to node[la]{$f$}(3);
    \draw[a](2)to node[swap,la]{$g$}(4);
    \node[la] at ($(1)!0.5!(4)$) {$\gamma$};

    \node(1)[right of= 3]{$I$};
    \node(2)[below of= 1]{$I$};
    \draw[proequal](1)to(2);
    \node(3)[right of= 1]{$A'$};
    \node(4)[below of= 3]{$B'$};
    \draw[proarrow](3)to node[la]{$u'$}(4);
    \draw[a](1)to node[la]{$f'$}(3);
    \draw[a](2)to node[swap,la]{$g'$}(4);
    \node[la] at ($(1)!0.5!(4)$) {$\gamma'$};

    \node(1)[right of= 3]{$A$};
    \node(2)[below of= 1]{$B$};
    \draw[proarrow](1)to node[swap,la]{$u$}(2);
    \node(3)[right of= 1]{$A'$};
    \node(4)[below of= 3]{$B'$};
    \draw[proarrow](3)to node[la]{$u'$}(4);
    \draw[a](1)to node[la]{$a$}(3);
    \draw[a](2)to node[swap,la]{$b$}(4);
    \node[la] at ($(1)!0.5!(4)$) {$\alpha$};
\end{diagram*}
and suppose $(\top u,\tau_u)$ and $(\top u',\tau_{u'})$ are tabulators for $u$ and $u'$ respectively. By the first universal property of tabulators, the squares $\gamma$ and $\gamma'$ uniquely correspond to horizontal morphisms $t\colon I\to \top u$ and $t'\colon I\to \top u'$ respectively. Moreover, the square $\alpha$ uniquely corresponds to a horizontal morphism $\top \alpha\colon \top u\to \top u'$. By applying \cref{cond:bi-in_mor} to $t\colon I\to \top u$, $t'\colon I\to \top u'$, and $\top \alpha\colon \top u\to \top u'$, we get a square $\theta$ in $\bA$ of the form
 \begin{diagram*}[.][4]
      \node(1)[]{$I$};
      \node(2)[below of= 1]{$I$};
      \draw[proequal](1)to(2);
      \node (4)[right of=2]{$\top u$};
      \node(5)[right of= 4]{$\top u'$};
      \node(3)[above of= 5]{$\top u'$};
      \draw[proequal](3)to(5);
      \draw[a](1)to node[la]{$t'$}(3);
      \draw[a](2)to node[swap,la]{$t$}(4);
      \draw[a](4)to node[swap,la]{$\top \alpha$}(5);
    \node[la,xshift=-5pt] at ($(1)!0.5!(5)$) {$\theta$};
    \node[la,xshift=5pt] at ($(1)!0.5!(5)$) {$\vcong$};
\end{diagram*}
By the second universal property of tabulators, this uniquely corresponds to squares $\psi\coloneqq \theta_0$ and $\varphi\coloneqq \theta_1$ satisfying the following pasting.
\begin{diagram*}
    \node(1)[]{$I$};
    \node(2)[below of= 1]{$I$};
    \draw[proequal](1)to(2);
    \node (4)[right of=2]{$A$};
    \node(5)[right of= 4]{$A'$};
    \node(3)[above of= 5]{$A'$};
    \draw[proequal](3)to(5);
    \draw[a](1)to node[la]{$f'$}(3);
    \draw[a](2)to node[over,la]{$f$}(4);
    \draw[a](4)to node[over,la]{$a$}(5);
    \node[la,xshift=-5pt] at ($(1)!0.5!(5)$) {$\psi$};
    \node[la,xshift=5pt] at ($(1)!0.5!(5)$) {$\vcong$};
    \node(1')[below of= 2]{$I$};
    \node(2')[right of= 1']{$B$};
    \node (3')[right of= 2']{$B'$};
    \draw[proequal](2)to(1');
    \draw[proarrow](4) to node[swap,la]{$u$}(2');
    \draw[proarrow](5) to node[la]{$u'$}(3');
    \draw[a](1')to node[swap,la]{$g$}(2');
    \draw[a](2')to node[swap,la]{$b$}(3');
    \node[la] at ($(2)!0.5!(2')$) {$\gamma$};
    \node[la] at ($(4)!0.5!(3')$) {$\alpha$};

    \node(1)[right of=3]{$I$};
    \node at ($(3')!0.5!(1)$) {$=$};
    \node(2)[below of= 1]{$I$};
    \draw[proequal](1)to(2);
    \node(4)[right of=2, xshift=2cm]{$B'$};
    \node(3)[above of= 4]{$A'$};
    \draw[proarrow](3)to node[la]{$u'$}(4);
    \draw[a](1)to node[la]{$f'$}(3);
    \draw[a](2)to node[over,la]{$g'$}(4);
    \node[la] at ($(1)!0.5!(4)$) {$\gamma'$};
    \node(1')[below of= 2]{$I$};
    \node(2')[right of= 1']{$B$};
    \node (3')[below of= 4]{$B'$};
    \draw[proequal](2)to(1');
    \draw[proequal](4) to (3');
    \draw[a](1')to node[swap,la]{$g$}(2');
    \draw[a](2')to node[swap,la]{$b$}(3');
    \node[la,xshift=-5pt] at ($(2)!0.5!(3')$) {$\varphi$};
    \node[la,xshift=5pt] at ($(2)!0.5!(3')$) {$\vcong$};
\end{diagram*}
Note that $\psi$ and $\varphi$ are the unique squares for the tuples $(f,f',a)$ and $(g,g',b)$ respectively; see \cref{prop:simpler-bi-initial} (ii'). This shows \cref{cond:square}.
\end{proof}

\begin{corollary} \label{cor:bi-in-in-HA-VA-tab}
  Let \bA be a double category with tabulators, and $I\in \bA$ be an object. Then the object $I$ is bi-initial in~$\cH\bA$ if and only if the corresponding object $\vid_I$ is bi-initial in $\SV\bA$.
\end{corollary}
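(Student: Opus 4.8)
The plan is to deduce this as an immediate consequence of the two preceding results, \cref{thm:initial} and \cref{prop:tabulators-biinitial}, both of which are phrased relative to the pivot notion of a double bi-initial object in $\bA$. Concretely, I would chain two biconditionals whose shared middle term is ``$I$ is double bi-initial in $\bA$''.

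First I would invoke \cref{prop:tabulators-biinitial}, which applies precisely because $\bA$ is assumed to have tabulators: it gives the equivalence that $I$ is bi-initial in $\cH\bA$ if and only if $I$ is double bi-initial in $\bA$. Next I would invoke \cref{thm:initial} — which requires no hypothesis on $\bA$ — in the form of the equivalence of its statements (i) and (iii): $I$ is double bi-initial in $\bA$ if and only if $\vid_I$ is bi-initial in $\SV\bA$. Composing these two biconditionals yields that $I$ is bi-initial in $\cH\bA$ if and only if $\vid_I$ is bi-initial in $\SV\bA$, which is exactly the assertion.

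There is no genuine obstacle here; all the substance has already been carried out in \cref{thm:initial,prop:tabulators-biinitial}, and the corollary is a purely formal composition of known equivalences. If one wishes to unwind the logic slightly, one can note that the implication ``$\vid_I$ bi-initial in $\SV\bA$ $\Rightarrow$ $I$ bi-initial in $\cH\bA$'' is already available with no hypothesis at all (it is \cref{thm:initial}, statement (iii) implies statement (ii)), so the only content specific to the tabulated case is the converse: when tabulators are present, bi-initiality of $I$ in $\cH\bA$ forces $I$ to be double bi-initial in $\bA$ by \cref{prop:tabulators-biinitial}, and this in turn forces $\vid_I$ to be bi-initial in $\SV\bA$ by \cref{thm:initial}. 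This is worth remarking because it shows the corollary is the sharpest possible strengthening of \cref{thm:initial} under the tabulator hypothesis: the two a priori distinct conditions on $I$ and on $\vid_I$ collapse to one.
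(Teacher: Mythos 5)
Your proposal is correct and is essentially identical to the paper's own proof: both chain \cref{prop:tabulators-biinitial} (using the tabulator hypothesis) with the equivalence of (i) and (iii) in \cref{thm:initial}, pivoting through double bi-initiality of $I$ in $\bA$. The additional remark that only one direction needs tabulators is accurate but not part of the paper's argument.
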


\begin{proof}
  By \cref{prop:tabulators-biinitial}, $I$ is bi-initial in $\cH\bA$ if and only if $I$ is double bi-initial in $\bA$. By \cref{thm:initial}, this holds if and only if $\vid_I$ is bi-initial in $\SV\bA$.
\end{proof}

\section{Bi-representations of normal pseudo-functors}
\label{sec:bi-rep-pseudo}

In \cref{sec:generalcase} we state and prove our main result characterising bi-representations of normal pseudo-functors $\ainl{\cC^{\op}}{F}{\Cat}$ as various sorts of bi-initial objects, where $\Cat$ is the $2$-category of categories, functors, and natural transformations. We give two flavours of such a theorem, one stated in the language of double categories, and the other stated completely in terms of $2$-categories. The former  predictably relies on the double category of elements of $F$ construction, but in the latter case, we will define from the data of a normal pseudo-functor $F$ not only the 2-category of \emph{elements} of $F$, but also a $2$-category of \emph{morphisms} of $F$. Moreover, by specialising \cref{thm:initial}, we see that the latter $2$-category subsumes the former for this purpose: bi-representations of a normal pseudo-functor $\ainl{\cC^{\op}}{F}{\Cat}$ are precisely bi-initial objects of a particular form in the $2$-category of morphisms of $F$.

We then show in \cref{sec:tensorby2case} that, when the $2$-category $\cC$ has tensors by $\mathbbm 2$ and the normal pseudo-functor $\ainl{\cC^{\op}}{F}{\Cat}$ preserves them, the expected characterisation actually holds: bi-representations of $F$ are now precisely bi-initial objects in the $2$-category of elements of $F$.

\subsection{The general case} \label{sec:generalcase}

Let us begin by defining the central objects at issue.

\begin{definition} \label{def:birep}
  Let \cC be a 2-category, and $\ainl{\cC^{\op}}{F}{\Cat}$ be a normal pseudo-functor. A~\textbf{bi-representation} of $F$ is a pair $(I,\rho)$ of an object $I\in\cC$ and a pseudo-natural adjoint equivalence $\neinl{\cC(-,I)}{\rho_{-}}{F}$, i.e., an adjoint equivalence in the $2$-category $\Psd(\cC^{\op},\Cat)$.
\end{definition}

\begin{remark}
  Recall that an equivalence in a $2$-category can always be promoted to an \emph{adjoint} equivalence (see, e.g.~\cite[Lemma 2.1.11]{RiehlVerity}). Therefore, by requiring the pseudo-natural equivalence in \cref{def:birep} to be adjoint, we do not lose any generality while simultaneously making the data easier to handle in the forthcoming proofs.
\end{remark}

To the data of such a normal pseudo-functor $\cC^{\op}\to\Cat$ we will associate a \emph{double category of elements}. This double category will play an analogous role to the classical category of elements in detecting representations.

\begin{definition}\label{def:eell}
   Let \cC be a 2-category, and $\ainl{\cC^{\op}}{F}{\Cat}$ be a normal pseudo-functor. The \textbf{double category of elements} $\eell(F)$ of $F$ is defined to be the pseudo-slice double category $\dblslice{\mathbbm 1}{\bH F}$ induced by the cospan
   \[ \mathbbm 1\stackrel{\mathbbm 1}{\longrightarrow} \bH\Cat \xleftarrow{\bH F} \bH\cC^{\op} \ . \]

  More explicitly, it is the double category whose
\begin{drome}
  \item objects are pairs $(C,x)$ of an object $C\in\cC$ and a functor $\ainl{\mathbbm 1}x{FC}$, i.e., an object $x\in FC$,
  \item horizontal morphisms $\ainl{(C',x')}{(c,\psi)}{(C,x)}$ comprise the data of a morphism $\ainl Cc{C'}$ of \cC and a natural isomorphism $\psi$ of the form
    \begin{diagram*}[,][3][node distance=1.8cm]
  \node(1)[right of =1, xshift=.5cm]{$\mathbbm{1}$};
  \node(2)[right of =1]{$FC'$};
  \node(3)[below of =2]{$FC$};
  \draw[a](1) to node[la]{$x'$} (2);
  \draw[a](1) to node(x)[swap,la]{$x$} (3);
  \draw[a](2) to node[la]{$Fc$} (3);
  \celli[la,shrink]{x}{2}{$\psi$};
  \end{diagram*}
  i.e., an isomorphism $\aiinl {x}\psi{(Fc)x'}$ in $FC$,
\item vertical morphisms $\vainl{(C,x)}{\alpha}{(C,y)}$ are natural transformations $\ninl x\alpha y$ of functors $\ainl{\mathbbm 1}{x,y}{FC}$, i.e., morphisms $\ainl x\alpha y$ in $FC$,
\item\label{def:eellsquare} squares $\sq{\gamma}{(c,\psi)}{(d,\varphi)}{\alpha'}{\alpha}$ comprise the data of a $2$-morphism $\ninl c\gamma{d}$ of \cC, as displayed below-left, which satisfies the below-right pasting equality,
  \begin{diagram*}[][][node distance=1.8cm]
    \node(1)[]{$C$};
    \node(2)[below of= 1]{$C'$};
    \draw[a,bend right=30](1) to node(b)[la,swap]{$c$}(2);
    \draw[a,bend left=30](1) to node(a)[la]{$d$}(2);
    \cell[la][n][.5]{b}{a}{$\gamma$};

    \node(1)[right of= 1, xshift=1cm,minimum size=0.7cm]{$\mathbbm{1}$};
    \node(2)[right of= 1]{$FC'$};
    \node(3)[below of= 2]{$FC$};
    \draw[a](2)to node(a)[la,over]{$Fc$}(3);
    \draw[a, bend left=60](2)to node(b)[la,right]{$Fd$}(3);
    \draw[a](1)to node(f)[la,swap]{$x$}(3);
    \draw[a](1)to node(x')[la,over]{$x'$}(2);     \cell[la,xshift=-.1cm][n][.45]{a}{b}{$F\gamma$};
    \draw[a,bend left=60](1)to node(y')[la,pos=0.44]{$y'$}(2);
    \celli[la,shrink]{f}{2}{$\psi$};
    \cell[la,swap][n][0.4]{x'}{y'}{$\alpha'$};

    \node(4)[right of= a]{$=$};

    \node(1)[right of= 2, xshift=.75cm]{$\mathbbm{1}$};
    \node(2)[right of= 1]{$FC'$};
    \node(3)[below of= 2]{$FC$};
    \draw[a](2)to node(a)[la]{$Fd$}(3);
    \draw[a](1)to node[la]{$y'$}(2);
    \draw[a](1)to node(y)[over,la]{$y$}(3);
    \draw[a,bend right=50](1)to node(x)[swap,la]{$x$}(3);
    \cell[la][n][0.6]{x}{y}{$\alpha$};
    \celli[la,shrink][n][0.4]{y}{2}{$\varphi$};
  \end{diagram*}
  i.e., the following diagram in $FC$ is commutative.
  \begin{diagram*}[][][node distance=1.2cm]
    \node(1){$x$};
    \node(2)[below= of 1]{$(Fc)x'$};
    \node(3)[right= of 2]{$(Fc)y'$};
    \node(4)[right= of 3]{$(Fd)y'$};
    \node(5) at (1-|4) {$y$};
    \draw[a](1) to node[la]{$\alpha$} (5);
    \draw[a](1) to node[swap,la]{$\psi$} node[la]{$\iso$} (2);
    \draw[a](2) to node[la,swap]{$(Fc)\alpha'$} (3);
    \draw[a](3) to node[la,swap]{$(F\gamma)_{y'}$} (4);
    \draw[a](5) to node[la]{$\varphi$} node[swap,la]{$\iso$} (4);
  \end{diagram*}
\end{drome}
\end{definition}

Much like in the $1$-dimensional case, from a normal pseudo-functor $\ainl {\cC^{\op}}F\Cat$ we are able to construct the $2$-category of elements $\el(F)$ of $F$, but new here is the $2$-category of \emph{morphisms} of $F$. As we shall see, the joint properties of these $2$-categories may be leveraged to successfully characterise bi-representations.

\begin{definition}\label{def:el-mor}
  Let \cC be a $2$-category, and $\ainl {\cC^{\op}}{F}{\Cat}$ be a normal pseudo-functor. We define the following two $2$-categories associated to $F$.
  \begin{itemize}
  \item The \textbf{$2$-category of elements} $\el(F)$ of $F$ is defined to be $\cH\eell(F)$.
  \item The \textbf{$2$-category of morphisms} $\mor(F)$ of $F$ is defined to be $\SV\eell(F)$.
  \end{itemize}
\end{definition}

The ardently $2$-categorical reader may be dismayed by the foray into the realm of double categories to give the above definition. In the coming discussion we will find that we are able to comfortably re-seat these $2$-categories as the result of purely $2$-categorical considerations.

Observe that exponentiation by the category $\mathbbm{2}=\{0\to 1\}$ gives rise to the classical functor $\ainl {\textsf{Cat}}{\ar\coloneqq(-)^{\mathbbm{2}}}{\textsf{Cat}}$, the \emph{category of arrows} functors, where $\textsf{Cat}$ is the category of categories and functors.

\begin{definition}
  We define the functor $\ainl {\TwoCat}{\Ar}{\TwoCat}$ as follows. It sends a $2$-category $\cC$ to the $2$-category $\Ar \cC$ with the same objects as $\cC$ and hom-categories $\Ar \cC(C,C')\coloneqq\ar(\cC(C,C'))$ for each pair of objects $C,C'\in \cC$. That is, a morphism in $\Ar \cC$ is a $2$-morphism of $\cC$ and a $2$-morphism in $\Ar \cC$ is a commutative square of vertical composites of $2$-morphisms in $\cC$.

  Given a normal pseudo-functor $\ainl \cC F\cD$, we define the normal pseudo-functor $\Ar F$ to act as $F$ on objects and as $\ar F$ on hom-categories. The compositors of $\Ar F$ are given component-wise by the compositors of $F$.
\end{definition}

\begin{remark}\label{rem:arprops}
  The functor $\Ar$ is a shadow of our double categorical approach of the previous sections. Indeed, we have the equality of functors $\SV\bH=\Ar$ to complement $\cH\bH=\id_{\TwoCat}$.
\end{remark}

Recall that $\eell(F)$ was defined as the pseudo-slice double category $\dblslice {\mathbbm 1}{\bH F}$. This, coupled with the fact that $\bH$ and $\SV$ preserve slices allows us to give the following, purely $2$-categorical formulations of the $2$-categories of elements and morphisms of a normal pseudo-functor.

\begin{remark}
  If \cC is a $2$-category and $\ainl {\cC^{\op}}{F}{\Cat}$ is a normal pseudo-functor then, by \cref{lem:iso_between_slices,rem:arprops}, the $2$-categories $\el(F)$ and $\mor(F)$ are isomorphic to the pseudo-slice $2$-categories induced by the cospans
  \[ \mathbbm 1\stackrel{\mathbbm 1}{\longrightarrow} \Cat \stackrel{F}{\longleftarrow} \cC^{\op}  \ \ \text{and} \ \ \mathbbm 1\stackrel{\mathbbm 1}{\longrightarrow} \Ar\Cat \xleftarrow{\Ar F} \Ar\cC^{\op} \ , \]
  respectively. In particular, $\el(F)$ is the pseudo-type version of the usual $2$-category of elements of $F$.
\end{remark}

We are now in a position to give the central result of this paper, a $2$-dimensional analogue to the classical relationship between representations and initial objects. The equivalence between (i) and (iii) below gives the promised $2$-categorical account of the theorem, and while it may be derived directly, we will find that our work of the previous sections allows for a more efficient approach via (ii).

\begin{theorem} \label{thm:birep}
  Let \cC be a $2$-category, and $\ainl{\cC^{\op}}{(F,\phi)}{\Cat}$ be a normal pseudo-functor. The following statements are equivalent.
  \begin{trome}
  \item\label{hyp:birep} The normal pseudo-functor $F$ has a bi-representation $(I,\rho)$.
  \item\label{hyp:dblbi-initial} There is an object $I\in \cC$ together with an object $i\in FI$ such that $(I,i)$ is double bi-initial in~$\eell(F)$.
  \item There is an object $I\in \cC$ together with an object $i\in FI$ such that $(I,i)$ is bi-initial in $\el(F)$ and $(I,\id_{i})$ is bi-initial in $\mor(F)$.
  \item There is an object $I\in \cC$ together with an object $i\in FI$ such that $(I,\id_{i})$ is bi-initial in $\mor(F)$.
  \end{trome}
\end{theorem}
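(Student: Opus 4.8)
The plan is to dispatch first the equivalences among (ii), (iii) and (iv), and then concentrate on the substantive equivalence of (i) with (ii). The equivalences (ii) $\Leftrightarrow$ (iii) $\Leftrightarrow$ (iv) are nothing more than \cref{thm:initial} applied to the double category $\bA=\eell(F)$ and the object $(I,i)$: by \cref{def:el-mor} we have $\el(F)=\cH\eell(F)$ and $\mor(F)=\SV\eell(F)$, and unwinding \cref{def:eell} the vertical identity $\vid_{(I,i)}$ of $\eell(F)$ is the identity morphism $\id_i$ in $FI$, which is precisely the object of $\SV\eell(F)=\mor(F)$ denoted $(I,\id_i)$. So the three clauses on $(I,i)$ are exactly the three clauses of \cref{thm:initial} for this choice of $\bA$ and object. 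It therefore remains to prove (i) $\Leftrightarrow$ (ii).

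For this, I would reduce both sides to a common statement, namely that for every $C\in\cC$ the \emph{evaluation-at-$i$} functor $\cC(C,I)\to FC$, sending $c\mapsto (Fc)i$ on objects and $\gamma\colon c\Rightarrow d$ to $(F\gamma)_i\colon (Fc)i\to (Fd)i$ on $2$-morphisms, is an equivalence of categories. On the side of (ii): by \cref{prop:simpler-dblbinitial}, $(I,i)$ is double bi-initial in $\eell(F)$ iff (a) every object $(C,x)$ of $\eell(F)$ receives a horizontal morphism from $(I,i)$, and (b) every square boundary in $\eell(F)$ with left leg $\vid_{(I,i)}$ has a unique filler. Translating through \cref{def:eell}, a horizontal morphism $(I,i)\to(C,x)$ is a pair $(c,\psi)$ with $c\colon C\to I$ in $\cC$ and an isomorphism $\psi\colon x\xrightarrow{\iso}(Fc)i$ in $FC$, so (a) says exactly that $c\mapsto(Fc)i$ is essentially surjective for every $C$; and a square with left leg $\vid_{(I,i)}$, top $(c,\psi)$, bottom $(d,\varphi)$ and right leg $\alpha\colon x\to y$ is a $2$-morphism $\gamma\colon c\Rightarrow d$ subject to the commutativity condition of \cref{def:eell}\ref{def:eellsquare}, which upon specialisation ($x'=y'=i$, $\alpha'=\id_i$) forces $(F\gamma)_i=\varphi\circ\alpha\circ\psi^{-1}$. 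Since $\psi$ and $\varphi$ are isomorphisms, for fixed $(c,\psi)$ and $(d,\varphi)$ the assignment $\alpha\mapsto\varphi\circ\alpha\circ\psi^{-1}$ is a bijection $FC(x,y)\cong FC((Fc)i,(Fd)i)$; so, granting (a), condition (b) is precisely the assertion that $\gamma\mapsto(F\gamma)_i$ is a bijection $\cC(C,I)(c,d)\to FC((Fc)i,(Fd)i)$ for all $c,d$, i.e.\ that $c\mapsto (Fc)i$ is fully faithful. Hence (ii) holds iff all the evaluation-at-$i$ functors are equivalences.

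On the side of (i): by the $2$-dimensional Yoneda lemma, the data of a pseudo-natural transformation $\cC(-,I)\Rightarrow F$ corresponds to the data of an object $i\in FI$, and under this correspondence the component at $C$ of the transformation attached to $i$ agrees, up to its pseudo-naturality isomorphism $\rho_c$, with the functor $c\mapsto(Fc)i$ (set $i\coloneqq\rho_I(\id_I)$ in one direction, and whisker back in the other). A morphism in $\Psd(\cC^{\op},\Cat)$ is an equivalence iff each of its components is an equivalence of categories, and every such equivalence promotes to an adjoint equivalence (as recorded after \cref{def:birep}). Combining these, (i) holds iff there is $i\in FI$ for which all evaluation-at-$i$ functors $\cC(C,I)\to FC$ are equivalences — exactly the reformulation of (ii) obtained above. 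This closes the cycle.

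The main obstacle is the bookkeeping in the second paragraph: checking that the pasting equality defining a square of $\eell(F)$ really does reduce, after the specialisation to left leg $\vid_{(I,i)}$, to the clean identity $(F\gamma)_i=\varphi\circ\alpha\circ\psi^{-1}$, keeping careful track of the variance (the components $(F\gamma)_z$ point the way they do because $\op$ reverses $1$-cells but not $2$-cells), of the role of the compositors $\phi$ of $F$ in the pseudo-naturality squares $\rho_c$, and of the adjoint-equivalence upgrade. None of this is conceptually deep, but it is precisely the sort of coherence verification that the double-categorical apparatus (\cref{thm:initial}, \cref{prop:simpler-dblbinitial}) is designed to absorb, so I would route the argument through those statements rather than expanding everything by hand.
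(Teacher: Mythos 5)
Your proposal is correct and takes essentially the same route as the paper: the (ii)$\Leftrightarrow$(iii)$\Leftrightarrow$(iv) part is exactly the paper's application of \cref{thm:initial} to $\eell(F)$, and your reduction of both (i) and (ii) to the statement that the evaluation-at-$i$ functors $c\mapsto (Fc)i$, $\gamma\mapsto(F\gamma)_i$ are equivalences is the same computation the paper performs, with its (ii)$\Rightarrow$(i) direction being precisely your translation of \cref{prop:simpler-dblbinitial} through \cref{def:eell}, and its (i)$\Rightarrow$(ii) direction absorbing your step of first rectifying $\rho$ to evaluation at $i=\rho_I(\id_I)$ into the unique-filler \cref{lem:sqboundary} (the naturality of the isomorphisms $(\rho_f)_{\id_I}$ in $f$ that you defer to bookkeeping is exactly the pseudo-naturality pasting used there, and is recorded separately as \cref{cor:bi-rep-rectification}). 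The remaining standard inputs you invoke---that the evaluation functors assemble into a pseudo-natural transformation via the compositors of $F$, that a pseudo-natural transformation with componentwise equivalences is an equivalence in $\Psd(\cC^{\op},\Cat)$, and that equivalences promote to adjoint equivalences---are treated at the same level of explicitness in the paper's own proof, so nothing is missing relative to its standard.
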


\begin{remark}
  Note that conditions (iii) and (iv) are inherently $2$-categorical statements. Let us think of an object of a $2$-category \cA as a normal pseudo-functor $\mathbbm 1\to\cA$. Recall that an object $\mathbbm 1\to\cA$ is bi-initial if and only if the projection $\slice \cA I \to \cA$ is a trivial fibration, where the slice and its projection are obtained as a certain pullback. Thus condition (iii) instructs us to construct the slice $\slice {\el(F)}{(I,i)}$ as well as the slice $\slice {\mor(F)}{(I,\id_i)}$, where the latter is obtained by considering the object given by the composite $\mathbbm 1\to\el(F)\to \mor(F)$, and ensures that both resulting projections are trivial fibrations. Similarly, condition (iv) concerns the latter pullback only, and ensures that its projection functor is a trivial fibration.
\end{remark}

First note that the equivalence of conditions (ii), (iii), and (iv) follows directly from \Cref{thm:initial}. The rest of this section will be devoted to the proof of the equivalence between conditions (i) and (ii). For this, we first introduce the following ``unique filler'' lemma to make efficient the proof of the forward implication.

\begin{lemma}\label{lem:sqboundary}
Under the assumptions of \Cref{hyp:birep}, for every pair of horizontal morphisms $\ainl{(I,i)}{(f,\psi)}{(C,x)}$ and $\ainl{(I,i)}{(g,\varphi)}{(C,y)}$, and every vertical morphism $\vainl{(C,x)}{\alpha}{(C,y)}$ in $\eell(F)$, there is a unique square in $\eell(F)$ of the below form.
     \begin{diagram*}[.][4]
        \node(1)[]{$(I,i)$};
        \node(2)[right of=1,xshift=0.5cm]{$(C,x)$};
        \node(3)[below of=1]{$(I,i)$};
        \node(4)[below of=2]{$(C,y)$};
        \draw[a](1)to node[la]{$(f,\psi)$}(2);
        \draw[a](3)to node[la,swap]{$(g,\varphi)$}(4);
        \draw[proequal](1)to(3);
        \draw[proarrow](2)to node[la]{$\alpha$}(4);
        \node[la] at ($(1)!0.5!(4)$) {$\gamma$};
      \end{diagram*}
\end{lemma}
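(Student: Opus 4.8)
The plan is to unwind the description of $\eell(F)$ from \cref{def:eell} and then reduce the claim to full faithfulness of the hom-functors of the bi-representation; this lemma is exactly the ``unique filler'' condition one needs to verify when checking double bi-initiality of $(I,i)$ via \cref{prop:simpler-dblbinitial}. Write $i \coloneqq \rho_I(\id_I) \in FI$ for the universal element attached to the bi-representation $(I,\rho)$ of \cref{def:birep}. Unpacking the clause of \cref{def:eell} describing squares in the case at hand --- left vertical boundary the vertical identity $\vid_{(I,i)}$, right vertical $\alpha$, top $(f,\psi)$, bottom $(g,\varphi)$, so that the underlying $\cC$-morphisms are $\ainl CfI$ and $\ainl CgI$ --- a square with this boundary is precisely a $2$-morphism $\ninl f\gamma g$ in $\cC$ for which the equation $(F\gamma)_i \circ \psi = \varphi \circ \alpha$ holds in $FC$; here the map labelled $(Fc)\alpha'$ of \cref{def:eell} has become $(Ff)(\id_i)$, an identity by functoriality of $Ff$. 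So it suffices to produce a unique $2$-morphism $\ninl f\gamma g$ solving this equation.

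First I would record two consequences of $(I,\rho)$ being a bi-representation. Since $\rho\colon \cC(-,I)\xnat{\eqv}F$ is a pseudo-natural equivalence, each component $\ainl{\cC(C,I)}{\rho_C}{FC}$ is an equivalence of categories, and in particular fully faithful. Moreover, for any $\ainl CfI$, the pseudo-naturality $2$-isomorphism $\rho_f$ of \cref{def:pseudo-natural-transformation}, taken at the $1$-cell of $\cC^{\op}$ corresponding to $f$ and evaluated at $\id_I$ --- and combined with unitors, which are identities as $F$ is normal --- yields a canonical isomorphism $\aiinl{(Ff)i}{\iota_f}{\rho_C f}$ in $FC$, and similarly $\aiinl{(Fg)i}{\iota_g}{\rho_C g}$. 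Applying the naturality of the $2$-isomorphism components of $\rho$ with respect to $2$-morphisms of $\cC^{\op}$ to $\gamma$ and evaluating at $\id_I$ gives the identity $\iota_g \circ (F\gamma)_i = \rho_C(\gamma) \circ \iota_f$ in $FC$, equivalently $(F\gamma)_i = \iota_g^{-1} \circ \rho_C(\gamma) \circ \iota_f$.

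Substituting this into the square equation, $\gamma$ fills the prescribed boundary if and only if $\rho_C(\gamma) = \iota_g \circ \varphi \circ \alpha \circ \psi^{-1} \circ \iota_f^{-1}$, whose right-hand side is a fixed morphism $\rho_C f \to \rho_C g$ in $FC$. Since $\rho_C$ is fully faithful, there is exactly one $2$-morphism $\ninl f\gamma g$ in $\cC$ whose image under $\rho_C$ is this morphism, and that $\gamma$ is the unique square sought. I expect the only real difficulty to lie in the bookkeeping of the second paragraph: pinning down the variance of $\rho$ on $\cC^{\op}$, tracking the unitors and compositors hidden inside $\iota_f$ and $\iota_g$, and checking that the relevant pseudo-naturality axiom reproduces the displayed identity $\iota_g \circ (F\gamma)_i = \rho_C(\gamma) \circ \iota_f$ exactly --- with the remaining coherence data collapsing because $F$ is normal. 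This is routine but must be handled with care.
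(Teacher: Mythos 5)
Your proposal is correct and follows essentially the same route as the paper's proof: unpack the square condition from \cref{def:eell} to the single equation $(F\gamma)_i=\varphi\circ\alpha\circ\psi^{-1}$, transport it along the isomorphisms $(\rho_f)_{\id_I}$ and $(\rho_g)_{\id_I}$ (your $\iota_f,\iota_g$) using the naturality of the pseudo-naturality $2$-cells of $\rho$ with respect to $\gamma$, and conclude by full faithfulness of the equivalence $\rho_C$. The paper merely packages the fixed target morphism as an auxiliary $\upsilon$ and phrases the reduction as a chain of equivalences, but the argument is the same.
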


\begin{proof}
  Let $(I,\rho)$ be a bi-representation of a normal pseudo-functor $\ainl{\cC^{\op}}{(F,\phi)}{\Cat}$, $\ainl{(I,i)}{(f,\psi)}{(C,x)}$ and $\ainl{(I,i)}{(g,\varphi)}{(C,y)}$ be horizontal morphisms in $\eell(F)$, and $\vainl{(C,x)}\alpha{(C,y)}$ be a vertical morphism in $\eell(F)$. Define $\upsilon$ to be the unique morphism of $FC$ fitting in the following diagram,
  \begin{diagram*}
    \node(1)[]{$(Ff)i$};
    \node(2)[right of= 1]{$x$};
    \node(3)[right of= 2]{$y$};
    \node(4)[right of= 3]{$(Fg)i$};
    \node(6)[below of= 1]{$\rho_{C}(f)$};
    \node(7)[below of= 4]{$\rho_{C}(g)$};
    \draw[a](1)to node[la]{$\psi^{\inv}$}(2);
    \draw[a](2)to node[la]{$\alpha$}(3);
    \draw[a](3)to node[la]{$\varphi$}(4);
    \draw[a](1)to node[la,swap]{$(\rho_{f})_{\id_{I}}$} node[la]{$\iso$}(6);
    \draw[a](4)to node[la]{$(\rho_{g})_{\id_{I}}$} node[la,swap]{$\iso$}(7);
    \draw[u](6)to node[la,swap]{$\upsilon$}(7);
  \end{diagram*}
  where $\niinl{(Ff)\rho_I}{\rho_f}{\rho_C \cC(I,f)}$ and $\niinl{(Fg)\rho_I}{\rho_g}{\rho_C \cC(I,g)}$ are the $2$-isomorphism components of $\rho$ at $f$ and $g$, respectively. By \cref{def:eellsquare}, a square $\sq{\gamma}{(f,\psi)}{(g,\varphi)}{\id_i}{\alpha}$ in~$\eell(F)$ is the data of a $2$-morphism $\ninl f\gamma{g}$ of \cC such that
   \[ (Ff)i
    \xrightarrow{(F\gamma)_{i}} (Fg)i \quad = \quad (Ff)i\stackrel{\psi^{\inv}}{\longrightarrow}
    x\stackrel{\alpha}{\longrightarrow}y \stackrel{\varphi}{\longrightarrow}(Fg)i.
  \]
  Therefore, we may deduce that this equation holds if and only if $(\rho_{g})_{\id_{I}}(F\gamma)_{i}=\upsilon(\rho_{f})_{\id_{I}}$, by definition of $\upsilon$. The left-hand composite of this equality appears as the result of evaluating the below-left pasting at $\id_{I}$, and this pasting is equal to the below-right pasting by pseudo-naturality of $\rho$.
  \begin{diagram*}[][][node distance=1.8cm]
    \node(1)[]{$\cC(I,I)$};
    \node(2)[right of= 1,xshift=0.5cm,minimum size=0.0cm]{$FI$};
    \node(3)[below of= 1]{$\cC(I,C)$};
    \node(4)[below of= 2,minimum size=0.0cm]{$FC$};
    \draw[a](1)to node[la]{$\rho_{I}$}(2);
    \draw[a](1)to node[la,swap]{$g_*$}(3);
    \draw[a](3)to node[la,swap]{$\rho_{C}$}(4);
    \draw[a](2)to node(Fac')[la,over,pos=0.5] {$Fg$} (4);
    \draw[a,bend left=65](2)to (4);
    \node(Fc)[la] at ($(Fac')+(1.1cm,0)$) {$Ff$};
    \celli[la,swap]{2}{3}{$\rho_{g}$};
    \cell[la,swap,xshift=-3pt][n][0.58]{Fc}{Fac'}{$F\gamma$};

    \node(5)[right of= 2,xshift=2.5cm]{$\cC(I,I)$};
    \node(6)[right of=5,xshift=0.5cm]{$FI$};
    \node(7)[below of=5]{$\cC(I,C)$};
    \node(8)[below of=6]{$FC$};
    \draw[a](5)to node[la]{$\rho_{I}$}(6);
    \draw[a](7)to node[la,swap]{$\rho_{C}$}(8);
    \draw[a](5)to node(F)[la,over]{$f_*$}(7);
    \draw[a](6)to node[la]{$Ff$}(8);
    \draw[a,bend right=60](5)to node(bad)[la,left]{$g_*$}(7);
    \coordinate (outer) at ($(bad.east)-(0,3pt)$);
    \coordinate (inner) at ($(outer)+(1.5cm,0)$);
    \cell[la,swap,xshift=2pt][n][0.4]{F}{bad}{$\gamma_*$};
    \celli[la,swap]{6}{7}{$\rho_{f}$};

    \node at ($(Fc.east)!0.5!(bad.west)$) {$=$};
  \end{diagram*}
  We deduce therefore that  \[ (\rho_{g})_{\id_{I}}(F\gamma)_{i}=\upsilon(\rho_{f})_{\id_{I}} \ \ \text{iff} \ \ (\rho_{C}(\gamma))(\rho_{f})_{\id_{I}}=\upsilon(\rho_{f})_{\id_{I}}\ \ \text{iff} \ \ \rho_{C}(\gamma)=\upsilon. \]
  All in all then, $\sq{\gamma}{(f,\psi)}{(g,\varphi)}{\id_i}{\alpha}$ is a square in $\eell(F)$ if and only if $\rho_{C}(\gamma)=\upsilon$. Since $\ainl{\cC(I,C)}{\rho_{C}}{FC}$ is an equivalence and is therefore fully faithful on morphisms, there is a unique such~$\gamma$.
\end{proof}

With this lemma established, the proof of the forward implication (i)$\Rightarrow$(ii) of \Cref{thm:birep} is readily given.

\begin{proof}[\Cref{thm:birep}, (i)$\Rightarrow$(ii)]
  Suppose (i), that is, we have a specified bi-representation $(I,\rho)$ of $F$. From this data we will select an object $i\in FI$ and demonstrate that $(I,i)$ is double bi-initial in $\eell(F)$. To begin, let us define $i\in FI$ as $i\coloneqq\rho_I(\id_I)$. We address each of conditions (i-iv) of \Cref{def:dblbi-initial} in turn.

  Let $(C,x)$ be an object of $\eell(F)$. Since $\ainl{\cC(C,I)}{\rho_C}{FC}$ is an equivalence and $x\in FC$, there is a morphism $\ainl{C}f{I}$ in $\cC$ together with a isomorphism $\aiinl {x}{\overline\psi}{\rho_C(f)}$ in $FC$. By post-composing with the inverse of $\aiinl {(Ff)i}{(\rho_f)_{\id_I}}{\rho_C(f)}$, arising from the $2$-isomorphism component $\niinl{(Ff)\rho_I}{\rho_f}{\rho_C \cC(f,I)}$ of $\rho$ at $f$, we find a horizontal morphism $\ainl {(I,i)}{(f,(\rho_f)_{\id_I}^{\inv}\overline\psi)}{(C,x)}$ in $\eell(F)$, and so we have established \Cref{cond:exists-obj}.

  The rest of conditions (ii-iv) each follow from applications of \Cref{lem:sqboundary} above, which we elaborate below. First, \Cref{cond:exists-horiz} grants us the existence of a boundary of $\eell(F)$ of the form depicted below, and charges us with finding a unique, vertically invertible filler.
  \begin{diagram*}
    \node(1)[]{$(I,i)$};
    \node(2)[below of= 1]{$(I,i)$};
    \node(3)[right of= 2, xshift=0.5cm]{$(C',x')$};
    \node(4)[right of= 3, xshift=0.5cm]{$(C,x)$};
    \node(5)[above of= 4]{$(C,x)$};
    \draw[a](1)to node[la]{$(f,\psi)$}(5);
    \draw[proequal](1)to(2);
    \draw[proequal](5)to(4);
    \draw[a](2)to node[la,swap]{$(f',\psi')$}(3);
    \draw[a](3)to node[la,swap]{$(c,\varphi)$}(4);
  \end{diagram*}
  By composing the bottom horizontal morphisms we see that \Cref{lem:sqboundary} supplies us with a unique filler for this square. That this filler is vertically invertible follows from considering the vertical opposite of the above square combined with further applications of \Cref{lem:sqboundary}.

  Next, \Cref{cond:exists-vert} grants us a vertical morphism $\vainl{(C,x)}\alpha{(C,y)}$ of $\eell(F)$ and demands the existence of a square from $\vidinl {(I,i)}$ to $\alpha$. By our construction of \Cref{cond:exists-obj} above, we may give horizontal morphisms $\ainl{(I,i)}{(f,\psi)}{(C,x)}$ and $\ainl{(I,i)}{(g,\varphi)}{(C,y)}$, and thus produce the below boundary in $\eell(F)$. An application of \Cref{lem:sqboundary} shows (iii).
  \begin{diagram*}
    \node(1)[]{$(I,i)$};
    \node(2)[right of=1,xshift=0.5cm]{$(C,x)$};
    \node(3)[below of=1]{$(I,i)$};
    \node(4)[below of=2]{$(C,y)$};
    \draw[a](1)to node[la]{$(f,\psi)$}(2);
    \draw[a](3)to node[la,swap]{$(g,\varphi)$}(4);
    \draw[proequal](1)to(3);
    \draw[proarrow](2)to node[la]{$\alpha$}(4);
  \end{diagram*}

  Finally, we must show that \Cref{cond:square} holds. That is, we must demonstrate that there is an equality of squares filling a fixed boundary. Fortunately we may apply \Cref{lem:sqboundary} to this boundary and so conclude the proof.
\end{proof}

We conclude this section by proving the reverse implication.

\begin{proof}[\Cref{thm:birep}, (ii)$\Rightarrow$(i)]
  Suppose (ii), that is, we have a double bi-initial object $(I,i)$ in $\eell(F)$. From this data we will construct equivalences $\ainl{\cC(C,I)}{\rho_{C}}{FC}$ for each $C\in \cC$ and then show that they assemble into a pseudo-natural transformation. By a standard result of $2$-categories, any equivalence is canonically rectifiable into an adjoint equivalence and so we do not trouble ourselves with additional work after giving $\rho$.

  For a fixed $C\in\cC$, let us define the functor $\ainl{\cC(C,I)}{\rho_{C}}{FC}$ on objects $\ainl CfI$ as $\rho_{C}(f)\coloneqq (Ff)i$ and on morphisms $\ninl f\gamma g$  as $\rho_{C}(\gamma)\coloneqq (F\gamma)_{i}$. As $F$ respects vertical composition of $2$-morphisms strictly, it is clear that $\rho_{C}$ is a functor by construction.

  With the functors $\rho_{C}$ defined, we now show that each of these functors is an equivalence. To that end, let us fix $C\in\cC$ and $x\in FC$. Observe that $(C,x)$ is an object of $\eell(F)$ so that, since $(I,i)$ is double bi-initial in $\eell(F)$, there is a horizontal morphism $\ainl{(I,i)}{(f,\psi)}{(C,x)}$ in $\eell(F)$. This is precisely the data of an object $f\in\cC(C,I)$ and an isomorphism $\aiinl{\rho_{C}(f)=(Ff)i}\psi x$, which shows that $\rho_{C}$ is essentially surjective on objects. To see that each $\rho_{C}$ is fully faithful on morphisms, let $\ainl C{f,g}I$ be objects in $\cC(C,I)$ and $\ainl{\rho_{C}(f)}{\alpha}{\rho_{C}(g)}$ be a morphism between their images in $FC$. This data is equivalently a pair of horizontal morphisms $\ainl{(I,i)}{(f,\id_{\rho_C(f)})}{(C,\rho_{C}(f))}$ and $\ainl{(I,i)}{(g,\id_{\rho_C(g)})}{(C,\rho_{C}(g))}$,
  since $\rho_C(f)=(Ff)i$ and $\rho_C(g)=(Fg)i$ by definition, together with a vertical morphism $\vainl{(C,\rho_{C}(f))}{\alpha}{(C,\rho_{C}(g))}$ in $\eell(F)$. Since $(I,i)$ is double bi-initial in $\eell(F)$, by \Cref{prop:simpler-dblbinitial} (ii'), there is a unique square in $\eell(F)$ of the form
  \begin{diagram*}[,][4]
  \node(1)[]{$(I,i)$};
    \node(2)[right of=1,xshift=1.25cm]{$(C,\rho_C(f))$};
        \node(3)[below of=1]{$(I,i)$};
        \node(4)[below of=2]{$(C,\rho_C(g))$};
        \draw[a](1)to node[la]{$(f,\id_{\rho_C(f)})$}(2);
        \draw[a](3)to node[la,swap]{$(g,\id_{\rho_C(g)})$}(4);
        \draw[proequal](1)to(3);
        \draw[proarrow](2)to node[la]{$\alpha$}(4);
        \node[la] at ($(1)!0.5!(4)$) {$\gamma$};
  \end{diagram*}
  that is, a unique $2$-morphism $\ninl f\gamma g$ such that $\rho_C(\gamma)=(F\gamma)_i=\alpha$. This shows fully faithfulness of $\rho_{C}$.

  Now that we have a collection of object-wise equivalences $\rho_{C}$ we seek to construct the data of the pseudo-naturality comparison natural isomorphisms $\niinl{(Fc)\rho_{C'}}{\rho_{c}}{\rho_{C}\cC(c,I)}$ depicted below, for each morphism $\ainl{C}c{C'}$ in $\cC$.
  \begin{diagram*}
    \node(1)[]{$\cC(C',I)$};
    \node(2)[right of=1,xshift=0.5cm]{$FC'$};
    \node(3)[below of=1]{$\cC(C,I)$};
    \node(4)[below of=2]{$FC$};
    \draw[a](1)to node[la]{$\rho_{C'}$}(2);
    \draw[a](3)to node[la,swap]{$\rho_{C}$}(4);
    \draw[a](1)to node[la,swap]{$\cC(c,I)$}(3);
    \draw[a](2)to node[la]{$Fc$}(4);
    \celli[la,swap]{2}{3}{$\rho_{c}$};
  \end{diagram*}
  For $f\in\cC(C',I)$ observe that $(Fc)\rho_{C'}(f)=(Fc)(Ff)i$ and $\rho_{C}\cC(c,I)(f)=F(fc)i$, so that we can set $\rho_{c}$ to be $(\phi_{c,-})_i$, the compositor of $F$ at $(c,-)$ evaluated at $i$. This satisfies all of the required properties of pseudo-naturality.
\end{proof}

In fact, we have additionally proven that bi-representations $(I,\rho)$ of a normal pseudo-functor are determined up to isomorphism by their values on $\id_I$. This conclusion may be seen as a special case of a suitable $2$-dimensional Yoneda lemma.

\begin{corollary}\label{cor:bi-rep-rectification}
  Let \cC be a $2$-category, and $\ainl{\cC^{\op}}{F}{\Cat}$ be a normal pseudo-functor. Suppose that $(I,\rho)$ is a bi-representation of $F$. Then there is a canonical bi-representation $(I,\overline\rho)$ of $F$ given by \[ \ainl{\cC(C,I)}{\overline \rho_C=(F-)(\rho_I(\id_I))}{FC}, \]
  for every $C\in \cC$. Moreover, we have that $\overline\rho\iso\rho$.
\end{corollary}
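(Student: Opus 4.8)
The plan is to obtain $\overline\rho$ by composing the two implications of \cref{thm:birep} and then to build an explicit comparison modification. First I would set $i\coloneqq \rho_I(\id_I)\in FI$; by the implication (i)$\Rightarrow$(ii) of \cref{thm:birep} the pair $(I,i)$ is double bi-initial in $\eell(F)$. Running this double bi-initial object through the construction carried out in the proof of the implication (ii)$\Rightarrow$(i) of \cref{thm:birep} then yields a bi-representation $(I,\overline\rho)$ of $F$ whose component at $C\in\cC$ sends an object $\ainl CfI$ to $(Ff)i$, sends a $2$-morphism $\ninl f\gamma g$ to $(F\gamma)_i$, and whose pseudo-naturality comparison at a morphism $\ainl Cc{C'}$ is $(\phi_{c,-})_i$. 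In particular $\overline\rho_C = (F-)(\rho_I(\id_I))$, which is the stated formula, and $(I,\overline\rho)$ is a bi-representation by construction.

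It then remains to exhibit an isomorphism $\overline\rho\iso\rho$ in the $2$-category $\Psd(\cC^{\op},\Cat)$, i.e.\ an invertible modification. For each $C\in\cC$ I would define the natural transformation $\ninl{\overline\rho_C}{\Gamma_C}{\rho_C}$ by taking its component at $\ainl CfI$ to be the isomorphism $(\rho_f)_{\id_I}\colon (Ff)i\xarr{\iso}\rho_C(f)$ extracted, by evaluation at $\id_I$, from the $2$-isomorphism component $\niinl{(Ff)\rho_I}{\rho_f}{\rho_C\cC(f,I)}$ of $\rho$ at $f$; each such component is invertible since $\rho_f$ is. Naturality of $\Gamma_C$ with respect to a $2$-morphism $\ninl f\gamma g$ would then be precisely the naturality of the components $\rho_c$ of $\rho$ with respect to $2$-morphisms of $\cC$, instantiated at $f$ and $\gamma$ and evaluated at $\id_I$: it reads $\rho_C(\gamma)\circ (\rho_f)_{\id_I}=(\rho_g)_{\id_I}\circ(F\gamma)_i$, which is exactly $\rho_C(\gamma)\circ(\Gamma_C)_f=(\Gamma_C)_g\circ\overline\rho_C(\gamma)$.

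The last and main task is to check that the family $(\Gamma_C)_{C\in\cC}$ satisfies the modification axiom relating the pseudo-naturality cells $\rho_c$ and $\overline\rho_c=(\phi_{c,-})_i$ for every morphism $\ainl Cc{C'}$. I expect this to be the only genuinely computational point: after unwinding the modification condition and evaluating at $\id_I$, it should reduce to the coherence axiom of \cref{def:pseudo-natural-transformation} expressing the compatibility of $\rho$ with the compositors of $F$. Once this pasting bookkeeping is dispatched, $\Gamma$ is an invertible modification, so $\overline\rho\iso\rho$ in $\Psd(\cC^{\op},\Cat)$, completing the argument. Alternatively one could present the whole comparison as an instance of a $2$-dimensional Yoneda lemma, but the direct modification seems the more self-contained route.
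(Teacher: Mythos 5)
Your proposal is correct and follows essentially the same route as the paper: the paper's proof likewise obtains $\overline\rho$ by tracing (i)$\Rightarrow$(ii) and then (ii)$\Rightarrow$(i) of \cref{thm:birep} with $i=\rho_I(\id_I)$, and takes the comparison $\overline\rho\iso\rho$ to be given by the components $(\rho_f)_{\id_I}$. Your additional verification that naturality and the modification axiom reduce to the naturality and compositor-compatibility axioms of $\rho$ evaluated at $\id_I$ is exactly the bookkeeping the paper leaves implicit.
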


\begin{proof}
  The construction is given by tracing the proofs above of \cref{thm:birep} through (i)$\Rightarrow$(ii) and then (ii)$\Rightarrow$(i). Finally, the isomorphism $\overline\rho\iso\rho$ is given by the $2$-isomorphism components $\aiinl{(Ff)\rho_I(\id_I)}{(\rho_f)_{\id_I}}{\rho_C(f)}$ of $\rho$ itself evaluated at $\id_I$, for every $\ainl CfI$ in $\cC$.
\end{proof}

\begin{remark}
  In particular, when $F$ is a strict $2$-functor, without loss of generality a bi-representation of $F$ may be taken to be a $2$-natural adjoint equivalence. Indeed, the bi-representation constructed in \cref{cor:bi-rep-rectification} is $2$-natural.
\end{remark}

\subsection{The case in presence of tensors by \texorpdfstring{$\mathbbm 2$}{2}} \label{sec:tensorby2case}

Finally we explore a substantial improvement of \cref{thm:birep} which is possible when the $2$-category \cC has \emph{tensors}, defined below, which are preserved by $F$.

\begin{definition}
    Let $\cC$ be a $2$-category, $C\in \cC$ be an object, and $\mathcal A$ be a category.

    A \textbf{power of $C$ by} $\mathcal A$ is a weighted $2$-limit of the $2$-functor $C\colon \mathbbm 1\to \cC$ by the weight $\mathcal A\colon \mathbbm 1\to \Cat$. In other words, it is a pair $(\mathcal A\pitchfork C,\lambda)$ of an object $\mathcal A\pitchfork C\in \cC$ and a functor $\ainl {\mathcal A}{\lambda}{\cC(\mathcal A\pitchfork C,C)}$ such that, for every object $C'\in \cC$, pre-composition by $\lambda$ induces an isomorphism of categories
    \[ \aiinl{\cC(C',\mathcal A\pitchfork C)}{\lambda_*\circ \cC(-,C)}{\Cat(\mathcal A,\cC(C',C))}. \]
    We say that $\cC$ \textbf{has powers by $\mathcal A$} if there is a power of $C$ by $\mathcal A$ for each object $C\in \cC$.

    Dually, a \textbf{tensor of $C$ by} $\mathcal A$ is a power of $C$ by $\mathcal A$ in the opposite $2$-category $\cC^{\op}$. In other words, it is a pair $(C\otimes \mathcal A, \zeta)$ of an object $C\otimes \mathcal A\in \cC$ and a functor $\zeta\colon \mathcal A\to \cC(C,C\otimes \mathcal A)$ such that, for every object $C'\in \cC$, pre-composition by $\zeta$ induces an isomorphism of categories
    \[ \aiinl{\cC(C\otimes \mathcal A,C')}{\zeta^*\circ \cC(C,-)}{\Cat(\mathcal A,\cC(C,C'))} \ . \]
    We say that $\cC$ \textbf{has tensors by} $\mathcal A$ if there is a tensor of $C$ by $\mathcal A$ for each object $C\in \cC$.
\end{definition}

\begin{remark}
  Powers may be viewed as a lower-dimensional shadow of the double categorical notion of tabulators seen in \cref{def:tabulator}. Indeed, a power of an object $C\in \cC$ by the category $\mathbbm 2=\{0\to 1\}$ in a $2$-category $\cC$ is precisely a tabulator of the vertical identity $\vid_C$ in its associated horizontal double category $\bH \cC$; see \cite[Exercise 5.6.2 (c)]{Grandis}. In particular, tabulators in $\bH\cC^{\op}$ correspond to tensors by $\mathbbm 2$ in $\cC$.
\end{remark}

From the universal property of powers, it is straightforward to see that the $2$-category $\Cat$ has powers by any category $\mathcal A$ given by $\mathcal A\pitchfork \mathcal C\coloneqq\Cat(\mathcal A,\mathcal C)$. Given a $2$-category $\cC$ with tensors by a category $\mathcal A$, we then say that a normal pseudo-functor $\ainl{\cC^{\op}}{F}{\Cat}$ \textbf{preserves powers by} $\mathcal A$ if, for every object $C\in \cC$, we have an isomorphism of categories $F(C\otimes\mathcal A)\iso \Cat(\mathcal A, FC)$ which is natural with respect to the defining cones.

\begin{theorem}\label{thm:eeellll-tensors}
  Let $\cC$ be a $2$-category with tensors by $\mathbbm 2$, and $\ainl {\cC^{\op}}{F}{\Cat}$ be a normal pseudo-functor which preserves powers by $\mathbbm 2$. Then the following statements are equivalent.
  \begin{rome}
  \item The normal pseudo-functor $F$ has a bi-representation $(I,\rho)$.
  \item There is an object $I\in \cC$ together with an object $i\in FI$ such that $(I,i)$ is double bi-initial in $\eell(F)$.
  \item There is an object $I\in\cC$ together with an object $i\in FI$ such that $(I,i)$ is bi-initial in $\el(F)$.
  \end{rome}
  In particular, $(I,i)$ is bi-initial in $\el(F)$ if and only if $(I,\id_i)$ is bi-initial in~$\mor(F)$.
\end{theorem}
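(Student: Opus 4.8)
The plan is to reduce the whole statement to one structural fact: under the hypotheses, the double category of elements $\eell(F)$ admits \emph{tabulators} in the sense of \cref{def:tabulator}. Granting this, all three equivalences — including the concluding ``in particular'' — drop out of results already in hand: \cref{thm:birep} gives (i)$\Leftrightarrow$(ii); since $\el(F)=\cH\eell(F)$ by \cref{def:el-mor}, \cref{prop:tabulators-biinitial} applied to $\bA=\eell(F)$ gives that $(I,i)$ is double bi-initial in $\eell(F)$ if and only if it is bi-initial in $\el(F)$, which is (ii)$\Leftrightarrow$(iii); and, noting that the vertical identity $\vid_{(I,i)}$ of $\eell(F)$ is the morphism $\id_i$ of $FI$ — which as an object of $\mor(F)=\SV\eell(F)$ is exactly $(I,\id_i)$ — the result \cref{cor:bi-in-in-HA-VA-tab} applied to $\bA=\eell(F)$ yields the final assertion.

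So the real work is to prove that $\eell(F)$ has tabulators, and here the two hypotheses are doing precisely the right thing. By the remark preceding the theorem, ``$\cC$ has tensors by $\mathbbm 2$'' says exactly that the horizontal double category $\bH\cC^{\op}$ has tabulators, the tabulator of the trivial vertical morphism $\vid_C$ being the tensor $C\otimes\mathbbm 2$; similarly $\Cat(\mathbbm 2,FC)$ is the tabulator of $\vid_{FC}$ in $\bH\Cat$, so ``$F$ preserves powers by $\mathbbm 2$'' says exactly that the normal pseudo-double functor $\ainl{\bH\cC^{\op}}{\bH F}{\bH\Cat}$ preserves tabulators. Since $\eell(F)=\dblslice{\mathbbm 1}{\bH F}$, it then suffices to check that this pseudo-slice inherits tabulators. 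Concretely, for a vertical morphism $\vainl{(C,x)}{\alpha}{(C,y)}$ of $\eell(F)$ — that is, a morphism $\alpha\colon x\to y$ in $FC$ — I would take as tabulator the object $(C\otimes\mathbbm 2,\widehat\alpha)$, where $\widehat\alpha\in F(C\otimes\mathbbm 2)$ corresponds to the morphism $\alpha$ of $FC$ under the isomorphism $F(C\otimes\mathbbm 2)\iso\Cat(\mathbbm 2,FC)$, the tabulator square being built from the universal $2$-cell of the tensor $C\otimes\mathbbm 2$ together with this isomorphism. Equivalently, one may isolate the general lemma that, for a normal pseudo-double functor $P$ preserving tabulators whose domain has tabulators and an object $I$ of its codomain, the pseudo-slice $\dblslice{I}{P}$ has tabulators, with the tabulator of $(u,\gamma)$ given by $\top u$ carrying the unique horizontal morphism that factors $\gamma$ through the image under $P$ of the tabulator square of $u$.

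The main obstacle is verifying the two universal properties of \cref{def:tabulator} for $(C\otimes\mathbbm 2,\widehat\alpha)$, and within that the second, two-dimensional one. The first amounts to combining the one-dimensional universal property of the tensor — morphisms $C\otimes\mathbbm 2\to C'$ in $\cC$ are the same as $2$-cells of $\cC$ — with the fact that $F(C\otimes\mathbbm 2)\iso\Cat(\mathbbm 2,FC)$ classifies the fibrewise data, after unwinding (via \cref{def:eellsquare}) that a square of $\eell(F)$ with left boundary a vertical identity is a $2$-cell of $\cC$ subject to a single commuting square in a fibre of $F$. The second universal property requires matching the nested pasting equalities defining squares and $\SV$-type $2$-cells of $\eell(F)$ against the two-dimensional part of the universal property of $C\otimes\mathbbm 2$ and the naturality of the isomorphism $F(C\otimes\mathbbm 2)\iso\Cat(\mathbbm 2,FC)$; this is careful but routine diagram bookkeeping. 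Everything downstream of ``$\eell(F)$ has tabulators'' is then a direct citation.
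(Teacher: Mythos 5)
Your proposal is correct and follows essentially the same route as the paper: the paper's proof also reduces everything to showing that $\eell(F)$ has tabulators (its \cref{lem:eellF-tabulators}), constructs the tabulator of a vertical morphism $\alpha\colon(C,x)\varrow(C,y)$ exactly as you do, namely as $(C\otimes\mathbbm 2,\overline\alpha)$ with $\overline\alpha$ corresponding to $\alpha$ under $F(C\otimes\mathbbm 2)\iso\Cat(\mathbbm 2,FC)$ and the tabulator square built from the universal $2$-cell of the tensor, and then deduces (i)$\Leftrightarrow$(ii) from \cref{thm:birep}, (ii)$\Leftrightarrow$(iii) from \cref{prop:tabulators-biinitial}, and the final assertion from \cref{cor:bi-in-in-HA-VA-tab}. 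The verification of the two universal properties that you defer as routine is carried out (also briefly) in the paper via the one- and two-dimensional universal properties of the tensor, just as you indicate.
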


In order to prove this result we will make use of the following lemma.

\begin{lemma} \label{lem:eellF-tabulators}
  Let $\cC$ be a $2$-category with tensors by $\mathbbm 2$, and $\ainl {\cC^{\op}}{F}{\Cat}$ be a normal pseudo-functor which preserves powers by $\mathbbm 2$. Then the double category $\eell(F)$ has tabulators.
\end{lemma}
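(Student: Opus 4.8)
The plan is to construct, for each vertical morphism in $\eell(F)$, an explicit tabulator using the tensor by $\mathbbm 2$ in $\cC$ and the power by $\mathbbm 2$ in $\Cat$, and then verify the two universal properties of \cref{def:tabulator} by translating them into statements about $\cC$ and $FC$ that follow from the universal property of tensors together with the fact that $F$ preserves powers by $\mathbbm 2$. Recall from \cref{def:eell} that a vertical morphism in $\eell(F)$ over an object $(C,x)$ is a morphism $\vainl{(C,x)}{\alpha}{(C,y)}$, i.e., a morphism $\ainl x\alpha y$ in the category $FC$; equivalently, by the power of $FC$ by $\mathbbm 2$, it is a functor $\mathbbm 2\to FC$. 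Since $F$ preserves powers by $\mathbbm 2$, we have $F(C\otimes\mathbbm 2)\iso\Cat(\mathbbm 2,FC)$, so the morphism $\alpha$ corresponds to an object $\widehat\alpha\in F(C\otimes\mathbbm 2)$. The candidate tabulator of the vertical morphism $\alpha$ is then the object $(C\otimes\mathbbm 2,\widehat\alpha)\in\eell(F)$, with the tabulator square $\tau_\alpha$ built from the two legs of the tensoring cone $\zeta\colon\mathbbm 2\to\cC(C,C\otimes\mathbbm 2)$ — namely the horizontal morphisms $\zeta(0)$ and $\zeta(1)$ — suitably decorated with the coherence isomorphisms coming from $F$ preserving the power.

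First I would make precise the correspondence: unwinding \cref{def:pseudoslicedouble}, a square $\sq{\gamma}{(f,\psi)}{(g,\varphi)}{\vid_{(C',x')}}{\alpha}$ in $\eell(F)$ with left boundary a vertical identity is exactly the data of a $2$-morphism in $\cC$ together with a commuting diagram in $FC'$, which, by the defining property of the power of $FC'$ by $\mathbbm 2$, is the same as a single morphism in $FC'$ into the ``arrow object'' associated to $\alpha$. Dually, using that $\cC$ has tensors by $\mathbbm 2$, a cospan of horizontal morphisms out of a common object into the two legs of $\zeta$ together with such a commuting square corresponds to a single horizontal morphism into $C\otimes\mathbbm 2$. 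Matching these two translations gives the first universal property \cref{cond:firstup-tab}: squares $\sq{\gamma}{f}{g}{\vid_I}{\alpha}$ in $\eell(F)$ correspond bijectively to horizontal morphisms $t\colon I\to\top\alpha=(C\otimes\mathbbm 2,\widehat\alpha)$.

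For the second universal property \cref{cond:secondup-tab} I would run the analogous argument one dimension up: the relevant pasting data of squares $\theta_0,\theta_1,\gamma,\gamma'$ in $\eell(F)$ translates, via the $2$-dimensional part of the universal property of the tensor $C\otimes\mathbbm 2$ in $\cC$ (which provides an isomorphism of hom-categories, not merely hom-sets) and via $F$ preserving the power by $\mathbbm 2$, into a single square in $\eell(F)$ with the prescribed boundary, uniquely. Here the key point is that $\zeta^*\circ\cC(C,-)$ is an \emph{isomorphism of categories} onto $\Cat(\mathbbm 2,\cC(C,-))$, so both the existence and uniqueness of $\theta$ are automatic once the boundary data is repackaged correctly. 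The main obstacle I anticipate is purely bookkeeping: carefully threading the coherence isomorphism $F(C\otimes\mathbbm 2)\iso\Cat(\mathbbm 2,FC)$ and the compositors $\phi$ of $F$ through the pasting diagrams of \cref{def:tabulator}, so that the squares one writes down in $\eell(F)$ genuinely satisfy the pasting equalities of \cref{def:eellsquare}; the underlying $2$-categorical content is entirely supplied by the universal property of tensors by $\mathbbm 2$ and the preservation hypothesis on $F$. I would organize the write-up so that these verifications reduce to the corresponding (already known) statements about powers and tensors in the $2$-categorical setting, invoking \cref{lem:h-sv-general-commas} or the explicit descriptions in \cref{def:pseudoslicedouble} as needed.
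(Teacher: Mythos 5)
Your proposal follows essentially the same route as the paper's proof: the tabulator of a vertical morphism $\alpha$ over $(C,x)\to(C,y)$ is taken to be $(C\otimes\mathbbm 2,\overline\alpha)$ with $\overline\alpha$ the object of $F(C\otimes\mathbbm 2)$ corresponding to $\alpha\colon\mathbbm 2\to FC$ under power preservation, the tabulator square is built from the tensor cone $\zeta$, and both universal properties are verified by repackaging squares in $\eell(F)$ via the one- and two-dimensional universal property of the tensor together with the isomorphism $F(C\otimes\mathbbm 2)\iso\Cat(\mathbbm 2,FC)$. The only caveats are cosmetic (e.g., the commuting diagram attached to a square with identity left boundary lives in $FC$, and the pair $(\psi,\varphi)$ is a morphism \emph{in} the arrow category $\Cat(\mathbbm 2,FC)$), not mathematical gaps.
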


\begin{proof}
  Let $\vainl{(C,x)}{\alpha}{(C,y)}$ be a vertical morphism in $\eell(F)$ and let $(C\otimes \mathbbm 2, \zeta)$ be a tensor of $C$ by $\mathbbm 2$. Recall that $\zeta$ is a functor $\zeta\colon \mathbbm 2\to \cC(C,C\otimes \mathbbm 2)$, and therefore it corresponds to a $2$-morphism
\begin{diagram*}[.][2]
\node[minimum size=0.8cm](1) {$C$};
\node[right of=1,xshift=.5cm](2) {$C\otimes \mathbbm 2$};
\draw[bend left,a] (1) to node(a)[above,la] {$\zeta_0$} (2);
\draw[bend right,a] (1) to node(b)[below,la] {$\zeta_1$} (2);
\cell[la]{a}{b}{$\zeta$};
\end{diagram*}
  Moreover, the morphism $\alpha\colon x\to y$ in $FC$ is equivalently a functor $\alpha\colon  \mathbbm 2\to FC$ and therefore it corresponds to an object $\overline{\alpha}\in F(C\otimes \mathbbm 2)$ as $\Cat(\mathbbm 2,FC)\iso F(C\otimes \mathbbm 2)$. We set $\top u\coloneqq (C\otimes\mathbbm 2,\overline{\alpha})\in\eell(F)$ and $\tau_u$ to be the following square in $\eell(F)$.
\begin{diagram*}
\node[](1) {$(C\otimes \mathbbm 2, \overline{\alpha})$};
\node[below of =1](2) {$(C\otimes \mathbbm 2, \overline{\alpha})$};
\node[right of=1,xshift=1cm](3) {$(C,x)$};
\node[below of =3](4) {$(C,y)$};
\draw[a] (1) to node[above,la] {$(\zeta_0,\id_x)$} (3);
\draw[a] (2) to node[below,la] {$(\zeta_1,\id_y)$} (4);
\draw[proequal] (1) to (2);
\draw[proarrow] (3) to node[right,la] {$\alpha$} (4);
\node[la] at ($(1)!0.5!(4)$) {$\zeta$};
\end{diagram*}
We show that it satisfies the universal properties of tabulators of \cref{def:tabulator}. Let
\begin{diagram*}
\node[](1) {$(C',x')$};
\node[below of =1](2) {$(C',x')$};
\node[right of=1,xshift=1cm](3) {$(C,x)$};
\node[below of =3](4) {$(C,y)$};
\draw[a] (1) to node[above,la] {$(c,\psi)$} (3);
\draw[a] (2) to node[below,la] {$(d,\varphi)$} (4);
\draw[proequal] (1) to (2);
\draw[proarrow] (3) to node[right,la] {$\alpha$} (4);
\node[la] at ($(1)!0.5!(4)$) {$\gamma$};
\end{diagram*}
be a square in $\eell(F)$. By the universal property of tensors, the $2$-morphism $\ninl c\gamma d$ corresponds to a morphism $\overline{\gamma}\colon C\otimes \mathbbm 2\to C'$. Moreover, note that the pair $(\psi,\varphi)$ gives an isomorphism in $\Cat(\mathbbm 2,FC)$ from $\alpha$ to $(F\gamma)_{x'}$, and since $F$ preserves powers by $\mathbbm 2$, then $(\psi,\varphi)$ corresponds to an isomorphism $\overline{(\psi,\varphi)}\colon \overline{\alpha}\cong (F\overline{\gamma})_{x'}$ in $F(C\otimes \mathbbm 2)$. We get the required horizontal morphism $(\overline{\gamma},\overline{(\psi,\varphi)})\colon (C',x')\to (C\otimes \mathbbm 2,\overline{\alpha})$ for \cref{cond:firstup-tab}.

Similarly, \cref{cond:secondup-tab} follows from the fact that $2$-morphisms in $\cC$ of the form
\begin{diagram*}
\node[minimum size=0.8cm](2) {$C'$};
\node[left of=1,xshift=-.5cm](1) {$C\otimes \mathbbm 2$};
\draw[bend left,a] (1) to node(a)[above,la] {$\overline{\gamma}$} (2);
\draw[bend right,a] (1) to node(b)[below,la] {$\overline{\gamma'}$} (2);
\cell[la]{a}{b}{$\overline{\theta}$};
\end{diagram*}
uniquely correspond to $2$-morphisms $\theta_0,\theta_1$ in $\cC$ between morphisms $C\to C'$ such that $\gamma \theta_0=\theta_1\gamma'$, by the universal property of tensors.
\end{proof}

\begin{proof}[\cref{thm:eeellll-tensors}]
    First note that (i) and (ii) are equivalent by \cref{thm:birep}.

    To see that (ii) and (iii) are equivalent consider the following. By \cref{lem:eellF-tabulators}, the double category $\eell(F)$ admits tabulators. Thus, by \cref{prop:tabulators-biinitial}, an object $(I,i)$ is double bi-initial in $\eell(F)$ if and only if $(I,i)$ is bi-initial in $\el(F)$.

    Finally, that $(I,i)$ is bi-initial in $\el(F)$ if and only if $(I,\id_i)$ is bi-initial in $\mor(F)$ follows from \cref{cor:bi-in-in-HA-VA-tab}.
\end{proof}

\section{Applications to bi-adjunctions and weighted bi-limits}
\label{sec:applications}

Now that we have satisfied ourselves with the characterisation of \Cref{thm:birep} of bi-representations of normal pseudo-functor we focus now on two formal applications. In \Cref{sec:bi-adjunctions}, we will leverage some $2$-dimensional arguments to give a characterisation of bi-adjunctions in terms of bi-terminal objects in pseudo-slices. Then, in \Cref{sec:weighted-limits} we will connect to the counter-examples given in \cite{us} by proving a correct characterisation of bi-limits in terms of bi-terminal objects in pseudo-slices, specialising the supporting theorem in the same section about weighted bi-limits. In both sections we will additionally give improvements on these results by specialising \cref{thm:eeellll-tensors} when the $2$-categories at issue have tensors by $\mathbbm 2$, which in the case of weighted bi-limits subsumes a known special case.

\subsection{Bi-adjunctions}
\label{sec:bi-adjunctions}

We begin by introducing the notion of a bi-adjunction.

\begin{definition} \label{def:biadj}
  Let $\cC$ and $\cD$ be $2$-categories. A \textbf{bi-adjunction} between \cC and \cD comprises the data of normal pseudo-functors $\ainl{\cC}{L}{\cD}$ and $\ainl{\cD}{R}{\cC}$, and adjoint equivalences of categories \[ \aeinl{\cC(C,RD)}{\Phi_{C,D}}{\cD(LC,D)} \]
  pseudo-natural in $C\in \cC^{\op}$ and $D\in \cD$.
\end{definition}

\begin{remark}\label{rem:local-nature}
  We wish to draw the reader's attention to the following relationship between bi-adjunctions and bi-representations. If $L$ and $R$ are embroiled in a bi-adjunction, then in particular for each object $D\in\cD$ we may observe that we have a bi-representation
  \[ \neinl{\cC(-,RD)}{\Phi_{-,D}}{\cD(L-,D)} \]
  of the normal pseudo-functor $\cD(L-,D)$. In this sense, bi-adjunctions are ``locally'' bi-representations.
\end{remark}

The major goal of this section is to provide a converse to the above observation. That is, we will concentrate our efforts on establishing that being ``locally bi-represented'' is, in fact, enough to determine a right bi-adjoint in the sense of \Cref{def:biadj}. Such a result is of course expected by analogy to the ordinary categorical version. Giving such a formulation of bi-adjunctions in terms of bi-representations allows us to apply \Cref{thm:birep} and thereby give a characterisation of bi-adjunctions in terms of bi-terminal objects in pseudo-slices.

\begin{theorem}\label{lem:yonedastyle}
  Let \cC and \cD be $2$-categories, and $\ainl \cC {(L,\delta)} \cD$ be a normal pseudo-functor. The following statements are equivalent.
  \begin{rome}
  \item The normal pseudo-functor $\ainl\cC L \cD$ has a right bi-adjoint $\ainl \cD R \cC$.
  \item For all objects $D\in\cD$, there is a bi-representation $(RD,\Psi^D)$ of $\cD(L-,D)$, where $\neinl{\cC(-,RD)}{\Psi^{D}}{\cD(L-,D)}$ in $\Psd(\cC^{\op},\Cat)$ is a pseudo-natural adjoint equivalence.
  \end{rome}
\end{theorem}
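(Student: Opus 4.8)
The two implications behave quite differently, so I would prove them separately. The implication (i)$\Rightarrow$(ii) is essentially \Cref{rem:local-nature}: given a bi-adjunction consisting of $\ainl\cC L\cD$, $\ainl\cD R\cC$, and a pseudo-natural adjoint equivalence $\Phi_{C,D}\colon\cC(C,RD)\xarr{\eqv}\cD(LC,D)$ in the variables $C\in\cC^{\op}$ and $D\in\cD$, one fixes $D$ and sets $\Psi^D\coloneqq\Phi_{-,D}$. Restricting a pseudo-natural transformation to one of its variables retains all of its structure, and restricting an adjoint equivalence to one variable again yields an adjoint equivalence, now in $\Psd(\cC^{\op},\Cat)$, with unit and counit the corresponding restricted modifications; hence $(RD,\Psi^D)$ is a bi-representation of $\cD(L-,D)$ for every $D$. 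All of the real work is in (ii)$\Rightarrow$(i).

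For (ii)$\Rightarrow$(i) the plan is to phrase the hypothesis through the $2$-dimensional Yoneda embedding and then lift a pseudo-functor along it. First I would note that $\cD(L-,-)$ underlies a $2$-functor $G\colon\cD\to\Psd(\cC^{\op},\Cat)$, namely the composite of the Yoneda embedding $\mathcal Y_\cD\colon\cD\to\Psd(\cD^{\op},\Cat)$ with the restriction-along-$L^{\op}$ $2$-functor $\Psd(\cD^{\op},\Cat)\to\Psd(\cC^{\op},\Cat)$. On the other side, the Yoneda embedding $\mathcal Y\colon\cC\to\Psd(\cC^{\op},\Cat)$ is a $2$-functor which is a \emph{local equivalence}: by the $2$-categorical Yoneda lemma the functor $\cC(C,C')\to\Psd(\cC^{\op},\Cat)(\cC(-,C),\cC(-,C'))$ is an equivalence of categories for all $C,C'\in\cC$. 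The hypothesis (ii) is then exactly the data, for each $D\in\cD$, of an object $RD\in\cC$ together with an adjoint equivalence $\Psi^D\colon\mathcal Y(RD)\xarr{\eqv}GD$ in $\Psd(\cC^{\op},\Cat)$.

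The crux is a transport-of-structure lemma, which I would prove directly: if $H\colon\cA\to\cE$ is a $2$-functor that is a local equivalence, $G\colon\cD\to\cE$ a normal pseudo-functor, and for each $D\in\cD$ we are given an object $RD\in\cA$ and an adjoint equivalence $e_D\colon H(RD)\xarr{\eqv}GD$, then there is a normal pseudo-functor $R\colon\cD\to\cA$ with that object-assignment together with a pseudo-natural adjoint equivalence $e\colon HR\xnat{\eqv}G$ whose components are the $e_D$. On a morphism $f\colon D\to D'$ one takes $Rf\in\cA(RD,RD')$ to be a preimage under $H$, unique up to the available $2$-isomorphism, of $e_{D'}^{-1}(Gf)e_D$ — possible since $H$ is a local equivalence — and the compositors of $R$, all of its coherence axioms, and the coherence cells of $e$ are then forced and checked by transporting those of $G$, $\mathcal Y$ and the $e_D$ across $H$, using local faithfulness on $2$-cells to reflect the resulting equations. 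Applying this with $H=\mathcal Y$, $\cE=\Psd(\cC^{\op},\Cat)$ and $e_D=\Psi^D$ produces a normal pseudo-functor $\ainl\cD R\cC$ and a pseudo-natural adjoint equivalence $\mathcal Y R=\cC(-,R-)\xnat{\eqv}\cD(L-,-)=G$; unwinding its component at a pair $(C,D)$, and using that $\mathcal Y$ and $G$ are strict, exhibits exactly a family $\Phi_{C,D}\colon\cC(C,RD)\xarr{\eqv}\cD(LC,D)$ that is pseudo-natural in $C\in\cC^{\op}$ and in $D\in\cD$, so $(L,R,\Phi)$ is a bi-adjunction. If normality of $R$ is in doubt it is restored by \cite[Proposition 5.2]{LackPauli}.

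The main obstacle is the transport-of-structure lemma itself: checking that the transported compositors of $R$ satisfy the pseudo-functor associativity and unitality axioms and that the transported $2$-cells of $e$ are coherent and pseudo-natural. Each such equation is obtained mechanically — apply the faithful $H$, rewrite using the chosen $2$-isomorphisms and the known coherence of $G$, $\mathcal Y$ and the $e_D$, and reflect the result back through $H$ — but this is where all the bookkeeping lives, and I expect it to occupy most of \Cref{sec:bi-adjunctions}.
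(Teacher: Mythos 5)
Your proof is correct in outline, but it is organised around a different key lemma than the paper's argument, so it is worth comparing the two. The paper also reduces everything to the $2$-dimensional Yoneda lemma, but it distributes the work differently: it first constructs, completely by hand, a pseudo-functor $Q\colon\cC^{\op}\times\cD\to\Cat$ with $Q(C,D)=\cC(C,RD)$ together with a pseudo-natural adjoint equivalence $\Gamma\colon Q\Rightarrow\cD(L-,-)$ in $\Psd(\cC^{\op}\times\cD,\Cat)$, building $Q$'s action on morphisms and its compositors and unitors explicitly out of the $\Psi^D$, their adjoint-equivalence units and counits, and the compositor $\delta$ of $L$; only afterwards does it extract $R$ formally, via a once-and-for-all pseudo-inverse $\evY$ of $\sY$ supplied by the Yoneda lemma (\cref{lem:evYequiv,lem:yon}), so no lifting of compositors through $\sY$ is ever needed. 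You instead put all of the coherence bookkeeping into a transport-of-structure lemma along the local equivalence $\sY$: choose $Rf$ as a preimage of $\Psi^{D'\,-1}(Gf)\Psi^{D}$ and reflect the compositor coherence through local full faithfulness. That route is more modular and closer to a standard ``lifting along a biequivalence onto specified objects'' argument, and it avoids constructing $Q$ altogether; the price is that the same volume of verification reappears inside your lemma, whereas the paper's verifications are concrete pasting computations with the $\Psi^D$ and $\delta$.

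One step you should not present as mere unwinding: passing from an equivalence $\sY R\eqv G$ in $\Psd(\cD,\Psd(\cC^{\op},\Cat))$ to the two-variable datum demanded by \cref{def:biadj}, namely an adjoint equivalence pseudo-natural jointly in $(C,D)\in\cC^{\op}\times\cD$. The paper isolates exactly this as \cref{lem:QisotoR}, pointing out that the two ambient $2$-categories are not isomorphic in general; the mixed constraint cells $\gamma_{(c,d)}$ must be defined as pastings of the components $(\theta_D)_c$ and the modification components $(\theta_d)_C$, one must check that the two possible pastings agree (this is the modification axiom) and that the result is pseudo-natural. This is fillable, and the paper's lemma does it, but your proof should invoke or reprove it rather than assert that the component family is ``exactly'' a bi-adjunction.
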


In order to prove this theorem we will make use of some purely formal results about the nature of bi-representations and bi-adjunctions. These arguments depend crucially upon the apparatus of a $2$-dimensional Yoneda lemma -- see \cite[\S 8.3]{JohYau} for the bi-categorical account.

\begin{notation}
    Let $\cC$ be a $2$-category. We denote the Yoneda embedding $2$-functor by
    \[ \ainl{\cC}{\Yoneda}{\Psd(\cC^{\op},\Cat)} \ . \]
    This $2$-functor sends an object $C\in \cC$ to the $2$-functor $\ainl{\cC^{\op}}{\Yoneda_C\coloneqq \cC(-,C)}{\Cat}$, and acts in the obvious way on hom-categories.
\end{notation}

We will make extensive use of the full sub-$2$-category on the image of the Yoneda $2$-functor, but this $2$-category is isomorphic to the following.

\begin{definition}
  Let $\cC$ be a $2$-category. We define a $2$-category $\YC$ with the same objects as $\cC$ and whose hom-categories are given by
  \[ \YC(C,C')\coloneqq \Psd(\cC^{\op},\Cat)(\Yoneda_C,\Yoneda_{C'}) \]
  for all $C,C'\in \cC$. Composition operations are given by those of $\Psd(\cC^{\op},\Cat)$.
\end{definition}

\begin{remark}
  The $2$-category $\YC$ is isomorphic to the full sub-$2$-category of $\Psd(\cC^{\op},\Cat)$ on the objects of the form $\Yoneda_C$ for $C\in \cC$. Observe that we therefore have the following factorisation of $2$-functors
  \begin{diagram*}
    \node(1)[]{$\cC$};
    \node(2)[right of= 1,xshift=1.5cm]{$\Psd(\cC^{\op},\Cat)$};
    \node(3) at ($(1.east)!0.5!(2.west)-(0,1cm)$) {$\cC^\Yoneda$};
    \draw[a] (1) to node[la]{$\Yoneda$} (2);
    \draw[a] (1) to node[la,swap]{$\sY$} (3);
    \draw[a,right hook->] (3) to (2.south west);
  \end{diagram*}
  where $\sY$ is the identity on objects.

  We have avoided defining $\YC$ as ``the full sub-2-category of bi-representables'' as this is problematic inasmuch as objects are concerned: we will need to know \emph{which} object is associated to a given bi-representable functor, but a priori any such object is only defined up to equivalence. One way to solve this is to choose, for each bi-representable, a representing object, and the result is precisely our $2$-category $\YC$ above.
\end{remark}

\begin{remark}\label{lem:evYequiv}
    Let \cC be a $2$-category. Then the $2$-dimensional Yoneda lemma says that the normal pseudo-functor $\ainl{\cC}{\sY}{\YC}$ is the identity on objects and induces equivalences between the hom-categories
    \[ \cC(C,C')\eqv \Psd(\cC^{\op},\Cat)(\Yoneda_C,\Yoneda_{C'})=\YC(C,C'), \]
    for all objects $C,C'\in \cC$; see \cite[Lemma 8.3.12]{JohYau}.

    Consequently, one can construct a normal pseudo-functor $\ainl{\YC}{\evY}{\cC}$ together with pseudo-natural isomorphisms $\niinl{\id_{\cC}}\eta{\evY\sY}$ and $\niinl{\sY\evY}\varepsilon{\id_{\YC}}$. We may see that $\evY$ is the identity on objects, and acts on hom-categories $\YC(C,C')$ by first taking the $C$-component of the pseudo-natural transformation or modification, and then evaluating it at $\id_C\in \Yoneda_C(C)=\cC(C,C)$. In fact, closer inspection reveals that $\id_\cC=\evY\sY$.
\end{remark}

\begin{lemma}\label{lem:yon}
  Let $\cC$ and $\cD$ be $2$-categories. Suppose that $\ainl{\cD}{Q}{\Psd(\cC^{\op},\Cat)}$ is a normal pseudo-functor such that, for all objects $D\in \cD$, $QD=\Yoneda_{RD}$ for an object $RD\in \cC$. Then there is a normal pseudo-functor $\ainl{\cD}{R}{\cC}$ and a pseudo-natural isomorphism $\Yoneda R\iso Q$ in $\Psd(\cD,\Psd(\cC^{\op},\Cat))$.
\end{lemma}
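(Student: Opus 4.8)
The plan is to leverage the rigidified Yoneda $2$-category $\YC$ together with the normal pseudo-functor $\ainl{\YC}{\evY}{\cC}$ and the pseudo-natural isomorphism $\niinl{\sY\evY}{\varepsilon}{\id_{\YC}}$ produced by \cref{lem:evYequiv}; the key observation is that $\YC$ was designed precisely so that the bi-representables — which constitute the pseudo-essential image of $\ainl{\cC}{\Yoneda}{\Psd(\cC^{\op},\Cat)}$ — form a \emph{strict} sub-$2$-category. Write $\ainl{\YC}{\iota}{\Psd(\cC^{\op},\Cat)}$ for the inclusion $2$-functor, which carries an object $C$ to $\Yoneda_C$ and is the identity on hom-categories; note that $\iota\sY=\Yoneda$ as $2$-functors.

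First I would use the hypothesis to fix, for each $D\in\cD$, a choice of object $RD\in\cC$ with $QD=\Yoneda_{RD}$, thereby specifying an assignment on objects. Since each hom-category of $Q$ has the shape $\Psd(\cC^{\op},\Cat)(\Yoneda_{RD},\Yoneda_{RD'})$, which is by definition the hom-category $\YC(RD,RD')$, and since the composition of $\Psd(\cC^{\op},\Cat)$ restricted to bi-representables is exactly that of $\YC$, the data of $Q$ — its assignment on objects, its hom-functors, and its compositor $2$-isomorphisms — is precisely the data of a normal pseudo-functor $\ainl{\cD}{\widetilde{Q}}{\YC}$ with $\widetilde{Q}D=RD$. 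Because $\iota$ is a strict $2$-functor which is the identity on hom-categories, one then checks directly that $\iota\widetilde{Q}=Q$ on the nose.

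Next I would set $R\coloneqq\evY\widetilde{Q}\colon\cD\to\cC$, which is a normal pseudo-functor as a composite of such, and which realises the chosen assignment $D\mapsto RD$ on objects since $\evY$ is the identity on objects. Finally, whiskering $\varepsilon$ on the right by $\widetilde{Q}$ and then on the left by $\iota$ yields a pseudo-natural isomorphism
\[ \Yoneda R \;=\; \iota\sY\evY\widetilde{Q} \;\iso\; \iota\widetilde{Q} \;=\; Q \]
in $\Psd(\cD,\Psd(\cC^{\op},\Cat))$, as required. I do not expect a genuine obstacle here: the content is entirely front-loaded into the constructions of $\YC$ and $\evY$, and the only point demanding care is the verification that the normality, functoriality, unitality, and associativity axioms satisfied by $Q$ in $\Psd(\cC^{\op},\Cat)$ transfer verbatim to make $\widetilde{Q}$ a normal pseudo-functor into $\YC$ — which is immediate because $\YC$ inherits its composition structure wholesale from $\Psd(\cC^{\op},\Cat)$.
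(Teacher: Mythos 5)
Your proposal is correct and follows essentially the same route as the paper: factor $Q$ through $\YC$ (your $\widetilde{Q}$ is the paper's $\overline{Q}$), set $R=\evY\widetilde{Q}$, and transport the isomorphism $\sY\evY\iso\id_{\YC}$ from \cref{lem:evYequiv} along the inclusion $\YC\hookrightarrow\Psd(\cC^{\op},\Cat)$ to obtain $\Yoneda R\iso Q$. The paper's proof is exactly this, stated a bit more tersely.
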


\begin{proof}
  First note that the image of the normal pseudo-functor $\ainl{\cD}{Q}{\Psd(\cC^{\op},\Cat)}$ is contained in the full sub-$2$-category $\YC$ of $\Psd(\cC^{\op},\Cat)$. That is, we have the following factorisation
  \begin{diagram*}[,][3]
    \node(1)[]{$\cD$};
    \node(2)[right of= 1,xshift=1.5cm]{$\Psd(\cC^{\op},\Cat)$};
    \node(3) at ($(1.east)!0.5!(2.west)-(0,1cm)$) {$\cC^\Yoneda$};
    \draw[a] (1) to node[la]{$Q$} (2);
    \draw[a] (1) to node[la,swap]{$\overline{Q}$} (3);
    \draw[a,right hook->] (3) to (2.south west);
  \end{diagram*}
  where $\overline Q D=RD$, for all objects $D\in \cD$. Using this we define the normal pseudo-functor $\ainl{\cD}{R}{\cC}$ to be the composite
 \[ \cD\stackrel{\overline Q}{\longrightarrow} \YC \stackrel{\evY}{\longrightarrow}\cC. \] Observe that by \cref{lem:evYequiv} we have a pseudo-natural isomorphism $\sY R=\sY\evY \overline{Q}\iso\overline{Q}$ in~$\Psd(\cD,\YC)$. By post-composing with the inclusion $\cC^\Yoneda\hookrightarrow\Psd(\cC^{\op},\Cat)$ this gives a pseudo-natural isomorphism $\Yoneda R\iso Q$ in $\Psd(\cD,\Psd(\cC^{\op},\Cat))$.
\end{proof}

Finally we need a technical result relating pseudo-natural isomorphisms in the category $\Psd(\cD,\Psd(\cC^{\op},\Cat))$ to those in $\Psd(\cC^{\op}\times\cD,\Cat)$. While it is not generally true that these two categories are isomorphic, we may ``by hand'' show that in a special case pseudo-natural isomorphisms in the former may be recast as occurring in the latter.

\begin{lemma}\label{lem:QisotoR}
  Given normal pseudo-functors $\ainl \cC L\cD$ and $\ainl \cD R\cC$, as well as a pseudo-natural isomorphism $\niinl{\cD(L-,-)}{\theta}{\cC(-,R-)}$ in $\Psd(\cD,\Psd(\cC^{\op},\Cat))$, the data of $\theta$ gives a pseudo-natural isomorphism $\niinl{\cD(L-,-)}{\gamma}{\cC(-,R-)}$ in $\Psd(\cC^{\op}\times\cD,\Cat)$.
\end{lemma}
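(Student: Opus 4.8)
The plan is to build $\gamma$ directly from the data of $\theta$ by an ``uncurrying'' argument, and then observe that invertibility is inherited for free. Recall first that $\cD(L-,-)$ and $\cC(-,R-)$ are genuine normal pseudo-functors $\cC^{\op}\times\cD\to\Cat$, with compositors and unitors built componentwise from those of $L$, $R$ and of $\cC$, $\cD$; crucially, the compositor of either of them at a composite $(c_2,c_1)$ paired with $(d_2,d_1)$ splits as a ``$\cC$-part'' (coming from $L$ resp.\ $R$) and a ``$\cD$-part''. Unpacking $\theta$: for each object $D\in\cD$ we have a pseudo-natural transformation $\ninl{\cD(L-,D)}{\theta_D}{\cC(-,RD)}$ in $\Psd(\cC^{\op},\Cat)$ — hence functors $\ainl{\cD(LC,D)}{(\theta_D)_C}{\cC(C,RD)}$ for $C\in\cC$ and $2$-isomorphisms $(\theta_D)_c$ for each $c$ in $\cC^{\op}$ — and for each morphism $\ainl DdD'$ in $\cD$ an invertible modification $\theta_d$ comparing $(Rd)_*\circ\theta_D$ with $\theta_{D'}\circ d_*$, with components $(\theta_d)_C$ in $\Cat$.

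Next I would define $\gamma$ as follows. On a component at $(C,D)$ put $\gamma_{(C,D)}\coloneqq(\theta_D)_C$. For a morphism $(c,d)\colon (C,D)\to(C',D')$ of $\cC^{\op}\times\cD$ — that is, $c\colon C'\to C$ in $\cC$ and $\ainl DdD'$ in $\cD$ — factor it as $(c,\id_{D'})\circ(\id_C,d)$ and set $\gamma_{(c,d)}$ to be the pasting of $(\theta_d)_C$ with $(\theta_{D'})_c$. This is a $2$-isomorphism since both pieces are. Using instead the factorisation $(\id_{C'},d)\circ(c,\id_D)$ gives the pasting of $(\theta_D)_c$ with $(\theta_d)_{C'}$, and that these two pastings agree is precisely the modification axiom for $\theta_d$ (its components must be compatible with the $2$-isomorphism components of $\theta_D$ and $\theta_{D'}$) together with interchange in $\Cat$; this also shows $\gamma_{(c,d)}$ is well defined.

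It then remains to verify the three coherence conditions of \cref{def:pseudo-natural-transformation} for $\gamma$. Naturality of the $\gamma_{(c,d)}$ with respect to $2$-morphisms $(\sigma,\tau)$ of $\cC^{\op}\times\cD$ follows from naturality of the $2$-isomorphism components of $\theta$ (the $\cD$-direction) and of each $\theta_D$ (the $\cC$-direction), together with interchange. Compatibility with unitors is immediate from normality and the unitality axioms of $\theta$ and of each $\theta_D$. The laborious point — which I expect to be the main obstacle — is compatibility with compositors: on $(c_2,c_1)$ paired with $(d_2,d_1)$ one must match the pasting built from $\gamma_{(c_1c_2,\,d_2d_1)}$ against the one built from $\gamma_{(c_2,d_2)}$ and $\gamma_{(c_1,d_1)}$. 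After applying the splitting of the ambient compositors into their $\cC$- and $\cD$-parts, this chase decomposes into the compositor axiom for $\theta$ (the $\cD$-direction, where the relevant compositors are those packaged in $\cC(-,R-)$), the compositor axiom for each $\theta_D$ (the $\cC$-direction), and two uses of the modification law for the $\theta_d$'s and of interchange to slide the $\cC$- and $\cD$-cells past one another; I would organise it as a single pasting-diagram computation.

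Finally, since $\theta$ is a pseudo-natural \emph{isomorphism} in $\Psd(\cD,\Psd(\cC^{\op},\Cat))$, each $\theta_D$ is an isomorphism in the $2$-category $\Psd(\cC^{\op},\Cat)$, so each functor $(\theta_D)_C=\gamma_{(C,D)}$ is an isomorphism of categories; hence $\gamma$ is a pseudo-natural isomorphism, with inverse data obtained from $\theta^{-1}$ by exactly the same recipe.
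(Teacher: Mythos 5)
Your proposal is correct and takes essentially the same route as the paper's proof: you define $\gamma_{(C,D)}\coloneqq(\theta_D)_C$, define $\gamma_{(c,d)}$ as either of the two pastings of $(\theta_d)$-components with $(\theta_D)_c$ resp.\ $(\theta_{D'})_c$ (their equality being exactly the modification axiom the paper records in its two displayed diagrams), and then reduce the remaining coherence checks to pasting manipulations using that equality, the modification law, and pseudo-naturality -- which is precisely what the paper sketches and omits. Your explicit closing remark that each $(\theta_D)_C$ is an isomorphism of categories, so that $\gamma$ is a pseudo-natural isomorphism, is a small point the paper leaves implicit and is a welcome addition.
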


\begin{proof}
  In order to give $\gamma$ we must give its value at each pair $(C,D)\in\cC^{\op}\times\cD$ of objects as well as its $2$-morphism component on each morphism of such pairs, and demonstrate that suitable compatibility conditions hold.

  Observe that for each $D\in\cD$ we have a pseudo-natural isomorphism \[\theta_{D}\in\Psd(\cC^{\op},\Cat)(\cD(L-,D),\cC(-,RD))\ .\] Thus it makes sense to set the component $\ainl{\cD(LC,D)}{\gamma_{(C,D)}}{\cC(C,RD)}$ to be the isomorphism $(\theta_{D})_{C}$.

  For each $\ainl Dd{D'}$ observe that the pseudo-naturality of $\theta$ gives the following diagram of pseudo-natural isomorphisms and modifications.
  \begin{diagram}\label{diag:pseudomod}
    \node(1)[]{$\cD(L-,D)$};
    \node(2)[right= of 1.east,anchor=west]{$\cC(-,RD)$};
    \node(3)[below of=1]{$\cD(L-,D')$};
    \node(4) at (3-|2) {$\cC(-,RD')$};
    \draw[n](1)to node[la]{$\theta_{D}$}(2);
    \draw[n](3)to node[la,swap]{$\theta_{D'}$}(4);
    \draw[n](1)to node[la,swap]{$\cD(L-,d)$}(3);
    \draw[n](2)to node[la]{$\cC(-,Rd)$}(4);
    \celli[la][t]{2}{3}{$\theta_{d}$};
  \end{diagram}
  Using this and the normality of $L$ and $R$, on a morphism $\ainl {C'}cC$ of \cC we may define $\niinl{\cC(c,Rd)\gamma_{(C,D)}}{\gamma_{(c,d)}}{\gamma_{(C',D')}\cD(Lc,d)}$ to be either of the following two, equal pastings.
  \begin{diagram}\label{diag:gammadef}
    \node(1)[]{$\cD(LC,D)$};
    \node(2)[right=of 1.east,anchor=west]{$\cC(C,RD)$};
    \node(3)[below of=1]{$\cD(LC',D)$};
    \node(4) at (3-|2){$\cC(C',RD)$};
    \node(5)[below of=3]{$\cD(LC',D')$};
    \node(6) at (5-|2){$\cC(C',RD')$};

    \node(1')[right=of 2.east,anchor=west]{$\cD(LC,D)$};
    \node(2')[right=of 1'.east,anchor=west]{$\cC(C,RD)$};
    \node(3')[below of=1']{$\cD(LC,D')$};
    \node(4') at (3'-|2'){$\cC(C,RD')$};
    \node(5')[below of=3']{$\cD(LC',D')$};
    \node(6') at (5'-|2'){$\cC(C',RD')$};

    \draw[a](1)to node[la]{$(\theta_{D})_{C}$}(2);
    \draw[a](3)to node[la,over]{$(\theta_{D})_{C'}$}(4);
    \draw[a](1)to node[la,swap]{$\cD(Lc,D)$}(3);
    \draw[a](2)to node[la]{$\cC(c,RD)$}(4);
    \draw[a](4)to node[la,right]{$\cC(C',Rd)$}(6);
    \draw[a](3)to node[la,left]{$\cD(C',d)$}(5);
    \draw[a](5)to node[la,below]{$(\theta_{D'})_{C'}$}(6);

    \draw[a](1')to node[la]{$(\theta_{D})_{C}$}(2');
    \draw[a](3')to node[la,over]{$(\theta_{D'})_{C}$}(4');
    \draw[a](1')to node[la,swap]{$\cD(LC,d)$}(3');
    \draw[a](2')to node[la]{$\cC(C,Rd)$}(4');
    \draw[a](4')to node[la,right]{$\cC(c,RD')$}(6');
    \draw[a](3')to node[la,left]{$\cD(c,D')$}(5');
    \draw[a](5')to node[la,below]{$(\theta_{D'})_{C'}$}(6');

    \celli[la,swap]{2}{3}{$(\theta_{D})_{c}$};
    \celli[la,swap]{4}{5}{$(\theta_{d})_{C}$};
    \celli[la,swap]{2'}{3'}{$(\theta_{d})_{C}$};
    \celli[la,swap]{4'}{5'}{$(\theta_{D'})_{c}$};

    \path(4) -- node[auto=false]{$=$} (3');
  \end{diagram}
  This concludes the data of $\gamma$, and now we must check that it is pseudo-natural. That is, we must ensure that it is compatible with the compositors of $\cC(-,R-)$ and $\cD(L-,-)$, that its components on identities are themselves identities, and that it commutes appropriately with $2$-cell pasting.

  None of these calculations are especially enlightening, and amount to massaging pasting diagrams with the equality of \eqref{diag:gammadef}, appealing to the fact that $\theta_{d}$ is a modification as in \eqref{diag:pseudomod}, and finally noting that $\theta_{d}$ itself is pseudo-natural. Hence we choose to omit the details for brevity.
\end{proof}

Now we are in a position to give a proof of \Cref{lem:yonedastyle}.

\begin{proof}[\Cref{lem:yonedastyle}]
  First note that \cref{rem:local-nature} directly gives that (i) implies (ii). We show the other implication.

  Suppose (ii), that is, for all objects $D\in \cD$, we have a bi-representation $(RD,\Psi^D)$ of $\cD(L-,D)$, i.e., a pseudo-natural adjoint equivalence $\neinl {\cC(-,RD)}{\Psi^D}{\cD(L-,D)}$. We want to construct the data of a normal pseudo-functor $\ainl\cD R\cC$ and a pseudo-natural adjoint equivalence $\aeinl{\cC(-,R-)}{\Phi_{-,-}}{\cD(L-,-)}$ in $\Psd(\cC^{\op}\times \cD,\Cat)$. 
  
  For this, we will simultaneously construct a pseudo-functor $\ainl{\cC^{\op}\times\cD}{(Q,\phi)}{\Cat}$ such that $Q(C,D)=\cC(C,RD)=\Yoneda_{RD}(C)$ for all $(C,D)\in \cC^{\op}\times \cD$ along with a pseudo-natural adjoint equivalence $\neinl{Q}{\Gamma}{\cD(L-,-)}$ in $\Psd(\cC^{\op}\times \cD,\Cat)$. Note that while our construction of $Q$ below does not necessarily yield a \emph{normal} pseudo-functor, we may apply a normalisation argument such as \cite[Proposition 5.2]{LackPauli} to construct a normal pseudo-functor $Q^{n}$ which agrees with $Q$ on objects and a pseudo-natural isomorphism $\niinl {Q^n}{\nu}{Q}$. Note that there is a forgetful functor $\Psd(\cD,\Psd(\cC^{\op},\Cat))\to\Psd(\cC^{\op}\times\cD,\Cat)$, so that we can see $Q^n$, $Q$, and $\cD(L-,-)$ as objects in $\Psd(\cD,\Psd(\cC^{\op},\Cat))$ and \[ Q^n\underset{\nu}{\xnat{\iso}} Q \underset{\Gamma}{\xnat{\eqv}} \cD(L-,-) \] as an isomorphism in $\Psd(\cD,\Psd(\cC^{\op},\Cat))$.
  
  Then, by applying \cref{lem:yon} to $Q^n$, we may extract a normal pseudo-functor $\ainl \cD R \cC$ and a pseudo-natural isomorphism $\niinl{\cC(-,R-)}{\xi}{Q^{n}}$ in $\Psd(\cD,\Psd(\cC^{\op},\Cat))$. Finally \cref{lem:QisotoR} applied to $L$, $R$, and the isomorphism
  \[ \cC(-,R-)\underset{\xi}{\xnat{\iso}} Q^n\underset{\nu}{\xnat{\iso}} Q \underset{\Gamma}{\xnat{\eqv}} \cD(L-,-) \ \] gives a pseudo-natural isomorphism $\cC(-,R-)\iso\cD(L-,-)$ in $\Psd(\cC^{\op}\times\cD,\Cat)$ as desired.

  It remains to construct the pseudo-functor $\ainl{\cC^{\op}\times\cD}{Q}{\Cat}$ and pseudo-natural adjoint equivalence $\neinl{Q}{\Gamma}{\cD(L-,-)}$.

  On objects $C\in \cC$ and $D\in\cD$, we define $Q(C,D)\coloneqq\cC(C,RD)$ where $RD$ is a representing object which exists by assumption, and we define $\Gamma_{C,D}$ as the adjoint equivalence
  \[ \aeinl{Q(C,D)=\cC(C,RD)}{\Gamma_{C,D}\coloneqq \Psi_C^D}{\cD(LC,D)}. \]
  On morphisms $\ainl Cc{C'}$ in $\cC$ and $\ainl Dd{D'}$ in $\cD$, we must define $Q(c,d)$ and $\Gamma_{c,d}$ such that they fit in the following square:
  \begin{diagram*}[.][4][node distance=1.8cm]
    \node(1)[]{$\cC(C',RD)$};
    \node(2)[right of= 1,xshift=1.5cm]{$\cD(LC',D)$};
    \node(3)[below of= 1]{$\cC(C,RD')$};
    \node(4)[below of= 2]{$\cD(LC,D')$};
    \draw[a](1)to node[la]{$\Gamma_{C',D}$}(2);
    \draw[a](3)to node[la,swap]{$\Gamma_{C,D'}$}(4);
    \draw[a](1)to node[la,swap]{$Q(c,d)$}(3);
    \draw[a](2)to node[la]{$\cD(Lc,d)$}(4);
    \celli[la,swap,shrink][n]{2}{3}{$\Gamma_{c,d}$};
  \end{diagram*}
To do this we, use the equivalence data $(\Psi^{D}_{C},(\Psi_C^D)^{\inv},\eta_C^D,\varepsilon_C^D)$ and set $Q(c,d)$ to be the composite
  \begin{diagram*}[,][1]
    \node(1)[]{$\cC(C',RD)$};
    \node(2)[right of= 1,xshift=1.25cm]{$\cD(LC',D)$};
    \node(3)[right of= 2,xshift=1.5cm]{$\cD(LC,D')$};
    \node(4)[right of= 3,xshift=1.5cm]{$\cC(C,RD')$};
    \draw[a](1)to node[la]{$\Psi^{D}_{C'}$}(2);
    \draw[a](2)to node[la]{$\cD(Lc,d)$}(3);
    \draw[a](3)to node[la]{$(\Psi^{D'}_{C})^{\inv}$}(4);
    \end{diagram*}
    and $\Gamma_{c,d}$ to be the following pasting.
    \begin{diagram*}[.][5]
      \node(1)[]{$\cC(C',RD)$};
      \node(2)[right of= 1,xshift=1.5cm]{$\cD(LC',D)$};
      \node(3)[below of= 2,yshift=-.5cm]{$\cD(LC,D')$};
      \node(4)[below of= 3,yshift=-.5cm]{$\cD(LC,D')$};
      \node(5)[below of=1,yshift=-2.5cm] {$\cC(C,RD')$};
      \draw[a](1)to node[la]{$\Psi^{D}_{C'}$}(2);
      \draw[a](2)to node[la]{$\cD(Lc,d)$}(3);
      \draw[a](3)to node[la,swap,near start]{$(\Psi^{D'}_{C})^{\inv}$}(5);
      \draw[a](5)to node[la,swap,near end]{$\Psi^{D'}_{C}$}(4);
      \draw[d](3)to(4);
      \draw[a](1)to node[la,swap]{$Q(c,d)$}(5);
      \coordinate (t) at ($(3)!0.4!(4)$);
      \celli[la,shrink][n][0.35]{t}{5}{$(\varepsilon^{D'}_{C})^{\inv}$};
    \end{diagram*}
  Next, on $2$-morphisms $\ninl c\alpha{c'}$ in $\cC$ and $\ninl d\beta{d'}$ in $\cD$, we define $Q(\alpha,\beta)$ to be the following pasting:
  \begin{diagram*}[.][4]
    \node(1)[]{$\cC(C',RD)$};
    \node(2)[right of= 1,xshift=1.5cm]{$\cD(LC',D)$};
    \node(3)[right of= 2,xshift=2cm]{$\cD(LC,D')$};
    \node(4)[right of= 3,xshift=1.5cm]{$\cC(C,RD')$};
    \draw[a](1)to node[la]{$\Psi^{D}_{C'}$}(2);
    \draw[a,bend left=30](2)to node(a')[la]{$\cD(Lc,d)$}(3);
    \draw[a,bend right=30](2)to node(b')[la,swap]{$\cD(Lc',d')$}(3);
    \draw[a](3)to node[la]{$(\Psi^{D'}_{C})^{\inv}$}(4);
    \coordinate(a) at ($(a')-(0.65cm,0)$);
    \coordinate(b) at ($(b')-(0.65cm,0)$);
    \cell[la]{a}{b}{$\cD(L\alpha,\beta)$};
  \end{diagram*}
  With this definition of $Q$ on $2$-morphisms, we can directly check that $\Gamma$ is natural with respect to this assignment. More precisely, the following pasting equality holds
  \begin{diagram*}[][][node distance=1.8cm]
    \node(1)[]{$\cC(C',RD)$};
    \node(2)[right of= 1,xshift=1.5cm]{$\cD(LC',D)$};
    \node(3)[below of= 1]{$\cC(C,RD')$};
    \node(4)[below of= 2]{$\cD(LC,D')$};
    \draw[a](1)to node[la]{$\Gamma_{C',D}$}(2);
    \draw[a](1)to node[la,swap]{$Q(c',d')$}(3);
    \draw[a](3)to node[la,swap]{$\Gamma_{C,D'}$}(4);
    \draw[a,bend right=56](2)to node(b)[la,left]{}(4);
    \draw[a,bend left=56](2)to node(a)[la,right]{}(4);
    \coordinate(a') at ($(a)-(0,0.1cm)$);
    \coordinate(b') at ($(b)-(0,0.1cm)$);
    \coordinate(c) at ($(2)-(.3cm,0)$);
    \celli[la,swap,shrink]{c}{3}{$\Gamma_{c',d'}$};
    \cell[la,swap][n]{a'}{b'}{$\cD(L\alpha,\beta)$};

    \node(5)[right of= 2,xshift=2cm]{$\cC(C',RD)$};
    \node(6)[right of=5,xshift=1.5cm]{$\cD(LC',D)$};
    \node(7)[below of=5]{$\cC(C,RD')$};
    \node(8)[below of=6]{$\cD(LC,D')$};
    \draw[a](5)to node[la]{$\Gamma_{C',D}$}(6);
    \draw[a](7)to node[la,swap]{$\Gamma_{C,D'}$}(8);
    \draw[a,bend left=56](5)to node(c)[la]{} (7);
    \draw[a,bend right=56](5)to node(d)[la,swap]{} (7);
    \draw[a](6)to node[la]{$\cD(c,d)$}(8);
    \coordinate(c') at ($(c)-(0,0.1cm)$);
    \coordinate(d') at ($(d)-(0,0.1cm)$);
    \coordinate(j) at ($(7)+(.3cm,0)$);
    \cell[la,swap]{c'}{d'}{$Q(\alpha,\beta)$};
    \celli[la,swap,shrink]{6}{j}{$\Gamma_{c,d}$};

    \node at ($(a.east)!0.5!(d.west)$) {$=$};
  \end{diagram*}
  since both sides are given by the following pasting.
  \begin{diagram*}
      \node(1)[]{$\cC(C',RD)$};
      \node(2)[right of= 1,xshift=1.5cm]{$\cD(LC',D)$};
      \node(3)[below of= 2,yshift=-.5cm]{$\cD(LC,D')$};
      \node(4)[below of= 3,yshift=-.5cm]{$\cD(LC,D')$};
      \node(5)[below of=1, yshift=-2.5cm] {$\cC(C,RD')$};
      \draw[a](1)to node[la]{$\Psi^{D}_{C'}$}(2);
      \draw[a, bend left=50](2)to node(a)[la]{$\cD(Lc,d)$}(3);
      \draw[a, bend right=50](2)to node(b)[la,swap]{$\cD(Lc',d')$}(3);
      \draw[a](3)to node[la,swap,near start]{$(\Psi^{D'}_{C})^{\inv}$}(5);
      \draw[a](5)to node[la,swap,near end]{$\Psi^{D'}_{C}$}(4);
      \draw[d](3)to(4);
      \draw[a](1)to node[la,swap]{$Q(c',d')$}(5);
      \coordinate (t) at ($(3)!0.4!(4)$);
      \celli[la,shrink][n][0.35]{t}{5}{$(\varepsilon^{D'}_{C})^{\inv}$};
      \cell[la,swap][n][0.48]{a}{b}{$\cD(L\alpha,\beta)$};
    \end{diagram*}

  With that achieved, it remains to supply the data of the compositors and unitors of~$Q$ and verify the pseudo-naturality conditions of $\Gamma$ with respect to these. We start with the compositors. Let $\ainl Cc{C'}$ and $\ainl {C'}{c'}{C''}$ be composable morphisms in $\cC$ and  $\ainl Dd{D'}$ and $\ainl {D'}{d'}{D''}$ be composable morphisms in $\cD$. We define the $2$-isomorphism compositor $\niinl{Q(c,d')Q(c',d)}{\phi_{(c',d),(c,d')}}{Q(c'c,d'd)}$ as the below pasting.
  \begin{diagram*}
    \node(1)[]{$\cC(C'',RD)$};
    \node(2)[right of= 1,xshift=1.5cm]{$\cD(LC'',D)$};
    \node(3)[right of= 2,xshift=1.5cm]{$\cD(LC',D')$};
    \node(4)[right of= 3,xshift=1.5cm]{$\cC(C',RD')$};
    \node(5)[below of= 4,yshift=-.5cm]{$\cD(LC',D')$};
    \node(6)[below of= 5,yshift=-.5cm]{$\cD(LC,D'')$};
    \node(7)[below of= 6,yshift=-.5cm]{$\cD(LC,D'')$};
    \draw[a](1)to node[la]{$\Psi^{D}_{C''}$}(2);
    \draw[a](2)to node[la,yshift=2pt]{$\cD(Lc',d)$}(3);
    \draw[a](3)to node[la]{$(\Psi^{D'}_{C'})^{\inv}$}(4);
    \draw[a](4)to node[la]{$\Psi^{D'}_{C'}$}(5);
    \draw[a](5)to node[la]{$\cD(Lc,d')$}(6);
    \draw[a](6)to node[la]{$(\Psi^{D''}_{C})^{\inv}$}(7);
    \draw[d](3)to(5);
    \coordinate(a) at ($(3)!0.5!(5)$);
    \coordinate(b) at ($(2)!0.5!(6)$);
    \draw[a](2)to node[la,over,pos=0.5,xshift=-0.5cm]{$\cD(L(c'c),d'd)$}(6);
    \coordinate (4') at ($(4)-(0.3cm,0)$);
    \coordinate (a') at ($(a)-(0.3cm,0)$);
    \coordinate (b') at ($(b)-(0.3cm,0)$);
    \celli[la,shrink][n][0.6]{4'}{a'}{$\varepsilon^{D'}_{C'}$};
    \celli[la][n][0.4]{a'}{b'}{$\delta_{c,c'}^{*}$};
    \draw[a](1)to node[la,swap]{$Q(c'c,d'd)$}(7);
    \draw[a,bend left=30](1)to node[la]{$Q(c',d)$}(4);
    \draw[a,bend left=50](4.south east)to node[la]{$Q(c,d')$}(7.north east);
  \end{diagram*}
  From this definition, the definition of $Q$ on $2$-morphisms in terms of $L$, and the properties of the compositor $\delta$ of $L$, we may directly verify that $\phi$ is associative and is natural with respect to $2$-morphisms.

  We need to check that $\Gamma$ is compatible with the compositors $\phi$, namely that the following pasting equality holds.
  \begin{diagram*}
    \node(1)[]{$\cC(C'',RD)$};
    \node(2)[right of= 1,xshift=1.5cm]{$\cD(LC'',D)$};
    \node(3)[below of= 1]{$\cC(C',RD')$};
    \node(4)[below of= 2]{$\cD(LC',D')$};
    \node(5)[below of= 3]{$\cC(C,RD'')$};
    \node(6)[below of= 4]{$\cD(LC,D'')$};
    \draw[a](1) to node[la]{$\Gamma_{C'',D}$}(2);
    \draw[a](3)to node[la,yshift=-2pt]{$\Gamma_{C',D'}$}(4);
    \draw[a](5)to node(p1)[la,swap]{$\Gamma_{C,D''}$}(6);
    \draw[a](1)to node[la,over]{$Q(c',d)$}(3);
    \draw[a](3)to node[la,over]{$Q(c,d')$}(5);
    \draw[a,rounded corners](1.west) to ($(1)-(2.25cm,0)$) to node[la,over,near end]{$Q(c'c,d'd)$} node(a)[la,swap]{} ($(5)-(2.25cm,0)$) to (5.west);
    \draw[a](2)to node[la]{$\cD(Lc',d)$}(4);
    \draw[a](4)to node[la]{$\cD(Lc,d')$}(6);
    \celli[la,shrink,swap]{2}{3}{$\Gamma_{c',d}$};
    \celli[la,shrink,swap]{4}{5}{$\Gamma_{c,d'}$};
    \celli[la,swap,yshift=5pt][n][0.62]{3}{a}{$\phi_{(c',d),(c,d')}$};
    
    \node(7)[right of=2,xshift=1.3cm]{$\cC(C'',RD)$};
    \node(8)[right of=7,xshift=1.5cm]{$\cD(LC'',D)$};
    \node(9)[below of=7,yshift=-1.5cm]{$\cC(C,RD'')$};
    \node(10)[below of=8,yshift=-1.5cm]{$\cD(LC,D'')$};
    \draw[a](7)to node(b)[la,over,near end]{$Q(c'c,d'd)$}(9);
    \draw[a](8)to node(b)[auto=false]{} node[la,near end,over]{$\cD(L(c'c),d'd)$}(10);
    \node(11)[right of=b,xshift=.5cm] {$\cD(LC',D')$};
    \draw[a](8)to node[la]{$\cD(Lc',d)$}(11);
    \draw[a](11)to node[la]{$\cD(Lc,d')$}(10);
    \draw[a](7) to node(p2)[la]{$\Gamma_{C'',D}$}(8);
    \draw[a](9) to node[la,swap]{$\Gamma_{C,D''}$}(10);
    \celli[la,swap,shrink]{8}{9}{$\Gamma_{c'c,d'd}$};
    \celli[la,swap,xshift=-5pt][n][0.65]{11}{b}{$\delta^{*}_{c,c'}$};
    
    \coordinate (79) at ($(7)!0.5!(9)$);
    \node at ($(4.east)!0.4!(79)$) {$=$};
  \end{diagram*}
  By direct expansion of definitions, we see that both pastings reduce to the following pasting.
  \begin{diagram*}
   \node(1)[]{$\cC(C'',RD)$};
      \node(2)[right of= 1,xshift=1.5cm]{$\cD(LC'',D)$};
      \node(8)[right of =2,xshift=1.5cm,yshift=-1.5cm]{$\cD(LC',D')$};
      \node(3)[below of= 2,yshift=-1.5cm]{$\cD(LC,D'')$};
      \node(4)[below of= 3,yshift=-.5cm]{$\cD(LC,D'')$};
      \node(5)[below of=1, yshift=-3.5cm] {$\cC(C,RD'')$};
      \draw[a](1)to node[la]{$\Psi^{D}_{C''}$}(2);
      \draw[a](2)to node(a)[la,swap]{$\cD(L(c'c),d'd)$}(3);
      \draw[a](3)to node[la,swap,near start]{$(\Psi^{D''}_{C})^{\inv}$}(5);
      \draw[a](5)to node[la,swap,near end]{$\Psi^{D''}_{C}$}(4);
      \draw[d](3)to(4);
      \draw[a](1)to node[la,swap]{$Q(c'c,d'd)$}(5);
      \draw[a](2) to node[la]{$\cD(Lc',d)$} (8);
      \draw[a](8) to node[la]{$\cD(Lc,d')$} (3);
      \coordinate (t) at ($(3)!0.4!(4)$);
      \celli[la,shrink][n][0.35]{t}{5}{$(\varepsilon^{D''}_{C})^{\inv}$};
      \celli[la]{8}{a}{$\delta_{c,c'}^*$};
  \end{diagram*}
  To complete the proof it remains to deal with the unitors. Given objects $C\in \cC$ and $D\in \cD$, recall that $Q(\id_C,\id_D)$ is given by the following composite
  \begin{diagram*}[,][1]
    \node(1)[]{$\cC(C,RD)$};
    \node(3)[right of= 1,xshift=1.5cm]{$\cD(LC,D)$};
    \node(4)[right of= 3,xshift=1.5cm]{$\cC(C,RD)$};
    \draw[a](1)to node[la]{$\Psi^{D}_{C}$}(3);
    \draw[a](3)to node[la]{$(\Psi^{D}_{C})^{\inv}$}(4);
    \end{diagram*}
    since $\cD(L(\id_C),\id_D)=\id_{\cD(LC,D)}$ by normality of $L$. We define the $2$-isomorphism unitor $\phi_{(C,D)}$ as the unit
    \begin{diagram*}[.][3]
    \node(1)[]{$\cC(C,RD)$};
    \node(2)[right of=1,xshift=1.5cm]{$\cC(C,RD)$};
    \coordinate(d) at ($(1)!0.5!(2)$);
    \node[below of=d](3) {$\cD(LC,D)$};
    \draw[a] (1) to node[la,swap]{$\Psi_C^D$} (3);
    \draw[a] (3) to node[la,swap]{$(\Psi_C^D)^{\inv}$} (2);
    \draw[d] (1) to (2);
    \celli[la][n][0.4]{d}{3}{$\eta_C^D$};
    \end{diagram*}
    From this definition and the triangle equalities for $(\eta_C^D,\varepsilon_C^D)$, we may directly verify that $\phi$ is satisfies the unitality conditions and that $\Gamma$ is compatible with the unitor $\phi$. This completes the constructions of $Q$ and $\Gamma$ and proves the theorem.
\end{proof}

We have now successfully shown that bi-adjunctions $(L,R,\Phi)$ are equivalently families of bi-representations of $\cD(L-,D)$, indexed by the objects $D\in \cD$. In the remainder of the section our goal is to combine \Cref{thm:birep} and \Cref{lem:yonedastyle} in order to obtain the following characterisation of bi-adjunctions in terms of bi-terminal objects in different pseudo-slices. Recall that (double) bi-terminal are defined as (double) bi-initial objects in the (horizontal) opposite.

In the statement of the theorem below, the pseudo-slice double categories $\dblslice{\bH L}D$ are given by the following cospan in $\DblCat$ \[ \bH\cC\xrightarrow{\bH L} \bH\cD\stackrel{D}{\longleftarrow}\mathbbm 1 \ , \]
 and the pseudo-slice $2$-categories $\slice L D$ and $\slice {\Ar L} D$ are given by the following cospans in $\TwoCat$
\[ \cC\stackrel{L}{\longrightarrow} \cD\stackrel{D}{\longleftarrow}\mathbbm 1 \ \ \text{and} \ \ \Ar \cC\xrightarrow{\Ar L} \Ar \cD\stackrel{D}{\longleftarrow}\mathbbm 1 \ , \]
respectively, for objects $D\in \cD$.

\begin{theorem}\label{thm:biadjoint}
  Let $\cC$ and $\cD$ be $2$-categories, and $\ainl \cC{L} \cD$ be a normal pseudo-functor. The following statements are equivalent.
  \begin{rome}
  \item The normal pseudo-functor $\ainl\cC L \cD$ has a right bi-adjoint $\ainl \cD R \cC$.
  \item For all objects $D\in\cD$, there is an object $RD\in \cC$ together with a morphism $\ainl{LRD}{\varepsilon_D}{D}$ in $\cD$ such that $(RD,\varepsilon_D)$ is double bi-terminal in $\dblslice {\bH L} D$.
  \item For all objects $D\in\cD$, there is an object $RD\in \cC$ together with a morphism $\ainl{LRD}{\varepsilon_D}{D}$ in $\cD$ such that $(RD,\varepsilon_D)$ is bi-terminal in $\slice L D$ and $(RD,\id_{\varepsilon_D})$ is bi-terminal in $\slice{\Ar L} D$.
  \item For all objects $D\in\cD$, there is an object $RD\in \cC$ together with a morphism $\ainl{LRD}{\varepsilon_D}{D}$ in $\cD$ such that $(RD,\id_{\varepsilon_D})$ is bi-terminal in $\slice{\Ar L} D$.
  \end{rome}
\end{theorem}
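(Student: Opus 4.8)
The plan is to deduce \cref{thm:biadjoint} from \cref{thm:birep} by way of \cref{lem:yonedastyle}. First I would invoke \cref{lem:yonedastyle} to replace condition (i) by the assertion that, for every object $D \in \cD$, the normal pseudo-functor $\cD(L-, D) \colon \cC^{\op} \to \Cat$ admits a bi-representation (this functor is normal precisely because $L$ is). The rest of the argument then proceeds one object $D \in \cD$ at a time; write $F_D \coloneqq \cD(L-, D)$.

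The heart of the proof is to identify the double category of elements $\eell(F_D)$ with the horizontal opposite $(\dblslice{\bH L}{D})^{\op}$ of the pseudo-slice double category of $\bH L$ over $D$. I would establish this by a direct inspection of the definitions, parallel to the identification of $\eell([\cI,\cC](\Delta(-),F))$ with the horizontal opposite of $\dblslice{\Delta}{F}$ mentioned in the introduction. Unwinding \cref{def:pseudoslicedouble} and \cref{def:eell}: an object of either double category is a pair $(C, f)$ of an object $C \in \cC$ and a morphism $f \colon LC \to D$ in $\cD$; a horizontal morphism $(C', f') \to (C, f)$ is a morphism $c \colon C \to C'$ of $\cC$ together with a $2$-isomorphism $f \cong f' \circ Lc$; a vertical morphism is a $2$-morphism of $\cD$ between parallel components $LC \rightrightarrows D$; and a square amounts to a $2$-morphism of $\cD$ subject, on both sides, to the same pasting condition. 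Checking that the compositor data — which in both cases descend from the compositors of $L$ — are respected upgrades this bijection of cells to an isomorphism of double categories.

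Next I would transport this isomorphism through $\cH$ and $\SV$. Using that $\cH$ and $\SV$ preserve pseudo-commas (\cref{lem:h-sv-general-commas}), that they commute with taking (horizontal) opposites, and that $\cH\bH = \id_{\TwoCat}$ and $\SV\bH = \Ar$ (\cref{rem:arprops}), one obtains isomorphisms of $2$-categories $\el(F_D) = \cH\eell(F_D) \cong (\slice{L}{D})^{\op}$ and $\mor(F_D) = \SV\eell(F_D) \cong (\slice{\Ar L}{D})^{\op}$. Consequently, double bi-initiality in $\eell(F_D)$ translates into double bi-terminality in $\dblslice{\bH L}{D}$, and bi-initiality in $\el(F_D)$, respectively $\mor(F_D)$, translates into bi-terminality in $\slice{L}{D}$, respectively $\slice{\Ar L}{D}$.

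Finally I would apply \cref{thm:birep} to $F_D$: it asserts that $F_D$ has a bi-representation $(RD, \Psi^D)$ — with associated element $\varepsilon_D \coloneqq \Psi^D_{RD}(\id_{RD}) \in \cD(LRD, D)$, which is the $D$-component of the counit of the resulting bi-adjunction — if and only if $(RD, \varepsilon_D)$ is double bi-initial in $\eell(F_D)$, if and only if $(RD, \varepsilon_D)$ is bi-initial in $\el(F_D)$ together with $(RD, \id_{\varepsilon_D})$ bi-initial in $\mor(F_D)$, if and only if $(RD, \id_{\varepsilon_D})$ is bi-initial in $\mor(F_D)$. Translating each of these through the isomorphisms of the previous step, and quantifying over $D \in \cD$, yields exactly the equivalence of (i) with (ii), (iii), and (iv). The only genuinely substantive point is the identification $\eell(F_D) \cong (\dblslice{\bH L}{D})^{\op}$; once that is in place, everything else is an appeal to \cref{lem:yonedastyle} and \cref{thm:birep} together with routine bookkeeping for $\cH$, $\SV$, and opposites.
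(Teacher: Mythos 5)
Your proposal is correct and takes essentially the same route as the paper: \cref{lem:yonedastyle} reduces (i) to object-wise bi-representations of $\cD(L-,D)$, the identification $\eell(\cD(L-,D))\iso(\dblslice{\bH L}{D})^{\op}$ (the paper's \cref{lem:eell-ld}), transported through $\cH$ and $\SV$ via \cref{lem:h-sv-general-commas} and $\cH\bH=\id$, $\SV\bH=\Ar$ (the paper's \cref{lem:el-mor-ld}), translates the various slices, and \cref{thm:birep} then gives the equivalences. The only slip is in your unwinding of the square data: a square in $\eell(\cD(L-,D))$, equivalently in $(\dblslice{\bH L}{D})^{\op}$, is a $2$-morphism of $\cC$ between the morphisms $c,d\colon C\to C'$ (subject to a pasting condition in $\cD$), not a $2$-morphism of $\cD$.
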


The missing components for the proof of this theorem are canonical isomorphisms of double categories $\eell(\cD(L-,D))\iso (\dblslice {\bH L} D)^{\op}$, as well as related canonical isomorphisms for the $2$-categories $\el(\cD(L-,D))$ and $\mor(\cD(L-,D))$. This is the content of the following results.

\begin{lemma}\label{lem:eell-ld}
  Let $\cC$ and $\cD$ be $2$-categories, $\ainl{\cC}{L}{\cD}$ be a normal pseudo-functor, and $D\in \cD$ be an object. There is a canonical isomorphism of double categories as in the following commutative triangle.
  \begin{diagram*}
    \node(1)[] {$\eell(\cD(L-,D))$};
    \node(2)[right of= 1,xshift=2cm]{$(\dblslice {\bH L} D)^{\op}$};
    \node(3)[] at ($(1)!0.5!(2)-(0,1.5cm)$) {$\bH\cC^{\op}$};
    \draw[a](1) to node[la]{$\iso$}(2);
    \draw[a](2)to node[la]{$\Pi^{\op}$}(3);
    \draw[a](1)to node[la,swap]{$\Pi$}(3);
  \end{diagram*}
\end{lemma}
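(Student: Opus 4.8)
The plan is to build a \emph{strict} double functor
\[ \Theta\colon \eell(\cD(L-,D)) \longrightarrow (\dblslice{\bH L}{D})^{\op} \]
by comparing the two explicit descriptions cell by cell, to check that $\Theta$ is bijective on objects, horizontal morphisms, vertical morphisms, and squares, and finally to observe that $\Pi^{\op}\Theta=\Pi$; a strict double functor that is bijective on all cells is an isomorphism of double categories, so this suffices. Set $F\coloneqq\cD(L-,D)$ and recall from \cref{defn:bH} that $\bH\cC$ and $\bH\cD$ have only trivial vertical morphisms, so that in $\dblslice{\bH L}{D}$ the factor $\PPsd(\bH\mathbbm 2,\bH\cD)$ appearing in \cref{def:double-comma} contributes only horizontal morphisms, $2$-cells of $\cD$, and their pastings. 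On objects there is nothing to do: an object $x\in FC$ is a morphism $\ainl{LC}{x}{D}$ of $\cD$, so the objects $(C,x)$ of $\eell(F)$ and of $\dblslice{\bH L}{D}$ literally coincide, and $\Theta$ is the identity. A vertical morphism $\vainl{(C,x)}{\alpha}{(C,y)}$ of $\eell(F)$ is a morphism $\ainl x\alpha y$ of $FC$, i.e.\ a $2$-cell $\ninl x\alpha y$ of $\cD$, which is exactly a vertical morphism of $\dblslice{\bH L}{D}$; since the horizontal opposite leaves vertical morphisms untouched, $\Theta$ is the identity here too. Finally, a horizontal morphism $\ainl{(C',x')}{(c,\psi)}{(C,x)}$ of $\eell(F)$ is a morphism $\ainl Cc{C'}$ of $\cC$ together with an isomorphism $\aiinl x\psi{(Fc)x'}$ of $FC$, while, unpacking \cref{def:double-comma}, a horizontal morphism $(C,x)\to(C',x')$ of $\dblslice{\bH L}{D}$ is the data of a morphism $\ainl Cc{C'}$ of $\cC$ together with a vertically invertible square of $\bH\cD$, that is, a $2$-isomorphism $(Fc)x'\xarr{\iso}x$ of $FC$; since the horizontal opposite reverses the direction of horizontal morphisms, $\Theta$ sends $(c,\psi)$ to $(c,\psi^{\inv})$, manifestly bijectively.

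The one step with genuine content is the correspondence on squares. By \cref{def:eell} a square $\sq{\gamma}{(c,\psi)}{(d,\varphi)}{\alpha'}{\alpha}$ of $\eell(F)$ amounts to a $2$-cell $\ninl c\gamma d$ of $\cC$ for which the pentagon
\[ (F\gamma)_{y'}\circ(Fc)\alpha'\circ\psi \;=\; \varphi\circ\alpha \]
commutes in $FC=\cD(LC,D)$. Dually, by \cref{def:double-comma} --- equivalently, by the mirror image of the explicit description of \cref{def:pseudoslicedouble}, the mirror arising because here the terminal double category sits on the right of the defining cospan via $D$ --- a square of $\dblslice{\bH L}{D}$ whose boundary is obtained from the one above by reversing both horizontal legs and swapping the two vertical ones is again just a $2$-cell of $\cC$ between the $\cC$-components of those legs, subject to a single pasting equality in $\bH\cD$ written in terms of $\alpha$, $\alpha'$ and the invertible square components of the horizontal legs, which here are $\psi^{\inv}$ and $\varphi^{\inv}$ because the horizontal opposite has been applied. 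The plan is to verify that, once these inverses are inserted, this pasting equality is exactly the pentagon above; $\Theta$ then sends the filler $\gamma$ to $\gamma$ and is bijective on squares with each fixed boundary. I expect this last verification --- keeping the pasting condition of the pseudo-comma aligned with the pentagon of the double category of elements after the horizontal opposite has inverted $\psi$ and $\varphi$ --- to be the only place where real care is required.

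It remains to confirm that $\Theta$ preserves identities and all composites. Horizontal and vertical identity squares are matched by the descriptions above; for horizontal composition, vertical composition, and interchange one compares the composition rules of \cref{def:eell} with those of the pullback \cref{def:double-comma}, the only twist being that horizontal composites are formed in the opposite order and the invertible $2$-cell components are inverted --- precisely the behaviour demanded of a double functor landing in a horizontal opposite. Thus $\Theta$ is a strict, cellwise-bijective double functor, hence an isomorphism, and since $\Pi$ records the $\cC$-component of each cell and $\Pi^{\op}$ does the same after the opposite, $\Pi^{\op}\Theta=\Pi$. (Alternatively one can argue structurally: factor $F=\cD(-,D)\circ L^{\op}$, use that the pseudo-comma of \cref{def:double-comma} is literally a pullback and that pullbacks paste to exhibit both $\eell(F)$ and $\dblslice{\bH L}{D}$ as base changes --- along $\bH(L^{\op})$ and $\bH L$ respectively --- of the corresponding objects for $\cD(-,D)$ and $\id_{\bH\cD}$, and use that the horizontal-opposite functor is an involution of $\DblCat$ commuting with $\bH$; one is then reduced to the case $L=\id_{\cD}$, i.e.\ to the double-categorical form of the classical identification of the category of elements of a representable with a slice, which is the cellwise comparison above with $\cC=\cD$.)
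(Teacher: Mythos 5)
Your proposal is correct and takes essentially the same route as the paper's own proof, which likewise just unpacks the cells of $\eell(\cD(L-,D))$ and compares them with the (dual of the) explicit description of the pseudo-slice double category, with the horizontal opposite accounting for the reversal of the horizontal morphisms and the square condition translating into the same pasting equality in $\cD$. Your additional bookkeeping --- inverting the invertible $2$-cell components, checking compatibility with compositions and with the projections, and the optional pullback-pasting argument --- only makes explicit what the paper records as a ``straightforward comparison''.
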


\begin{proof}
  We describe the data of the double category $\eell(\cD(L-,D))$. Then, by a straightforward comparison with the data in the double category $\dblslice{\bH L}{D}$, which is the dual construction to the double category described in \Cref{def:pseudoslicedouble} with the double functor $F=\bH L$ being horizontal, we can see that the isomorphism above canonically holds.

  An object in $\eell(\cD(L-,D))$ is a pair $(C,f)$ of objects $C\in \cC$ and $f\in \cD(LC,D)$, i.e., a morphism $\ainl{LC}{f}{D}$ in $\cD$. A horizontal morphism $\ainl{(C',f')}{(c,\psi)}{(C,f)}$ in~$\eell(\cD(L-,D))$ comprises the data of a morphism $\ainl{C}{c}{C'}$ in $\cC$ and an isomorphism $\aiinl{f}{\psi}{\cD(Lc,D)f'}$ in $\cD(LC,D)$, i.e., a $2$-isomorphism in~\cD
  \begin{diagram*}[.][3][node distance=1.8cm]
  \node(1)[]{$LC$};
  \node(2)[below of =1]{$LC'$};
  \node(3)[right of =2]{$D$};
  \draw[a](1) to node[la,swap]{$Lc$} (2);
  \draw[a](1) to node(x)[la]{$f$} (3);
  \draw[a](2) to node[la,swap]{$f'$} (3);
  \celli[la]{x}{2}{$\psi$};
  \end{diagram*}
  Note that this corresponds to a morphism $\ainl{(C,f)}{(c,\psi)}{(C',f')}$ in $\dblslice{\bH L}{D}$, and it is the reason why we need to take the horizontal opposite $(\dblslice{\bH L}{D})^{\op}$. A vertical morphism $\vainl{(C,f)}{\alpha}{(C,g)}$ in $\eell(\cD(L-,D))$ is a morphism $\ainl{f}{\alpha}{g}$ in $\cD(LC,D)$, i.e., a $2$-morphism $\ninl{f}{\alpha}{g}$ between morphisms $\ainl{LC}{f,g}{D}$ in $\cD$. Finally, a square $\sq{\gamma}{(c,\psi)}{(d,\varphi)}{\alpha'}{\alpha}$ is a $2$-morphism $\ninl{c}{\gamma}{d}$ in $\cC$ satisfying the pasting equality in \Cref{def:eellsquare}, which can be translated into the following pasting equality in \cD.
\[ \begin{tikzpicture}[node distance=1.8cm,baseline=(2.base)]
  \node(1)[]{$LC$};
  \node(2)[below of= 1]{$LC'$};
  \node(3)[right of= 2]{$D$};
  \draw[a](1)to node(a)[over,la]{$Lc$}(2);
  \draw[a, bend right=60](1)to node(b)[swap,la]{$Ld$}(2);
  \cell[la,swap][n][.4]{a}{b}{$L\gamma$};
  \draw[a](1)to node(f)[la]{$f$}(3);
  \draw[a](2)to node(f')[over,la]{$f'$}(3);
  \draw[a,bend right=60](2)to (3);
  \node(g')[swap,la] at ($(f')-(0,1cm)$) {$g'$};

  \coordinate (f'') at ($(f')-(2pt,0)$);
  \coordinate (g'') at ($(g')-(2pt,0)$);
  \cell[right,la,yshift=5pt][n][.4]{f''}{g''}{$\alpha'$};
  \celli[la][n][.4]{f}{2}{$\psi$};

  \node(4)[right of= a, xshift=.5cm]{$=$};
  \node(1)[right of=1, xshift=2cm]{$LC$};
  \node(2)[below of= 1]{$LC'$};
  \node(3)[right of= 2]{$D$};
  \draw[a](1)to node(a)[swap,la]{$Lc$}(2);
  \draw[a, bend left=50](1)to node(f)[la]{$f$}(3);
  \draw[a](1) to node(g)[over,la]{$g$}(3);
  \draw[a](2)to node(g')[swap,la]{$g'$}(3);
  \celli[la][n][.4]{g}{2}{$\varphi$};
  \cell[la][n][.6]{f}{g}{$\alpha$};
\end{tikzpicture}
\qedhere\]
\end{proof}

\begin{corollary}\label{lem:el-mor-ld}
 Let $\cC$ and $\cD$ be $2$-categories, $\ainl{\cC}{L}{\cD}$ be a normal pseudo-functor, and $D\in \cD$ be an object. There are canonical isomorphisms of $2$-categories as in the following commutative triangles.
  \begin{diagram*}
      \node(1)[] {$\el(\cD(L-,D))$};
      \node(2)[right of= 1,xshift=2cm]{$(\pslice L D)^{\op}$};
      \node(3)[] at ($(1)!0.5!(2)-(0,1.5cm)$) {$\cC^{\op}$};
      \draw[a](1) to node[la]{$\iso$}(2);
      \draw[a](2)to node[la]{$\pi^{\op}$}(3);
      \draw[a](1)to node[la,swap]{$\pi$}(3);

      \node(1)[right of=2,xshift=2cm] {$\mor(\cD(L-,D))$};
      \node(2)[right of= 1,xshift=2cm]{$(\pslice {\Ar L} D)^{\op}$};
      \node(3)[] at ($(1)!0.5!(2)-(0,1.5cm)$) {$\Ar\cC^{\op}$};
      \draw[a](1) to node[la]{$\iso$}(2);
      \draw[a](2)to node[la]{$\pi^{\op}$}(3);
      \draw[a](1)to node[la,swap]{$\pi$}(3);
    \end{diagram*}
\end{corollary}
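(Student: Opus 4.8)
The plan is to obtain both isomorphisms at once by applying the functors $\cH$ and $\SV$ to the isomorphism of double categories produced in \cref{lem:eell-ld}. By \cref{def:el-mor} we have $\el(\cD(L-,D)) = \cH\eell(\cD(L-,D))$ and $\mor(\cD(L-,D)) = \SV\eell(\cD(L-,D))$, while \cref{lem:eell-ld} supplies a canonical isomorphism $\eell(\cD(L-,D)) \iso (\dblslice{\bH L}{D})^{\op}$ commuting with the projections to $\bH\cC^{\op}$. Since $\cH$ and $\SV$ are compatible with the passage to (horizontal) opposites, as recalled at the start of \cref{sec:initial}, it remains only to identify $\cH$ and $\SV$ of the pseudo-comma double category $\dblslice{\bH L}{D}$.

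For this I would invoke \cref{lem:h-sv-general-commas} for the cospan $\bH\cC \xrightarrow{\bH L} \bH\cD \xleftarrow{D} \mathbbm 1$ of normal pseudo-double functors, which yields canonical isomorphisms $\cH(\dblslice{\bH L}{D}) \iso \slice{\cH\bH L}{\cH D}$ and $\SV(\dblslice{\bH L}{D}) \iso \slice{\SV\bH L}{\SV D}$ commuting with the projections onto $\cC$, resp.\ $\Ar\cC$. Now $\cH\bH = \id_{\TwoCat}$ by \cref{lem:HHadjunction} and $\SV\bH = \Ar$ by \cref{rem:arprops}, so $\cH\bH L = L$ and $\SV\bH L = \Ar L$; moreover $\cH\mathbbm 1 = \mathbbm 1 = \SV\mathbbm 1$, so $\cH D$ and $\SV D$ are simply the objects $D$ of $\cD = \cH\bH\cD$ and of $\Ar\cD = \SV\bH\cD$. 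Hence $\cH(\dblslice{\bH L}{D}) \iso \slice{L}{D}$ and $\SV(\dblslice{\bH L}{D}) \iso \slice{\Ar L}{D}$ over $\cC$ and $\Ar\cC$. Composing with $\cH$, resp.\ $\SV$, of the isomorphism of \cref{lem:eell-ld} and using $\cH(\bB^{\op}) = (\cH\bB)^{\op}$ and $\SV(\bB^{\op}) = (\SV\bB)^{\op}$, I obtain the desired $\el(\cD(L-,D)) \iso (\slice{L}{D})^{\op}$ and $\mor(\cD(L-,D)) \iso (\slice{\Ar L}{D})^{\op}$. Commutativity of the two triangles then follows by applying $\cH$, resp.\ $\SV$, to the commuting triangle of \cref{lem:eell-ld}, pasting in the commuting squares of \cref{lem:h-sv-general-commas}, and using that the projections out of $\el(\cD(L-,D))$ and $\mor(\cD(L-,D))$ are by construction $\cH$ and $\SV$ of the projection of $\eell(\cD(L-,D))$, together with $\cH\Pi^{\op} = (\cH\Pi)^{\op}$ and $\SV\Pi^{\op} = (\SV\Pi)^{\op}$.

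There is essentially no hard content here: the corollary is a purely formal consequence of \cref{lem:eell-ld} --- which carries the genuine combinatorial comparison --- together with \cref{lem:h-sv-general-commas} and the elementary identities $\cH\bH = \id$, $\SV\bH = \Ar$. The only real work, and the thing I expect to be most delicate, will be the bookkeeping: checking that the chain of canonical isomorphisms is compatible with every projection functor and with the dualizations, so that the displayed triangles commute on the nose rather than merely up to isomorphism. This should nonetheless be a routine diagram chase with no surprises.
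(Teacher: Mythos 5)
Your proposal is correct and is exactly the paper's argument: the paper's proof of this corollary is a one-liner citing the definitions of $\el$ and $\mor$ together with \cref{lem:eell-ld} and \cref{lem:h-sv-general-commas}, which is precisely the chain (with $\cH\bH=\id$, $\SV\bH=\Ar$, and compatibility with opposites) that you spell out. No gaps.
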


\begin{proof}
This follows directly from the definitions of $\el$ and $\mor$, \Cref{lem:eell-ld,lem:h-sv-general-commas}.
\end{proof}

The proof of \cref{thm:biadjoint} now follows in a straightforward manner.

\begin{proof}[\Cref{thm:biadjoint}]
  By \Cref{thm:birep,lem:yonedastyle,lem:eell-ld}, we see that (i) and (ii) are equivalent. The equivalences of (ii), (iii), and (iv) follow from \Cref{thm:birep,lem:eell-ld,lem:el-mor-ld}.
\end{proof}

\begin{remark}
  Although we have proven this result by means of formal arguments involving a reformulation of a $2$-dimensional Yoneda lemma (see \cref{lem:evYequiv}), these details are not a necessary feature of the proof of this theorem. For the reader for whom such devices are unfamiliar or otherwise constitute a significant detour, we note here that a pleasingly direct (if somewhat lengthy) proof of this theorem is possible and follows entirely similar lines to the proof of \Cref{thm:birep}.
\end{remark}

Much as in the case of \cref{thm:eeellll-tensors}, we may improve \cref{thm:biadjoint} by assuming that $\cC$ has tensors by $\mathbbm 2$ and that $L$ preserves them, i.e., for every object $C\in \cC$, there is a tensor of $LC$ by $\mathbbm 2$ in $\cD$ and we have an isomorphism $L(C\otimes \mathbbm 2)\cong (LC)\otimes \mathbbm 2$ in~$\cD$ natural with respect to the defining cones.

\begin{theorem}\label{thm:biadjtensor}
  Let $\cC$ and $\cD$ be $2$-categories, and $\ainl \cC{L} \cD$ be a normal pseudo-functor. Suppose that $\cC$ has tensors by $\mathbbm 2$ and that $L$ preserves them. Then the following statements are equivalent.
  \begin{rome}
    \item The normal pseudo-functor $L\colon \cC\to \cD$ has a right bi-adjoint $R\colon \cD\to \cC$.
    \item For all objects $D\in \cD$, there is an object $RD\in \cC$ together with a morphism $\ainl{LRD}{\varepsilon_D}{D}$ in $\cD$ such that $(RD,\varepsilon_D)$ is bi-terminal in $\slice L D$.
  \end{rome}
\end{theorem}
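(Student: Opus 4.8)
The plan is to deduce this theorem from the local characterisation of bi-adjunctions in \Cref{lem:yonedastyle} together with the tensored improvement of \Cref{thm:birep}, namely \Cref{thm:eeellll-tensors}, in precisely the way that \Cref{thm:biadjoint} was deduced from \Cref{lem:yonedastyle} and \Cref{thm:birep}. By \Cref{lem:yonedastyle}, statement (i) is equivalent to asking that the normal pseudo-functor $\ainl{\cC^{\op}}{\cD(L-,D)}{\Cat}$ admits a bi-representation for each object $D\in \cD$ (here $\cD(L-,D)$ is normal, being the composite of the strict $2$-functor $\cD(-,D)$ with the normal pseudo-functor $L$). The one genuinely new ingredient needed to feed this into \Cref{thm:eeellll-tensors} is the observation that, under our hypotheses, $\cD(L-,D)$ preserves powers by $\mathbbm 2$.

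First I would verify this preservation claim. Fix $C\in \cC$. The tensor $C\otimes\mathbbm 2$ exists by hypothesis, and since $L$ preserves tensors by $\mathbbm 2$ we have an isomorphism $L(C\otimes\mathbbm 2)\cong (LC)\otimes\mathbbm 2$ in $\cD$, natural with respect to the defining cones $\zeta_C$ and $\zeta_{LC}$. Applying $\cD(-,D)$ and then invoking the universal property of the tensor $(LC)\otimes\mathbbm 2$ in $\cD$ yields isomorphisms of categories
\[ \cD(L(C\otimes\mathbbm 2),D)\iso \cD((LC)\otimes\mathbbm 2,D)\iso \Cat(\mathbbm 2,\cD(LC,D)) \ , \]
in which the left-hand term is $\cD(L-,D)$ evaluated at $C\otimes\mathbbm 2$ and the right-hand term is $\Cat(\mathbbm 2,-)$ applied to $\cD(L-,D)$ at $C$. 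A routine chase, tracking the action of $\cD(L-,D)$ on the $2$-morphism picked out by $\zeta_C$, shows that this composite is natural with respect to the defining cones, so $\cD(L-,D)$ preserves powers by $\mathbbm 2$ in the required sense.

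With this in place, \Cref{thm:eeellll-tensors} applied to $F\coloneqq \cD(L-,D)$ shows that, for each $D\in \cD$, the pseudo-functor $\cD(L-,D)$ has a bi-representation if and only if there is an object $RD\in \cC$ together with an object $i\in \cD(LRD,D)$ --- that is, a morphism $\ainl{LRD}{\varepsilon_D}{D}$ in $\cD$ --- such that $(RD,i)$ is bi-initial in $\el(\cD(L-,D))$; tracing the proof of \Cref{thm:birep} through (i)$\Rightarrow$(ii), the object $i$ produced from a bi-representation $(RD,\Psi^D)$ is $i=\Psi^D_{RD}(\id_{RD})$, i.e.\ the counit component $\varepsilon_D$. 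By the left-hand isomorphism of \Cref{lem:el-mor-ld}, there is a canonical isomorphism $\el(\cD(L-,D))\iso (\slice L D)^{\op}$ over $\cC^{\op}$, under which the object $(RD,i)$ of $\el(\cD(L-,D))$ corresponds to the object $(RD,\varepsilon_D)$ of $\slice L D$. Since bi-terminal objects are by definition bi-initial objects in the horizontal opposite, $(RD,i)$ is bi-initial in $\el(\cD(L-,D))$ precisely when $(RD,\varepsilon_D)$ is bi-terminal in $\slice L D$. Chaining these equivalences over all $D\in \cD$ establishes the equivalence of (i) and (ii).

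I expect the only mildly delicate point --- rather than a real obstacle --- to be the bookkeeping in the preservation claim and the matching of data across the isomorphism of \Cref{lem:el-mor-ld}: confirming that the object and morphism extracted from a bi-representation via \Cref{thm:birep} are exactly the pair $(RD,\varepsilon_D)$, and that this is what the isomorphism sends to the corresponding object of $\slice L D$. Conceptually nothing here is new; it is the same assembly of results already used to prove \Cref{thm:biadjoint}, rerouted through \Cref{thm:eeellll-tensors} in place of \Cref{thm:birep}, with the extra verification that the hypothesis ``$L$ preserves tensors by $\mathbbm 2$'' translates into ``$\cD(L-,D)$ preserves powers by $\mathbbm 2$''.
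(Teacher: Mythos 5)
Your proposal is correct and follows essentially the same route as the paper: the paper also reduces the theorem to checking that $\cD(L-,D)$ preserves powers by $\mathbbm 2$ via the chain $\cD(L(C\otimes\mathbbm 2),D)\cong\cD((LC)\otimes\mathbbm 2,D)\cong\Cat(\mathbbm 2,\cD(LC,D))$, and then invokes \cref{thm:eeellll-tensors} together with the bi-adjunction characterisation (the paper cites \cref{thm:biadjoint}, which itself packages \cref{lem:yonedastyle} and the slice isomorphisms \cref{lem:eell-ld,lem:el-mor-ld} that you use directly). Your extra bookkeeping identifying $i=\Psi^D_{RD}(\id_{RD})$ with $\varepsilon_D$ is fine but not needed, since statement (ii) is purely existential.
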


\begin{remark}
  Note that tensors are a special case of a weighted $2$-colimit construction. Therefore, if a $2$-category $\cC$ has tensors by $\mathbbm 2$ and $L\colon \cC\to \cD$ is a left bi-adjoint, it preserves in particular all tensors by $\mathbbm 2$. In this way, this additional hypothesis on $L$ is entirely anodyne in the following sense: given an $L$ which we suspect to be a left bi-adjoint, in order to apply the above theorem we would need to know that $L$ preserves tensors by $\mathbbm 2$, but this should be part of a ``background-check'' on $L$ in the first place.
\end{remark}

\begin{proof}[\cref{thm:biadjtensor}]
  By \cref{thm:biadjoint,thm:eeellll-tensors}, it is enough to show that the normal pseudo-functor $\cD(L-,D)\colon \cC^{\op}\to \Cat$ preserves powers by $\mathbbm 2$. This is indeed the case since it follows from the fact that $L$ preserves tensors by $\mathbbm 2$ that
  \[ \cD(L(C\otimes \mathbbm 2),D)\cong \cD((LC)\otimes \mathbbm 2, D)\cong \Cat(\mathbbm 2, \cD(LC,D)). \qedhere\]
\end{proof}

\subsection{Weighted bi-limits}
\label{sec:weighted-limits}

The primary and indeed motivating application of this theory is to the notion of $2$-dimensional limits. In \cite[Counter-example 2.12]{us}, we give an example of a $2$-terminal object in the slice $2$-category of cones over a $2$-functor $\ainl \cI F\cC$ that does not give a $2$-limit of $F$. This also gives a counter-example of a bi-terminal object in the pseudo-slice $2$-category of cones over $F$ which is not a bi-limit of $F$, as explained in \cite[\S 5]{us}. However, we show in \cite[Proposition 2.13]{us} that when $\cC$ has tensors by $\mathbbm 2$, then $2$-limits and $2$-terminal objects in the slice do correspond precisely. But we deferred the corresponding result for bi-limits to this document.

With this in mind, we now apply \cref{thm:birep} to the case of (weighted) bi-limits in order to obtain a correct characterisation in terms of bi-terminal objects. We further prove the deferred results for (weighted) bi-limits involving tensors by $\mathbbm 2$, which are obtained as a direct application of \cref{thm:eeellll-tensors}.

Let us begin by recalling the definition of a weighted bi-limit.

\begin{definition}\label{def:weighted-limit}
  Let $\cI$ and $\cC$ be $2$-categories, and let $\ainl \cI F{\cC}$ and $\ainl \cI W\Cat$ be normal pseudo-functors. A \textbf{weighted bi-limit of $F$ by $W$} is a pair $(X,\lambda)$ of an object $X\in\cC$ together with a pseudo-natural transformation $\ninl{W}{\lambda}{\cC(X,F-)}$  in $\Psd(\cI,\Cat)$, such that, for every object $C\in \cC$, pre-composition by $\lambda$ induces an adjoint equivalence of categories
  \[ \aeinl{\cC(C,X)}{\lambda^*\circ\cC(-,F)}{\Psd(\cI,\Cat)(W,\cC(C,F-))},\]
  where $\ainl {\cC^{\op}}{\cC(-,F)}{\Psd(\cI,\Cat)}$ is the normal pseudo-functor sending an object $C\in \cC$ to the normal pseudo-functor $ \ainl{\cI}{\cC(C,F-)}{\Cat}$.
\end{definition}

\begin{remark} \label{rem:bi-limit-bi-rep}
  Note that a weighted bi-limit $(X,\lambda)$ induces a $2$-natural adjoint equivalence
  \[ \neinl{\cC(-,X)}{\lambda^*\circ\cC(-,F)}{\Psd(\cI,\Cat)(W,\cC(-,F))}, \]
  so that we can see that weighted bi-limits are, in particular, bi-representations of the $2$-functor $\ainl{\cC^{\op}}{\Psd(\cI,\Cat)(W,\cC(-,F))}{\Cat}$. Conversely, if we have a bi-representation $(X,\rho)$, with $\neinl{\cC(-,X)}{\rho}{\Psd(\cI,\Cat)(W,\cC(-,F))}$ a pseudo-natural adjoint equivalence in $\Psd(\cC^{\op}, \Cat)$, we may set $\ninl{W}{\lambda\coloneqq \rho_X(\id_X)}{\cC(X,F-)}$. Then by \Cref{cor:bi-rep-rectification}, $\overline \rho=\lambda^*\circ\cC(-,F)$ is a $2$-natural adjoint equivalence, that is, a weighted bi-limit of $F$ by~$W$.
\end{remark}

We now aim to apply \cref{thm:birep} to this setting in order to obtain a characterisation of weighted bi-limits in terms of bi-initial objects in different pseudo-slices.

In the statement of the theorem below, the pseudo-slice double category $\dblslice W {\bH\cC(-,F)}$ is given by the following cospan in $\DblCat$
\[ {\mathbbm 1}\xrightarrow{W}{\bH\Psd(\cI,\Cat)}\xleftarrow{\bH\cC(-,F)}{\bH\cC^{\op}} \ , \]
and the pseudo-slice $2$-categories $\slice W {\cC(-,F)}$ and $\slice W {\Ar\cC(-,F)}$ are given by the following cospans in $\TwoCat$
\[ {\mathbbm 1}\xrightarrow{W}{\Psd(\cI,\Cat)}\xleftarrow{\cC(-,F)}{\cC^{\op}} \ \ \text{and} \ \ {\mathbbm 1}\xrightarrow{W}{\Ar\Psd(\cI,\Cat)}\xleftarrow{\Ar\cC(-,F)}{\Ar\cC^{\op}} \ , \]
respectively.

\begin{theorem} \label{thm:weighted}
  Let $\cI$ and $\cC$ be $2$-categories, and let $\ainl \cI F{\cC}$ and $\ainl \cI W\Cat$ be normal pseudo-functors. The following statements are equivalent.
  \begin{rome}
  \item There is a weighted bi-limit $(X,\lambda)$ of $F$ by $W$.
  \item There is an object $X\in \cC$ together with a pseudo-natural transformation $\ninl W{\lambda}{\cC(X,F-)}$ in $\Psd(\cI,\Cat)$ such that $(X,\lambda)$ is double bi-initial in $\dblslice W {\bH\cC(-,F)}$.
  \item There is an object $X\in \cC$ together with a pseudo-natural transformation $\ninl W{\lambda}{\cC(X,F-)}$ in $\Psd(\cI,\Cat)$ such that $(X,\lambda)$ is bi-initial in $\slice W {\cC(-,F)}$ and $(X,\id_{\lambda})$ is bi-initial in $\slice W {\Ar\cC(-,F)}$.
  \item There is an object $X\in \cC$ together with a pseudo-natural transformation $\ninl W{\lambda}{\cC(X,F-)}$ in $\Psd(\cI,\Cat)$ such that $(X,\id_{\lambda})$ is bi-initial in $\slice W {\Ar\cC(-,F)}$.
  \end{rome}
\end{theorem}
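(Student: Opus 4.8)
The plan is to derive this from \cref{thm:birep} by recognising weighted bi-limits as bi-representations. Set $G\colon\cC^{\op}\to\Cat$ to be the normal pseudo-functor $G\coloneqq\Psd(\cI,\Cat)(W,\cC(-,F))$, so that $GC=\Psd(\cI,\Cat)(W,\cC(C,F-))$, its compositors being those of $\cC(-,F)$ since $\Psd(\cI,\Cat)(W,-)$ is a strict $2$-functor and contributes nothing. By \cref{rem:bi-limit-bi-rep} a weighted bi-limit $(X,\lambda)$ of $F$ by $W$ is exactly a bi-representation of $G$, with $\lambda\in GX$ and the equivalences $\lambda^*\circ\cC(-,F)$ playing the role of the comparison equivalences. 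Granting this, \cref{thm:birep} applied to $G$ turns conditions (i)--(iv) of the theorem into, respectively: $(X,\lambda)$ is double bi-initial in $\eell(G)$; $(X,\lambda)$ is bi-initial in $\el(G)$ and $(X,\id_\lambda)$ bi-initial in $\mor(G)$; and $(X,\id_\lambda)$ bi-initial in $\mor(G)$. Thus it remains only to identify $\eell(G)$, $\el(G)$, and $\mor(G)$ with the pseudo-slices named in the statement.

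The central step is a canonical isomorphism of double categories $\eell(G)\iso\dblslice{W}{\bH\cC(-,F)}$ commuting with the projections to $\bH\cC^{\op}$. This is the precise analogue of \cref{lem:eell-ld}, and I would prove it by the same method: unfold \cref{def:eell} for $G$ and compare it, datum by datum, with the explicit description of the pseudo-comma double category (\cref{def:pseudoslicedouble}) of the cospan $\mathbbm 1\xrightarrow{W}\bH\Psd(\cI,\Cat)\xleftarrow{\bH\cC(-,F)}\bH\cC^{\op}$. An object $(C,x)$ of $\eell(G)$ has $x\in GC$ a pseudo-natural transformation $W\Rightarrow\cC(C,F-)$ — exactly an object of the pseudo-slice; a horizontal morphism carries a morphism of $\cC$ and an invertible modification $x\xrightarrow{\iso}(Gc)x'$, which is the invertible-square datum of the pseudo-slice because $Gc$ is post-composition by $\cC(c,F-)$; vertical morphisms are morphisms in $GC$, i.e.\ modifications; and squares are governed by the pasting equality of \cref{def:eellsquare}, which transcribes verbatim into that of \cref{def:pseudoslicedouble}. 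Crucially, and unlike \cref{lem:eell-ld}, no horizontal opposite is needed here: in the bi-adjunction case the contravariance sat inside the covariant functor $L$ against a slice over $\bH\cC$, whereas $\cC(-,F)$ is genuinely contravariant and the target slice is already over $\bH\cC^{\op}$, so the directions of all $2$-cells match on the nose. Verifying this variance bookkeeping without slipping an opposite is the one place requiring care; everything else is routine.

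With that isomorphism in hand, the proof is finished by applying $\cH$ and $\SV$, which preserve pseudo-commas by \cref{lem:h-sv-general-commas}. One gets $\el(G)=\cH\eell(G)\iso\cH(\dblslice{W}{\bH\cC(-,F)})\iso\slice{W}{\cC(-,F)}$ and $\mor(G)=\SV\eell(G)\iso\SV(\dblslice{W}{\bH\cC(-,F)})\iso\slice{W}{\Ar\cC(-,F)}$, the latter using $\SV\bH=\Ar$ (\cref{rem:arprops}) and the fact that $\SV$ is the identity on objects, so that $\SV W=W$ viewed in $\Ar\Psd(\cI,\Cat)$; one also checks that under these isomorphisms the chosen $(X,\lambda)$ corresponds to $(X,\lambda)\in\slice{W}{\cC(-,F)}$ and that the object $\mathbbm 1\to\el(G)\to\mor(G)$ selecting $(X,\id_\lambda)$ corresponds to $(X,\id_\lambda)\in\slice{W}{\Ar\cC(-,F)}$ — the same translation carried out in \cref{lem:el-mor-ld} for the bi-adjoint case. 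Substituting the three identifications into the statement of \cref{thm:birep} for $G$ yields the asserted chain of equivalences (i) $\Leftrightarrow$ (ii) $\Leftrightarrow$ (iii) $\Leftrightarrow$ (iv).
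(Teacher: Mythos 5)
Your proposal is correct and follows essentially the same route as the paper: it identifies weighted bi-limits with bi-representations of $\Psd(\cI,\Cat)(W,\cC(-,F))$ via \cref{rem:bi-limit-bi-rep}, applies \cref{thm:birep}, and identifies $\eell$, $\el$, $\mor$ of that pseudo-functor with the stated pseudo-slices by unpacking \cref{def:eell} against \cref{def:pseudoslicedouble} and invoking \cref{lem:h-sv-general-commas} together with $\SV\bH=\Ar$ — which is precisely the content of the paper's \cref{lem:eell-weighted,cor:elp-weighted}. Your observation that, unlike the bi-adjunction case, no horizontal opposite is needed also matches the paper.
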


\begin{remark}
  At a cursory reading it may surprise readers to learn that weighted \emph{bi-limits} are characterised as somehow \emph{bi-initial} rather than bi-terminal objects. However, such a statement belies their true nature. When we unravel definitions, we see that the double bi-initiality in the pseudo-slice double category $\dblslice W {\bH\cC(-,F)}$ is expressed over $\bH\cC^{\op}$, and its presence is indicative of a ``mapping in'' property for the limiting object in \cC -- precisely as one might expect from bi-limits.
\end{remark}

The proof of \cref{thm:weighted} is deferred to the end of the section, as we need to establish some technical results (\cref{lem:eell-weighted,cor:elp-weighted}) relating the double category $\eell(\Psd(\cI,\Cat)(W,\cC(-,F)))$ to the pseudo-slice double category $\dblslice W {\bH\cC(-,F)}$, and similarly so for $\el(\Psd(\cI,\Cat)(W,\cC(-,F))$ and $\mor(\Psd(\cI,\Cat)(W,\cC(-,F)))$.

Assuming \cref{thm:weighted}, we now specialise \cref{thm:eeellll-tensors} to the weighted bi-limit case. Here we only need to assume that the $2$-category $\cC$ has tensors by $\mathbbm 2$ as these are preserved automatically in this special case.

\begin{theorem}\label{thm:tensors-weighted}
  Let $\cI$ and $\cC$ be $2$-categories, and let $\ainl \cI F{\cC}$ and $\ainl \cI W\Cat$ be normal pseudo-functors. Suppose that $\cC$ has tensors by $\mathbbm 2$. Then the following statements are equivalent.
  \begin{rome}
  \item There is a weighted bi-limit $(X,\lambda)$ of $F$ by $W$.
  \item There is an object $X\in \cC$ together with a pseudo-natural transformation $\ninl W{\lambda}{\cC(X,F-)}$ in $\Psd(\cI,\Cat)$ such that $(X,\lambda)$ is bi-initial in $\slice W {\cC(-,F)}$.
  \end{rome}
\end{theorem}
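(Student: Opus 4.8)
The plan is to deduce \cref{thm:tensors-weighted} in the same spirit as the proof of \cref{thm:biadjtensor}: reduce it to a single preservation statement and then apply \cref{thm:eeellll-tensors}. Recall from \cref{rem:bi-limit-bi-rep} that weighted bi-limits of $F$ by $W$ are the same data as bi-representations of the normal pseudo-functor $G\coloneqq\Psd(\cI,\Cat)(W,\cC(-,F))\colon\cC^{\op}\to\Cat$, a weighted bi-limit $(X,\lambda)$ corresponding to the bi-representation determined by $\lambda\in GX$; and recall, from the proof of \cref{thm:weighted} via \cref{cor:elp-weighted}, that there is an isomorphism $\el(G)\iso\slice W{\cC(-,F)}$ of $2$-categories over $\cC^{\op}$ carrying $(X,\lambda)$ to $(X,\lambda)$. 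Since bi-initiality is preserved by isomorphisms of $2$-categories and $\cC$ has tensors by $\mathbbm 2$, \cref{thm:eeellll-tensors} applied to $G$ immediately gives the equivalence of (i) and (ii) as soon as we know that $G$ preserves powers by $\mathbbm 2$. Thus the entire content of the proof is this preservation statement.

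To establish it, I would first record the routine fact that $\Psd(\cI,\Cat)$ admits powers by every category $\mathcal A$, computed pointwise: $(\mathcal A\pitchfork H)(i)=\Cat(\mathcal A,Hi)$, with defining cone given at each $i$ by the co-evaluation cone of the power $\Cat(\mathcal A,Hi)$ in $\Cat$. This is a \emph{strict} power, so for all $H,K\in\Psd(\cI,\Cat)$ precomposition with the cone gives an isomorphism of categories $\Psd(\cI,\Cat)(H,\mathcal A\pitchfork K)\iso\Cat(\mathcal A,\Psd(\cI,\Cat)(H,K))$. Next, fixing $C\in\cC$ together with a tensor $(C\otimes\mathbbm 2,\zeta)$ of $C$ by $\mathbbm 2$, the universal property of the tensor gives an isomorphism $\cC(C\otimes\mathbbm 2,Y)\iso\Cat(\mathbbm 2,\cC(C,Y))$ of categories, $2$-natural in $Y\in\cC$; whiskering along $F\colon\cI\to\cC$ turns this into an isomorphism $\cC(C\otimes\mathbbm 2,F-)\iso\mathbbm 2\pitchfork\cC(C,F-)$ in $\Psd(\cI,\Cat)$. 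Applying $\Psd(\cI,\Cat)(W,-)$ and then the pointwise-power isomorphism above yields $G(C\otimes\mathbbm 2)\iso\Cat(\mathbbm 2,GC)$, and a short diagram chase through the two universal properties just invoked identifies this composite with the canonical comparison induced by $G\zeta$. Hence $G$ preserves powers by $\mathbbm 2$, completing the argument.

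There is no serious obstacle here; the argument is the weighted analogue of the proof of \cref{thm:biadjtensor}. The two points that demand a little care are checking that the pointwise formula $\mathcal A\pitchfork H=\Cat(\mathcal A,H-)$ really furnishes a strict power in $\Psd(\cI,\Cat)$ --- so that its universal property is an honest isomorphism of hom-categories rather than merely an equivalence --- and the bookkeeping needed to confirm the ``natural with respect to the defining cones'' clause in the definition of preservation of powers. Note that, in contrast to \cref{thm:biadjtensor}, no preservation hypothesis on $F$ or $W$ is required here: the computation above uses only the existence of tensors by $\mathbbm 2$ in $\cC$, exactly as the remark preceding the statement anticipates.
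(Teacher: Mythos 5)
Your proposal is correct and follows essentially the same route as the paper: both reduce the statement, via \cref{thm:eeellll-tensors} and the identification of $\el(\Psd(\cI,\Cat)(W,\cC(-,F)))$ with $\slice W{\cC(-,F)}$, to showing that $\Psd(\cI,\Cat)(W,\cC(-,F))$ preserves powers by $\mathbbm 2$, and both prove this preservation using the tensor universal property $\cC(C\otimes\mathbbm 2,F-)\iso\Cat(\mathbbm 2,\cC(C,F-))$ together with the fact that powers in $\Psd(\cI,\Cat)$ are computed pointwise. Your write-up merely spells out the cone-naturality bookkeeping that the paper leaves implicit.
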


\begin{proof}
  By \cref{thm:weighted,thm:eeellll-tensors}, it is enough to show that the normal pseudo-functor $\ainl{\cC^{\op}}{\Psd(\cI,\Cat)(W,\cC(-,F))}{\Cat}$ preserves powers by $\mathbbm 2$. Indeed we have that
  \begin{align*}
    \Psd(\cI,\Cat)(W,\cC(C\otimes \mathbbm 2,F-))&  \cong \Psd(\cI,\Cat)(W,\Cat(\mathbbm 2,\cC(C,F-)))\\ & \cong \Cat(\mathbbm 2, \Psd(\cI,\Cat)(W,\cC(C,F-))
  \end{align*}
  as powers in $\Psd(\cI,\Cat)$ are given by point-wise powers in $\Cat$.
\end{proof}

In the special case where the weight $W$ is constant at the terminal category, i.e., $W=\Delta \mathbbm 1$, the characterisation of weighted bi-limits by $\Delta\mathbbm 1$, called \emph{conical bi-limits}, takes a more familiar form.

In the statement of the corollary below, the pseudo-slice double category $\dblslice {\bH\Delta}F$ is given by the following cospan in $\DblCat$
\[ {\bH\cC}\xrightarrow{\bH\Delta}{\bH\Psd(\cI,\cC)}\xleftarrow{F}{\mathbbm 1} \ , \]
and the pseudo-slice $2$-categories $\slice \Delta F$ and $\slice {\Ar\Delta}F$ are given by the following cospans in $\TwoCat$
\[ {\cC}\xrightarrow{\Delta}{\Psd(\cI,\cC)}\xleftarrow{F}{\mathbbm 1} \ \ \text{and} \ \ {\Ar\cC}\xrightarrow{\Ar\Delta}{\Ar\Psd(\cI,\cC)}\xleftarrow{F}{\mathbbm 1} \ , \]
respectively.

\begin{corollary}\label{thm:conicalbilim}
  Let $\cI$ and $\cC$ be $2$-categories, and $\ainl \cI F{\cC}$ be a normal pseudo-functor. The following statements are equivalent.
  \begin{rome}
  \item There is a bi-limit $(X,\lambda)$ of $F$.
  \item There is an object $X\in \cC$ together with a pseudo-natural transformation $\ninl {\Delta X}{\lambda}{F}$ in $\Psd(\cI,\cC)$ such that $(X,\lambda)$ is double bi-terminal in $\dblslice {\bH\Delta} F$.
  \item There is an object $X\in \cC$ together with a pseudo-natural transformation $\ninl {\Delta X}{\lambda}{F}$ in $\Psd(\cI,\cC)$ such that $(X,\lambda)$ is bi-terminal in $\slice \Delta F$ and $(X,\id_{\lambda})$ is bi-terminal in $\slice {\Ar\Delta} F$.
  \item There is an object $X\in \cC$ together with a pseudo-natural transformation $\ninl {\Delta X}{\lambda}{F}$ in $\Psd(\cI,\cC)$ such that $(X,\id_{\lambda})$ is bi-terminal in $\slice {\Ar\Delta} F$.
  \end{rome}
\end{corollary}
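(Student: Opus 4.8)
The plan is to deduce \cref{thm:conicalbilim} as the special case of \cref{thm:weighted} in which the weight is the constant functor $\Delta\mathbbm 1\colon\cI\to\Cat$ at the terminal category. By \cref{def:weighted-limit}, a bi-limit of $F$ is exactly a weighted bi-limit of $F$ by $\Delta\mathbbm 1$, so part (i) of the corollary is part (i) of \cref{thm:weighted} at $W=\Delta\mathbbm 1$. What remains is to recognise the pseudo-slice $2$-dimensional categories occurring in \cref{thm:weighted} -- which live over $\bH\cC^{\op}$, $\cC^{\op}$, and $\Ar\cC^{\op}$ -- as the horizontal, respectively ordinary, opposites of those of the corollary, which live over $\bH\cC$, $\cC$, and $\Ar\cC$.

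The essential ingredient is a canonical isomorphism of double categories
\[ \dblslice{\Delta\mathbbm 1}{\bH\cC(-,F)}\iso\bigl(\dblslice{\bH\Delta}{F}\bigr)^{\op}, \]
compatible with the projections down to $\bH\cC^{\op}$, where $(-)^{\op}$ denotes the horizontal opposite. This is the conical counterpart of \cref{lem:eell-ld}, and I would establish it by the method used for \cref{lem:eell-weighted,cor:elp-weighted}: it rests on the currying isomorphism
\[ \Psd(\cI,\Cat)(\Delta\mathbbm 1,\cC(C,F-))\iso\Psd(\cI,\cC)(\Delta C,F), \]
natural in $C\in\cC^{\op}$, which says that a pseudo-natural transformation from the terminal weight into $\cC(C,F-)$ is precisely a cone $\Delta C\Rightarrow F$ with summit $C$; one then checks that this bijection extends coherently to the horizontal morphisms, vertical morphisms, and squares of the two double categories. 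The horizontal opposite appears because a horizontal morphism on the left carries a comparison sitting over a morphism $\ainl{C}{c}{C'}$ of $\cC$, whereas the corresponding datum in $\dblslice{\bH\Delta}{F}$ runs the other way.

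Applying the functors $\cH$ and $\SV$ to this isomorphism yields the two required $2$-categorical identifications. Using \cref{lem:h-sv-general-commas} (so that $\cH$ and $\SV$ commute with the formation of pseudo-slices), the equalities $\cH\bH=\id_{\TwoCat}$ from \cref{lem:HHadjunction} and $\SV\bH=\Ar$ from \cref{rem:arprops}, and the fact that $\cH$, $\SV$, and $\bH$ are each compatible with passage to opposites, I obtain canonical isomorphisms of $2$-categories
\[ \slice{\Delta\mathbbm 1}{\cC(-,F)}\iso(\slice{\Delta}{F})^{\op}\qquad\text{and}\qquad\slice{\Delta\mathbbm 1}{\Ar\cC(-,F)}\iso(\slice{\Ar\Delta}{F})^{\op} \]
over the corresponding projections; these are the conical versions of \cref{lem:el-mor-ld}.

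Finally I would transport the equivalences (i)$\Leftrightarrow$(ii)$\Leftrightarrow$(iii)$\Leftrightarrow$(iv) of \cref{thm:weighted}, instantiated at $W=\Delta\mathbbm 1$, along the three isomorphisms above. Since a double bi-terminal object in a double category is by definition a double bi-initial object of its horizontal opposite, and bi-terminal in a $2$-category means bi-initial in the opposite, statements (ii), (iii), and (iv) of \cref{thm:weighted} become statements (ii), (iii), and (iv) of \cref{thm:conicalbilim} respectively, which completes the proof. The only load-bearing step is the first displayed isomorphism; its verification is routine but fiddly, the one real pitfall being to keep straight that the relevant opposite is the horizontal one and that it sits correctly over $\bH\cC^{\op}$ -- everything else is a formal consequence of results already established.
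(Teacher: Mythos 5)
Your proposal is correct and follows essentially the same route as the paper: specialise \cref{thm:weighted} to $W=\Delta\mathbbm 1$, prove the canonical isomorphism $\dblslice{\Delta\mathbbm 1}{\bH\cC(-,F)}\iso(\dblslice{\bH\Delta}{F})^{\op}$ over $\bH\cC^{\op}$ via the currying of cones (this is exactly the paper's \cref{lem:conical}), and apply $\cH$ and $\SV$ through \cref{lem:h-sv-general-commas} to obtain the two $2$-categorical identifications (the paper's \cref{cor:2conical}) before transporting the equivalences. No gaps; the paper's proof is just a terse version of this argument.
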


The proof is deferred to the end of the section, where we prove the needed technical results (\cref{lem:conical,cor:2conical}) relating pseudo-slices of weighted cones for the weight $W=\Delta \mathbbm 1$ to the usual pseudo-slices of cones.

\begin{remark}
  As we already mentioned, we show in \cite[\S 5]{us} that the data of a bi-limit of $F$ is not fully captured by a bi-terminal object in the usual pseudo-slice $2$-category $\slice \Delta F$ of cones. Statement (iv) above shows that by ``shifting'' the pseudo-slice $\slice \Delta F$ to the pseudo-slice $\slice {\Ar \Delta} F$ whose \emph{objects} are modifications between cones, we can successfully capture the additional data we require.
\end{remark}

In particular, by comparing \cref{thm:conicalbilim} with the characterisation of bi-adjunctions of \Cref{thm:biadjoint}, we can see bi-limits as a right bi-adjoint. Namely:

\begin{remark}
  Let $\cI$ and $\cC$ be $2$-categories. If every normal pseudo-functor $\ainl{\cI}{F}{\cC}$ has a bi-limit, then this bi-limit construction extends to a right bi-adjoint to the diagonal $2$-functor $\ainl{\cC}{\Delta}{\Psd(\cI,\cC)}$.
\end{remark}

Assuming \cref{thm:conicalbilim} and specialising \cref{thm:tensors-weighted} to the case $W=\Delta \mathbbm 1$, we obtain the promised results of \cite[Proposition 5.5]{us}.

\begin{corollary} \label{cor:bilimtensor}
   Let $\cI$ and $\cC$ be $2$-categories, and $\ainl \cI F{\cC}$ be a normal pseudo-functor. Suppose that $\cC$ has tensors by $\mathbbm 2$. Then the following statements are equivalent.
  \begin{rome}
  \item There is a bi-limit $(X,\lambda)$ of $F$.
  \item There is an object $X\in \cC$ together with a pseudo-natural transformation $\ninl {\Delta X}{\lambda}{F}$ in $\Psd(\cI,\cC)$ such that $(X,\lambda)$ is bi-terminal in $\slice \Delta F$.
  \end{rome}
\end{corollary}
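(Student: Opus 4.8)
The plan is to obtain this corollary by specialising \cref{thm:tensors-weighted} to the conical weight $W=\Delta\mathbbm 1$, in exactly the way that \cref{thm:conicalbilim} is deduced from \cref{thm:weighted}. First I would recall, as in \cref{rem:bi-limit-bi-rep}, that a bi-limit of $F$ is precisely a weighted bi-limit of $F$ by $\Delta\mathbbm 1$; under this correspondence a pseudo-natural transformation $\ninl{\Delta\mathbbm 1}{\lambda}{\cC(X,F-)}$ in $\Psd(\cI,\Cat)$ is the same datum as a cone $\ninl{\Delta X}{\lambda}{F}$ in $\Psd(\cI,\cC)$, since evaluation at the unique object of $\mathbbm 1$ gives a canonical isomorphism $\Psd(\cI,\Cat)(\Delta\mathbbm 1,\cC(X,F-))\iso \Psd(\cI,\cC)(\Delta X,F)$.

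Next I would invoke \cref{thm:tensors-weighted} with $W=\Delta\mathbbm 1$: since $\cC$ has tensors by $\mathbbm 2$, statement (i) is equivalent to the existence of an object $X\in\cC$ together with $\ninl{\Delta\mathbbm 1}{\lambda}{\cC(X,F-)}$ such that $(X,\lambda)$ is bi-initial in the pseudo-slice $2$-category $\slice{\Delta\mathbbm 1}{\cC(-,F)}$. No further hypothesis is needed here: \cref{thm:tensors-weighted} already carries out the verification that the normal pseudo-functor $\Psd(\cI,\Cat)(\Delta\mathbbm 1,\cC(-,F))\colon\cC^{\op}\to\Cat$ preserves powers by $\mathbbm 2$, using only that tensors by $\mathbbm 2$ exist in $\cC$ together with the fact that powers in $\Psd(\cI,\Cat)$ are computed pointwise.

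I would then apply the conical comparison result \cref{cor:2conical} (the $2$-categorical counterpart of \cref{lem:conical} that is used to derive \cref{thm:conicalbilim}), which supplies a canonical isomorphism of $2$-categories $\slice{\Delta\mathbbm 1}{\cC(-,F)}\iso (\slice\Delta F)^{\op}$ compatible with the projection $2$-functors. Transporting bi-initiality along this isomorphism — so that it becomes bi-terminality in $\slice\Delta F$ — and matching the object $(X,\lambda)$ on the two sides via the identification of the first paragraph, yields statement (ii), completing the equivalence.

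Since the argument is entirely formal, I do not expect a genuine obstacle; the only point requiring care is the bookkeeping of the horizontal opposite, namely checking that the identification of conical weighted cones with ordinary cones reverses the direction of the universal property (turning ``bi-initial'' into ``bi-terminal'') and that it respects the projection functors, so that \cref{def:bi-initial} transports correctly along the isomorphism of \cref{cor:2conical}. Everything else is a direct citation of \cref{thm:tensors-weighted} and the conical comparison lemmas.
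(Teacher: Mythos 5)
Your proposal is correct and follows essentially the same route as the paper, which deduces the corollary directly from \cref{thm:tensors-weighted} applied to $W=\Delta\mathbbm 1$ together with the conical comparison (the paper cites \cref{thm:conicalbilim}, whose proof rests on exactly the isomorphisms \cref{lem:conical,cor:2conical} you invoke). The only difference is that you unpack the transport along $\slice{\Delta\mathbbm 1}{\cC(-,F)}\iso(\slice\Delta F)^{\op}$ explicitly rather than quoting \cref{thm:conicalbilim} wholesale, which is fine.
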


\begin{proof}
  This follows directly from \cref{thm:conicalbilim} and \cref{thm:tensors-weighted} applied to $W=\Delta\mathbbm 1$.
\end{proof}

The rest of this section will be devoted to the technical lemmas supporting the proofs of \Cref{thm:weighted,thm:conicalbilim} which give general characterisations of weighted bi-limits and conical bi-limits.

\begin{lemma}\label{lem:eell-weighted}
  Let $\cI$ and $\cC$ be $2$-categories, and let $\ainl \cI F{\cC}$ and $\ainl \cI W\Cat$ be normal pseudo-functors. There is a canonical isomorphism of double categories as in the following commutative triangle.
  \begin{diagram*}
    \node(1)[] {$\eell(\Psd(\cI,\Cat)(W,\cC(-,F)))$};
    \node(2)[right of= 1,xshift=3.5cm]{$\dblslice {W} {\bH\cC(-,F)}$};
    \node(3)[] at ($(1)!0.5!(2)-(0,1.5cm)$) {$\bH\cC^{\op}$};
    \draw[a](1) to node[la]{$\iso$}(2);
    \draw[a](2)to node[la]{$\Pi$}(3);
    \draw[a](1)to node[la,swap]{$\Pi$}(3);
  \end{diagram*}
\end{lemma}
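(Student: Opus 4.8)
The plan is to follow the template of \cref{lem:eell-ld}: I will spell out the explicit, cell-by-cell description of $\eell(\Psd(\cI,\Cat)(W,\cC(-,F)))$ furnished by \cref{def:eell}, and then observe that it agrees on the nose with the explicit description of the pseudo-slice double category $\dblslice{W}{\bH\cC(-,F)}$ obtained by instantiating \cref{def:pseudoslicedouble} with ambient double category $\bH\Psd(\cI,\Cat)$, domain double category $\bH\cC^{\op}$, object $W\colon\mathbbm 1\to\bH\Psd(\cI,\Cat)$, and structural pseudo-double functor $\bH\cC(-,F)\colon\bH\cC^{\op}\to\bH\Psd(\cI,\Cat)$. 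The comparison will act as the identity on the underlying data of the two double categories, so it will visibly be an isomorphism and visibly be strictly compatible with the projection double functors $\Pi$ onto $\bH\cC^{\op}$, which is exactly the commutative triangle asserted.

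Concretely, taking the pseudo-functor in \cref{def:eell} to be $\Psd(\cI,\Cat)(W,\cC(-,F))\colon\cC^{\op}\to\Cat$, an object of $\eell(\Psd(\cI,\Cat)(W,\cC(-,F)))$ is a pair $(C,\lambda)$ with $C\in\cC$ and $\lambda$ an object of $\Psd(\cI,\Cat)(W,\cC(C,F-))$, i.e., a pseudo-natural transformation $\lambda\colon W\Rightarrow\cC(C,F-)$; a horizontal morphism $(C',\lambda')\to(C,\lambda)$ is a morphism $c\colon C\to C'$ of $\cC$ together with an invertible modification $\psi\colon\lambda\cong\cC(c,F-)\lambda'$, the action of $\Psd(\cI,\Cat)(W,\cC(-,F))$ on $c$ being post-composition by the precomposition functor $\cC(c,F-)$; a vertical morphism $(C,\lambda)\to(C,\mu)$ is a morphism $\lambda\to\mu$ of $\Psd(\cI,\Cat)(W,\cC(C,F-))$, i.e., a modification; and a square is a $2$-morphism $\gamma\colon c\Rightarrow d$ of $\cC$ satisfying the commuting diagram of \cref{def:eellsquare}, read in the hom-category $\Psd(\cI,\Cat)(W,\cC(C,F-))$. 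Since $\bH\cC^{\op}$ and $\bH\Psd(\cI,\Cat)$ both have only trivial vertical morphisms, the recipe of \cref{def:pseudoslicedouble} for $\dblslice{W}{\bH\cC(-,F)}$ collapses to exactly this data: objects are pairs $(C,f)$ with $f\colon W\Rightarrow\cC(C,F-)$ a horizontal morphism of $\bH\Psd(\cI,\Cat)$; horizontal morphisms are pairs $(c,\psi)$ with $c$ a horizontal morphism of $\bH\cC^{\op}$ --- that is, a morphism of $\cC$ --- and $\psi$ a vertically invertible square, necessarily an invertible modification; vertical morphisms are squares of $\bH\Psd(\cI,\Cat)$ with trivial vertical boundaries, i.e., modifications; and squares are $2$-morphisms of $\bH\cC^{\op}$, i.e., $2$-morphisms of $\cC$, constrained by the pasting equality of \cref{def:pseudoslicedouble}. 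Moreover, the contravariance in the $\cC$-variable enters identically on the two sides --- through the opposite $\cC^{\op}$ occurring in $\bH\cC^{\op}$ and in $\cC(-,F)$ on one side, and through the contravariance of the presheaf $\Psd(\cI,\Cat)(W,\cC(-,F))$ on the other --- so here, in contrast with \cref{lem:eell-ld}, no horizontal opposite is needed, and the two projections $\Pi$ agree under the identity comparison.

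The one point requiring genuine care --- handled exactly as in \cref{lem:eell-ld} --- is to check that the two conditions on squares match: one transcribes the pasting equality of \cref{def:pseudoslicedouble} that carves out a square of $\dblslice{W}{\bH\cC(-,F)}$, using that its vertical morphisms are modifications and its structural pseudo-double functor is $\bH\cC(-,F)$, and verifies that it is term-for-term the commuting diagram of \cref{def:eellsquare} with the pseudo-functor there taken to be $\Psd(\cI,\Cat)(W,\cC(-,F))$. This is a mechanical manipulation of pasting diagrams with no hidden subtlety, so in the write-up I would exhibit this translation and leave the residual bookkeeping to the reader, as is done for \cref{lem:eell-ld}.
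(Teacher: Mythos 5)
Your proposal is correct and follows essentially the same route as the paper: one unpacks the data of $\eell(\Psd(\cI,\Cat)(W,\cC(-,F)))$ via \cref{def:eell} and matches it cell-by-cell against the explicit description of the pseudo-slice $\dblslice{W}{\bH\cC(-,F)}$ from \cref{def:pseudoslicedouble}, with the identity comparison visibly commuting with the projections to $\bH\cC^{\op}$. Your observation that, unlike in \cref{lem:eell-ld}, no horizontal opposite is needed (the contravariance being already built into the base $\bH\cC^{\op}$ on both sides) is accurate and consistent with the paper's treatment.
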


\begin{proof}
  We describe the data of the double category $\eell(\Psd(\cI,\Cat)(W,\cC(-,F)))$. By a straightforward comparison with the data described in \cref{def:pseudoslicedouble} of the pseudo-slice double category $\dblslice {W} {\bH\cC(-,F)}$, we will see that the isomorphism above canonically holds.

  An object in $\eell(\Psd(\cI,\Cat)(W,\cC(-,F)))$ consists of a pair $(C,\kappa)$ of an object $C\in \cC$ and a pseudo-natural transformation $\ninl{W}{\kappa}{\cC(C,F-)}$ in $\Psd(\cI,\Cat)$. A horizontal morphism $\ainl{(C',\kappa')}{(c,\Psi)}{(C,\kappa)}$ in $\eell(\Psd(\cI,\Cat)(W,\cC(-,F)))$ consists of a morphism $\ainl{C}{c}{C'}$ in $\cC$ together with an invertible modification $\Psi$ in $\Psd(\cI,\Cat)$ of the form
  \begin{diagram*}[.][3][node distance=1.8cm]
  \node(1)[]{$W$};
  \node(2)[right of =1,xshift=.25cm]{$\cC(C',F-)$};
  \node(3)[below of =2]{$\cC(C,F-)$};
  \draw[n](1) to node[la]{$\kappa'$} (2);
  \draw[n](1) to node(x)[swap,la]{$\kappa$} (3);
  \draw[n](2) to node[la]{$\cC(c,F-)$} (3);
  \celli[la,shrink][t][0.4]{x}{2}{$\Psi$};
  \end{diagram*}
  A vertical morphism $\vainl{(C,\kappa)}{\Theta}{(C,\mu)}$ in $\eell(\Psd(\cI,\Cat)(W,\cC(-,F)))$ is a modification $\minl{\kappa}{\Theta}{\mu}$ between pseudo-natural transformations $\ainl{W}{\kappa,\mu}{\cC(C,F-)}$ in $\Psd(\cI,\Cat)$. Finally, a square $\sq{\gamma}{(c,\Psi)}{(d,\Phi)}{\Theta'}{\Theta}$ is a $2$-morphism $\ninl{c}{\gamma}{d}$ in $\cC$ satisfying the pasting equality in \Cref{def:eellsquare}, which can be translated into a pasting equality for the modification $\cC(\gamma,F-)$ in $\Psd(\cI,\Cat)$.
\end{proof}

\begin{corollary} \label{cor:elp-weighted}
  Let $\cI$ and $\cC$ be $2$-categories, and let $\ainl \cI F{\cC}$ and $\ainl \cI W\Cat$ be normal pseudo-functors. There are canonical isomorphisms of $2$-categories as in the following commutative triangles.
\begin{center}
\begin{tikzpicture}
      \node(1)[] {$\el(\Psd(\cI,\Cat)(W,\cC(-,F)))$};
      \node(2)[right of= 1,xshift=3cm]{$\pslice W {\cC(-,F)}$};
      \node(3)[] at ($(1)!0.5!(2)-(0,1.5cm)$) {$\cC^{\op}$};
      \draw[a](1) to node[la]{$\iso$}(2);
      \draw[a](2)to node[la]{$\pi$}(3);
      \draw[a](1)to node[la,swap]{$\pi$}(3);
\end{tikzpicture}
\begin{tikzpicture}
      \node(1')[below of=1, yshift=-1cm] {$\mor(\Psd(\cI,\Cat)(W,\cC(-,F)))$};
      \node(2)[right of= 1',xshift=3.5cm]{$\pslice W {\Ar \cC(-,F)}$};
      \node(3)[] at ($(1')!0.5!(2)-(0,1.5cm)$) {$\Ar\cC^{\op}$};
      \draw[a](1') to node[la]{$\iso$}(2);
      \draw[a](2)to node[la]{$\pi$}(3);
      \draw[a](1')to node[la,swap]{$\pi$}(3);
    \end{tikzpicture}
    \end{center}
\end{corollary}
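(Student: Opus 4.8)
The plan is to obtain both isomorphisms formally, exactly along the lines of the proof of \cref{lem:el-mor-ld}. Abbreviate $G \dfn \Psd(\cI,\Cat)(W,\cC(-,F))$, so that by \cref{def:el-mor} we have $\el(G) = \cH\eell(G)$ and $\mor(G) = \SV\eell(G)$. The first step is to invoke \cref{lem:eell-weighted}, which already supplies a canonical isomorphism of double categories $\eell(G) \iso \dblslice{W}{\bH\cC(-,F)}$ lying over $\bH\cC^{\op}$ via the respective projections $\Pi$.

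Next I would apply the functors $\cH$ and $\SV$ to this isomorphism and combine the result with \cref{lem:h-sv-general-commas}, specialised to the cospan $\mathbbm 1 \xrightarrow{W} \bH\Psd(\cI,\Cat) \xleftarrow{\bH\cC(-,F)} \bH\cC^{\op}$ --- so that the pseudo-comma there is precisely the pseudo-slice $\dblslice{W}{\bH\cC(-,F)}$. That lemma then yields canonical isomorphisms $\cH(\dblslice{W}{\bH\cC(-,F)}) \iso \slice{\cH W}{\cH\bH\cC(-,F)}$ and $\SV(\dblslice{W}{\bH\cC(-,F)}) \iso \slice{\SV W}{\SV\bH\cC(-,F)}$, each compatible with the projections down to (isomorphic copies of) $\cC^{\op}$ and $\Ar\cC^{\op}$ respectively. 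It then remains only to rewrite the right-hand sides using the identities $\cH\bH = \id_{\TwoCat}$ and $\SV\bH = \Ar$ of \cref{rem:arprops}: these turn $\cH\bH\cC(-,F)$ into $\cC(-,F)$ and $\SV\bH\cC(-,F)$ into $\Ar\cC(-,F)$, while $\cH W$ is simply $W$ viewed as an object of $\Psd(\cI,\Cat)$ and $\SV W$ is $W$ viewed as an object of $\Ar\Psd(\cI,\Cat)$ (using $\SV\mathbbm 1 = \mathbbm 1$, as recorded in the proof of \cref{lem:iso_between_slices}). Chaining the isomorphisms produces the two commutative triangles in the statement.

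I do not expect a genuine obstacle: the argument is pure bookkeeping with no new computation, the only point demanding a moment's attention being that the resulting isomorphisms respect the indicated projections $\pi$. This is however automatic, since the squares exhibited in \cref{lem:h-sv-general-commas} already track the projections and the isomorphism of \cref{lem:eell-weighted} was itself constructed over $\bH\cC^{\op}$. (Invoking the general pseudo-comma statement \cref{lem:h-sv-general-commas} rather than its slice-only shadow \cref{lem:iso_between_slices} is essential here, since the double functor $\bH\cC(-,F)$ appearing in the cospan is not an identity.)
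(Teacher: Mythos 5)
Your proposal is correct and matches the paper's own proof, which likewise derives the result directly from the definitions of $\el$ and $\mor$, the double-categorical isomorphism of \cref{lem:eell-weighted}, and the comma-preservation statement \cref{lem:h-sv-general-commas} (with the rewriting via $\cH\bH=\id$ and $\SV\bH=\Ar$ implicit there). The extra bookkeeping you spell out, including why the general comma statement rather than \cref{lem:iso_between_slices} is needed, is exactly the intended argument.
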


\begin{proof}
  This follows directly from the definitions of $\el$ and $\mor$, \Cref{lem:eell-weighted,lem:h-sv-general-commas}.
\end{proof}

With \Cref{lem:eell-weighted,cor:elp-weighted} above established we may now give a direct proof of \Cref{thm:weighted}.

\begin{proof}[\Cref{thm:weighted}]
Recall from \Cref{rem:bi-limit-bi-rep} that a weighted bi-limit of $F$ by $W$ is equivalently a bi-representation of the $2$-functor $\Psd(\cI,\Cat)(W,\cC(-,F))$. Then the result is obtained as a direct application of \Cref{thm:birep} using \Cref{lem:eell-weighted,cor:elp-weighted}.
\end{proof}

In the conical case we may simplify the pseudo-slices above through the below computations to obtain a proof of \cref{thm:conicalbilim}.

\begin{lemma} \label{lem:conical}
  Let $\cI$ and $\cC$ be $2$-categories, and $\ainl \cI F\cC$ be a normal pseudo-functor. There is a canonical isomorphism of double categories as in the following commutative triangle.
  \begin{diagram*}
    \node(1)[] {$\dblslice {\Delta \mathbbm 1}{\bH\cC(-,F)}$};
    \node(2)[right of= 1,xshift=2cm]{$(\dblslice{\bH\Delta} F)^{\op}$};
    \node(3) at ($(1)!0.5!(2)-(0,1.5cm)$) {$\bH\cC^{\op}$};
    \draw[a](1)to node[la]{$\iso$}(2);
    \draw[a](1)to node[la,swap]{$\Pi$}(3);
    \draw[a](2)to node[la]{$\Pi^{\op}$}(3);
  \end{diagram*}
\end{lemma}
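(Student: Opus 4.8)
The plan is to unwind both double categories explicitly and match their data directly, taking advantage of the fact that a pseudo-natural transformation $\ninl{\Delta\mathbbm 1}{\kappa}{\cC(C,F-)}$ in $\Psd(\cI,\Cat)$ is the same thing as a pseudo-natural transformation $\ninl{\Delta C}{\lambda}{F}$ in $\Psd(\cI,\cC)$, i.e.\ a cone over $F$ with summit $C$. This is the conical-weight version of the standard identification; the only subtlety is bookkeeping the variance. First I would recall from \cref{lem:eell-weighted} that $\dblslice{\Delta\mathbbm 1}{\bH\cC(-,F)}$ is already identified with $\eell(\Psd(\cI,\Cat)(\Delta\mathbbm 1,\cC(-,F)))$, so it suffices to exhibit the isomorphism at the level of the explicit descriptions of \cref{def:pseudoslicedouble} for the cospans
\[ \mathbbm 1\xrightarrow{\Delta\mathbbm 1}\bH\Psd(\cI,\Cat)\xleftarrow{\bH\cC(-,F)}\bH\cC^{\op}
\qquad\text{and}\qquad
\bH\cC\xrightarrow{\bH\Delta}\bH\Psd(\cI,\cC)\xleftarrow{F}\mathbbm 1. \]

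The core observation I would then establish is the following chain of bijections, natural in all data: for objects $C\in\cC$,
\[ \Psd(\cI,\Cat)(\Delta\mathbbm 1,\cC(C,F-))\;\cong\;\Psd(\cI,\cC)(\Delta C,F), \]
and more is true — this extends to an isomorphism of the relevant \emph{categories} (with modifications as morphisms on the left, and modifications of cones on the right), and moreover it is suitably $2$-natural in $C$ so that the $2$-morphism components of horizontal morphisms (the invertible modifications $\Psi$ of \cref{lem:eell-weighted}) correspond to the invertible modifications $\psi$ appearing in the pseudo-slice double category $\dblslice{\bH\Delta}F$. Concretely: an object $(C,\kappa)$ on the left corresponds to an object $(C,\lambda)$ on the right, a horizontal morphism $(c,\Psi)\colon(C',\kappa')\to(C,\kappa)$ (data of $c\colon C\to C'$ in $\cC$ together with $\Psi$ exhibiting $\kappa\cong\cC(c,F-)\cdot\kappa'$) corresponds to a horizontal morphism $(c,\psi)$ in $(\dblslice{\bH\Delta}F)^{\op}$ — note the same morphism $c\colon C\to C'$ points the ``wrong'' way relative to the pseudo-slice convention, which is exactly why the horizontal opposite appears — a vertical morphism $\Theta\colon(C,\kappa)\to(C,\mu)$ (a modification) corresponds to a modification of cones $\Delta C\to F$, and squares correspond to squares since both are controlled by a single $2$-morphism $\gamma\colon c\Rightarrow d$ in $\cC$ subject to a pasting equality which, under the identification, is precisely the pasting equality defining squares in $\dblslice{\bH\Delta}F$ (read in the horizontal opposite). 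Commutativity of the triangle over $\bH\cC^{\op}$ is then immediate, since on both sides the projection reads off the underlying object $C\in\cC$ and morphism $c$, with the opposite accounting for the variance flip.

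The main obstacle I expect is purely organisational rather than conceptual: carefully tracking how the horizontal-opposite operation interacts with the direction of $c$ and with the pasting equality for squares, so that the translation of \cref{def:eellsquare}'s coherence condition lands \emph{exactly} on the square condition of \cref{def:pseudoslicedouble} applied to $F=\bH\Delta$ and not on a mirror-image of it. I would handle this by first nailing down the object-level and horizontal-morphism-level correspondence (where the $\Delta\mathbbm 1$ versus $\Delta C$ identification does all the work), then checking vertical morphisms, and only then verifying the square condition — at which point, having fixed all conventions, the pasting equality should match on the nose. Since everything in sight is defined by universal constructions built from $\Psd$, $\bH$, and pullbacks, naturality of the isomorphism is automatic and need not be checked by hand; I would remark on this rather than belabour it.
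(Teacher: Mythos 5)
Your proposal is correct and follows essentially the same route as the paper: the entire content of the paper's (one-sentence) proof is precisely your core observation that pseudo-natural transformations $\Delta\mathbbm 1\Rightarrow\cC(C,F-)$ in $\Psd(\cI,\Cat)$ correspond to cones $\Delta C\Rightarrow F$ in $\Psd(\cI,\cC)$, with the horizontal opposite accounting for the variance flip. Your additional unwinding of horizontal/vertical morphisms and squares (via \cref{lem:eell-weighted} and \cref{def:pseudoslicedouble}) is just an explicit elaboration of what the paper leaves implicit.
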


\begin{proof}
  This follows from the fact that, given an object $C\in \cC$, a pseudo-natural transformation $\ninl{\Delta\mathbbm 1}{\kappa}{\cC(C,F-)}$ in $\Psd(\cI,\Cat)$ corresponds to a pseudo-natural transformation $\ninl{\Delta C}{\kappa}{F}$ in $\Psd(\cI,\cC)$.
\end{proof}

\begin{corollary} \label{cor:2conical}
  Let $\cI$ and $\cC$ be $2$-categories, and $\ainl \cI F\cC$ be a normal pseudo-functor. There are canonical isomorphisms of $2$-categories as in the following commutative triangles.
  \begin{diagram*}
      \node(1)[] {$\slice{\Delta\mathbbm 1}{\cC(-,F)}$};
      \node(2)[right of= 1,xshift=2cm]{$(\slice \Delta F)^{\op}$};
      \node(3)[] at ($(1)!0.5!(2)-(0,1.5cm)$) {$\cC^{\op}$};
      \draw[a](1) to node[la]{$\iso$}(2);
      \draw[a](2)to node[la]{$\pi^{\op}$}(3);
      \draw[a](1)to node[la,swap]{$\pi$}(3);

      \node(1)[right of=2,xshift=2cm] {$\slice{\Delta\mathbbm 1}{\Ar\cC(-,F)}$};
      \node(2)[right of= 1,xshift=2cm]{$(\pslice {\Ar \Delta} F)^{\op}$};
      \node(3)[] at ($(1)!0.5!(2)-(0,1.5cm)$) {$\Ar\cC^{\op}$};
      \draw[a](1) to node[la]{$\iso$}(2);
      \draw[a](2)to node[la]{$\pi^{\op}$}(3);
      \draw[a](1)to node[la,swap]{$\pi$}(3);
    \end{diagram*}
\end{corollary}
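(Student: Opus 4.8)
The plan is to derive both isomorphisms simply by applying the functors $\cH$ and $\SV$ to the isomorphism of double categories furnished by \cref{lem:conical}, and then rewriting the resulting $2$-categories using three facts established earlier: that $\cH$ and $\SV$ preserve pseudo-commas and their projections (\cref{lem:h-sv-general-commas}), that $\cH\bH=\id_{\TwoCat}$ (\cref{lem:HHadjunction}) while $\SV\bH=\Ar$ (\cref{rem:arprops}), and that $\cH$ and $\SV$ carry the horizontal opposite of a double category to the opposite of the associated $2$-category, as recorded at the start of \cref{sec:initial}.

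For the first triangle I would apply $\cH$ to the commutative triangle of \cref{lem:conical}. At the apex, \cref{lem:h-sv-general-commas} identifies $\cH(\dblslice{\Delta\mathbbm 1}{\bH\cC(-,F)})$ with $\slice{\cH(\Delta\mathbbm 1)}{\cH(\bH\cC(-,F))}$ compatibly with the projections, and since $\cH\bH=\id$ this is $\slice{\Delta\mathbbm 1}{\cC(-,F)}$; at the other vertex, $\cH$ commutes with the horizontal opposite, so $\cH((\dblslice{\bH\Delta}F)^{\op})=(\cH(\dblslice{\bH\Delta}F))^{\op}$, and again \cref{lem:h-sv-general-commas} together with $\cH\bH=\id$ identifies $\cH(\dblslice{\bH\Delta}F)$ with $\slice{\Delta}{F}$, making this vertex $(\slice{\Delta}{F})^{\op}$. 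The base of the triangle of \cref{lem:conical} is $\bH\cC^{\op}$, and $\cH\bH\cC^{\op}=\cC^{\op}$, so $\cH$ of the whole triangle is exactly the first asserted commutative triangle. For the second triangle I would run the identical argument with $\SV$ in place of $\cH$: now $\SV\bH=\Ar$ turns $\SV(\dblslice{\Delta\mathbbm 1}{\bH\cC(-,F)})$ into $\slice{\Ar(\Delta\mathbbm 1)}{\Ar\cC(-,F)}$, which is the pseudo-slice $\slice{\Delta\mathbbm 1}{\Ar\cC(-,F)}$ in the notation of \cref{thm:weighted}, and turns $\SV((\dblslice{\bH\Delta}F)^{\op})$ into $(\slice{\Ar\Delta}{F})^{\op}$, with base $\SV\bH\cC^{\op}=\Ar\cC^{\op}$.

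I do not expect a genuine obstacle: the only point requiring care is the bookkeeping of what $\cH$ and $\SV$ do to the ``object-legs'' of the cospans, namely that $\Delta\mathbbm 1$ and $F$ in \cref{lem:conical} are to be read as $\bH$ of the evident normal pseudo-functors out of $\mathbbm 1$, so that $\SV\bH=\Ar$ applies and reproduces precisely the legs appearing in the cospans of \cref{thm:weighted} and \cref{thm:conicalbilim}; together with the mild check that the projection squares of \cref{lem:h-sv-general-commas} are compatible with the projection triangle of \cref{lem:conical} and with the horizontal-opposite identifications. Equivalently, and perhaps more cleanly, one may observe that \cref{cor:2conical} is \cref{cor:elp-weighted} specialised at the weight $W=\Delta\mathbbm 1$, composed with the $2$-categorical shadows of \cref{lem:conical} obtained by applying $\cH$ and $\SV$; I would present whichever of these two routes turns out to be shorter to write down in full.
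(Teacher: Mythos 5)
Your proposal is correct and follows the same route as the paper, whose proof is simply ``this follows directly from \cref{lem:conical} and \cref{lem:h-sv-general-commas}'': apply $\cH$ and $\SV$ to the isomorphism of \cref{lem:conical}, identify the images of the pseudo-slices via \cref{lem:h-sv-general-commas}, and use $\cH\bH=\id$, $\SV\bH=\Ar$ and compatibility with (horizontal) opposites. The extra bookkeeping you flag (reading the legs as $\bH$ of normal pseudo-functors, compatibility of projections) is exactly the content the paper leaves implicit, so no gap.
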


\begin{proof}
  This follows directly from \Cref{lem:conical} and \Cref{lem:h-sv-general-commas}.
\end{proof}

Finally we obtain a straightforward proof of \Cref{thm:conicalbilim}.

\begin{proof}[\Cref{thm:conicalbilim}]
  This result is obtained by applying \Cref{thm:weighted} to the special case where $W=\Delta \mathbbm 1$ and using \Cref{lem:conical,cor:2conical}.
\end{proof}

\bibliographystyle{plain}
\bibliography{references}

\end{document}